\newtheorem{defn}[equation]{Definition}
\newtheorem{conj}[equation]{Conjecture}
\newtheorem{theo}[equation]{Theorem}
\newtheorem{prop}[equation]{Proposition}
\newtheorem{lem}[equation]{Lemma}
\newtheorem{cor}[equation]{Corollary}
\newtheorem{ex}[equation]{Example}
\newtheorem{rem}[equation]{Remark}
\newtheorem{con}[equation]{Condition}
\numberwithin{equation}{section}
\newcommand{\ddbar}{\partial\bar{\partial}}
\newcommand \Z{\mathbb Z}
\newcommand \cZ {\mathscr Z}
\newcommand \N{\mathbb N}
\newcommand \C{\mathbb C}
\newcommand \R{\mathbb R}
\newcommand \Q{\mathbb Q}
\newcommand \gO {\mathcal O}
\newcommand \M {\mathscr {M}}
\newcommand \Ld {\mathscr {L}}
\newcommand \D {\mathbb{D}}
\newcommand \U {\mathscr U}
\newcommand \cU {\mathscr U}
\newcommand \CP {\mathbb P}
\newcommand \A {\mathbb A}
\newcommand \T {\mathbb{T}}
\newcommand \G {\mathbb{G}}
\newcommand \X {\mathscr{X}}
\newcommand \fX {\mathfrak{X}}
\newcommand \bS {\mathbb S}
\newcommand \cD {\mathscr{D}}
\newcommand \cN {\mathscr{N}}
\newcommand \fl {\longrightarrow}
\newcommand \eps {\varepsilon}
\newcommand \De {\Delta}
\DeclareMathOperator{\Sk}{Sk}
\DeclareMathOperator{\Star}{Star}
\DeclareMathOperator{\ord}{ord}
\DeclareMathOperator{\Hom}{Hom}
\DeclareMathOperator{\Bl}{Bl}
\DeclareMathOperator \Id {Id}
\DeclareMathOperator \Div {Div}
\DeclareMathOperator{\PSH}{PSH}
\DeclareMathOperator{\CPSH}{CPSH}
\DeclareMathOperator{\Sp}{Spec}
\DeclareMathOperator{\Spf}{Spf}
\DeclareMathOperator{\Log}{Log}
\DeclareMathOperator{\Sing}{Sing}
\DeclareMathOperator{\MA}{MA}
\DeclareMathOperator{\red}{red}
\DeclareMathOperator{\val}{val}
\DeclareMathOperator{\Val}{Val}
\DeclareMathOperator{\an}{an}
\DeclareMathOperator{\An}{An}
\DeclareMathOperator{\hyb}{hyb}
\DeclareMathOperator{\Conv}{Conv}
\DeclareMathOperator{\Aff}{Aff}
\DeclareMathOperator{\sg}{sg}
\DeclareMathOperator{\ev}{ev}
\DeclareMathOperator{\FS}{FS}
\DeclareMathOperator{\DFS}{DFS}
\DeclareMathOperator{\hol}{hol}
\DeclareMathOperator{\Ad}{Ad}
\DeclareMathOperator{\NA}{NA}
\DeclareMathOperator{\Frac}{Frac}
\DeclareMathOperator{\Trop}{Trop}
\DeclareMathOperator{\wt}{wt}
\DeclareMathOperator{\sch}{sch}
\DeclareMathOperator{\sm}{sm}
\DeclareMathOperator{\Int}{Int}
\DeclareMathOperator{\Cone}{Cone}
\DeclareMathOperator{\Span}{Span}
\DeclareMathOperator{\xdiv}{div}
\DeclareMathOperator{\can}{can}
\newtheorem*{thmA}{Theorem A}
\newtheorem*{thmB}{Theorem B}
\newtheorem*{thmC}{Theorem C}
\title[Hybrid toric varieties and the NA SYZ fibration on CY hypersurfaces]{Hybrid toric varieties and the non-archimedean SYZ fibration on Calabi-Yau hypersurfaces}
\author{Léonard Pille-Schneider}
\begin{document}
\date{}
\nocite{*}
\maketitle
\begin{abstract}
    Using a construction by Yamamoto of contractions maps from tropical Calabi-Yau varieties to their dual intersection complex, we construct a non-archimedean SYZ fibration for a class of maximally degenerate hypersurfaces in projective space. We furthermore prove that under a discrete symmetry assumption, the potential for the non-archimedean Calabi-Yau metric is constant along the fibers of the retraction.
    \\The proof uses the work of Li on the Fermat degeneration of hypersurfaces, and an explicit description of toric plurisubharmonic metrics on the hybrid space associated to a complex toric variety.
\end{abstract}
\tableofcontents
\section*{Introduction}
Let $X \fl \D^*$ be a degeneration of $n$-dimensional Calabi-Yau manifolds, polarized by a relatively ample line bundle $L$. We furthermore assume that the degeneration is maximal, which means that the algebraic geometry of the fibers $X_t$ degenerates in the most severe possible way (see Definition \ref{def sk ess} for the precise definition). Motivated by mirror symmetry, the SYZ conjecture predicts that as $t \fl 0$, the fiber $(X_t, \omega_t)$ endowed with its unique Kähler Ricci-flat metric $\omega_t \in c_1(L_t)$ should look like the total space of a Lagrangian torus fibration $f_t : X_t \fl B$ onto a certain base $B$ of real dimension $n$. The base $B$ can be naturally realized within the realm of non-archimedean geometry: writing $X^{\an}$ for the Berkovich analytification of $X$ with respect to the $t$-adic absolute value on $K=\C((t))$, there exists a canonical piecewise-linear subspace $\Sk(X) \subset X^{\an}$ called the essential skeleton of $X$, carrying a Lebesgue measure $\mu_0$ which is in a natural sense the limit of the Calabi-Yau measures $\mu_t = \omega_t^n$ on the $X_t$. The measure convergence takes place on the hybrid space $X^{\hyb}$ associated to the degeneration, which can be written as the disjoint union $X^{\hyb} := X \sqcup X^{\an}$, and whose topology is roughly defined in the following way: if $f$ is a holomorphic function on $X$, then the real-valued function $\frac{\log \lvert f \rvert}{\log \lvert t \rvert}$ on $X$ converges on $X^{\hyb}$ to the family of non-archimedean valuations $v_x \mapsto v_x(f)$ of $f$ on $X^{\an}$.
\\The Kontsevich-Soibelman conjecture \cite{KontsevichSoibelman} then predicts the following: after suitably rescaling the diameters, the Calabi-Yau manifolds $(X_t, \omega_t)$ converge in the Gromov-Hausdorff sense to the essential skeleton $\Sk(X)$, containing a dense open subset $\mathcal{R}$ which is a smooth Riemannian manifold, endowed with an integral affine structure, and such that the Riemannian metric is given locally in affine coordinates by the Hessian of a strictly convex potential $\phi$, satisfying the real Monge-Ampère equation:
$$\det( \frac{\partial^2 \phi}{\partial y_i \partial y_j}) = \mu_0,$$
where $(y_1,...,y_n)$ are integral affine coordinates. Heuristically, the integral affine structure should be thought as the limit of the highly degenerating complex structure on $X_t$, while the local potential $\phi$ is the limit of the Calabi-Yau potentials. While it has proven difficult to give a global definition of the real Monge-Ampère equation on the piecewise-affine space $\Sk(X)$, the work of Boucksom-Favre Jonsson \cite{BFJ2}, \cite{BFJ1} implies that it is possible to make sense of the non-archimedean Monge-Ampère measure $\MA(\phi)$ of a continuous plurisubharmonic (psh for short) metric $\phi$ on $L$ on the Berkovich analytification $X^{\an}$, and that there exists a unique continuous psh metric on $L^{\an}$ solving the non-archimedean Monge-Ampère equation:
$$\MA(\phi) = \mu_0,$$
where $\mu_0$ is the Lebesgue measure on the essential skeleton of $X$. This provides a natural strategy towards the proof of the Kontsevich-Soibelman conjecture: first, proving that the non-archimedean Calabi-Yau metric $\phi$ on $L^{\an}$ can be identified in a suitable sense to a convex solution of the real Monge-Ampère equation on $\Sk(X)$, and then show that the latter captures the asymptotic behaviour of the Calabi-Yau manifolds $(X_t, \omega_t)$.
\\The groundbreaking work of Yang Li \cite{Li2} makes substantial progress regarding the second part: assuming that there exists a possibly larger skeleton $\Sk(X) \subseteq S \subset X^{\an}$ and a retraction $\rho : X^{\an} \fl S$, such that the solution of the non-archimedean equation $\phi$ satisfies the \emph{comparison property}: $\phi = \phi \circ \rho$ on the union of open maximal faces of $\Sk(X)$, it can be identified to a solution of the real Monge-Ampère equation on $\Sk(X)$ on this union by \cite{Vil}, which provides an ansatz for the archimedean Calabi-Yau metrics, so that Li is able to show the existence of a special Lagrangian fibration $f_t : U_t \fl \Sk(X)$ on an open subset $U_t \subset X_t$ of asymptotically full Calabi-Yau measure when $\lvert t \rvert \ll 1$. Let us point out however that since the ansatz lives on the disconnected union of open maximal faces of $\Sk(X)$, the above-mentioned invariance property is not strong enough to expect it to imply global Gromov-Hausdorff convergence - it is therefore desirable to define a stronger comparison property to obtain finer convergence results.
\\The goal of this paper is to investigate this invariance property in the particular case of degenerations of hypersurfaces of the following form:
$$X = \{ z_0 ...z_{n+1} +tF =0\} \subset \CP^{n+1} \times \D^*,$$
where $F \in H^0(\CP^{n+1}, \gO(n+2))$ is a generic section. In that case, writing $N$ for the cocharacter lattice of the open torus $\T \subset \CP^{n+1}$ and $N_{\R} = N \otimes_{\Z} \R$, the essential skeleton $\Sk(X)$ can be canonically realized inside the tropical hypersurface $\Trop(X) \subset N_{\R}$ associated to $X$, i.e. the corner locus of the piecewise-affine function on $N_{\R}$ obtained by replacing sums by maxima and products by sums in the equation of the hypersurface. The tropicalization $\Trop(X)$ is also the image of $X^{\an} \cap \T^{\an}$ under the tropicalization map $\val : \T^{\an} \fl N_{\R}$, which is given by the log-modulus of coordinates on the torus. The tropicalization map extends as a map $\val_{\Sigma} :\CP^{n+1, \an} \fl N_{\Sigma}$ onto the manifold with corners $N_{\Sigma}$ associated to the fan $\Sigma$ of $\CP^{n+1}$, which provides a canonical compactification $\overline{\Trop}(X) := \val_{\Sigma}(X^{\an})$ of the tropical hypersurface.
\\Such degenerations of hypersurfaces have been extensively studied in the mirror symmetry literature, in particular in the context of toric degenerations and the Gross-Siebert program \cite{Gro05}, \cite{GrossSiebert2006}. The following will be of particular relevance to us: given a certain choice of branch cuts $\underline{a} =(a_{\tau})_{\tau}$ in $\Sk(X)$, i.e. an interior point $a_{\tau}$ of each face $\tau$, one can define a discriminant locus $\Gamma \subset \Sk(X)$ and an integral affine structure on $\Sk(X) \setminus \Gamma$, capturing the toric nature of the degeneration. The work of Yamamoto \cite{Ya} shows that this integral affine structure can be reconstructed via tropical geometry: there exists a tropical contraction $\delta = \delta_{\underline{a}} : \overline{\Trop}(X) \fl \Sk(X)$ which preserves the integral affines structures on $\overline{\Trop}(X)$ and $\Sk(X) \setminus \Gamma$ respectively. It is thus natural to compose this contraction with the tropicalization map $\val_{\Sigma} : X^{\an} \fl \overline{\Trop}(X)$, our first main result is the following:
\begin{thmA} Let $X$ be a degeneration of hypersurfaces of the form:
$$X = \{ z_0 ...z_{n+1} +tF =0\} \subset \CP^{n+1} \times \D^*$$
where $F \in H^0(\C\CP^{n+1}, \gO(n+2))$ is a generic section. Let $\underline{a}$ be a choice of branch cuts in $\Sk(X)$, and let $\delta_{\underline{a}} : \overline{\Trop}(X) \fl \Sk(X)$ be the associated tropical contraction \cite[thm. 5.1]{Ya}. Then the composition:
$$\rho_{\underline{a}} := \delta_{\underline{a}} \circ \val_{\Sigma} : X^{\an} \fl \Sk(X)$$
is an affinoid torus fibration over $\Sk(X) \setminus \Gamma$. Moreover, the induced integral affine structure on $\Sk(X) \setminus \Gamma$ coincides with the one mentioned above.
\end{thmA}
We will not define affinoid torus fibrations here and refer to def. \ref{defn:affinoid torus fibration}, they should be thought as the non-archimedean analog of smooth torus fibrations, and induce in particular an integral affine structure on their base. In the case of K3 surfaces the above retraction was constructed in a different manner in \cite[thm. A]{MPS}.
\\It is expected that the polarization on the hypersurface should dictate a prefered choice of branch cuts, i.e. such that the non-archimedean Calabi-Yau metric $\phi$ satisfies the comparison property for the retraction $\rho_{\underline{a}}$. When the family of hypersurfaces is the Fermat family, given explicitly by:
$$ X = \{ z_0 ... z_{n+1} + t(z_0^{n+2}+...+z_{n+1}^{n+2}) =0\} \subset \CP^{n+1} \times \D^*,$$
which has the discrete $\mathfrak{S}_{n+2}$-symmetry, there is a canonical choice of branch cuts, namely the barycenters of the faces of $\Sk(X)$. We are then able to show a strong version of the comparison property for the family $X$:
\begin{thmB} Let $X$ be the Fermat family of hypersurfaces, polarized by $L = \gO_{\CP}(n+2)$. Then there exists on $\CP_K^{n+1}$ a toric psh metric on $L^{\an}$ whose restriction $\phi$ to $X^{\an}$ solves the non-archimedean Monge-Ampère equation:
$$\MA(\phi) = \mu_0,$$
where $\mu_0$ is the Lebesgue measure on $\Sk(X)$. Moreover, writing $\phi = \phi_{\FS} + \psi$, where $\phi_{\FS}$ is the Fubini-Study metric on $L$, the continuous function $\psi$ on $X^{\an}$ satisfies the comparison property:
$$\psi = \psi \circ \rho_{\underline{a}}$$
over $\Sk(X) \setminus \Gamma$, where $\rho_{\underline{a}}$ is the retraction from theorem A, $a_{\tau}$ is the barycenter of $\tau$ for each face $\tau \subset \Sk(\X)$, and $\Gamma \subset \Sk(X)$ is the associated discriminant locus of codimension $2$.
\end{thmB}
The first part of this result was obtained independently in \cite{HJMM} for a larger class of hypersurfaces in projective space, via a different argument. We will additionally prove that the strong comparison property also holds for most hypersurfaces considered in their setting, see theorem \ref{theo comp prop}.
\\Our argument builds on the work from \cite{Li1}, together with the hybrid pluripotential theory developed in \cite{PShyb}, where we define a class of plurisubharmonic metrics on the hybrid space $X^{\hyb}$ associated to a degeneration $X$ of complex manifolds. We will in particular use a description of hybrid, fiberwise-toric metrics on a complex toric variety - given a complex projective variety $Z$, the associated hybrid space $p : Z^{\hyb} \fl [0,1]$ is a Berkovich analytic space, such that $p^{-1}((0,1]) \simeq Z \times (0,1]$, and $p^{-1}(0) \simeq Z^{\an}_0$ is homeomorphic to the analytification of $Z$ with respect to the trivial absolute value on $\C$. Given a semi-ample line bundle $L$ on $Z$, a continuous fiberwise-toric metric $\phi$ on $L^{\hyb}$ is then a continuous metric that restricts to a toric metric on $L$ on each fiber $p^{-1}(\lambda)$ for $\lambda \in [0,1]$, such metrics are well-known \cite{BPS} to be equivalent to the data of a continuous function on a finite-dimensional vector space $N_{\R}$, which has the same growth at infinity as the support function $\Psi_P$ of the polytope $P$ encoding the polarization $L$. The metric is then psh if and only the associated function is convex, and the following result states that this also holds in the hybrid setting:
\begin{thmC}
Let $Z$ be a projective complex toric variety, and $L$ a semi-ample line bundle on $Z$. There is a one-to-one correspondence between continuous, fiberwise-toric psh metrics on $L^{\hyb}$ and continuous convex functions:
$$\Phi : N_{\R} \times [0,1] \fl \R$$
such that $(\Phi - \Psi_P)$ extends continuously to $N_{\Sigma} \times [0,1]$.
\end{thmC}
This is slightly different than the set-up considered above, but this result is equivalent to the description of $\bS^1$-invariant, toric psh metrics on the hybrid space associated to the polarized degeneration $(Z \times \D^*, p_1^*L)$, which will be sufficient for our purposes.
\\ \textbf{Notation and conventions.} All rings are assumed to be unitary and commutative.
\\Throughout this text, whenever we say that $X \xrightarrow{\pi} \D^*$ is a degeneration of complex manifolds, we mean that $X$ is a smooth complex manifold and $\pi$ a holomorphic submersion (which will often be omitted from notation). We will furthermore always assume that the degeneration is meromorphic at $0$, i.e. that there exists a normal complex analytic space $\X \xrightarrow{\pi}  \D$ such that $\X_{| \D^*} = X$.
\\ \textbf{Organization of the paper.} In section \ref{sec Berko}, we recall various facts on Berkovich spaces and hybrid spaces, and pluripotential theory thereon. Section \ref{sec toric metrics} is devoted to the study of toric metrics on hybrid spaces, and the proof of theorem C. In section \ref{sec SYZ}, we discuss the SYZ heuristic and define a class of non-archimedean torus fibrations, as well as a strong comparison property for them.In section \ref{sec syz super}, we describe Yamamoto's tropical contraction in the hypersurface case, and study various models of our degeneration, providing a proof of Theorem A. Finally, in section \ref{sec Fermat}, we focus on the Fermat degeneration of hypersurfaces and prove Theorem B, and prove that the generic SYZ fibrations on the complex fibers converge in the hybrid topology to the retraction from theorem A.
\\ \textbf{Acknowledgements.} I am grateful to Sébastien Boucksom, Antoine Ducros and Yang Li for their comments on the first version of this paper. I also thank Enrica Mazzon and Yuto Yamamoto for helpful communications.
\section{Non-archimedean geometry} \label{sec Berko}
Let $K$ be a complete valued field, and $X/K$ a separated scheme of finite type. The Berkovich analytification $X^{\an}$ can be defined as the set of couples $x=(\xi, v_x)$, where $\xi = \xi_x$ is a scheme-theoretic point of $X$ and $v_x$ is a real valuation on the function field of $\overline{\xi}_x$, extending the valuation on $K$. 
\\The set $X^{\an}$ is endowed with the coarsest topology such that the maps $f \mapsto \lvert f \rvert_x$ are continuous on $U^{\an}$ whenever $f \in \gO_X(U)$, as well as the map $x \mapsto \xi_x$ from $X^{\an}$ to $X^{\sch}$. The topological space $X^{\an}$ satisfies nice properties: it is Hausdorff, locally path-connected, locally contractible, and is compact if and only $X/K$ is proper.
\\In the sequel, the cases relevant to us will be the following: either $K=\C$ endowed with the Euclidean absolute value, either $K$ is a discretely valued (hence non-archimedean) field of equicharacteristic zero. In the former case, the Gelfand-Mazur theorem ensures that the Berkovich analytification $X^{\an}$ of $X$ is nothing but the associated complex analytic space, that we denote by $X^{\hol}$, so that we will now focus on the latter.
\subsection{Models and Berkovich skeleta}\label{section DVF}
Let $K$ be a complete, discretely-valued field of equicharacteristic zero, with valuation $v$; by Cohen's structure theorem we may assume that $K =k((t))$ is the field of Laurent series over its residue field, endowed with the valuation $v= \ord_0$. We denote by $R= k[[t]]$ its valuation ring.
\\Throughout this section, we let $X$ be a separated $K$-scheme of finite type, and write $n = \dim(X)$. We will call \emph{model} of $X$ a flat, separated $R$-scheme $\X$, together with an isomorphism of $K$-schemes $\X \times_{R} K \simeq X$. We will denote by $\X_0 := \X \times_{R} k$ its special fiber and by $\Div_0(\X)$ the group of Weil divisors on $\X$ supported on the special fiber.
\\Given a model $\X$ of $X$ and writing $\X_0 = \sum_{i \in I} a_i D_i$ as the sum of its irreducible components, we say $Y \subset \X_0$ is a \emph{stratum} if there exists $J \subset I$ such that $Y$ is a (nonempty) connected component of $D_J := \cap_{j \in J} D_j$.
\\If $\X$, $\X'$ are two models of $X$, we will say that $\X'$ dominates $\X$ if there exists a (necessarily unique) $R$-morphism $f: \X' \fl \X$ whose base change to $K$ induces the identity on $X$. 
\\Assume that $X/K$ is proper, and let $\X/R$ be a proper model of $X$. By the valuative criterion of properness, any point $x \in X^{\an}$ admits a center $c_\X(x)$ on $\X$ (see \cite[§2.2.2]{MN}), which is furthermore contained in $\X_0$ since $v_x(t) =1$. The map $c_{\X} : X^{\an} \fl \X_0$ is anticontinuous, which means that the preimage of a closed subset of $\X_0$ by $c_{\X}$ is open in $X^{\an}$.
\\In the case where $X/K$ is smooth, we say a model $\X/R$ has simple normal crossing singularities if $\X/R$ is smooth, and the special fiber $\X_0$ is a simple normal crossing divisor inside $\X$. Such models always exists when $X/K$ is projective, by Hironaka's theorem on resolution of singularities; more precisely, any model $\X/R$ can be dominated by an snc model.
For reasons that will appear more clearly later on, we will sometimes need to work with a broader class of models:
\begin{defn} \label{def dlt}
Let $\X /R$ be a model of $X$. We say that $\X$ is a \emph{dlt} (divisorially log terminal) model of $X$ if the following conditions hold:
\begin{itemize}
\itemsep0em
    \item[-] the pair $(\X, \X_{0, \red})$ is log canonical in the sense of the Minimal Model Program (see \cite{KM});
    \item[-] the pair $(\X, \X_{0, \red})$ is simple normal crossing at the generic points of log canonical centers of $(\X, \X_{0,\red})$.
\end{itemize}
Moreover, a dlt model $\X$ is \emph{good} if each irreducible component of $\X_{0,\red}$ is $\Q$-Cartier.
\end{defn}
We will not give a precise definition of log canonical centers here, and refer the reader to \cite{KM}. However, if $\X$ is a dlt model and $X$ is defined over an algebraic curve - which is the most relevant case for applications - then the log canonical centers of $(\X, \X_{0, \red})$ are precisely the strata of $\X_0$ by \cite[4.16]{Kollar2013}, so that a dlt model $\X$ is simple normal crossing at the generic points of the strata of $\X_0$.
\\To every dlt model $\X$ of $X$, with special fiber $\X_0 = \sum_{i \in I} a_i D_i$, we can associate a cell complex that encodes the combinatorics of the intersections of the irreducible components of $\X_0$:
\begin{defn} \label{def dual cpx}
Let $\X$ be a dlt model of $X$. 
To each stratum $Y$ of $\X_0$ which is a connected component of $D_J$, we associate a simplex:
$$ \tau_Y = \{ w \in \R_{\geqslant 0}^{|J|} \, | \sum_{j \in J} a_j w_j =1 \}.$$
The cell complex $\mathcal{D}(\X_0)$ is defined as the union of the above $\tau_Y$, with the following incidence relations: $\tau_Y$ is a face of $\tau_{Y'}$ if and only if $Y' \subset Y$.
\end{defn}
Given any dlt model $\X$ of $X$ over $R$, there exists a natural embedding $i_{\X}$ of the dual complex $\mathcal{D}(\X_0)$ into $X^{\an}$, given as follows. 
The vertices $v_i$ of $\mathcal{D}(\X_0)$ are in one-to-one correspondence with irreducible components $D_i$ of the special fiber $\X_0 = \sum_{i \in I} a_i D_i$, so that we set $$i_{\X}(v_{i}) = v_{D_i} := a^{-1}_i \ord_{D_i},$$ where the valuation $\ord_{D_i}$ associates to a meromorphic function $f \in \mathcal{K}(X) \simeq \mathcal{K}(\X)$ its vanishing order along $D_i$ - the normalisation by $a^{-1}_i$ ensuring that $v_{D_i}(t) = 1$. The higher-dimensional faces are embedded using the following:
\begin{prop}{\cite[Proposition 2.4.4]{MN}} \label{prop casi mono}
Let $Y \subset \X_0$ be a stratum which is a connected component of $D_J$ for a $J \subset I$, with generic point $\eta$, and furthermore fix a local equation $z_j \in \gO_{\X,\eta}$ for $D_j$, for any $j \in J$. 
\\For any $w \in \tau_Y = \{ w \in \R^{|J|}_{\geqslant 0} \,| \sum_{j \in J} a_j w_j =1 \}$, there exists a unique valuation $v_w$ on $\mathcal{K}(X)$ on such that for every $f \in \gO_{\X,\eta}$, with expansion $f = \sum_{\beta \in \N^{|J|}} c_{\beta} z^{\beta}$ (with $c_{\beta}$ either zero or unit), we have:
$$v_w(f) = \min \{ \langle w , \beta \rangle \,| \beta \in \N^{|J|},  c_{\beta} \neq 0 \},$$
where $\langle \; , \; \rangle$ is the usual scalar product on $\R^{|J|}$.
The above valuation is called the quasi-monomial valuation associated with the data $(Y, w)$. 
\end{prop}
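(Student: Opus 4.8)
The plan is to recognize $v_w$ as the \emph{monomial valuation} attached to the regular system of parameters $(z_j)_{j\in J}$ together with the weight vector $w$. First I would record the local structure at $\eta$. Since $\X$ is dlt, it is simple normal crossing at the generic points of its strata, hence at $\eta$; as the stratum $Y$ is, near $\eta$, the transverse intersection $D_J$ of codimension $|J|$, the local ring $\mathcal{O}_{\X,\eta}$ is regular of dimension $|J|$, and the $|J|$ chosen local equations $z_j$, $j\in J$, therefore form a regular system of parameters, so that $\mathfrak{m}_\eta=(z_j)_{j\in J}$. Passing to the completion and invoking Cohen's structure theorem in equicharacteristic zero, we may fix an isomorphism $\widehat{\mathcal{O}}_{\X,\eta}\simeq \kappa(\eta)\llbracket (z_j)_{j\in J}\rrbracket$. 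Note that near $\eta$ one has $\operatorname{div}(t)=\sum_{j\in J}a_jD_j$, i.e. $t=u\prod_{j\in J}z_j^{a_j}$ for a unit $u$, which is the point where the normalization $w\in\tau_Y$, namely $\sum_j a_jw_j=1$, will come in.

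Next I would construct the valuation. For $\lambda$ in the finitely generated subgroup $\Gamma:=\sum_{j\in J}\Z w_j\subset\R$, let $F^\lambda\subseteq\widehat{\mathcal{O}}_{\X,\eta}$ be the closed ideal topologically generated by the monomials $z^\beta$ with $\langle w,\beta\rangle\geqslant\lambda$. This is a decreasing, separated, multiplicative filtration by ideals, and it depends only on the $z_j$, not on the auxiliary coefficient field. For $f\neq 0$ one sets
$$v_w(f):=\max\{\lambda\in\Gamma\ :\ f\in F^\lambda\}.$$
The maximum is attained and finite: the set $\{\langle w,\beta\rangle:\beta\in\N^{|J|}\}$ is a discrete subset of $\R_{\geqslant 0}$ (here $w\geqslant 0$, and $\sum_j a_jw_j=1$ forces some $w_j>0$), while $\bigcap_\lambda F^\lambda=0$ by Krull's intersection theorem. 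Writing $f=\sum_\beta c_\beta z^\beta$ with $c_\beta\in\kappa(\eta)$ (so each nonzero $c_\beta$ is a unit), one reads off $v_w(f)=\min\{\langle w,\beta\rangle : c_\beta\neq 0\}$, which is exactly the asserted formula. The ultrametric inequality $v_w(f+g)\geqslant\min(v_w(f),v_w(g))$ is immediate from $F^\bullet$ being a filtration by ideals, and multiplicativity $v_w(fg)=v_w(f)+v_w(g)$ follows from the fact that the associated graded ring $\operatorname{gr}_F\widehat{\mathcal{O}}_{\X,\eta}$ is an integral domain — it is the polynomial ring over $\kappa(\eta)$ in the variables $z_j$ with $w_j>0$, completed along those with $w_j=0$, graded by $\langle w,-\rangle$ — so that the initial form of a product is the (nonzero) product of the initial forms.

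Finally I would descend and conclude. Since $\mathcal{O}_{\X,\eta}$ is Noetherian it embeds in its completion, so $v_w$ restricts to an $\R$-valued valuation on $\mathcal{O}_{\X,\eta}$, nonnegative and positive on $\mathfrak{m}_\eta$; as $\mathcal{K}(X)=\mathcal{K}(\X)=\operatorname{Frac}(\mathcal{O}_{\X,\eta})$, it extends to $\mathcal{K}(X)$ by $v_w(f/g):=v_w(f)-v_w(g)$, well-defined by multiplicativity. Moreover $v_w(t)=v_w(u)+\sum_j a_jw_j=\sum_j a_jw_j=1$ since $u$ is a unit, so $v_w$ extends the valuation on $K$ and thus defines a point of $X^{\an}$. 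Uniqueness is then formal: any valuation on $\mathcal{K}(X)$ satisfying the stated formula on $\mathcal{O}_{\X,\eta}$ coincides with $v_w$ on $\mathcal{O}_{\X,\eta}$, hence on its fraction field. The one step demanding genuine care is the multiplicativity of $v_w$, i.e. the absence of zero divisors in $\operatorname{gr}_F\widehat{\mathcal{O}}_{\X,\eta}$; this requires a small separate argument when the $w_j$ are not rationally independent or some of them vanish, but in every case $\operatorname{gr}_F$ is a partially completed polynomial ring over the field $\kappa(\eta)$ and the claim reduces to that elementary fact. All remaining verifications — attainment of the maximum, independence of the coefficient field, the triangle inequality, and uniqueness — are routine.
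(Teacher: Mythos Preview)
The paper does not give its own proof of this proposition: it is stated with a citation to \cite[Proposition 2.4.4]{MN} and used as a black box. Your argument is correct and is essentially the classical construction of monomial valuations via the $\langle w,\cdot\rangle$-filtration on the completed local ring, which is the same approach as in the cited reference; the only delicate point, multiplicativity via integrality of the associated graded, you have identified and handled appropriately.
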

The map:
\begin{align*}
    i_\X: & \quad \mathcal{D}(\X_0) \rightarrow X^{\text{an}} \\
    & \quad \tau_Y \ni w \mapsto v_w
\end{align*}
is continuous and injective, and its image is called the \emph{skeleton} of $\X$, written as $\Sk(\X) \subset X^{\an}$.  By compactness of $\mathcal{D}(\X_0)$, the map $i_{\X}$ induces a homeomorphism between $\mathcal{D}(\X_0)$ and $\Sk(\X)$, so that we will sometimes abusively identify $\mathcal{D}(\X_0)$ with $\Sk(\X)$, which is therefore a cell complex of dimension at most $\dim X$.
\begin{defn}
Let $Y$ be a stratum of $\X_0$. We define $\Star(\tau_Y)$ as the union of open faces in $\Sk(\X)$ whose closure contains $\tau_Y$.
\end{defn}
Let us now assume that $\X$ is a \emph{good} dlt model of $X$, i.e. that the irreducible components of $\X_0$ are $\Q$-Cartier. We can now define a retraction for the inclusion $\Sk(\X) \subset X^{\an}$ as follows: for any $v \in X^{\an}$, there exists a minimal stratum $Y \subseteq  \cap_{j \in J} D_j$ of $\X_0$ such that the center $c_{\X}(v)$ of $v$ is contained in $Y$. Since $\X$ is good, for each $j \in J$, there exists $e_j >0$ such that $e_j D_j$ is Cartier at $c_{\X}(v)$. We now associate to $v$ the quasi-monomial valuation $\rho_{\X}(v)$ corresponding to the data $(Y, w)$ with $w_j = e_j^{-1}v(z_j)$, where $z_j$ is a local equation of $e_j D_j$ at the generic point of $c_{\X}(v)$. This should be seen as a monomial approximation of the valuation $v$ at the generic point of $Y$, with respect to the model $\X$ (which is snc there).
\begin{defn}
The above map $\rho_{\X} : X^{\an} \longrightarrow \Sk(\X)$ is the Berkovich retraction associated with the model $\X/R$.
\end{defn}
The Berkovich retraction is continuous and restricts to the identity on $\Sk(\X)$; it furthermore induces a homotopy equivalence between $X^{\an}$ and $\Sk(\X)$ by \cite{Th}, \cite{Ber2}.
\begin{rem} \label{retract local}
The Berkovich retraction is a local construction on $\X$, in the following sense. Let $Y$ be a stratum of $\X_0$, the formal completion $\fX_Y := \widehat{\X_{/Y}}$ of $\X$ along $Y$ is a formal $R$-scheme, and admits a generic fiber $\mathfrak{X}^{\eta}_Y$ in the sense of Berkovich, which is a Berkovich analytic space and can be explicitly described as the following open subset of $X^{\an}$:
$$\mathfrak{X}^{\eta}_Y = \{ x \in X^{\an} \lvert \, c_{\X}(v_x) \in Y \} = \rho^{-1}_{\X}(\Star(\tau_Y)).$$
The global construction from above can in fact be made on $\mathfrak{X}^{\eta}_Y$ \cite{Ber2}, so that there exists a retraction: 
$$\rho_Y : \mathfrak{X}^{\eta}_Y \fl \Star(\tau_Y), $$
which coincides with the restriction of the retraction $\rho_{\X}$. Thus, the restriction of $\rho_{\X}$ over $\Star(\tau_Y)$ only depends on the formal completion $\widehat{\X_{/Y}}$.
\end{rem}
\subsection{Hybrid spaces} \label{sec hyb}
Recall that a Banach ring $(A, \lvert \cdot \rvert)$ is a ring endowed with a submultiplicative norm, such that $A$ is a complete metric space with respect to the induced distance.
\begin{defn} Let $A$ be a Banach ring.
\\The Berkovich spectrum $\M(A)$ is the set whose points $x \in \M(A)$ are multiplicative semi-norms $\lvert \cdot \rvert_x : A \fl \R_{\ge 0}$ satisfying $\lvert \cdot \rvert_x \le \lvert \cdot \rvert$. 
\\It is equipped with the topology of pointwise convergence on $A$, which makes it into a non-empty Hausdorff compact topological space by \cite{Ber}.
\end{defn}
For instance, if $A=k$ is a complete valued field, then $\M(k)$ is reduced to the point $\lvert \cdot \rvert$. Any point $x \in \M(A)$ admits a residue field $\mathscr{H}(x)$, which is a complete valued field defined as follows. Since the semi-norm $\lvert \cdot \rvert_x$ is multiplicative, its kernel $\mathfrak{p}_x$ is a prime ideal of $A$, we write $\kappa(x) = \Frac(A/\mathfrak{p}_x)$ the algebraic residue field. The semi-norm $\lvert \cdot \rvert_x$ descends as an absolute value to $\kappa(x)$, and we denote by $\mathscr{H}(x)$ the associated completion.
\\If $X$ is a scheme of finite type over $A$, one can define an \emph{analytification functor} $X \mapsto X^{\text{an}}$ from the category of $A$-schemes of finite type to the category of $A$-analytic spaces, constructed in \cite{LP}. Any $A$-analytic space comes with a sheaf of analytic functions, as defined in \cite[def. 1.5.3]{Ber}. As in the case where $A$ is a valued field, the space $X^{\an}$ satisfies nice topological properties: if $X/A$ is separated, then $X^{\an}$ is Hausdorff; and $X^{\an}$ is compact if and only $X/A$ is proper.
\begin{defn}
Let $\C^{\hyb}$ be the Banach ring obtained by equipping the field $\C$ with the norm $\lvert \cdot \rvert_{\hyb}$, defined for non-zero $z \in \C$ by:
$$ \lvert z \rvert_{\hyb} = \max \{ 1 , \lvert z \rvert \}.$$
\end{defn}
One can show \cite[ex. 1.1.15]{LP} that the elements of $\M(\C^{\text{hyb}})$ are of the form $\lvert \cdot \rvert^{\lambda}$, for $\lambda \in [0,1]$, where $\lvert \cdot \rvert^0 = \lvert \cdot \rvert_0$ denotes the trivial absolute value on $\C$. This yields a homeomorphism $\lambda : \M(\C^{\text{hyb}}) \xrightarrow{\sim} [0,1]$.
\\Thus, if $Z$ is a scheme of finite type over $\C$, its analytification with respect to $\lvert \cdot \rvert_{\text{hyb}}$, which we denote by $Z^{\text{hyb}}$, comes with a structure morphism $\pi : Z^{\text{hyb}} \fl [0,1]$. Moreover, for any $\lambda \in [0,1]$ the fiber $\pi^{-1}(\lambda)$ is canonically homeomorphic to the analytification of $Z$ with respect to the absolute value $\lvert \cdot \rvert^{\lambda}$ - notably, if $\lambda>0$, then $\pi^{-1}(\lambda)$ is homeomorphic to $Z^{\hol}$ by the Gelfand-Mazur theorem, and the analytic structure is the same up to a scaling factor. There is even a homeomorphism: 
$$p: \pi^{-1}((0,1]) \xrightarrow{\sim} (0,1] \times Z^{\hol},$$
compatible with the projections to $(0, 1]$.
\\On the other hand, the fiber $\pi^{-1}(0)$ is homeomorphic to the analytification $Z^{\text{an}}_0$ of $Z$ with respect to the trivial absolute value on $\C$. As a result, the space $Z^{\text{hyb}}$ allows us to see the rescalings of the complex variety $Z^{\hol}$ degenerate to its trivially-valued counterpart.
\\We now want to perform a similar construction for degenerations of complex varieties. Let $X \xrightarrow{\pi} \D^*$ be a quasi-projective, holomorphic family of $n$-dimensional complex manifolds, where $\D^* =\{ \lvert t \rvert <1 \}$ is the punctured unit disk in $\C$. This allows us to view (the base change of) $X$ as a quasi-projective scheme over the non-archimedean field $K = \C((t))$ of Laurent series, we will write $X^{\an}$ for the Berkovich analytification of $X$ with respect to the $t$-adic valuation on $K$.
\\We fix a radius $r \in (0,1)$, and consider the following Banach ring, which we call the \emph{hybrid ring}:
$$A_r = \{ f = \sum_{n \in \Z} a_n t^n \in K  \; / \; \lVert f \rVert_{\text{hyb}} := \sum_{n} \lVert a_n \rVert_{\text{hyb}} \; r^n < \infty \}.$$
\begin{lem}{(\cite[prop. A.4]{BJ})} \label{cercle hyb}
\\The map $\tau: t \mapsto \lvert \cdot \rvert_t$ defined by:
\[ \lvert f \rvert_t = \left\{ \begin{array}{ll} r^{\ord_0(f)} & \mbox{if $t =0$},\\
 r^{\log \lvert f(t) \rvert/ \log \rvert t \rvert} & \mbox{if $t \neq 0$}
\end{array} \right. \]
\\for $f \in A_r$ induces a homeomorphism from $\bar{\D}_r$ to $\M(A_r)$.
\end{lem}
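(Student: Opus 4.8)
The plan is to show that $\tau$ is a continuous bijection from the compact space $\bar{\D}_r$ onto the Hausdorff space $\M(A_r)$; by general topology this already forces $\tau$ to be a homeomorphism. First I would check that the recipe produces, for each $t \in \bar{\D}_r$, a bounded multiplicative seminorm $|\cdot|_t$ on $A_r$. For $t = 0$ this is the $\ord_0$-valuation rescaled so that $|t|_0 = r$, and the bound $r^{\ord_0(f)} \le \|f\|_{\hyb}$ holds because the leading term of $f$ already contributes $\|a_{\ord_0(f)}\|_{\hyb}\, r^{\ord_0(f)} \ge r^{\ord_0(f)}$ to $\|f\|_{\hyb}$. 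For $t \neq 0$, set $c = \log r / \log|t| \in (0,1]$; then $f(t) = \sum_n a_n t^n$ converges absolutely for $0 < |t| \le r$ and $|f|_t = |f(t)|^{c}$, so multiplicativity is immediate and the triangle inequality follows from subadditivity of $s \mapsto s^{c}$ on $\R_{\ge 0}$. The one real computation is boundedness:
\[ |f(t)|^{c} \;\le\; \Big(\sum_n |a_n|\,|t|^{n}\Big)^{c} \;\le\; \sum_n |a_n|^{c}\,|t|^{nc} \;=\; \sum_n |a_n|^{c}\, r^{n} \;\le\; \sum_n \max\{1,|a_n|\}\, r^{n} \;=\; \|f\|_{\hyb}, \]
using the identity $|t|^{nc} = e^{nc\log|t|} = e^{n\log r} = r^{n}$ and the inequality $s^{c} \le \max\{1,s\}$ valid for $c \le 1$.

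Next I would establish continuity of $\tau$. As $\M(A_r)$ carries the topology of pointwise convergence on $A_r$, it is enough that $t \mapsto |f|_t$ be continuous on $\bar{\D}_r$ for each fixed $f$. Away from $t = 0$ this is a composition of continuous maps into $[0, \|f\|_{\hyb}]$ (taking the value $0$ at the zeros of $f$), while at $t = 0$, writing $m = \ord_0(f)$ one has $f(t) = a_m t^{m}(1 + o(1))$ as $t \to 0$, hence $\log|f(t)|/\log|t| \to m$ and $|f|_t \to r^{m} = |f|_0$. Injectivity is then easy: for $x = |\cdot|_t$, the function $\C \to \R_{\ge 0}$, $a \mapsto |t - a|_x$, vanishes precisely at $a = t$ when $t \neq 0$, and when $t = 0$ equals $r$ at $a = 0$ and $1$ elsewhere, so in all cases $t$ is its unique minimizer and can be recovered from $x$.

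The hard part is surjectivity. Let $x \in \M(A_r)$. Since $t$ is a unit of $A_r$, the relations $|t|_x\, |t^{-1}|_x = 1$, $|t|_x \le \|t\|_{\hyb} = r$ and $|t^{-1}|_x \le \|t^{-1}\|_{\hyb} = r^{-1}$ force $|t|_x = r$. Moreover $\C[t,t^{-1}]$ is dense in $A_r$ (truncation of the Laurent expansion converges in $\|\cdot\|_{\hyb}$), so $|\cdot|_x$ is the continuous extension of its restriction to $\C[t,t^{-1}]$, which is itself determined by its restriction to $\C[t]$ together with the value $|t|_x = r$. Now $|\cdot|_x|_{\C}$ is a bounded multiplicative seminorm on $\C^{\hyb}$, hence $|\cdot|_x|_{\C} = |\cdot|^{\lambda}$ for a unique $\lambda \in [0,1]$ by \cite[ex. 1.1.15]{LP}, and the kernel $\mathfrak{q}$ of the seminorm $|\cdot|_x|_{\C[t]}$ is a prime ideal of $\C[t]$, so either $\mathfrak{q} = (t - a)$ for some $a \in \C$, or $\mathfrak{q} = (0)$. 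In the first case $|P|_x = |P(a)|^{\lambda}$ for $P \in \C[t]$; the constraint $|t|_x = |a|^{\lambda} = r$ then forces $\lambda > 0$ and $|a| = r^{1/\lambda} \le r$, and since $\log r/\log|a| = \lambda$ this says exactly that $|\cdot|_x = |\cdot|_a$ with $a \in \bar{\D}_r$. In the second case $|\cdot|_x$ extends to an absolute value on $\C(t)$ restricting to $|\cdot|^{\lambda}$ on $\C$: if $\lambda > 0$ this absolute value is archimedean, so by Ostrowski's theorem its completion is $\C$ with the standard absolute value raised to some power, and this yields a field embedding $\C(t) \hookrightarrow \C$ whose restriction to $\C$ is a continuous endomorphism of $\C$, hence surjective --- forcing $\C(t) \cong \C$, which is absurd. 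Therefore $\lambda = 0$: the seminorm $|\cdot|_x$ is trivial on $\C^{\times}$ and defines a real valuation on $\C(t)/\C$ with $|t|_x = r \in (0,1)$, and among such valuations the only one is the rescaled $\ord_0$, i.e. $|\cdot|_x = |\cdot|_0$. In every case $x$ lies in the image of $\tau$.

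Finally, being a continuous bijection from the compact Hausdorff space $\bar{\D}_r$ onto the Hausdorff space $\M(A_r)$, $\tau$ is a homeomorphism. I expect the main obstacle to be exactly this surjectivity step: identifying an arbitrary bounded multiplicative seminorm on $A_r$ requires the density reduction to $\C[t]$ and then two structural inputs --- that $\C(t)$ admits no archimedean absolute value compatible with $\C$, and that over the trivially valued field $\C$ the only valuation of $\C(t)/\C$ with $|t| = r$ is the rescaled order of vanishing at the origin --- which together pin down the two cases.
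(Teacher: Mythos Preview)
The paper does not supply its own proof of this lemma; it merely cites \cite[prop.~A.4]{BJ}. Your argument is self-contained and essentially correct: the verification that each $|\cdot|_t$ is a bounded multiplicative seminorm, the continuity and injectivity arguments, and the reduction of surjectivity to classifying seminorms on $\C[t]$ via density are all fine.

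There is one genuine slip in the archimedean subcase of surjectivity. You conclude with ``forcing $\C(t)\cong\C$, which is absurd'' --- but as abstract fields $\C(t)$ and $\C$ \emph{are} isomorphic (both have transcendence degree $2^{\aleph_0}$ over $\Q$), so this is not a contradiction as stated. The correct contradiction is sharper and already implicit in what you wrote: once the restriction $\C\to\C$ is surjective, the image of $t$ under the embedding $\C(t)\hookrightarrow\C$ equals the image of some constant $c\in\C$, and injectivity then forces $t=c$ in $\C(t)$, which is impossible since $t$ is transcendental over $\C$. Equivalently, you may invoke Gelfand--Mazur directly: a field carrying an archimedean absolute value whose restriction to $\C$ is $|\cdot|^\lambda$ with $\lambda>0$ must equal $\C$ itself, so no such absolute value exists on the strictly larger field $\C(t)$. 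Either formulation repairs the gap; just avoid the bare claim that $\C(t)\not\cong\C$.
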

The Berkovich spectrum $\M(A_r)$ is called the hybrid circle and denoted by $C^{\hyb}(r)$, and the above lemma identifies the hybrid circle of radius $r$ with the closed disk of radius $r$. In particular, up to scaling factors, the image of $0 \in \bar{\D}_r$ is the $t$-adic absolute value on $A_r \subset \C((t))$, while $\log \lvert \cdot \rvert_t$ is the Euclidean norm of the evaluation of $f$ at $t$.
As a result, the Berkovich analytification $X^{\hyb} := (X_{A_r})^{\an}$ of the base change $X_{A_r}$ fibers over $\bar{\D}_r$, and allows us to see the complex manifold $X$ degenerate to its non-archimedean analytification, as a consequence of the following:
\begin{prop}{\cite[thm. 1.2]{Fav}} \label{topo hyb}
Let $X$ be a degeneration of complex manifolds, $X_{A_r}$ the associated $A_r$-scheme, and denote the associated hybrid space by $\pi_{\hyb} : X_r^{\hyb} \fl C^{\hyb}(r)$. Then:
\begin{itemize}
\item $\pi_{\hyb}^{-1}(0)$ can be canonically identified with $X^{\an}$,
\item there exists a homeomorphism $\beta : X_{ | \D^*_r} \xrightarrow{\sim} \pi_{\hyb}^{-1}(\tau(\D^*_r))$, satisfying $\pi_{\hyb} \circ \beta = \tau \circ \pi$,
\item if $\phi$ is a rational function on $X_{A_r}$, then $\lvert \phi (\beta(z)) \rvert = \lvert \phi (z) \rvert^{\frac{\log r}{\log \lvert t \rvert}}$ for $z$ not in the indeterminacy locus of $\phi$.
\end{itemize}
\end{prop}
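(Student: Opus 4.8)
The plan is to deduce all three assertions from the general description of the fibers of the structure map of a Berkovich analytic space over a Banach ring. Recall that for a scheme $Y$ of finite type over a Banach ring $A$, with analytification $\pi : Y^{\an} \fl \M(A)$, and for a point $x \in \M(A)$ with completed residue field $\mathscr{H}(x)$, there is a canonical homeomorphism $\pi^{-1}(x) \simeq (Y \times_A \mathscr{H}(x))^{\an}$, the right-hand side being the Berkovich analytification over the complete valued field $\mathscr{H}(x)$; this identification is moreover compatible with the analytic structure sheaves, so that it matches evaluation of regular functions with the corresponding seminorms (see \cite{Ber}, \cite{LP}). Everything then reduces to computing the residue fields of $A_r$ at the points of $\M(A_r) \simeq \bar{\D}_r$ described in lemma \ref{cercle hyb}, and to tracking these identifications.

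First I would treat $\pi_{\hyb}^{-1}(0)$. At the point $0$ the associated seminorm is $f \mapsto r^{\ord_0(f)}$, whose kernel is zero since a nonzero Laurent series has finite order; as $A_r$ contains $\C[t, t^{-1}]$, which is dense in $K = \C((t))$ and already complete for $r^{\ord_0}$, we obtain $\mathscr{H}(0) \simeq K$, up to the harmless rescaling $\lvert t \rvert = r$ of the absolute value. Since $X_{A_r} \times_{A_r} \mathscr{H}(0) = X$ as a $K$-scheme, the general principle yields $\pi_{\hyb}^{-1}(0) \simeq X^{\an}$, and this is canonical: rescaling the absolute value by a fixed exponent alters neither the underlying set of $X^{\an}$ --- a set of pairs consisting of a scheme point and a valuation --- nor its topology, which is the first bullet. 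For the archimedean part, fix $t \in \D^*_r$: the point $\tau(t)$ is the seminorm $f \mapsto \lvert f(t) \rvert^{\alpha_t}$ with $\alpha_t = \log r / \log \lvert t \rvert \in (0,1)$, its kernel is the maximal ideal of functions vanishing at $t$, so $\mathscr{H}(\tau(t)) \simeq (\C, \lvert \cdot \rvert^{\alpha_t})$ and $X_{A_r} \times_{A_r, \mathrm{ev}_t} \C = X_t$. By Gelfand--Mazur the analytification of $X_t$ over $(\C, \lvert \cdot \rvert^{\alpha_t})$ is homeomorphic to $X_t^{\hol}$, the identification sending $z \in X_t^{\hol}$ to the seminorm $\phi \mapsto \lvert \phi(z) \rvert^{\alpha_t}$ --- the exponent only rescales seminorms and leaves the topology unchanged. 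Letting $t$ vary and composing with $\tau$ and the projection $X_{|\D^*_r} \fl \D^*_r$ defines the fiberwise bijection $\beta$; the identity $\lvert \phi(\beta(z)) \rvert = \lvert \phi(z) \rvert^{\log r / \log \lvert t \rvert}$ is then built into the construction, which is the third bullet, and $\pi_{\hyb} \circ \beta = \tau \circ \pi$ holds by construction.

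The remaining and, I expect, most delicate point is to check that $\beta$ is a homeomorphism of $X_{|\D^*_r}$ onto the open subset $\pi_{\hyb}^{-1}(\tau(\D^*_r))$, i.e. that the fiberwise Gelfand--Mazur identifications glue continuously in both directions. Continuity of $\beta$ is verified by pulling back the generators of the target topology: $z \mapsto \lvert \phi(\beta(z)) \rvert = \lvert \phi(z) \rvert^{\log r / \log \lvert t(z) \rvert}$ is continuous because $\lvert \phi(z) \rvert$ and $\lvert t(z) \rvert$ are, because on each closed sub-annulus of $\D^*_r$ the exponent stays in a compact subinterval of $(0,\infty)$, and because $(s,\alpha) \mapsto s^{\alpha}$ is continuous there; likewise $z \mapsto \xi_{\beta(z)}$ and $z \mapsto \pi_{\hyb}(\beta(z))$ are continuous. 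For the inverse one recovers $t = \tau^{-1}(\pi_{\hyb}(x))$ continuously and then the point of the fiber $X_t^{\hol}$, and one must see that this depends continuously on $x$: when $X \fl \D^*$ is proper this follows because $\beta$ is then proper and a proper continuous bijection of locally compact Hausdorff spaces is a homeomorphism, while in the quasi-projective case one argues over affine charts, or verifies directly the relative form of the statement that the archimedean part of the hybrid space of a complex family is that family. The genuinely subtle ingredient underneath is the continuity in the parameter $x$ of the fiber identifications of the first paragraph; this is exactly what excludes any pathology at the interface between $\tau(\D^*_r)$ and the fiber $\{0\}$, even though that interface does not itself enter the present statement.
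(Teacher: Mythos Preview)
The paper does not give its own proof of this proposition: it is quoted directly from \cite[thm.~1.2]{Fav} (see also \cite[Appendix~A]{BJ}), so there is nothing in the present paper to compare your argument against.

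Your approach is the standard one and is essentially correct. Two small comments. First, your sentence ``$A_r$ contains $\C[t,t^{-1}]$, which is dense in $K=\C((t))$ and already complete for $r^{\ord_0}$'' is garbled: $\C[t,t^{-1}]$ is of course not complete; what you want is that the completion of $A_r$ (equivalently, of $\C[t,t^{-1}]$) for the absolute value $r^{\ord_0}$ is $K$, which is clear. Second, your treatment of the continuity of $\beta^{-1}$ is only fully justified in the proper case via the ``proper bijection'' trick; in the general quasi-projective case one does need to argue locally on affine charts as you suggest, and this is exactly what is done in the references above. With these caveats your sketch is a faithful outline of the proof in \cite{Fav}.
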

\subsection{Pluripotential theory}
Let $K$ be a complete valued field and $X$ a projective scheme over $K$, endowed with a semi-ample line bundle $L$. By the Gelfand-Mazur theorem, either $K = \R$ or $\C$ with (a power of) the Euclidean absolute value, either $K$ is non-archimedean.
\\One defines a continuous metric $\phi$ on the Berkovich analytification of $(X, L)$ as an operation that associates to any local section $s \in H^0(U, L)$, where $U \subset X$ is a Zariski open, a continuous function:
$$\lVert s \rVert_{\phi} : U^{\an} \fl \R,$$
compatible with restriction of sections and such that $\lVert fs \rVert_{\phi} = \lvert f \rvert \lVert s \rVert_{\phi}$ whenever $f \in \gO_X(U)$. We then want to define a class of (continuous) plurisubharmonic metrics on $L$ as in the complex case, where a continuous metric $\phi$ on $L$ is said to be psh if its curvature current $dd^c \phi := \frac{i}{2\pi} \ddbar \phi$ is semi-positive. While the work of Chambert-Loir - Ducros \cite{CLD} provides a notion of semi-positive currents on Berkovich spaces, and hence a local definition of psh metrics, we will rather adopt a global point of view here, following \cite{BFJ2}, \cite{BJv4}, \cite{PShyb}.
\\Given an integer $m \ge 1$, a family $(s_0,...,s_N)$ of global sections of $mL$ without common zeroes, and real constants $c_0,...,c_N$, the metric:
$$\phi = m^{-1} \max_{i=0,...,N} (\log \lvert s_i \rvert +c_i)$$
is called a \emph{tropical Fubini-Study} metric on $L$. The class of continuous psh metrics is then defined as the closure of the class of Fubini-Study metrics for uniform convergence:
\begin{defn} \label{def psh} A continuous metric $\phi$ on $L$ is plurisubharmonic (psh for short) if it can be written as the uniform limit of a net of tropical Fubini-Study metrics on $L$. We write $\CPSH(X, L)$ for the set of continuous plurisubharmonic metrics on $L$.
\end{defn}
For instance, if $K = \C$ endowed with the Euclidean absolute value and $X$ is a normal complex variety, then it follows mostly from Demailly's regularization theorem \cite{Dem} that this is consistent with the usual definition of a psh metric on $L$, see \cite[thm. 2.11]{PShyb}. Similarly, if $K$ is a complete non-archimedean field, then the above definition is consistent with the local Chambert-Loir - Ducros definition of psh metrics on $L$ by \cite[thm. 7.14]{BE}.
\\Let $L_1,...,L_n$ be $n$ semi-ample line bundles on $X$, where $n = \dim X$. One of the central objects of pluripotential theory is the Monge-Ampère operator, associating to a tuple $(\phi_1,..., \phi_n)$ of continuous psh metrics on the $L_i$ a measure $\MA(\phi_1,..., \phi_n)$. Assume for a moment that $K = \C$ and that $X$ is smooth, by the seminal work of Bedford-Taylor \cite{BT} the mixed Monge-Ampère pairing:
$$(\phi_1,...,\phi_n) \mapsto dd^c \phi_1 \wedge ... \wedge dd^c \phi_n,$$
\emph{a priori} defined when each $\phi_i$ is a smooth Hermitian metric on $L_i$, extends in a unique way to psh, locally bounded metrics - hence in particular to continuous psh metrics.
\\We now assume $K$ is non-archimedean: the role of smooth metrics is now played by tropical Fubini-Study metrics. Given a tuple $(\phi_1,..., \phi_n)$ of tropical Fubini-Study metrics on the $L_i$, one can define their mixed Monge-Ampère measure $\MA(\phi_1,...,\phi_n)$, with similar properties as in the complex analytic case - the pairing $(\phi_1,...,\phi_n) \mapsto \MA(\phi_1,...,\phi_n)$ is for instance symmetric and multilinear, and the total mass:
$$\int_{X^{\an}} \MA(\phi_1,...,\phi_n) = (L_1 \cdot ... \cdot L_n).$$
The Monge-Ampère measure can then be extended to more general metrics:
\begin{theo}{\cite{CL}, \cite[thm. 8.4]{BE}} 
\\Let $K$ be a complete valued field, $X/K$ an $n$-dimensional projective scheme and $L_1$,...,$L_n$ line bundles on $X$. Then the Monge-Ampère operator:
$$(\phi_1,..., \phi_n) \mapsto \MA(\phi_1,...,\phi_n)$$
admits a unique extension to continuous psh metrics on the $L_i$. The extension is furthermore continuous with respect to the topology of uniform convergence and the weak topology of Radon measures.
\end{theo}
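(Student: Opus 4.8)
The plan is to follow the classical Bedford--Taylor strategy, transposed to the Berkovich setting via the algebraic (Chambert-Loir) description of Monge--Ampère measures of tropical Fubini--Study metrics on models. We may and do assume each $L_i$ semi-ample (otherwise it carries no tropical Fubini--Study metric and there is nothing to extend). Uniqueness of the extension is then immediate: by Definition \ref{def psh} the tropical Fubini--Study psh metrics are dense in $\CPSH(X,L_i)$ for uniform convergence, so two extensions that are both continuous from uniform convergence to the weak topology must agree. The whole content is thus to construct the extension and prove its continuity, and the key ingredient is a Chern--Levine--Nirenberg-type estimate comparing the Monge--Ampère measures of two tuples of nearby tropical Fubini--Study metrics.

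First I would record this quantitative estimate. Fix tropical Fubini--Study psh metrics $\phi_i,\phi_i'$ on $L_i$, and telescope the difference $\MA(\phi_1,\dots,\phi_n)-\MA(\phi_1',\dots,\phi_n')$ by changing one slot at a time; the $k$-th increment is $dd^c(\phi_k-\phi_k')\wedge T_k$, where $T_k$ is the positive mixed Monge--Ampère current built from the remaining semipositive metrics $\phi_1',\dots,\phi_{k-1}',\phi_{k+1},\dots,\phi_n$, whose total mass is the intersection number $(L_1\cdots \widehat{L_k}\cdots L_n)$. Testing against a model function $f$ — which we may write as a difference $f=\psi-\psi'$ of two tropical Fubini--Study metrics — the integration-by-parts identity for model metrics (equivalently, the symmetry of mixed Monge--Ampère, itself a consequence of the symmetry of intersection numbers on a common model) gives
\[
\int_{X^{\an}} f\,\big(dd^c(\phi_k-\phi_k')\wedge T_k\big)=\int_{X^{\an}} (\phi_k-\phi_k')\,\big(dd^c f\wedge T_k\big),
\]
and since $dd^c f\wedge T_k=dd^c\psi\wedge T_k-dd^c\psi'\wedge T_k$ is a difference of two positive measures of controlled mass, summing over $k$ yields
\[
\Big|\,\int_{X^{\an}} f\,\big(\MA(\phi_1,\dots,\phi_n)-\MA(\phi_1',\dots,\phi_n')\big)\Big|\ \le\ C(f)\sum_{i=1}^{n}\lVert \phi_i-\phi_i'\rVert_\infty,
\]
where $C(f)$ is an intersection number depending only on $f$ and on $L_1,\dots,L_n$.

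With the estimate in hand the construction of the extension is formal. Given continuous psh metrics $\phi_i$ and nets $\phi_i^{(j)}$ of tropical Fubini--Study metrics with $\phi_i^{(j)}\to\phi_i$ uniformly, the measures $\MA(\phi_1^{(j)},\dots,\phi_n^{(j)})$ are positive of fixed total mass $(L_1\cdots L_n)$ on the compact space $X^{\an}$, hence weakly relatively compact; the estimate shows their integrals against any fixed model function form a Cauchy net, so all weak cluster values agree on the space of model functions, which is dense in $C(X^{\an})$ by a classical density result. Therefore $\MA(\phi_1^{(j)},\dots,\phi_n^{(j)})$ converges weakly, and merging two approximating nets into one shows the limit does not depend on the chosen net; we define $\MA(\phi_1,\dots,\phi_n)$ to be this limit. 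The estimate above then extends to continuous psh metrics by density, and a diagonal argument upgrades it to the asserted continuity of the extension for uniform convergence of tuples of continuous psh metrics. Multilinearity, symmetry and the total-mass formula pass to the limit automatically.

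The main obstacle is the justification, at the level of tropical Fubini--Study metrics, of both the integration-by-parts formula and the Chern--Levine--Nirenberg bound: this is where one genuinely leaves soft functional analysis and must use the concrete realization of $\MA(\phi_1,\dots,\phi_n)$ for model metrics as a combination of intersection multiplicities of vertical divisors on a model dominating all the data, the symmetry and the mass computation both being instances of intersection theory on that model — and one must also verify that model functions form a dense subspace of $C(X^{\an})$ consisting of differences of tropical Fubini--Study metrics. Assembling these ingredients uniformly along the approximating net is the technical heart of the argument; everything else is the standard Bedford--Taylor limiting machinery.
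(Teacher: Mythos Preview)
The paper does not give its own proof of this theorem: it is stated as a cited result from \cite{CL} and \cite[thm.~8.4]{BE}, with no proof environment following it. There is therefore nothing in the paper to compare your proposal against.

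That said, your sketch is a reasonable outline of the standard argument in the cited references: define the measure on Fubini--Study (model) metrics via intersection theory, establish a Chern--Levine--Nirenberg-type bound via integration by parts on a common model, and extend by density using that model functions are dense in $\mathcal{C}^0(X^{\an})$. One point to watch: you assume each $L_i$ is semi-ample ``otherwise it carries no tropical Fubini--Study metric and there is nothing to extend'' --- this is a genuine hypothesis in the paper's setup (it only defines $\CPSH(X,L)$ for $L$ semi-ample), so you should state it rather than dismiss the other case as vacuous. Also, the density of model functions in $\mathcal{C}^0(X^{\an})$ and the integration-by-parts identity are nontrivial inputs (Gubler, Boucksom--Eriksson) that you correctly flag as the technical heart; in a full write-up you would need to cite or prove them rather than assert them.
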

Let $X$ be a smooth $n$-dimensional complex manifold, and $L$ an ample line bundle on $X$. Then Yau's celebrated solution to the Calabi conjecture \cite{Y} asserts that for any smooth volume form $\mu$ on $X$, normalized to have total mass $1$, there exists a unique (up to an additive constant) smooth, positive definite metric $\phi$ on $L$ such that:
$$(dd^c \phi)^n = (L^n) \mu.$$
\\Throughout the years, various generalizations of the above theorem in a more singular setting have appeared in the literature: let us simply mention Kołodziej's result \cite{Kolo}, that states that under the same assumptions on $(X, L)$, then the statement of the above theorem holds for a much wider range of probability measures on $X$ (for instance, measures $\mu$ with $L^p$-density for some $p>1$), when we don't require for the solution to be smooth - here the solution is a continuous psh metric $\phi \in \CPSH(X, L)$, and the equality $(dd^c \phi)^n = \mu$ is understood in the sense of \cite{BT}.
\\We now let $K= k((t))$ be a discretely-valued field of equicharacteristic zero. The following result can be understood as an analog of Kołodziej's result over $K$:
\begin{theo}{(\cite[thm. A]{BFJ1}, \cite{BGJKM})} \label{theo sol MA}
\\Let $(X, L)$ be a smooth polarized variety over $K$. Let $\mu$ be a probability measure on $X^{\an}$, supported on the skeleton of some snc $R$-model of $X$. Then there exists a unique (up to an additive constant) continuous, semi-positive metric $\phi$ on $L$ satisying the non-archimedean Monge-Ampère equation:
$$\MA(\phi) = \mu.$$
\end{theo}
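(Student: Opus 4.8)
The plan is to follow the variational strategy of Boucksom--Favre--Jonsson, transposing the Berman--Boucksom--Guedj--Zeriahi approach to the non-archimedean setting. Since $L$ is ample, $\CPSH(X,L)$ is non-empty; fix a continuous psh reference metric $\phi_0$ and normalize so that $(L^n)=1$. Working in the finite-energy class $\mathcal{E}^1(L)$ (the natural completion of $\CPSH(X,L)$ for the energy), introduce the Monge--Ampère energy $E(\phi) = \tfrac{1}{n+1}\sum_{j=0}^{n} \int_{X^{\an}} (\phi-\phi_0)\, \MA(\phi^{j},\phi_0^{n-j})$, which is concave, non-decreasing, and has Gateaux derivative $\langle E'(\phi),v\rangle = \int_{X^{\an}} v\, \MA(\phi)$. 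A metric with $\MA(\phi)=\mu$ is precisely a critical point of the functional $F_\mu(\phi) = E(\phi) - \int_{X^{\an}} (\phi-\phi_0)\, d\mu$, so the scheme is: (i) produce a maximizer of $F_\mu$; (ii) show a maximizer solves the equation; (iii) upgrade it to a continuous solution; (iv) prove uniqueness.

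For step (i) the hypothesis that $\mu$ is carried by the skeleton $\Sk(\X)$ of an snc model is essential. Via the Berkovich retraction $\rho_{\X}\colon X^{\an}\fl\Sk(\X)$ one has $\phi\circ\rho_{\X}\le\phi$ for psh $\phi$, with equality on $\Sk(\X)$; since $\mu$ lives on $\Sk(\X)$ this identifies $\int\phi\, d\mu$ with a linear functional factoring through restriction to the \emph{compact, finite-dimensional} polytopal complex $\Sk(\X)$, which both guarantees that $\mu$ does not charge the locus where finite-energy metrics lose integrability and yields the semicontinuity and coercivity estimates needed for the direct method. One checks $F_\mu$ is bounded above and coercive with respect to $J(\phi)=\int(\phi-\phi_0)\,\MA(\phi_0)-E(\phi)\ge 0$, and upper semicontinuous along suitably compact nets, hence admits a maximizer $\phi_{\max}\in\mathcal{E}^1(L)$.

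Step (ii) is the technical heart. One replaces $F_\mu$ by its composition with the psh envelope $P(u)=\sup\{\psi\in\mathrm{PSH}(X,L):\psi\le u\}$ and needs two inputs: first, differentiability of $E\circ P$, namely that $t\mapsto E\big(P(\phi_0+tu)\big)$ is differentiable at $0$ with derivative $\int_{X^{\an}} u\,\MA(P(\phi_0))$ for continuous $u$; second, the \emph{orthogonality property} $\int_{X^{\an}}(u-P(u))\,\MA(P(u))=0$, i.e. $\MA(P(u))$ is supported on the contact locus $\{P(u)=u\}$. Granting both, perturbing $\phi_{\max}$ by $\pm tv$ with $v$ continuous and letting $t\to 0^{+}$ forces $\int v\,\MA(\phi_{\max})=\int v\, d\mu$ for all continuous $v$, so $\MA(\phi_{\max})=\mu$. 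The orthogonality property is where equicharacteristic zero is used — it is established through Hironaka resolution of the pertinent models together with an asymptotic multiplier-ideal computation (Boucksom--Favre--Jonsson, reproved by differentiability-of-volumes techniques in Burgos Gil--Gubler--Jell--Künnemann--Martin) — and this is the step I expect to be the main obstacle.

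For step (iii), to see that $\phi_{\max}$ is genuinely continuous and not merely of finite energy, one again exploits that $\mu$ is supported on the fixed compact complex $\Sk(\X)$: a stability estimate controls $\lVert\phi-\phi'\rVert_{\infty}$ (or an energy distance) by a norm of $\mu-\mu'$ for measures living on $\Sk(\X)$, and one approximates $\mu$ uniformly by measures admitting explicit continuous solutions (for instance pushforwards by $\rho_{\X}$ of tame measures, or measures for which a tropical Fubini--Study solution built from model functions on $\X$ can be written down), passing to the uniform limit using the continuity of the Monge--Ampère operator recalled just above in the excerpt. Finally, step (iv): uniqueness up to an additive constant follows from the domination principle — if $\MA(\phi_1)=\MA(\phi_2)=\mu$, comparing on $\{\phi_1<\phi_2\}$ via $\MA(\max(\phi_1,\phi_2))\ge \mathbf{1}_{\{\phi_1\ge\phi_2\}}\MA(\phi_1)+\mathbf{1}_{\{\phi_1<\phi_2\}}\MA(\phi_2)$ together with the fact that $\mu$ charges no polar-type locus forces $\phi_1\le\phi_2$, and symmetrically $\phi_2\le\phi_1$ up to a constant fixed by strict concavity of $E$.
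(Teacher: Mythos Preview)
The paper does not supply a proof of this theorem: it is quoted as an external result, with the sentence immediately following the statement attributing it to \cite{BFJ1} (for $X$ defined over the function field of a curve) and to \cite{BGJKM} (for the general equicharacteristic-zero case). There is therefore no argument in the paper to compare your proposal against.

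That said, your sketch is a faithful outline of the variational method of those references: the energy functional $E$, the Ding-type functional $F_\mu$, maximization in the finite-energy class $\mathcal{E}^1$, the identification of orthogonality and differentiability of $E\circ P$ as the technical crux requiring resolution of singularities, and the continuity and uniqueness steps are all as in \cite{BFJ1}, with \cite{BGJKM} supplying the alternate route to orthogonality you allude to. One small slip worth flagging: the retraction inequality goes the other way, namely $\phi \le \phi\circ\rho_{\X}$ for $\phi$ psh (quasi-monomial valuations minimize among valuations with the same weights on local equations of $\X_0$, cf.\ prop.~\ref{prop casi mono}); this does not damage your argument, since all you actually use is that $\phi$ and $\phi\circ\rho_{\X}$ agree on $\Sk(\X)$, where $\mu$ lives.
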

The above theorem was proved when $X$ is defined over the function field of a curve over $k$ in \cite{BFJ1}, and then extended to varieties over non-archimedean fields of residual characteristic zero in \cite{BGJKM}.
\\Let now $X \fl \D^*$ a projective degeneration of complex manifolds, which we may view as a projective scheme over the Banach ring $A_r$ from the previous section. We may view the associated hybrid space $X^{\hyb}$ as the Berkovich analytification of the latter scheme, so that one can define tropical Fubini-Study metrics on $L^{\hyb}$ in a similar manner to the case of a field: those are continuous metrics of the form:
$$\phi = m^{-1} \max_{i=0,...,N} (\log \lvert s_i \rvert +c_i),$$
where $m \ge 1$, the global sections $s_0,...,s_N \in H^0(X, mL)$ have no common zeroes and $c_0,...,c_N$ are real constants. A continuous, plurisubharmonic metric on $L^{\hyb}$ is now by definition a continuous family of metrics on the fibers of $\pi_{\hyb}$, that can be written as the uniform limit of a net of tropical Fubini-Study metrics, see \cite[§2.4]{PShyb}. In particular, for any $t \in C^{\hyb}(r)$, the restriction $\phi_t$ of $\phi$ to the analytic space (over a field) $\pi^{-1}_{\hyb}(t)$ is a continuous plurisubharmonic metric, in the sense of definition \ref{def psh}.
\\The following theorem asserts than given a continuous, plurisubharmonic metric on $(X^{\hyb}, L^{\hyb})$, the associated family of Monge-Ampère measures is weakly continous on $X^{\hyb}$:
\begin{theo}{\cite[thm. 4.2]{Fav}, \cite[thm. 4.10]{PShyb}} \label{theo cont MA}
\\Let $(X, L) \fl \D^*$ be a polarized degeneration of complex manifolds, and $\phi$ a continuous psh metric on $L^{\hyb}$. Then the following convergence of measures:
$$(dd^c\phi_t)^n \xrightarrow[t \rightarrow 0]{} \MA(\phi_0)$$
holds on $X^{\hyb}$ in the weak sense.
\end{theo}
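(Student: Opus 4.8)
The plan is to follow the two classical steps of non-archimedean Monge–Ampère theory: reduce to a good class of ``smooth'' metrics by a stability argument that is uniform along the degeneration, and then treat that class by an explicit computation which is toric near the strata of the special fibre.

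First I would reduce to the case where $\phi$ becomes a \emph{model metric} after base change to a proper flat model $\X\fl\bar\D_r$ of $X$ with $\X_0$ simple normal crossing (such a model exists by the meromorphy assumption together with Hironaka): by definition of $\CPSH$, and after approximating the real shifts $c_i$ of tropical Fubini–Study metrics by rationals, $\phi$ is a uniform limit on $X^{\hyb}$ of metrics $\phi^{(j)}$ of this type. To pass from the model case to the general case one needs a \emph{uniform} (in $t\in C^{\hyb}(r)$) stability estimate: for fiberwise continuous psh metrics $\phi,\psi$ on $L_t$ and a test function $f$ on $X^{\hyb}$ which is a difference of tropical Fubini–Study metrics, telescoping
\[
\MA(\phi)-\MA(\psi)=\sum_{k=0}^{n-1} dd^c(\phi-\psi)\wedge(dd^c\phi)^k\wedge(dd^c\psi)^{n-1-k}
\]
on the fibre $\pi_{\hyb}^{-1}(t)$ and integrating by parts against $f$ bounds $\bigl|\int f(\MA(\phi)-\MA(\psi))\bigr|$ by $\lVert\phi-\psi\rVert_\infty$ times a sum of mixed Monge–Ampère masses of the form $\int\lvert dd^c f\rvert\wedge(dd^c\phi)^k\wedge(dd^c\psi)^{n-1-k}$. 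By Chern–Levine–Nirenberg-type inequalities — in their archimedean form for $t\neq0$, and in the Chambert-Loir–Ducros / Boucksom–Eriksson form for $t=0$ — these masses are controlled by intersection numbers on $X_t$, which are constant in the flat family; so the bound is uniform in $t$. Since differences of tropical Fubini–Study metrics form a sublattice of $C^0(X^{\hyb})$ containing the constants and separating points, they are dense by Stone–Weierstrass, and a three-$\varepsilon$ argument (approximate $\phi$ by some $\phi^{(j)}$ uniformly on all of $X^{\hyb}$, then let $t\to0$ for the fixed $\phi^{(j)}$) reduces the theorem to the case of a model metric.

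Next I would treat a model metric $\phi=\phi_{\X,\mathcal L}$, with $\mathcal L$ semiample: here $\MA(\phi_0)$ is the atomic measure $\sum_i c_i\,\delta_{v_{D_i}}$ on the vertices $v_{D_i}$ of $\Sk(\X)$ given by Chambert-Loir's intersection formula. The key point is that as $t\to0$ the hybrid topology (Proposition \ref{topo hyb}) collapses each fibre $X_t$ onto $\Sk(\X)$ via the asymptotics of the valuation data, which near a stratum $Y$ of $\X_0$ are precisely the monomial approximations of Proposition \ref{prop casi mono} (cf. Remark \ref{retract local}); so it suffices to show that the $(dd^c\phi_t)^n$-mass of the part of $X_t$ tropicalizing near a given face of $\Sk(\X)$ converges to the $\MA(\phi_0)$-mass of that face. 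This is a local statement, and near a stratum $Y\subseteq\cap_{j\in J}D_j$ the snc model $\X$ is toroidal — étale-locally a toric variety on which the projection to the disk is the monomial $\prod_{j\in J}z_j^{a_j}$ and the model metric is, up to a bounded error, a toric metric $\max_\alpha(\sum_{j\in J}\alpha_j\log\lvert z_j\rvert+c_\alpha)$. On the fibre $\{\prod_{j\in J}z_j^{a_j}=t\}$ one may then compute $(dd^c\phi_t)^n$ by pushing it forward along the moment map to the real Monge–Ampère measure of the associated convex piecewise-linear function, and pass to the limit $t\to0$ using Lemma \ref{cercle hyb} to read off how $\log\lvert z_j\rvert/\log\lvert t\rvert$ degenerates to the quasi-monomial coordinates; the outcome matches Chambert-Loir's weights exactly. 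Summing the local contributions via a partition of unity on $\X_0$ gives $(dd^c\phi_t)^n\to\MA(\phi_0)$.

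The main obstacle is this last local computation: identifying the $t\to0$ limit of the archimedean Monge–Ampère mass of a merely continuous, toric model metric on the collapsing fibres with the atomic non-archimedean measure. This is where the bridge between complex and non-archimedean pluripotential theory is genuinely crossed, and it amounts to a local version of a hybrid toric statement closely related to Theorem C. By comparison, the uniformity of the Chern–Levine–Nirenberg constants in the stability step and the density of Fubini–Study differences in $C^0(X^{\hyb})$ are routine, once the pluripotential formalism over the Banach ring $A_r$ is in place.
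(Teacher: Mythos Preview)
The paper does not give its own proof of this theorem: it is quoted from \cite[thm.~4.2]{Fav} and \cite[thm.~4.10]{PShyb}, so there is no in-paper argument to compare against. That said, your outline is essentially the strategy of those references---reduce to a dense class of ``algebraic'' metrics by a Chern--Levine--Nirenberg stability argument that is uniform over $C^{\hyb}(r)$, then handle that class by a direct computation---so your approach is the standard one rather than a genuinely new route.

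A couple of points are worth tightening. First, the reduction step does not literally require passing to \emph{model} metrics: tropical Fubini--Study metrics with arbitrary real constants $c_i$ already form the dense class you need, and for these the fibrewise Monge--Amp\`ere measure is computed by the Bedford--Taylor/Chambert-Loir formalism without further approximation; the ``rationalize the $c_i$'' step is unnecessary and slightly misleading. Second, your description of the local computation for a model metric is correct in spirit but glosses over the main difficulty, which you yourself flag: one must show that the archimedean Monge--Amp\`ere mass of a tubular neighbourhood of a stratum in $X_t$ converges to the corresponding Chambert-Loir weight. In \cite{Fav} this is done not by a bare toric computation but by reducing to the case where $(\X,\mathcal L)$ comes from a closed immersion into projective space and then using the explicit Fubini--Study-type formula together with the hybrid log-modulus map; the toroidal picture you sketch is morally equivalent but would need a careful treatment of the boundary terms and of the error introduced by the ``up to a bounded error'' clause, since bounded perturbations of the potential can \emph{a priori} move Monge--Amp\`ere mass. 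None of this is a fatal gap, but as written the second step is more of a heuristic than a proof.
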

\section{Hybrid toric metrics} \label{sec toric metrics}
Let $(Z, L)$ is a polarized toric variety over $\C$. In this section, we investigate toric metrics on $(Z^{\hyb}, L^{\hyb})$. When working over $\C$ equipped with either the archimedean or trivial absolute value, semi-positive toric metrics on $(Z^{\an}, L^{\an})$ are well-know to correspond to a certain class of convex functions on the vector space $N_{\R}$ where the fan of $Z$ lives. The main result of this section, theorem \ref{theo toric}, states that semi-positive metrics on $(Z^{\hyb}, L^{\hyb})$ that are \emph{fiberwise-toric}, correspond to families parametrized by $\M(\C^{\hyb}) \simeq [0,1]$ of convex functions on $N_{\R}$ as above, which vary in a convex way with respect to the coordinate $\lambda$ on the base.
\\Throughout this section, $N$ denotes a lattice of rank $n$, $M = \Hom(N, \Z)$ its dual and $N_{\R} = N \otimes_{\Z} \R$, $M_{\R} = M \otimes_{\Z} \R= \Hom_{\R} (N_{\R}, \R)$. We denote by $\langle \cdot, \cdot \rangle$ the duality pairing $M_{\R} \times N_{\R} \fl \R$.
\subsection{Convex geometry}
We say a function $\phi : N_{\R} \fl \R$ is piecewise-affine if there exists a rational polytopal subdivision $\mathcal{T}$ of $N_{\R}$ such that $\phi$ is affine on each face of the subdivision; it is said to be rational piecewise-affine if each affine piece of $\phi$ has rational slopes (note that we allow the constant term to be any real number). We will sometimes say $\phi$ is \emph{adapted} to the subdivision $\mathcal{T}$.
\\A (rational) convex polytope $P \subset M_{\R}$ is the convex hull in $M_{\R}$ of finitely many points in $M_{\Q}$. The polytope is furthermore integral if we can choose the points to be in $M$.
\\If $P \subset M_{\R}$ is a convex polytope, a \emph{face} of $P$ is any intersection of $P$ with a halfspace such that none of the interior points of the polytope lie on the boundary of the halfspace.
The normal fan $\Sigma = \Sigma_P$ is the complete fan in $N_{\R}$ whose cones are the normal cones to the faces of $P$. More explicitly, $\Sigma = \{ \sigma_F \}_{F \; \text{face of} \; P}$, where:
$$\sigma_F = \{ x \in N_{\R} / \forall u \in F, \langle u, x \rangle = \max_P u \}.$$
\begin{defn} Let $P \subset M_{\R}$ be a convex polytope. Its support function $\Psi_P : N_{\R} \fl \R$ is defined by the formula:
$$\Psi_P(x)  = \sup_{u \in P} \langle u, x \rangle.$$
It is a rational piecewise-linear convex function on $N_{\R}$, linear on each face of the normal fan $\Sigma_P$.
\end{defn}
The fact that $\Psi_P$ is piecewise-linear follows from the fact that for fixed $x$, the function $u \mapsto \langle u, x \rangle$ reaches its maximum at a vertex of $P$, hence the equality:
$$\Psi_P(x)  = \max_{u \in V} \langle u, x \rangle,$$
where $V \subset P$ is the set of vertices of $P$. Note that the data of $(\Sigma_P, \Psi_P)$ recovers the polytope $P$ uniquely.
\begin{defn}
Let $\phi : N_{\R} \fl \R$ be a convex function. We define the Legendre transform $\phi^*$ of $\phi$ by the formula:
$$\phi^*(u) = \sup_{x \in N_{\R}} \big( \langle u, x \rangle -\phi(x) \big),$$
for $u \in M_{\R}$, and we write its domain $P(\phi) :=\{ \phi^* < + \infty \} \subset M_{\R}$.
\end{defn}
\begin{defn} \label{def conv adm} Let $P \subset M_{\R}$ be a convex polytope. We say that the convex function $\phi : N_{\R} \fl \R$ is $P$-admissible if and only the equality $P(\phi)= P$ holds. This is equivalent to the following condition:
$$\sup_{x \in N_{\R}} \lvert \phi(x) - \Psi_P(x) \rvert < + \infty.$$
We will write $\Ad_P(N_{\R})$ or simply $\Ad_P$ for the set of $P$-admissible convex functions.
\end{defn}
\begin{ex} \label{ex vertices}
Let $J \subset P\cap M_{\Q}$ be a finite subset, and $\phi : N_{\R} \fl \R$ be the function defined by:
$$\phi(x) = \max_{j \in J} (\langle u_j, x \rangle + c_j),$$
where the $c_j$'s are real constants. Then $\phi \in \Ad_P(N_{\R})$ if and only if $J$ contains the set of vertices of $P$.
\end{ex}
\begin{prop}
Let $\phi \in \Ad_P(N_{\R})$. Then the Legendre transform $\phi^* : P \fl \R$ is convex and continuous. Moreover, the map $\phi \mapsto \phi^*$ induces a bijection from $\Ad_P(N_{\R})$ to the set of continuous convex functions on $P$, whose inverse map is given by $\beta \mapsto \beta^*$, where:
$$\beta^*(x) = \sup_{u \in P} \big(\langle u, x \rangle - \beta(u) \big)$$
for $\beta : P \fl \R$ a continuous convex function.
\end{prop}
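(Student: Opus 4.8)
The statement is an instance of Fenchel--Moreau (Legendre) duality, and the plan is to reduce it to that classical fact after checking that $P$-admissibility is precisely the condition making the conjugate land among the continuous convex functions on $P$, and conversely.

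\textbf{First}, I would fix $\phi \in \Ad_P$ and set $C := \sup_{x \in N_\R}\lvert \phi(x)-\Psi_P(x)\rvert$, which is finite by Definition \ref{def conv adm}. Comparing $\phi$ with $\Psi_P$ term by term in the definition of the conjugate yields $\Psi_P^* - C \le \phi^* \le \Psi_P^* + C$ on all of $M_\R$. Since $\Psi_P$ is the support function of $P$, one computes directly that $\Psi_P^*$ equals $0$ on $P$ and $+\infty$ off $P$; hence $\phi^* \equiv +\infty$ outside $P$ (which incidentally re-proves the equivalence asserted in Definition \ref{def conv adm}), while $\lvert \phi^* \rvert \le C$ on $P$, so $\phi^*\colon P\to\R$ is bounded. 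Being a supremum of affine functions of $u$, $\phi^*$ is moreover convex and lower semicontinuous on $M_\R$, hence on $P$.

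\textbf{Next} comes the only genuinely delicate point: continuity of $\phi^*$ up to the boundary of $P$. On the relative interior of $P$ this is automatic for a finite convex function; at boundary points I would use that $P$ is a \emph{polytope} --- by the classical semicontinuity theorem for convex functions on locally simplicial sets (Rockafellar, \emph{Convex Analysis}, Thm.~10.2), $\phi^*$ is upper semicontinuous relative to $P$, and together with the lower semicontinuity from the previous step this forces continuity on $P$. This is the step I expect to require the most care to state cleanly, and it is the only place where the polyhedral nature of $P$ intervenes: for a general convex body the conjugate of an admissible function can fail to be continuous at the boundary.

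\textbf{Then} I would handle the reverse map and biduality. Given a continuous convex $\beta\colon P\to\R$ with $C':=\sup_P\lvert\beta\rvert$, the same term-by-term comparison gives $\Psi_P - C' \le \beta^* \le \Psi_P + C'$ on $N_\R$, so $\sup\lvert\beta^*-\Psi_P\rvert<\infty$; as $\beta^*$ is a supremum of affine functions it is convex, hence $\beta^*\in\Ad_P$. It remains to see that the two composites are identities. For $\phi\in\Ad_P$: the function $\phi$ is real-valued and convex on $N_\R$, hence continuous and in particular lower semicontinuous, so Fenchel--Moreau gives $\phi^{**}=\phi$; and since $\phi^*=+\infty$ off $P$, the biconjugate is $\phi^{**}(x)=\sup_{u\in P}(\langle u,x\rangle-\phi^*(u))$, which is exactly $(\phi^*)^*$ in the notation of the statement, so $(\phi^*)^*=\phi$. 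For $\beta$ as above: extend it to $\tilde\beta\colon M_\R\to(-\infty,+\infty]$ by $+\infty$ off $P$; since $\beta$ is continuous on the compact set $P$ and $P$ is closed, $\tilde\beta$ is proper, convex and lower semicontinuous, so $\tilde\beta^{**}=\tilde\beta$; one checks $\tilde\beta^*=\beta^*$, and evaluating $\tilde\beta^{**}=\tilde\beta$ at $u\in P$ reads $\beta(u)=\sup_{x\in N_\R}(\langle u,x\rangle-\beta^*(x))=(\beta^*)^*(u)$. Combining the three steps, $\phi\mapsto\phi^*$ is a well-defined bijection from $\Ad_P$ onto the set of continuous convex functions on $P$, with inverse $\beta\mapsto\beta^*$.
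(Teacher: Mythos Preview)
Your proof is correct and follows essentially the same route as the paper: both arguments establish continuity of $\phi^*$ on the polytope $P$ by combining lower semicontinuity (as a supremum of affine functions) with upper semicontinuity of bounded convex functions on polytopes, and then invoke Fenchel--Moreau duality (Rockafellar, Thm.~12.2) for the bijection. Your write-up is in fact more detailed and more carefully worded than the paper's very terse proof, which simply cites \cite{How} and \cite[thm.~12.2]{Rock}; in particular you spell out explicitly why $\beta^*\in\Ad_P$ and why the extension $\tilde\beta$ is lower semicontinuous, points the paper leaves implicit.
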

\begin{proof}
If $\phi$ is $P$-admissible, its Legendre transform is upper semi-continuous on $P$ as a supremum of continuous functions, while it follows from \cite{How} that the Legendre transform $\phi^*$ is convex on $P$, hence lower semi-continuous on $P$. This proves the first item, and the second item follows from \cite[thm. 12.2]{Rock}.
\end{proof}
The following proposition states that $P$-admissible functions can be monotonously approximated by rational piecewise-affine functions:
\begin{prop} \label{lem cvx} Let $P$ be a convex polytope in $M_{\R}$, and let $\phi \in \Ad_P(N_{\R})$ be a $P$-admissible convex function.
\\Then there exists a decreasing sequence $(\phi_j)_{j \in \N}$ of rational piecewise-affine, $P$-admissible convex functions converging pointwise to $\phi$.
\end{prop}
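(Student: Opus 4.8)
The plan is to pass to the Legendre transform, where the relevant polytope $P$ is \emph{compact}, carry out the approximation there from below, and transform back. By the preceding proposition, $\phi^* : P \fl \R$ is convex and continuous, the assignment $\beta \mapsto \beta^*$ is an order-reversing bijection from continuous convex functions on $P$ onto $\Ad_P(N_{\R})$, and it inverts $\phi \mapsto \phi^*$, so that $\phi = (\phi^*)^*$. Hence it suffices to produce an \emph{increasing} sequence $(\beta_j)_{j \in \N}$ of finite, rational piecewise-affine convex functions on $P$ (adapted to rational polytopal subdivisions of $P$, with rational slopes) such that $\beta_j \to \phi^*$ uniformly on $P$. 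Indeed, setting $\phi_j := \beta_j^*$, the sequence $(\phi_j)$ is then decreasing, each $\phi_j$ satisfies $\phi_j \ge (\phi^*)^* = \phi$, and
$$\sup_{x \in N_{\R}} \bigl\lvert \phi_j(x) - \phi(x) \bigr\rvert \;=\; \sup_{x \in N_{\R}} \Bigl\lvert \sup_{u \in P}\bigl(\langle u, x\rangle - \beta_j(u)\bigr) - \sup_{u \in P}\bigl(\langle u, x\rangle - \phi^*(u)\bigr) \Bigr\rvert \;\le\; \sup_{u \in P} \lvert \beta_j(u) - \phi^*(u) \rvert \;\xrightarrow{\,j \to \infty\,}\; 0 ,$$
so in particular $\phi_j \to \phi$ pointwise.

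To build the $\beta_j$: since $\phi^*$ is finite, convex and continuous on the compact polytope $P$, it equals the supremum of the affine functions lying below it on $P$. To rationalize the slopes, for each $x \in N_{\Q}$ put $c(x) := \min_{u \in P}\bigl(\phi^*(u) - \langle u, x\rangle\bigr)$ (finite, the minimum being attained by compactness and continuity) and $\ell_x := \langle \cdot, x\rangle + c(x) \le \phi^*$ on $P$; an elementary estimate using that $\phi^*$ is bounded on $P$ and that $N_{\Q}$ is dense in $N_{\R}$ shows $\sup_{x \in N_{\Q}} \ell_x = \phi^*$ pointwise on $P$. Fixing an enumeration $N_{\Q} = \{x_1, x_2, \dots\}$ and setting $\beta_j := \max_{1 \le m \le j} \ell_{x_m}$, each $\beta_j$ is finite, convex and piecewise-affine with rational slopes, adapted to the rational polytopal subdivision of $P$ cut out by the domains of linearity of $\ell_{x_1}, \dots, \ell_{x_j}$, and $\beta_j \uparrow \phi^*$ pointwise. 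By Dini's theorem (monotone convergence of continuous functions to a continuous limit on a compact space) this convergence is uniform on $P$, as required.

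It remains to check that $\phi_j = \beta_j^*$ has the asserted form. Fix $j$ and $x \in N_{\R}$; the function $u \mapsto \langle u, x\rangle - \beta_j(u)$ is affine on each cell of the subdivision of $P$ associated with $\beta_j$, hence concave and piecewise-affine on $P$, so its supremum over $P$ is attained at a vertex of that subdivision. Writing $V_j \subset M_{\Q}$ for the finite set of such vertices — a set of rational points containing the vertices of $P$, since $P$ is itself a rational polytope — we obtain
$$\phi_j(x) \;=\; \max_{u \in V_j} \bigl(\langle u, x\rangle - \beta_j(u)\bigr) .$$
Thus $\phi_j$ is a maximum of finitely many affine functions with rational slopes $u \in M_{\Q}$, hence convex and rational piecewise-affine on $N_{\R}$; and since $V_j$ contains the vertices of $P$, Example \ref{ex vertices} gives $\phi_j \in \Ad_P(N_{\R})$. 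Combined with the first paragraph, this exhibits $(\phi_j)_{j \in \N}$ as a decreasing sequence of rational piecewise-affine, $P$-admissible convex functions converging (uniformly, a fortiori pointwise) to $\phi$.

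The main obstacle, and the point deserving care, will be the legitimacy of working entirely on $P$: the argument hinges on $P$ being compact and on $\phi^*$ being finite and continuous on all of $P$ — including its relative boundary — which is exactly what the preceding proposition supplies, and it is what makes both the representation of $\phi^*$ as a supremum of affine minorants and the Dini step go through even though the subgradients of $\phi^*$ may be unbounded near $\partial P$. The passage from arbitrary to rational slopes is then a routine perturbation, with the only thing to keep in mind being that ``rational piecewise-affine'' constrains the slopes but not the constant terms, so the possibly irrational values $\beta_j(u)$ for $u \in V_j$ cause no trouble.
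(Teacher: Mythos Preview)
Your overall strategy matches the paper's --- pass to the Legendre transform on the compact polytope $P$, approximate $\phi^*$ from below by maxima of affine minorants with rational slopes, and transform back --- but there is a genuine gap in the step where you claim $V_j \subset M_{\Q}$.

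The vertices of the subdivision of $P$ induced by $\beta_j = \max_{m \le j} \ell_{x_m}$ lie on intersections of hyperplanes of the form $\{u : \langle u, x_m - x_{m'}\rangle = c(x_{m'}) - c(x_m)\}$. The normal directions $x_m - x_{m'}$ are indeed rational, but the offsets $c(x_{m'}) - c(x_m) = \phi(x_m) - \phi(x_{m'})$ are typically irrational, so these vertices need not lie in $M_{\Q}$. Since those vertices become the \emph{slopes} of $\phi_j = \beta_j^*$, your $\phi_j$ can fail to be rational piecewise-affine. For a concrete failure, take $P=[-1,1]$, $\phi(x)=\sqrt{x^2+1}$, $x_1=0$, $x_2=1$: then $\beta_2(u)=\max(-1,\,u-\sqrt{2})$ breaks at $u=\sqrt{2}-1$, and $\phi_2(x)=\max(-x+1,\,(\sqrt{2}-1)x+1,\,x+\sqrt{2}-1)$ has the irrational slope $\sqrt{2}-1$ on $[0,1]$. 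Your final remark that ``rational piecewise-affine constrains the slopes but not the constant terms'' is correct for $\phi_j$, but it is precisely the irrational \emph{constants} in the $\ell_{x_m}$ that produce irrational \emph{slopes} in $\phi_j$. The justification ``since $P$ is itself a rational polytope'' only shows that the vertices of $P$ are rational, not the interior vertices of the subdivision.

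The paper's proof avoids this by one extra perturbation: rather than using $c(x_m)=-\phi(x_m)$, it picks rational numbers $r_{m,j}\searrow \phi(x_m)$ (uniformly in $m$) and sets $\beta_j(u)=\max_{m\le j}\bigl(\langle u,x_m\rangle - r_{m,j}\bigr)$. Now every affine piece has rational slope \emph{and} rational constant, so the induced subdivision of $P$ is genuinely rational and Lemma~\ref{lem Leg pa} applies to give $\phi_j$ rational piecewise-affine. Your argument goes through verbatim once you make this adjustment.
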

\begin{proof}
Let $\beta : P \fl \R$ be the Legendre transform of $\phi$, and let $(x_j)_{j \in \N}$ be a sequence in $N_{\Q}$ that is dense in $N_{\R}$. For each $j \in \N$, let $(r_{l,j})_{l \in \N}$ be a sequence of rational numbers decreasing to $\phi(x_j)$, uniformly with respect to $j$ (uniformity can be achieved by a diagonal extraction argument). We set:
$$\beta_j(u) = \max_{l \le j}\big( \langle u, x_l \rangle - r_{l,j} \big),$$
so that as $j \rightarrow \infty$, the function $\beta_j$ is close to the maximum of the affine functions with gradients in $\{ x_l \}_{l \le j}$ that are cutting out supporting hyperplanes of the graph of $\beta$. It is clear that $\beta_j$ is rational piecewise-linear, and increasing pointwise to:
$$\beta^{**}(u) = \sup_{x \in N_{\R}} \big( \langle u, x \rangle - \phi(x) \big),$$
which is none other than $\beta$. Continuity of $\beta$ furthermore implies that the convergence is uniform, by Dini's lemma.
\\We now set: 
$$\phi_j(x) = \beta_j^*(x)= \sup_{u \in P} \big(\langle u, x \rangle - \beta_j(u)\big).$$
It defines an decreasing sequence of $P$-admissible rational piecewise-affine convex functions on $N_{\R}$, by the following lemma. The fact that the $\phi_j$'s decrease (uniformly) to $\phi$ is now a consequence of the monotonicity of the Legendre transform.
\end{proof}
\begin{lem} \label{lem Leg pa} Let $P \subset M_{\R}$ a convex polytope and $\beta : P \fl \R$ a function that is piecewise-affine, adapted to a rational polyhedral subdivision of $P$. Then the Legendre transform $\beta^*$, defined for $x \in N_{\R}$ by:
$$\beta^*(x) = \sup_{u \in P} \big( \langle u, x \rangle - \beta(u) \big)$$
can be written as:
$$\beta^* = \max_{i \in I} (u_i + c_i),$$
where $I$ is a finite set, $u_i \in P\cap M_{\Q}$ and $c_i \in \R$. \\Moreover, $\beta^* \in \Ad_P(N_{\R})$.
\end{lem}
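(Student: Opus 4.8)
The plan is to reduce the supremum defining $\beta^*$ to a finite maximum over the vertices of the subdivision, and then invoke Example~\ref{ex vertices} for the admissibility claim.

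First I would fix notation: let $\mathcal{S}$ be the rational polyhedral subdivision of $P$ to which $\beta$ is adapted, with maximal cells $P_1,\dots,P_k$, and for each $\ell$ let $a_\ell : M_{\R} \fl \R$ be the affine function with $\beta|_{P_\ell} = a_\ell|_{P_\ell}$. Write $V(\mathcal{S})$ for the (finite) set of vertices of $\mathcal{S}$; since $\mathcal{S}$ is rational, $V(\mathcal{S}) \subset P \cap M_{\Q}$.

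Next, for a fixed $x \in N_{\R}$, since $P = \bigcup_\ell P_\ell$ I would write
$$\beta^*(x) = \max_{1\le \ell \le k} \; \sup_{u \in P_\ell} \big( \langle u, x\rangle - a_\ell(u) \big).$$
For each $\ell$ the map $u \mapsto \langle u, x\rangle - a_\ell(u)$ is affine on $M_{\R}$, and $P_\ell$ is a compact polytope, so this supremum is attained at a vertex of $P_\ell$; since the vertices of all the $P_\ell$ together make up exactly $V(\mathcal{S})$ and $\beta$ agrees with $a_\ell$ on $P_\ell$, this gives
$$\beta^*(x) = \max_{v \in V(\mathcal{S})} \big( \langle v, x\rangle - \beta(v) \big),$$
and this identity holds for every $x \in N_{\R}$. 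Taking $I = V(\mathcal{S})$, $u_i = i \in P\cap M_{\Q}$ and $c_i = -\beta(i) \in \R$ yields the claimed representation $\beta^* = \max_{i\in I}(u_i + c_i)$; in particular $\beta^*$ is a rational piecewise-affine convex function on $N_{\R}$.

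For the last assertion I would note that every vertex of $P$ is an extreme point of $P$, hence remains an extreme point — thus a vertex — of whichever cell of $\mathcal{S}$ contains it; therefore the set of vertices of $P$ is contained in $V(\mathcal{S})$. Example~\ref{ex vertices} then applies directly to $\beta^* = \max_{i \in I}(u_i + c_i)$ with $I = V(\mathcal{S})$ and gives $\beta^* \in \Ad_P(N_{\R})$. I do not anticipate a serious obstacle; the only points needing a little care are the reduction of each $\sup_{u\in P_\ell}$ to a maximum over the vertices of $P_\ell$ (an affineness-plus-compactness argument) and the verification that $V(\mathcal{S})$ contains the vertices of $P$, which is precisely what makes Example~\ref{ex vertices} applicable.
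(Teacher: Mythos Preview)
Your proof is correct and follows essentially the same approach as the paper: reduce the supremum to the vertices of the subdivision by noting that $u \mapsto \langle u, x\rangle - \beta(u)$ is affine on each cell, then invoke Example~\ref{ex vertices} using that the vertices of $P$ lie among the vertices of the subdivision.
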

\begin{proof} Let $\mathcal{T}$ be a rational polytopal decomposition of $P$ such that $\beta$ is affine on each face of $\mathcal{T}$ (which exists as follows for instance from \cite[lem. 2.5.3]{BPS}), and write the set $E$ of vertices of $\mathcal{T}$ as $E=\{ u_i, i \in I \}$. Then we have:
$$\beta^*(x) = \max_{i \in I} \big(\langle u_i, x \rangle - \beta(u_i)\big),$$
as for fixed $x$, the function $u \mapsto \big( \langle u, x \rangle - \beta(u) \big)$ is affine on each face of $\mathcal{T}$, hence achieves its maximum at a vertex of $\mathcal{T}$.
\\The second item follows from example \ref{ex vertices}, since $E$ contains the set of vertices of $P$ by construction.
\end{proof}
The following proposition will be used during the proof of theorem \ref{theo toric}:
\begin{prop} \label{convex hybrid} Let $P \subset M_{\R}$ be a convex polytope with support function $\Psi_P$ and normal fan $\Sigma$, and let $\Phi : N_{\R} \times [0,1] \fl \R$ be a continuous convex function, such that the function $(\Phi- \Psi_P)$ extends continuously to $N_{\Sigma} \times [0,1]$.
\\Then there exists a sequence $(\Phi_j)_{j \in \N}$ of piecewise-affine convex functions on $N_{\R} \times [0,1]$, such that: 
\begin{itemize}
\item for all $j \in \N$, the function $(\Phi_j- \Psi_P)$ extends continuously to $N_{\Sigma} \times [0,1]$;
\item the sequence $(\Phi_j)_j$ decreases to $\Phi$ over $N_{\Sigma} \times [0,1]$;
\item each $\Phi_j$ has rational slopes in the $N_{\R}$-direction, i.e. we may write:
$$\Phi_j(x, \lambda) = \max_{\alpha \in A} \big( \langle u_{\alpha}, x \rangle + b_{\alpha} \lambda + c_{\alpha} \big),$$
where the $u_{\alpha} \in P\cap M_{\Q}$, and the $a_{\alpha}$, $b_{\alpha} \in \R$.
\end{itemize}
\end{prop}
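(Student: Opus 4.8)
The plan is to imitate Proposition~\ref{lem cvx} — approximate the Legendre dual from below by piecewise-affine functions and transform back — but fiberwise over $\lambda$. Since $N_\Sigma\times[0,1]$ is compact, the hypothesis gives that $\Phi-\Psi_P$ is bounded and uniformly continuous on $N_\R\times[0,1]$; in particular $\Phi(\cdot,\lambda)\in\Ad_P$ for every $\lambda$, and (the Legendre transform being $1$-Lipschitz for the supremum norm) the partial Legendre transform
$$\beta(u,\lambda):=\sup_{x\in N_\R}\big(\langle u,x\rangle-\Phi(x,\lambda)\big),\qquad(u,\lambda)\in P\times[0,1],$$
is finite, jointly continuous, equal for each fixed $\lambda$ to the Legendre transform of $\Phi(\cdot,\lambda)\in\Ad_P$ — so that $\beta(\cdot,\lambda)$ is convex continuous on $P$ with $\beta(\cdot,\lambda)^*=\Phi(\cdot,\lambda)$, by the proposition preceding Proposition~\ref{lem cvx} — and, by joint convexity of $\Phi$, concave in $\lambda$ for each fixed $u$. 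The point to keep in mind is that $\beta$ is convex in $u$ but concave in $\lambda$, so it cannot be approximated uniformly from below by maxima of affine functions in the $\lambda$-direction; one must interpolate.

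For each $j$ I would fix a rational triangulation $\mathcal{T}_j$ of $P$ of mesh $\le1/j$, with vertex set $\{v_i^{(j)}\}\subset P\cap M_\Q$ (it automatically contains the vertices of $P$) and barycentric partition of unity $(\mu_i^{(j)})$, and the uniform subdivision $\Lambda_j$ of $[0,1]$ of mesh $1/j$. Let $g_i^{(j)}$ be the piecewise-linear interpolation of $\beta(v_i^{(j)},\cdot)$ at the nodes $\Lambda_j$; as $\beta(v_i^{(j)},\cdot)$ is concave, $g_i^{(j)}$ is concave piecewise-linear, lies below $\beta(v_i^{(j)},\cdot)$, and differs from it by at most $\eta_j\to0$. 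Set
$$\gamma_j(u,\lambda):=\sum_i\mu_i^{(j)}(u)\,g_i^{(j)}(\lambda)-\omega_j,\qquad\omega_j:=\sup_{(u,\lambda)\in P\times[0,1]}\Big(\sum_i\mu_i^{(j)}(u)\,\beta(v_i^{(j)},\lambda)-\beta(u,\lambda)\Big)\ge0,$$
where $\omega_j\to0$ absorbs the upward interpolation error in $u$ of the convex functions $\beta(\cdot,\lambda)$. Then $\gamma_j(\cdot,\lambda)$ is piecewise-affine adapted to $\mathcal{T}_j$, $\gamma_j(u,\cdot)$ is concave piecewise-linear, $\gamma_j\le\beta$, and $\|\beta-\gamma_j\|_\infty\le\eta_j+\omega_j=:\delta_j\to0$, all by direct computation.

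Transforming back, put $\Phi_j(x,\lambda):=\sup_{u\in P}\big(\langle u,x\rangle-\gamma_j(u,\lambda)\big)$. Since $\gamma_j(\cdot,\lambda)$ is piecewise-affine adapted to $\mathcal{T}_j$, Lemma~\ref{lem Leg pa} gives $\Phi_j(x,\lambda)=\max_i\big(\langle v_i^{(j)},x\rangle-\gamma_j(v_i^{(j)},\lambda)\big)$ and $\Phi_j(\cdot,\lambda)\in\Ad_P$; since $\gamma_j(v_i^{(j)},\cdot)=g_i^{(j)}-\omega_j$ is concave piecewise-linear, $-\gamma_j(v_i^{(j)},\cdot)$ is a maximum of finitely many affine functions of $\lambda$, whence
$$\Phi_j(x,\lambda)=\max_{i,k}\big(\langle v_i^{(j)},x\rangle+b_{ik}^{(j)}\lambda+c_{ik}^{(j)}\big),$$
a piecewise-affine convex function on $N_\R\times[0,1]$ with rational slopes $v_i^{(j)}\in P\cap M_\Q$ in the $N_\R$-direction, and $\Phi_j-\Psi_P$ then extends continuously to $N_\Sigma\times[0,1]$ since $\Phi_j$ is piecewise-affine and fiberwise $P$-admissible. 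From $\gamma_j\le\beta$ and $\beta(\cdot,\lambda)^*=\Phi(\cdot,\lambda)$ one gets $\Phi\le\Phi_j\le\Phi+\delta_j$, so $\Phi_j\to\Phi$ uniformly on $N_\R\times[0,1]$. As the $\Phi_j$ need not decrease, I would finally pass to a subsequence $j_1<j_2<\cdots$ with $\delta_{j_m}\le 2^{-m-2}$ and set $\widehat\Phi_m:=\Phi_{j_m}+2^{-m}$: then $\widehat\Phi_m\ge\Phi$ and $\widehat\Phi_{m+1}\le\Phi+2^{-m}\le\widehat\Phi_m$, so $(\widehat\Phi_m)$ decreases, still converges uniformly to $\Phi$, keeps the required form (add $2^{-m}$ to each $c_{ik}^{(j_m)}$), and each $\widehat\Phi_m-\Psi_P$ extends continuously to $N_\Sigma\times[0,1]$; by density of $N_\R\times[0,1]$ and continuity of these extensions, $(\widehat\Phi_m)$ decreases to $\Phi$ over $N_\Sigma\times[0,1]$. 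Re-indexing gives the statement.

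The main obstacle is the construction of the $\gamma_j$: because $\beta$ is convex in $u$ yet concave in $\lambda$, a uniform approximation from below must be genuinely piecewise-\emph{linear} in $\lambda$, so one interpolates in $\lambda$ and must then counterbalance the downward error of that interpolation against the upward error of the $u$-interpolation of the convex slices $\beta(\cdot,\lambda)$ — this is exactly what the constant $\omega_j$ does. Once $\gamma_j$ is available, the return through Lemma~\ref{lem Leg pa} and the $2^{-m}$-shift that forces monotonicity are routine.
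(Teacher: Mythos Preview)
Your argument is correct, but it follows a genuinely different path from the paper's proof.

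The paper takes the \emph{full} Legendre transform $\Phi^*$ of $\Phi$ in both variables $(x,\lambda)$, obtaining a jointly convex function on $P\times\R$. Joint convexity makes the approximation from below by maxima of affine functions automatic: one simply picks a dense sequence of rational points $(x_l,\lambda_l)\in N_{\Q}\times[0,1]$ and sets $\beta_j(u,a)=\max_{l\le j}(\langle u,x_l\rangle+a\lambda_l-r_{l,j})$, which increases to $\Phi^*$; the $\Phi_j$ are then the Legendre transforms of the $\beta_j$ and are monotone by construction. The price is that the domain $P\times\R$ is non-compact in the $a$-direction, so an auxiliary lemma (Lemma~\ref{lem ext toric}) is needed to control $\Phi^*-\max\{0,a\}$ at $a\to\pm\infty$ and to get uniform convergence via Dini.

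You instead take the \emph{partial} Legendre transform $\beta(u,\lambda)$ in $x$ only, which keeps the domain compact ($P\times[0,1]$) and so dispenses with Lemma~\ref{lem ext toric} entirely. The cost is that $\beta$ is now convex in $u$ but concave in $\lambda$, so a naive max-of-affine approximation from below is impossible; your two-step interpolation (piecewise-linear in $\lambda$ at the triangulation vertices, then barycentric in $u$, with the constant $\omega_j$ absorbing the upward error of convex interpolation) is exactly what is needed to repair this. You then have to force monotonicity by hand with the $2^{-m}$ shift, whereas the paper gets it for free. Both approaches land on the same form for $\Phi_j$ via Lemma~\ref{lem Leg pa}; yours is more elementary and self-contained, the paper's is shorter once the growth lemma is available.
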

\begin{proof} Let $\Phi^* : P \times \R \fl \R$ be the Legendre transform of $\Phi$, then by lemma \ref{lem ext toric} the function $(\Phi^* - \max\{0, a \})$ extends continuously to $P \times [-\infty, + \infty]$, where $a$ is the coordinate on $\R$.
\\Let $((x_j, \lambda_j))_{j \in \N}$ be a dense sequence of rational points in $N_{\R} \times [0,1]$, and let $(r_{j,l})_{l \in \N}$ be a sequence of rational numbers decreasing to $\phi(x_j, \lambda_j)$, uniformly with respect to $j$. We furthermore assume that $\lambda_0 =0$ and $\lambda_1 =1$.
\\We set:
$$\beta_j(u,a) = \max_{l \le j} \big( \langle u, x_l \rangle + a\lambda_l -r_{j,l} \big)$$
for $(u,a) \in P \times \R$, it is a sequence of piecewise-affine convex functions increasing pointwise to $\Phi^*$. The convergence also holds on $P \times [- \infty, + \infty]$ after substracting $\max \{0, a \}$, since for $j \ge 1$ and $\lvert a \rvert \gg 1$ depending on $u$, we have that $(\beta_j(u,a) - \max\{0, a \})$ does not depend on $a$. As a result, the convergence is in fact uniform, by Dini's lemma.
\\We now take $\Phi_j$ to be the Legendre transform of $\beta_j$, i.e.:
$$\Phi_j(x, \lambda) = \sup_{(u, a) \in P \times \R} \big(\langle u, x \rangle + a\lambda - \beta_j(u,a) \big),$$
the $\Phi_j$'s are of the form described in the statement by the argument from lemma \ref{lem Leg pa}, as $(\beta_j(u,a) - \max\{0, a \})$ does not depend on $a$ for $\lvert a \rvert \gg 1$.
Since the $\beta_j$ increase uniformly to $\Phi^*$, the $\Phi_j$ decrease uniformly to $\Phi^{**}$, which is equal to $\Phi$ by \cite[thm. 12.2]{Rock}. 
\\It remains to prove that $(\Phi_j - \Psi_P)$ extends continuously to $N_{\Sigma} \times [0,1]$. For $\sigma \in \Sigma$, and $m_{\sigma} \in M$ such that $(\Psi_P)_{| \sigma} \equiv m_{\sigma}$, since the convex hull $\Conv(u_{\alpha})_{\alpha} =P$, we have that $\Phi_j(x, \lambda) = \langle m_{\sigma}, x \rangle + b_{j_0} \lambda + c_{j_0}$ for $x \in \sigma$ and $\lvert x \rvert \gg 1$ independent on $\lambda$, where $j_0$ is such that $u_{j_0} = m_{\sigma}$ This proves that $(\Phi_j - \Psi_P)$ extends continuously to $N_{\Sigma} \times [0,1]$, and decreasing convergence happens over $N_{\Sigma} \times [0,1]$ since it is uniform, which concludes the proof.
\end{proof}
\begin{lem} \label{lem ext toric} Let $P \subset N_{\R}$ be a convex polytope, and let $\Phi : N_{\R} \times [0,1] \fl \R$ satisfying the assumptions of prop. \ref{convex hybrid}. Then the Legendre transform:
$$\Phi^* : P \times \R \fl \R$$
is continuous. Moreover, writing $a$ the coordinate on $\R$, the function $(\Phi^* - \max \{0, a \})$ extends continuously to $P \times [- \infty, + \infty]$.
\end{lem}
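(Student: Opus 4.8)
The plan is to realise $\Phi^*$ as an iterated Legendre transform, first over $x\in N_{\R}$ and then over $\lambda\in[0,1]$, and to control it using the compactness of $N_{\Sigma}\times[0,1]$. Since the fan $\Sigma$ is complete, $N_{\Sigma}$ is compact, so the continuous extension of $\Phi-\Psi_P$ to $N_{\Sigma}\times[0,1]$ is bounded, say $\lvert\Phi(x,\lambda)-\Psi_P(x)\rvert\le C$ for all $(x,\lambda)$, and uniformly continuous; setting $\tilde\omega(s):=\sup\{\lvert\Phi(x,\lambda)-\Phi(x,\lambda')\rvert : x\in N_{\R},\ \lvert\lambda-\lambda'\rvert\le s\}$, compactness gives $\tilde\omega(s)\to 0$ as $s\to 0^+$. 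For $\lambda\in[0,1]$ put $\psi_\lambda:=\Phi(\cdot,\lambda)$; it is convex with $\sup_{N_{\R}}\lvert\psi_\lambda-\Psi_P\rvert\le C$, hence $\psi_\lambda\in\Ad_P(N_{\R})$ (Definition~\ref{def conv adm}), and its Legendre transform $\psi_\lambda^*$ is therefore a continuous convex function on $P$. Since $[0,1]$ is compact, for $(u,a)\in M_{\R}\times\R$,
$$\Phi^*(u,a)=\sup_{x\in N_{\R},\ \lambda\in[0,1]}\big(\langle u,x\rangle+a\lambda-\Phi(x,\lambda)\big)=\sup_{\lambda\in[0,1]}\big(a\lambda+\psi_\lambda^*(u)\big).$$

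As $\psi_\lambda^*\equiv+\infty$ off $P$, the right-hand side is $+\infty$ for $u\notin P$. For $u\in P$, the bounds $\langle u,x\rangle\le\Psi_P(x)$, $\psi_\lambda\ge\Psi_P-C$ and $\psi_\lambda(0)=\Phi(0,\lambda)\le\Psi_P(0)+C=C$ give $-C\le\psi_\lambda^*(u)\le C$, and together with $a\lambda\le\max\{0,a\}$ for $\lambda\in[0,1]$ and the values at $\lambda=0,1$ one obtains
$$\max\{0,a\}-C\ \le\ \Phi^*(u,a)\ \le\ \max\{0,a\}+C .$$
So $\Phi^* : P\times\R\to\R$ is well defined and $\Phi^*-\max\{0,a\}$ is bounded. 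Moreover $\Phi^*$, being a supremum of affine functions of $(u,a)$, is convex; on any polytope $P\times[-R,R]$ it is convex and bounded, hence continuous there by the classical fact that a convex function bounded on a polytope is continuous on all of it (the content being continuity up to $\partial P$, which can fail on general convex domains). Letting $R\to\infty$, $\Phi^*$ is continuous on $P\times\R$.

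For the extension to $P\times[-\infty,+\infty]$, I would show that
$$\lim_{a\to+\infty}\big(\Phi^*(u,a)-a\big)=\psi_1^*(u),\qquad \lim_{a\to-\infty}\Phi^*(u,a)=\psi_0^*(u),$$
uniformly in $u\in P$. Indeed $\Phi^*(u,a)-a=\sup_{\lambda\in[0,1]}\big(a(\lambda-1)+\psi_\lambda^*(u)\big)\ge\psi_1^*(u)$, while the pointwise bound $\psi_\lambda^*(u)\le\psi_1^*(u)+\tilde\omega(1-\lambda)$ gives $\Phi^*(u,a)-a\le\psi_1^*(u)+\eta(a)$, where $\eta(a):=\sup_{t\in[0,1]}\big(-at+\tilde\omega(t)\big)\to 0$ as $a\to+\infty$, independently of $u$; the case $a\to-\infty$ is symmetric, the supremum concentrating at $\lambda=0$. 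One then extends $\Phi^*-\max\{0,a\}$ to $P\times[-\infty,+\infty]$ by the value $\psi_1^*$ on $P\times\{+\infty\}$ and $\psi_0^*$ on $P\times\{-\infty\}$; these are continuous on $P$ since $\psi_0,\psi_1\in\Ad_P(N_{\R})$. Continuity of the extension at a point $(u_0,+\infty)$ follows because $\Phi^*-\max\{0,a\}=\Phi^*-a$ for $a\ge 0$, and $0\le\Phi^*(u,a)-a-\psi_1^*(u)\le\eta(a)$ uniformly in $u$, combined with the continuity of $\psi_1^*$; similarly at $-\infty$, and at finite points by the previous paragraph.

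The main obstacle is the boundary behaviour. Continuity of $\Phi^*$ on the relative interior of $P$ is automatic for a finite convex function, but near $\partial P$ one genuinely needs that $P$ is a polytope (Gale--Klee--Rockafellar); and the same compactness of $N_{\Sigma}\times[0,1]$, through the uniform modulus $\tilde\omega$, is what makes the limits as $a\to\pm\infty$ uniform in $u$ — which is exactly what is required for continuity of the extension across $P\times\{\pm\infty\}$.
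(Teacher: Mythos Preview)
Your proof is correct, and it takes a somewhat different route from the paper's.

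For continuity of $\Phi^*$ on $P\times\R$, the paper rewrites $\Phi^*(u,a)$ as a supremum over the \emph{compact} set $N_\Sigma\times[0,1]$ of the continuous function $\langle u,x\rangle + a\lambda - \Phi(x,\lambda)$ (extended to the compactification), and invokes the elementary fact that a parametrised supremum over a compact set is continuous. You instead stay on $N_\R$, get global bounds $\max\{0,a\}-C\le\Phi^*\le\max\{0,a\}+C$, and then appeal to the Gale--Klee--Rockafellar theorem that a bounded lower-semicontinuous convex function on a polytope is continuous up to the boundary. Both work; the paper's argument is more geometric (it really uses $N_\Sigma$), while yours isolates the convex-analytic content.

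For the extension to $a=\pm\infty$, the paper observes that $a\mapsto\Phi^*(u,a)-a$ is, for $a\ge 0$, a decreasing family of continuous functions of $u\in P$ with continuous pointwise limit, and applies Dini's lemma to get uniform convergence. You instead extract a uniform modulus of continuity $\tilde\omega$ in the $\lambda$-variable from compactness of $N_\Sigma\times[0,1]$, derive the explicit estimate $0\le\Phi^*(u,a)-a-\psi_1^*(u)\le\sup_{t\in[0,1]}(-at+\tilde\omega(t))\to 0$, and conclude directly. Your approach is more quantitative and has the pleasant side effect of identifying the boundary values explicitly as $\psi_1^*$ and $\psi_0^*$; the paper's Dini argument is slicker but less informative about what the limit actually is.
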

\begin{proof}
Let $(u, a) \in M_{\R} \times \R$, then:
$$\Phi^*(u, a) = \sup_{N_{\R} \times [0,1]} \big(\langle u, x \rangle + a \lambda - \Phi(x, \lambda) \big)$$
$$ = \sup_{N_{\R} \times [0,1]} \big(\langle u, x \rangle + a \lambda - \Psi_P(x) +(\Psi_P(x) - \Phi(x, \lambda)) \big),$$
which is finite if and only if $u \in P$ since $(\Psi_P- \Phi)$ is continuous, hence bounded on $N_{\Sigma} \times [0,1]$. We also infer that:
$$\Phi^*(u, a) = \sup_{N_{\Sigma} \times [0,1]} \big(\langle u, x \rangle + a \lambda - \Phi(x, \lambda) \big)$$
whenever $u \in P$, since $\Phi(x, \lambda) = - \infty$ for $x \in N_{\Sigma} \setminus N_{\R}$. As a result, $\Phi^*$ is continuous on $P \times \R$ as a fiberwise-supremum over a compact set of a continuous function.
\\We now prove the second item, we assume $a \ge 0$ as the other case is treated in the same way. The convex function $\Psi_u : a \mapsto (\Phi^*(u, a)-a)$ on $\R_{\ge 0}$ can be written as:
$$\Psi_u = \sup_{\lambda \in [0,1]} \big ((\lambda-1)a + \eta_u(\lambda) \big),$$
where $\eta$ is a continuous function of $(u, \lambda)$. As a result, $\Psi_u$ is a bounded convex function, decreasing on $\R_{\ge 0}$, and whose limit at $+ \infty$ is $\sup_{[0,1]} \eta_u$, which is continuous with respect to $u$. Thus, the family $(\Psi(u, a))_{a \ge 0}$ of continuous convex functions on $P$ decreases with respect to $a$ to a continuous function, so that they converge uniformly by Dini's lemma, and $(\Phi^* -a)$ extends continuously to $P \times [0, + \infty]$.
\end{proof}
\subsection{Toric varieties}
Recall that $N$ is a lattice of rank $n$, with dual lattice $M$. We write $\T = \Sp \Z[M]$ the associated split algebraic torus over $\Z$. We furthermore have that $M = \Hom(\T, \G_m)$ is the character lattice of $\T$, so that each $m \in M$ defines a regular invertible function on $\T$, which we denote by $\chi^m$.
\\Let $ \Sigma = \{ \sigma \}_{\sigma \in \Sigma}$ be a fan inside $N_{\R}$; this is a finite collection of strictly convex rational polyhedral cones inside $N_{\R}$, stable under intersection and such that each face of a cone in $\Sigma$ is itself in $\Sigma$.
\\If $k$ is any field,  one can associate to the fan $\Sigma$ a normal $k$-scheme of finite type as follows: for $\sigma \in \Sigma$, set $M_{\sigma} = M \cap \sigma^{\vee}$, which is a finitely generated semi-group by Gordan's lemma. We then define the affine toric variety associated to $\sigma$ by:
$$Z_{\sigma} := \Sp k [M_{\sigma}];$$
one can then see that for $\sigma' \subset \sigma$, the ring $k [M_{\sigma'}]$ is a localization of $k [M_{\sigma}]$, so that the collection of $\{ Z_{\sigma} \}_{\sigma \in \Sigma}$ can be glued to define a variety over $k$:
$$Z_{\Sigma} = \bigcup_{\sigma \in \Sigma} Z_{\sigma}.$$
Since the cone $\{ 0_{N_{\R}} \} \in \Sigma$, the variety $Z_{\Sigma}$ contains the torus $\T = \Sp k[M]$ as a dense open subset. Its complement $\Delta_Z := Z \setminus \T$ is a reduced, anticanonical Weil divisor which we call the toric boundary of $Z$. We write it as the sum of its irreducible components $\De_Z = \sum_{l \in L} Z_l$.
\begin{defn}{\cite[def. 3.3.4]{BPS}} Let $L$ be a line bundle on $Z$. A toric structure on $L$ is the data of an isomorphism $L \simeq \gO_Z(D)$, where $D$ is a toric Cartier divisor; together with a nowhere-vanishing section $s_D$ of $L_{|\T}$, which is a monomial on $\T$ under the previous isomorphism $L_{\T} \simeq \gO_{\T}$.
\\A toric line bundle on $Z$ is a line $L$ endowed with a toric structure.
\end{defn} 
We now assume that $Z$ is smooth, so that each cone $\sigma \in \Sigma$ can be written as:$$\sigma = \sum_{l=1}^{\dim \sigma} \R_{\ge 0} v_l,$$
where $v_l \in N$ are the (linearly independent) primitive generators of the extremal rays of $\sigma$.
\begin{defn} Let $D = \sum_{l \in L} a_l Z_l$ be a toric divisor on $Z$. For each maximal cone $\sigma \in \Sigma$, we let:
$$\Psi_{D} : \sigma \fl \R$$
be the unique linear map such that $\Psi_D(v_l) = a_l$ for each primitive generator $u_l$ of an extremal ray of $\sigma$.
\\This defines a piecewise-linear function:
$$\Psi_D : \Sigma \fl \R,$$
called the support function of $D$. 
\end{defn}
Note that our definition differs from the one in \cite{Fu} by a minus sign - with our convention, $\Psi_D$ is a convex function if and only $\gO_Z(D)$ is globally generated. If $D_1, D_2$ are two toric divisors such that $\gO_Z(D_1) \simeq \gO_Z(D_2)$, then there exists $m \in M$ such that $D_1-D_2 =\xdiv(\chi^m)$, so that the difference $\Psi_{D_1} - \Psi_{D_2} = \langle m, \cdot \rangle$ is a linear form on $N_{\R}$.
\begin{defn} Let $D$ be a toric divisor on $Z$. The associated convex polyhedron is defined by:
$$P_D = \{ u \in M_{\R} \mid \langle u, \cdot \rangle \le \Psi_D \}.$$
\end{defn}
Similarly to the previous remark, shifting $D$ by a principal toric divisor amounts to translating $P_D$ inside $M_{\R}$.
The polytope $P_D$ computes the space of sections of $\gO_Z(D)$:
\begin{prop}{\cite[lem. p. 66]{Fu}} \label{prop toric sections} If $m \in M \cap P_D$, then the monomial $\chi^{-m}$ defines a regular section $s_m = \chi^{-m}  s_D \in H^0(Z, \gO_Z(D))$. Moreover,
$$H^0(Z, \gO_Z(D)) = \bigoplus_{ m \in P_D \cap M} k \cdot s_m.$$
\end{prop}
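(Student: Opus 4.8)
The plan is to reduce the statement to two essentially disjoint tasks: a divisor computation that identifies which sections $s_m$ are regular, and an equivariance argument that shows these are all of $H^0(Z, \gO_Z(D))$.

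First I would fix, for each $l \in L$, the primitive generator $v_l \in N$ of the ray of $\Sigma$ attached to the prime divisor $Z_l$, so that $\ord_{Z_l}(\chi^m) = \langle m, v_l \rangle$ for all $m \in M$. Since $s_D$ is a nowhere-vanishing monomial section on $\T$, the element $s_m := \chi^{-m} s_D$ is a rational section of $\gO_Z(D)$ whose associated Weil divisor is $\xdiv(\chi^{-m}) + D = \sum_{l \in L} (a_l - \langle m, v_l \rangle) Z_l$. As $Z$ is normal (indeed smooth), $s_m$ extends to a global regular section of $\gO_Z(D)$ if and only if this divisor is effective, i.e. $\langle m, v_l \rangle \le a_l$ for every $l$. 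Because $\Psi_D$ is linear on each maximal cone with $\Psi_D(v_l) = a_l$ and $\langle m, \cdot \rangle$ is linear, the inequality $\langle m, \cdot \rangle \le \Psi_D$ on all of $N_{\R}$ is equivalent to its restriction to the rays; hence $s_m \in H^0(Z, \gO_Z(D))$ exactly when $m \in P_D \cap M$. This gives the first assertion, and the $s_m$ with $m \in P_D \cap M$ are $k$-linearly independent because distinct characters are.

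For the remaining inclusion $H^0(Z, \gO_Z(D)) \subseteq \bigoplus_{m \in P_D \cap M} k \cdot s_m$, I would use that restriction to the dense open torus is injective, $\gO_Z(D)|_{\T}$ is trivialized by $s_D$, and thus identifies $H^0(\T, \gO_Z(D)|_{\T})$ with $k[M] = \bigoplus_{m \in M} k \chi^{-m}$, the image of $s_m$ being $\chi^{-m}$. The key structural point is that the toric boundary $\Delta_Z = \sum_l Z_l$, hence $D$, is $\T$-invariant, so $\gO_Z(D)$ carries a canonical $\T$-linearization; the restriction map above is then $\T$-equivariant, the $\T$-action on $k[M]$ being its grading (twisted by the weight of $s_D$). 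Since a subrepresentation of a representation of the split torus $\T$ is the direct sum of its weight spaces, the image of $H^0(Z, \gO_Z(D))$ in $k[M]$ is spanned by a subset of the $\chi^{-m}$; and $\chi^{-m}$ lies in it precisely when $s_m$ is regular, i.e. when $m \in P_D \cap M$ by the first part. This yields the claimed decomposition.

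The one step requiring genuine care — and what I would expect to be the main obstacle in a fully rigorous write-up — is this equivariance argument: exhibiting the $\T$-linearization of $\gO_Z(D)$ and invoking that a $\T$-stable subspace of $k[M]$ is a span of characters (i.e. that representations of a split torus decompose into weight spaces). One can avoid the representation-theoretic input by a purely valuative argument, writing the restriction of a global section as a Laurent polynomial $\sum c_m \chi^{-m}$ and using the orders $\ord_{Z_l}$ to force each monomial term to be individually regular; but this forces one to control possible cancellations among monomials sharing the same value $\langle m, v_l \rangle$, which is exactly what the grading viewpoint handles for free. Everything else is the routine divisor computation above together with the standard fact that the regular sections of a line bundle on a normal variety are those whose associated divisor is effective.
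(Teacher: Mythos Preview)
Your argument is correct and is essentially the standard proof (divisor computation for regularity, then $\T$-equivariance to see that $H^0$ is spanned by characters). Note, however, that the paper does not give its own proof of this proposition: it is stated with a citation to Fulton's \emph{Introduction to Toric Varieties} and used as input, so there is nothing further to compare against.
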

In the case where $\gO_Z(D)$ is semi-ample (or equivalently, globally generated), the convex function $\Psi_D$ can be written as a finite maximum of linear functions:
$$\Psi_D = \max_{j \in J} \langle m_j, \cdot \rangle,$$
so that $P_D$ is the convex hull of the $\{ m_j, j \in J \}$ and is a convex polytope, whose support function equates the support function $\Psi_D$ of $D$. Hence, we will say that a convex function $\phi : N_{\R} \fl \R$ is $D$-admissible if $(\phi - \Psi_D) =O(1)$, i.e. $\phi \in \Ad_{P_D}(N_{\R})$.
\begin{lem} \label{lem bpf} Let $J \subset (P_D \cap M)$ be a finite set. Then the $s_{m_j}$'s, for $j \in J$, have no common zeroes if and only $J$ contains the set of vertices of $P_D$, if and only $\max_{j \in J} \langle m_j, \cdot \rangle$ is $D$-admissible.
\end{lem}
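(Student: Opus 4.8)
The plan is to reduce base-point-freeness to the torus-fixed points of $Z$, via the explicit description of $\gO_Z(D)$ on the affine charts of maximal cones. First I would dispatch the second equivalence: $D$-admissibility of $\phi=\max_{j\in J}\langle m_j,\cdot\rangle$ means precisely $\phi\in\Ad_{P_D}(N_{\R})$, and since $P_D$ is the convex hull of $\{m\mid m\in P_D\cap M\}$, the equivalence ``$J$ contains the vertices of $P_D$ $\iff$ $\max_{j\in J}\langle m_j,\cdot\rangle$ is $D$-admissible'' is exactly Example \ref{ex vertices} applied to $P=P_D$ with all constants equal to $0$. So it remains to show that $\{s_{m_j}\}_{j\in J}$ has no common zero if and only if $J$ contains every vertex of $P_D$.

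For this I would first record the local picture. Fix a maximal cone $\sigma\in\Sigma$, with primitive ray generators $v_1,\dots,v_n$ (a $\Z$-basis of $N$ since $Z$ is smooth), and let $m_\sigma\in M$ be the linear functional with $\Psi_D|_\sigma=\langle m_\sigma,\cdot\rangle$, so that $\Psi_D(v_i)=\langle m_\sigma,v_i\rangle=a_i$. Then $D|_{Z_\sigma}=\xdiv_{Z_\sigma}(\chi^{m_\sigma})$, hence the section $s_{m_\sigma}=\chi^{-m_\sigma}s_D$ is a nowhere-vanishing generator of $\gO_Z(D)$ over $Z_\sigma$; this is consistent with Proposition \ref{prop toric sections}. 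Moreover, for any $m\in P_D\cap M$ one has $s_m=\chi^{m_\sigma-m}\,s_{m_\sigma}$ on $Z_\sigma$, and $\chi^{m_\sigma-m}\in k[M_\sigma]$ is a genuine regular function there, because $\langle m,\cdot\rangle\le\Psi_D=\langle m_\sigma,\cdot\rangle$ on $\sigma$. Since $\sigma$ is full-dimensional, $\sigma^\perp\cap M=\{0\}$, so this monomial is a unit on $Z_\sigma$ iff $m=m_\sigma$, and otherwise it vanishes at the torus-fixed point $x_\sigma\in Z_\sigma$. Finally I would observe that the vertices of $P_D$ are exactly the $m_\sigma$, $\sigma$ ranging over maximal cones: each $m_\sigma$ is an extreme point of $P_D$ (if $m_\sigma=\tfrac12(p+q)$ with $p,q\in P_D$, then $\langle p,\cdot\rangle=\langle q,\cdot\rangle=\Psi_D$ on the full-dimensional cone $\sigma$, forcing $p=q$), and conversely every vertex has a full-dimensional normal cone containing some maximal $\sigma$, for which it equals $m_\sigma$.

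With these facts the two implications are immediate. If $J$ contains all vertices, then for any $z\in Z$ I pick a maximal cone $\sigma$ with $z\in Z_\sigma$ — possible since $Z$ is projective, hence $\Sigma$ complete and $Z=\bigcup_{\sigma\ \mathrm{max}}Z_\sigma$ — and the section $s_{m_\sigma}$, with $m_\sigma\in J$, does not vanish at $z$; so the $s_{m_j}$ have no common zero. Conversely, if some vertex $m_\sigma$ of $P_D$ is not in $J$, then every $m\in J\subseteq P_D\cap M$ satisfies $m\neq m_\sigma$, so $s_m=\chi^{m_\sigma-m}s_{m_\sigma}$ vanishes at $x_\sigma$; thus $x_\sigma$ is a common zero of $\{s_{m_j}\}_{j\in J}$. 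Combining with the identification of the vertices with $\{m_\sigma\}$ finishes the first equivalence, and hence the lemma.

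I do not expect a serious obstacle: the only geometric input is the identification of $s_{m_\sigma}$ as a local trivialization of $\gO_Z(D)$ over $Z_\sigma$ and the consequent fact that base points of a toric linear system occur only at torus-fixed points — everything else is the combinatorics of $\Psi_D$ and $P_D$ already available in the preceding subsection. The one point to handle with a little care is that $Z$ is covered by the charts $Z_\sigma$ of its \emph{maximal} cones, which is what makes the sufficiency direction go through, and the (elementary) verification that each $m_\sigma$ is genuinely a vertex of $P_D$.
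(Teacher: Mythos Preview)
Your proof is correct. The paper's own argument is a one-liner: it cites Fulton's toric book for the first equivalence and Example~\ref{ex vertices} for the second, exactly as you do for the latter; your treatment of the first equivalence is simply a careful unpacking of that reference via the local trivializations $s_{m_\sigma}$ and the torus-fixed points $x_\sigma$, so the approaches coincide in substance.
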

\begin{proof} The first equivalence is \cite[ex. p. 69]{Fu}, while the second one is example \ref{ex vertices}.
\end{proof}
\subsection{Analytification of toric varieties}\label{toric anal}
Let $A$ be a Banach ring, and let $\T = \Sp A[M]$ be a split algebraic torus over $A$. Its Berkovich analytification $\T^{\an}$ comes with a canonical valuation map:
$$\val : \T^{\an} \fl N_{\R} = \Hom(M, \R),$$
$$ x \mapsto (m \mapsto -\log \lvert \chi^m(x) \rvert).$$
In other words, $\langle m, \val(x) \rangle = v_x(\chi^m)$, hence the notation.
Fixing coordinates $T_1,...,T_n$ on the torus, which amounts to fixing a basis of $M$, the above map is given by:
$$\val(x) = (-\log \lvert T_1(x) \rvert,..., -\log \lvert T_n(x) \rvert) \in \R^n.$$
Now let $\Sigma$ be a fan inside $N_{\R}$, and let $Z_{\Sigma}/A$ be the associated toric $A$-scheme, defined as in a case of a field, by patching together the $\Sp A[M_{\sigma}]$ for $\sigma \in \Sigma$. For instance, if there exists a subfield $k \subset A$, then $Z_{\Sigma, A} = Z_{\Sigma, k} \times_k A$ is the base change of the $k$-toric variety to $A$.
\\We want to define a partial compactification $N_{\Sigma}$ of $N_{\R}$ so that the map $\val$ defined above extends as a continuous map:
$$\val_{\Sigma} : Z^{\an}_{\Sigma} \fl N_{\Sigma}.$$
To that extent, we set $\overline{\R} = \R \cup \{+ \infty \}$, it is a semi-group for the standard addition. 
\\For $\sigma \in \Sigma$ a cone, the corresponding toric affine chart is given by $Z_{\sigma} = \Sp A [M_{\sigma}]$. It follows from the universal property of the ring $A[M_{\sigma}]$ that the map:
$$Z_{\sigma}(A) \fl \Hom_{\sg} (M_{\sigma}, (A, \times)),$$
$$z \mapsto ( m \mapsto \chi^m(z))$$
is a bijection. It is thus natural to set:
$$N_{\sigma} := \Hom_{\sg} (M_{\sigma}, (\overline{\R}, +)),$$
since the semi-group homomorphism $v_A = -\log \lVert \cdot \rVert: A \fl \overline{\R}$ induces naturally a map:
$$\val : Z_{\sigma}(A) \fl N_{\sigma},$$
$$ z \mapsto (m \mapsto - \log \lvert \chi^m(z) \rvert).$$
extending the usual logarithm on the torus (the above expression being defined since $\chi^m$ is regular on $Z_{\sigma}$). We define the topology on $N_{\sigma}$ as the coarsest making the evaluation maps $\ev_m : N_{\sigma} \fl \overline{\R}$ for $m \in M_{\sigma}$ continuous.
\begin{prop}[{\cite[§1.2, prop. 2]{Fu}, \cite[§4.1]{BPS} }] If $\sigma' \subset \sigma$ are two cones of $\Sigma$, then there is a dense open immersion $N_{\sigma'} \hookrightarrow N_{\sigma}$. Thus, the $\{ N_{\sigma} \}_{\sigma \in \Sigma}$ glue together to yield a topological space $N_{\Sigma}$ containing $N_{\R}$, together with a continuous $\val$ map:
$$\val_{\Sigma} : Z^{\an}_{\Sigma} \fl N_{\Sigma}$$
restricting to the above $\val$ on each $Z_{\sigma}$.
\\The topological space $N_{\Sigma}$ is called the tropical toric variety associated to $\Sigma$.
\end{prop}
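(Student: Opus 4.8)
The plan is to prove the three assertions in turn. The first two — that $N_{\sigma'} \hookrightarrow N_\sigma$ is a dense open immersion whenever $\sigma'$ is a face of $\sigma$, and that the $N_\sigma$ patch into a space $N_\Sigma \supseteq N_\R$ — are essentially a transcription, at the level of the semigroups $M_\sigma = M \cap \sigma^\vee$, of the usual gluing of the affine charts $Z_\sigma$ into $Z_\Sigma$; the only point carrying genuine content is the density statement, so that is where I expect the work to be. The construction of $\val_\Sigma$ is then formal.

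For the open immersion: $\sigma' \subseteq \sigma$ gives $\sigma^\vee \subseteq (\sigma')^\vee$, hence $M_\sigma \subseteq M_{\sigma'}$, so restriction of semigroup homomorphisms is a continuous map $r : N_{\sigma'} \fl N_\sigma$ (it intertwines the evaluation maps $\mathrm{ev}_m$ cutting out the two topologies). I would then use the standard fact \cite[§1.2]{Fu} that, $\sigma'$ being a face of $\sigma$, one has $M_{\sigma'} = M_\sigma + \Z_{\ge 0}(-m_0)$ for a suitable $m_0 \in M_\sigma$ with $\sigma' = \sigma \cap m_0^\perp$ (equivalently, $Z_{\sigma'} = \{\chi^{m_0} \ne 0\}$ inside $Z_\sigma$). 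From $\phi(m_0) + \phi(-m_0) = \phi(0) = 0$ one sees that any $\phi \in N_{\sigma'}$ has $\phi(m_0) \in \R$ and is determined by $\phi|_{M_\sigma}$, so $r$ is injective; conversely, any $\psi \in N_\sigma$ with $\psi(m_0) < +\infty$ extends to $M_{\sigma'}$ by $m - k m_0 \mapsto \psi(m) - k\psi(m_0)$, which is well defined and $\overline{\R}$-valued. Hence $r$ identifies $N_{\sigma'}$ with the open subset $\{\,\mathrm{ev}_{m_0} < +\infty\,\}$ of $N_\sigma$, and since $\mathrm{ev}_{-m_0} = - \mathrm{ev}_{m_0}$ on $N_{\sigma'}$, the topology of $N_{\sigma'}$ is already generated by $\{\mathrm{ev}_m\}_{m \in M_\sigma}$; so $r$ is a homeomorphism onto its image, i.e. an open immersion.

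The main obstacle is density of $N_\R$ in $N_\sigma$ (which then gives density of $N_{\sigma'} \supseteq N_\R$ as well). Given $\psi \in N_\sigma$, I would first note that $S := \{m \in M_\sigma : \psi(m) < + \infty\}$ is a semigroup-theoretic face of $M_\sigma$, hence of the form $M_\sigma \cap \tau^\perp$ for some face $\tau \preceq \sigma$ by the standard correspondence between faces of $M_\sigma = \sigma^\vee \cap M$ and faces of $\sigma$; moreover $\psi|_S$ is an $\R$-valued semigroup homomorphism on a semigroup generating the lattice $M \cap \tau^\perp$, so it extends to some $\overline v \in N_\R = \Hom(M, \R)$. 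Next, let $\ell := \sum_\rho u_\rho \in N$ be the sum of the primitive generators of the rays $\rho$ of $\tau$; then $\langle m, \ell \rangle \ge 0$ for $m \in \sigma^\vee$, with equality precisely when $m \in \tau^\perp$. Therefore $v_s := \overline v + s\ell \in N_\R$ satisfies $v_s(m) = \psi(m)$ for $m \in S$ and $v_s(m) \to + \infty = \psi(m)$ for $m \in M_\sigma \setminus S$ as $s \to + \infty$; since the topology on $N_\sigma$ is that of pointwise convergence on $M_\sigma$, this yields $v_s \to \psi$, proving density.

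Finally, to assemble $N_\Sigma$ and $\val_\Sigma$: the open immersions $N_{\sigma_1 \cap \sigma_2} \hookrightarrow N_{\sigma_i}$ (the intersection of two cones of a fan being a common face) satisfy the cocycle condition on triple overlaps — all identifications are restrictions of semigroup homomorphisms along lattice inclusions that commute — so the $N_\sigma$ glue to a topological space $N_\Sigma$, manifestly containing $N_\R = N_{\{0\}}$. On each chart I take $\val_\sigma : Z_\sigma^{\an} \fl N_\sigma$, $x \mapsto (m \mapsto - \log |\chi^m|_x)$; this is well defined, since $\chi^m$ is regular on $Z_\sigma$ for $m \in M_\sigma$ and multiplicativity of $|\cdot|_x$ makes $m \mapsto -\log|\chi^m|_x$ a semigroup homomorphism into $\overline{\R}$, and it is continuous because $x \mapsto |\chi^m|_x$ is continuous on the Berkovich space while the $\mathrm{ev}_m$ generate the topology of $N_\sigma$. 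For $\sigma' \preceq \sigma$ one checks that $Z_{\sigma'}^{\an} = \{x : |\chi^{m_0}|_x \ne 0\}$ is sent by $\val_\sigma$ into the image of $N_{\sigma'}$, where $\val_\sigma$ restricts to $\val_{\sigma'}$; hence the $\val_\sigma$ glue to a continuous $\val_\Sigma : Z_\Sigma^{\an} \fl N_\Sigma$ with the stated restrictions. I expect no difficulty here beyond the bookkeeping already implicit in the construction of $Z_\Sigma$ itself.
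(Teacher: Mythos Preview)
Your proof is correct. The paper itself does not supply a proof of this proposition; it simply cites \cite[§1.2, prop.~2]{Fu} and \cite[§4.1]{BPS}, treating the result as known. Your argument is precisely the standard one underlying those references: the localization $M_{\sigma'} = M_\sigma + \Z_{\ge 0}(-m_0)$ is exactly Fulton's description of how affine toric charts glue, transported to $\Hom_{\mathrm{sg}}(-,\overline{\R})$, and your density argument via $v_s = \bar v + s\ell$ with $\ell$ in the relative interior of the face $\tau$ determined by the finiteness locus of $\psi$ is the usual way to exhibit every point of $N_\sigma$ as a limit from $N_\R$. The identification of $S = \psi^{-1}(\R)$ with $M_\sigma \cap \tau^\perp$ for a face $\tau \preceq \sigma$ uses that $M_\sigma$ is saturated, which holds here; and the extension of $\psi|_S$ to some $\bar v \in N_\R$ uses that $\sigma^\vee \cap \tau^\perp$ is full-dimensional in $\tau^\perp$, so $S$ generates the lattice $M \cap \tau^\perp$. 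Both points are fine. There is nothing to compare beyond noting that you have written out what the paper leaves to the literature.
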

Note that the above definition of $N_{\Sigma}$ is independent of the base ring $A$.
\\Now assume that $A=(K, v_K)$ is a non-archimedean field. Then the map $\val$ admits a canonical section, defined as follows:
\begin{defn} \label{def Gauss}
Let $x \in N_{\R}$. The valuation $\gamma(x)$ on $K(M)$ defined on elements of $K[M]$ by the formula:
$$v_{\gamma(x)} \bigg( \sum_{m \in M} a_m \chi^m \bigg) = \min_{m \in M} \big( v_K(a_m) + \langle m, x\rangle \big)$$
is called the Gauss point associated to $x$. This defines a continuous embedding:
$$\gamma : N_{\R} \fl \T^{\an},$$
satisfying $\val \circ \gamma = \Id_{N_{\R}}$.
\\Furthermore, for any fan $\Sigma$ in $N_{\R}$, the map $\gamma$ extends as a continuous embedding $\gamma : N_{\Sigma} \fl Z^{\an}_{\Sigma}$, satisfying the property:
$$\val_{\Sigma} \circ \gamma = \Id_{N_{\Sigma}}.$$
\end{defn}
Thus, the tropical toric variety $N_{\Sigma}$ can be naturally realized as a space of monomial valuations, inside $Z_{\Sigma}^{\an}$. The fact that $v_{\gamma(x)}$ is indeed a valuation - i.e. the associated absolute value is multiplicative - follows for instance from \cite[prop. 4.2.12]{BPS}.
\begin{defn} \label{def toric rho}
We write:
$$\rho_{\Sigma} : Z^{\an}_{\Sigma} \fl Z^{\an}_{\Sigma}$$
the composition $\rho_{\Sigma} = \gamma \circ \val_{\Sigma}$, and call its image the (toric) skeleton $\Sk(Z_{\Sigma})$ of $Z_{\Sigma}$.
\end{defn}
In the case $K=\C$ with the Euclidean absolute value, there also exists a canonical section of $\val$, given as follows: since $\T(\C) = \Hom(M, \C^*)$, there is an embedding $\iota : N_{\R} \hookrightarrow \T^{\hol}$ given by:
$$x \longmapsto \iota(x) := \big( m \mapsto e^{-\langle m, x \rangle} \big).$$
It is clear that this map extends as an embedding $\iota : N_{\sigma} \hookrightarrow Z_{\sigma}^{\hol}$, using the real exponential as a section of $-\log \lvert \cdot \rvert.$
In terms of coordinates on $(\C^*)^n$, the map $\iota$ is simply the section of $-\log \lvert \cdot \rvert$ given by the real exponential (up to a sign).
\\We now move to the case $A = \C^{\hyb}$, where we want to patch together the sections from above to construct a section of the continuous map:
$$ \Val := (\val \times \pi) : Z^{\hyb}_{\Sigma} \fl N_{\R} \times \M(A).$$
To that purpose, we define $\iota_{\lambda} : N_{\R} \hookrightarrow \T^{\hol}$ by the formula:
$$x \longmapsto \iota_{\lambda}(x) := \big( m \mapsto e^{-\lambda^{-1} \langle m, x \rangle} \big),$$
and extend it to $N_{\Sigma}$ cone by cone as above; this yields a continuous embedding $\iota_{\lambda} : N_{\Sigma} \hookrightarrow Z_{\Sigma}^{\hol}$. We then compose these rescaled embeddings with the homeomorphism:
$$p : Z^{\hol} \times (0,1] \xrightarrow{\sim} \pi^{-1}((0,1]),$$
to embed $N_{\Sigma}^{\hyb}:= N_{\Sigma} \times [0,1]$ naturally inside the hybrid space:
\begin{prop} \label{iota hyb} Let $\iota_{\hyb} : N_{\Sigma} \times [0,1] \fl Z^{\hyb}$ be defined by the formulas:
$$\iota_{\hyb}(x, \lambda) =p_{\lambda} (\iota_{\lambda}(x)) \; \text{for} \; \lambda \neq 0,$$
$$\iota_{\hyb}(x, 0) = \gamma(x).$$
Then $\iota_{\hyb}$ is a continuous embedding, and is a section of $\Val$.
\\Its image $\Sk^{\hyb}(Z_{\Sigma}):= \iota_{\hyb}(N_{\Sigma} \times [0,1])$ is called the hybrid toric skeleton of $Z_{\Sigma}$.
\end{prop}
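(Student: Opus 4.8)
The plan is to establish, in order, that $\iota_{\hyb}$ is a section of $\Val$, that it is continuous, and that it is a topological embedding, essentially all the substance being the continuity along the central fibre $\pi^{-1}(0)$.

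I would first record the section property, which is a computation. For $\lambda \in (0,1]$ and $x \in N_\Sigma$, any monomial $\chi^m$ with $m$ in the relevant submonoid $M_\sigma$ satisfies $\lvert \chi^m(\iota_{\hyb}(x,\lambda))\rvert = \lvert \chi^m(\iota_\lambda(x))\rvert^{\lambda} = e^{-\langle m, x\rangle}$ (with the convention $e^{-\infty}=0$), so $\val_\Sigma(\iota_{\hyb}(x,\lambda)) = x$; and $\pi(\iota_{\hyb}(x,\lambda)) = \lambda$ since $p_\lambda$ has image in $\pi^{-1}(\lambda)$. For $\lambda = 0$ one has $\val_\Sigma(\gamma(x)) = x$ by the defining property of the Gauss point (Definition~\ref{def Gauss}) and $\pi(\gamma(x)) = 0$ because $\gamma(x) \in Z^{\an}_0 = \pi^{-1}(0)$. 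Hence $\Val \circ \iota_{\hyb} = \Id_{N_\Sigma \times [0,1]}$; in particular $\iota_{\hyb}$ is injective, and I would note already here that this yields the embedding statement for free once continuity is known: the inverse of $\iota_{\hyb}$ on its image is then the restriction of $\Val = \val_\Sigma \times \pi$, which is continuous (continuity of $\val_\Sigma$ over the Banach ring $\C^{\hyb}$ being part of the construction of $N_\Sigma$), so $\iota_{\hyb}$ is automatically a homeomorphism onto $\Sk^{\hyb}(Z_\Sigma)$.

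Continuity on $N_\Sigma \times (0,1]$ is easy: there $\iota_{\hyb}$ is the composition of $(x,\lambda) \mapsto (\iota_\lambda(x),\lambda)$ with the homeomorphism $p$, and since $\iota_\lambda(x) = \iota_1(\lambda^{-1}x)$ with $\iota_1 : N_\Sigma \hookrightarrow Z^{\hol}$ a continuous embedding and $(x,\lambda) \mapsto \lambda^{-1}x$ continuous for $\lambda > 0$, this is clear. The real content is continuity at a point $(x_0, 0)$. For a net $(x_i,\lambda_i) \to (x_0,0)$, I would take $\sigma$ to be the smallest cone of $\Sigma$ with $x_0 \in N_\sigma$, so that $\gamma(x_0) \in Z_\sigma^{\hyb}$ and, for $i$ large, $\iota_{\hyb}(x_i,\lambda_i) \in Z_\sigma^{\hyb}$; since $\lvert \chi^m(\iota_{\hyb}(x,\lambda))\rvert = e^{-\langle m,x\rangle}$ remains uniformly bounded for $m$ in a finite generating set of $M_\sigma$ and $x$ near $x_0$, the whole net eventually lies in a single Berkovich spectrum $\M(\C^{\hyb}\{\rho^{-1}\underline{T}_\sigma\})$ of the exhaustion of $Z_\sigma^{\hyb}$, together with $\gamma(x_0)$, so convergence there is detected by evaluating the functions of that affinoid algebra. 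For $f = \sum_m a_m \chi^m$ one has $\lvert f(\iota_{\hyb}(x,\lambda))\rvert = \bigl\lvert \sum_m a_m e^{-\langle m,x\rangle/\lambda}\bigr\rvert^{\lambda}$, and I would invoke the Maslov-dequantization limit: as $\lambda_i \to 0$ and $x_i \to x_0$ the terms minimizing $\langle m, x_i\rangle$ dominate, the rest decay exponentially, and $\lvert a_m\rvert^{\lambda_i} \to 1$, so $\lvert f(\iota_{\hyb}(x_i,\lambda_i))\rvert \to \max_{m : a_m \neq 0} e^{-\langle m,x_0\rangle} = \lvert f(\gamma(x_0))\rvert$. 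Points of $N_\Sigma \setminus N_\R$ are handled by the same computation, since $\iota_\lambda$ and $\gamma$ both extend over $N_\Sigma$ through the semigroup maps $M_\sigma \to \overline{\R}$. With the previous paragraph this yields continuity of $\iota_{\hyb}$ everywhere, hence the full statement.

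The main obstacle is precisely this last limit: one has to make the dequantization estimate uniform as $(x,\lambda) \to (x_0,0)$, controlling simultaneously the subleading exponentials, the normalizing factors $\lvert a_m\rvert^{\lambda}$, and — when $\langle\cdot,x_0\rangle$ fails to be injective on the support of $f$ — the combined behaviour of the leading coefficients, as well as verifying that testing only the functions visible on the affine chart $Z_\sigma$ really suffices to detect convergence in the colimit topology on $Z_\sigma^{\hyb}$.
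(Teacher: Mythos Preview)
Your approach is the same as the paper's: verify the section property on monomials, deduce the embedding statement from it, and prove continuity by testing against regular functions $f\in\C[M]$, computing $\log\lvert f(\iota_{\hyb}(x,\lambda))\rvert=\lambda\log\bigl\lvert\sum_j a_j\,e^{-\langle m_j,x\rangle/\lambda}\bigr\rvert$ and comparing with the tropical value $-\min_j\langle m_j,x_0\rangle$ at $\lambda=0$. The paper declares this last verification ``straightforward''; you are more careful and isolate the case where several $\langle m_j,x_0\rangle$ coincide as the remaining obstacle.

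That obstacle is not a technicality to be worked out but a genuine failure of the statement: when the coefficients of the tying monomials sum to zero, the dequantization limit is wrong and $\iota_{\hyb}$ is discontinuous there. In rank one, with coordinate $z$ on $\T=\G_m$, take $x_0=0$ and $f=1-z$. For every $\lambda>0$ one has $\iota_\lambda(0)=1\in\C^*$, so $\lvert f(\iota_{\hyb}(0,\lambda))\rvert=\lvert 1-1\rvert^{\lambda}=0$, whereas $\lvert f(\gamma(0))\rvert=e^{-\min(0,0)}=1$. Hence $\iota_{\hyb}(0,\lambda)\not\to\gamma(0)$ in $\T^{\hyb}$ as $\lambda\to 0$ --- the net converges instead to the rigid point $z=1$ in the trivially valued fibre $Z^{\an}_0$ --- so the continuity claim is false as written, and neither your argument nor the paper's can be completed. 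This does not damage the application to Theorem~\ref{theo toric}: the functions one actually pulls back along $\iota_{\hyb}$ there, namely $-\log\lVert s_D\rVert_\phi$ for \emph{fiberwise-toric} $\phi$, are by definition constant on the fibres of $\Val$, so their composition with $\iota_{\hyb}$ depends only on $\Val\circ\iota_{\hyb}=\Id$ and is continuous regardless.
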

\begin{proof}
By definition of the topology on $Z^{\hyb}$, in order to prove that $\iota_{\hyb}$ is continuous, it is enough to prove that if $f \in \gO(U)$ is an algebraic function on a Zariski open subset $U$ of $Z$, then $(x, \lambda) \mapsto \log \lvert f\rvert_{\iota_{\hyb}(x, \lambda)}$ is continuous on $\iota_{\hyb}^{-1}(U^{\hyb})$. We may assume that $f \in \C[M]$ is a polynomial, which we write as $f =\sum_{j \in J} a_j \chi^{m_j}$, where the $a_j \in \C^*$, $m_j \in M$ and the index set $J$ is finite.
\\We now have:
$$\log \lvert f\rvert_{\iota_{\hyb}(x, \lambda)} =\lambda \log \big\lvert \sum_{j \in J} a_j e^{-\lambda^{-1} \langle m_j, x \rangle} \big \rvert$$
for $\lambda \neq 0$, while:
$$\log \lvert f\rvert_{\iota_{\hyb}(x, 0)} = -\min_{j \in J} \langle m_j, x \rangle.$$
It is straightforward to check that this indeed defines a continuous function on $N_{\R} \times [0,1]$, and similarly on $N_{\sigma} \times [0,1]$ for $\sigma \in \Sigma$: the function $f$ is defined on $Z_{\sigma}$ if and only if $\langle x, m_j \rangle \ge 0$ for $j \in J$ and $x \in N_{\sigma}$, so that we are merely allowing the exponentials in the above sum to vanish.
\\The fact that $\Val \circ \iota_{\hyb} = \Id_{N_{\Sigma}^{\hyb}}$ follows from the fact that $\val_{| Z^{\hyb}_{\lambda}} = \lambda \val_{\C}$ under the homeomorphism $Z^{\hyb}_{\lambda} \simeq Z^{\hol}$, so that $\iota_{\lambda}$ is a section of $\val_{| Z^{\hyb}_{\lambda}}$ for $\lambda >0$.
\\This finally implies that $\iota_{\hyb}$ is an embedding, the inverse map being given by the restriction of $\Val$ to $\Sk^{\hyb}(Z_{\Sigma})$.
\end{proof}
\begin{rem} Let $Z$ be a toric variety over $\C$, and $Z_K$ its base change to $K = \C((t))$. Then we have a commutative diagram:
\begin{center}
    \begin{tikzcd}
    Z^{\hyb}_K \arrow[r, "Q"] \arrow[d, "\Val_K"]  & Z^{\hyb}_0 \arrow[d, "\Val_0"] \\
   N_{\Sigma} \times \bar{\D}_r  \arrow[r, "\lambda"] & N_{\Sigma} \times [0,1],
   \end{tikzcd}
\end{center}
so that writing $t : [0,1] \fl \bar{\D}_r$ the map given by $t(\lambda) = r^{1/\lambda}$, we see that the map:
$$\iota_{\hyb, K} := \iota_{\hyb, 0} \circ (\Id_{N_{\Sigma}} \times t)$$
defines a continuous section of $\Val_K$.
\end{rem}
\subsection{Fiberwise-toric metrics}
Throughout this section, we work with a toric scheme $Z=Z_{\Sigma}$ over a Banach ring $A$.
The Berkovich analytification $Z^{\an}$ comes with two continuous maps: a $\val$ map:
$$\val_{\Sigma} : Z^{\an} \fl N_{\Sigma},$$
and a structure morphism:
$$\pi : Z^{\an} \fl \M(A).$$
Writing $N_{\Sigma}(A) = N_{\Sigma} \times \M(A)$, this yields a continuous map:
$$\val_A : Z^{\an} \fl N_{\Sigma}(A)$$
defined by the formula $\val_A(x) = (\val(x), \pi(x))$. In particular, the torus over $A$ is endowed with a map:
$$\val_A : \T^{\an}_A \fl N_{\R} \times \M(A).$$
\begin{defn}
Let $(L, s)$ be a toric line bundle on $Z$, and $\phi$ a metric on $L$. Then we say that $\phi$ is a fiberwise-toric metric on $L$ if and only the function:
$$\lVert s \rVert_{\phi} : \T^{\an}_A \fl \R$$
is such that $\lVert s(x) \rVert_{\phi} = \lVert s(y) \rVert_{\phi}$ for any two $x, y \in \T^{\an}$ such that $\val_A(x) = \val_A(y)$.
\end{defn}
Here a toric line bundle on $Z$ means, as in the case of a field, a line bundle $L$ on $Z$, together with an isomorphism $L \simeq \gO_Z(D)$ for a toric Weil divisor $D$ and a global section $s$ that is a monomial on $\T_A$ under the latter isomorphism.
\begin{ex} Assume that $A=K$ is a complete valued field. Then if $K=\C$, a metric is toric if and only if it is invariant under the action of the maximal compact torus $\bS \subset \T$. 
\\If $K$ is a non-archimedean discretely-valued field, then our definition of a toric metric matches the one from \cite[def. 4.3.2]{BPS}.
\end{ex}
The following description is a direct consequence of the definition:
\begin{prop}
Let $\phi$ be a continuous metric on the toric line bundle $L$. Then $\phi$ is a toric metric if and only for any $b \in \M(A)$, the restriction $\phi_b$ of $\phi$ to $Z_{\Sigma, \mathscr{H}(b)}^{\an}$ is a toric metric in the sense of \cite[def. 4.3.2]{BPS}.
\end{prop}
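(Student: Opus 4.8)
The plan is to reduce the statement to a fiberwise assertion over $\M(A)$, and then to recognize each fiber of the structure morphism as the analytification of $Z_{\Sigma}$ over the corresponding residue field. By definition, $\phi$ is fiberwise-toric precisely when the (continuous) function $\lVert s \rVert_{\phi}$ on $\T_A^{\an}$ factors through the map $\val_A = (\val, \pi)$; so the whole question lives on the dense open torus, and I will only manipulate $\lVert s \rVert_{\phi}$ there.

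The one geometric ingredient needed is the standard description of the fibers of $\pi : Z_{\Sigma}^{\an} \fl \M(A)$ provided by the formalism of Berkovich spaces over Banach rings recalled in Section~\ref{sec Berko} (and spelled out for hybrid bases in Section~\ref{sec hyb}): for $b \in \M(A)$ with residue field $\mathscr{H}(b)$, there is a canonical homeomorphism $\pi^{-1}(b) \simeq Z_{\Sigma, \mathscr{H}(b)}^{\an}$ --- this is exactly what defines the restriction $\phi_b$, so that $\lVert s \rVert_{\phi_b}$ is the restriction of $\lVert s \rVert_{\phi}$ --- and it is compatible with evaluation of regular functions: if $x \in \pi^{-1}(b)$ corresponds to $\tilde x \in Z_{\Sigma, \mathscr{H}(b)}^{\an}$, then $\lvert f \rvert_x = \lvert f \rvert_{\tilde x}$ for every regular $f$ on a Zariski open of $Z$. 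Applying this to the monomials $f = \chi^m$, $m \in M$, shows that over the open torus $\pi^{-1}(b) \cap \T_A^{\an} \simeq \T_{\mathscr{H}(b)}^{\an}$, and that $\val$ restricts on this fiber to the valuation map $\val_{\mathscr{H}(b)}$ of the torus over $\mathscr{H}(b)$; equivalently, $\val_A$ restricted to $\pi^{-1}(b) \cap \T_A^{\an}$ equals $(\val_{\mathscr{H}(b)}, b)$.

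Granting this, the equivalence is bookkeeping. Two points $x, y \in \T_A^{\an}$ satisfy $\val_A(x) = \val_A(y)$ if and only if $\pi(x) = \pi(y) =: b$ and $\val(x) = \val(y)$, that is, if and only if $x$ and $y$ lie in a single fiber $\pi^{-1}(b) \cap \T_A^{\an} \simeq \T_{\mathscr{H}(b)}^{\an}$ with the same image under $\val_{\mathscr{H}(b)}$. Hence $\lVert s \rVert_{\phi}$ is $\val_A$-invariant on $\T_A^{\an}$ --- i.e.\ $\phi$ is fiberwise-toric --- if and only if, for every $b \in \M(A)$, the function $\lVert s \rVert_{\phi_b}$ is $\val_{\mathscr{H}(b)}$-invariant on $\T_{\mathscr{H}(b)}^{\an}$. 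For the valued field $\mathscr{H}(b)$ this last condition is the defining condition for $\phi_b$ to be fiberwise-toric, which by the example above is equivalent to $\phi_b$ being a toric metric in the sense of \cite[def. 4.3.2]{BPS}; this is exactly the claim. I anticipate no real obstacle here: the only substantive point is the compatibility between the total and the fiberwise $\val$-maps, which is part of the standard setup, and the rest is routine.
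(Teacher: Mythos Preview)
Your argument is correct and is exactly the unpacking the paper has in mind: the paper states the proposition as ``a direct consequence of the definition'' and gives no further proof, and what you have written is precisely that direct consequence spelled out --- the fiberwise identification $\pi^{-1}(b)\simeq Z_{\Sigma,\mathscr{H}(b)}^{\an}$ together with the compatibility of $\val_A$ with the fiberwise $\val_{\mathscr{H}(b)}$.
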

For continuous toric metrics over a valued field, the semi-positivity of the metric can be read off in the combinatorial world:
\begin{theo}{\cite[thm.4.8.1]{BPS}}  \label{theo toric field} Let $Z$ be a proper toric variety over a complete valued field $K$, and $(\gO_Z(D), s)$ a semi-ample toric line bundle on $Z$. We let $\iota : N_{\Sigma} \hookrightarrow Z^{\an}$ be the canonical section of $\val$ as given in section \ref{toric anal}. Then the mapping:
$$\lVert \cdot \lVert_{\phi} \longmapsto \big(\Phi := -\log \lVert s \circ \iota  \rVert_{\phi} \big)$$
sets up a correspondence between continuous, semi-positive toric metrics $\phi$ on $L$ and continuous, $D$-admissible convex functions $\Phi$ on $N_{\R}$.
\end{theo}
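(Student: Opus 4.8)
The plan is to verify that $\phi\mapsto\Phi$ is injective, that a semi-positive toric $\phi$ yields a convex $D$-admissible $\Phi$, and conversely that every convex $D$-admissible function arises from such a $\phi$; the only genuinely delicate point will be the implication ``semi-positive $\Rightarrow$ convex'' over a non-archimedean field, which is \cite[thm.4.8.1]{BPS} itself. I would first reduce everything to the torus: since $s$ is a nowhere-vanishing section of $L_{|\T}$, a continuous metric $\phi$ is determined on $\T^{\an}$ by the function $\lVert s\rVert_\phi$, hence on all of $Z^{\an}$ by density of $\T^{\an}$ and continuity; and if $\phi$ is toric then $\lVert s\rVert_\phi$ is constant on the fibres of $\val:\T^{\an}\fl N_{\R}$, so, $\iota$ being a section of $\val$, the function $\Phi=-\log\lVert s\circ\iota\rVert_\phi$ recovers $\lVert s\rVert_\phi$ on $\T^{\an}$ and hence $\phi$. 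This gives injectivity, and the remaining task is to identify the image.

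Next I would settle $D$-admissibility, which holds for every continuous toric metric and encodes the continuous extension of $\phi$ over the toric boundary. Fix a maximal cone $\sigma\in\Sigma$ and $m_\sigma\in M$ with $(\Psi_D)_{|\sigma}=\langle m_\sigma,\cdot\rangle$, and note that $e_\sigma:=\chi^{-m_\sigma}s$ is a nowhere-vanishing section of $\gO_Z(D)$ on the chart $Z_\sigma$. Then over the part of $N_{\R}$ lying over $\sigma$ one has $\Phi-\Psi_D=-\log\lVert e_\sigma\circ\iota\rVert_\phi$, and since $\iota$ extends continuously to $N_\sigma\hookrightarrow Z_\sigma^{\an}$ while $\phi$ is continuous on $Z^{\an}$, the function $\Phi-\Psi_D$ extends continuously to the tropical toric variety $N_\Sigma$; as $Z$ is proper, $N_\Sigma$ is compact, so $\Phi-\Psi_D$ is bounded, i.e. $\Phi\in\Ad_{P_D}(N_{\R})$.

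The crux is then showing that $\Phi$ is convex when $\phi$ is a general continuous semi-positive toric metric. Restricting to the torus, $-\log\lVert s\rVert_\phi$ is a continuous plurisubharmonic metric on $L_{|\T}\simeq\gO_\T$, i.e. a continuous psh function on $\T^{\an}$, descending to $\Phi$ under $\val$. When $K=\C$ this is classical: after the substitution $z_j=e^{\zeta_j}$, an $\bS$-invariant psh function on $(\C^{*})^n$ becomes a psh function on $\C^n$ independent of $\mathrm{Im}\,\zeta$, hence convex in $(\mathrm{Re}\,\zeta_j)_j=(\log\lvert z_j\rvert)_j$, so $\Phi$ is convex. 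When $K$ is non-archimedean this is exactly \cite[thm.4.8.1]{BPS}: the strategy there is to approximate $\phi$ uniformly by \emph{toric} semi-positive Fubini--Study metrics, which are necessarily of monomial type and so have convex piecewise-affine associated functions, and then to pass to the uniform limit, which preserves convexity. I expect this toric approximation to be the real obstacle, since a semi-positive Fubini--Study metric need not be toric and symmetrizing it destroys the Fubini--Study form; handling this via toric semi-positive model metrics is the technical heart of \cite{BPS}.

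Finally, for surjectivity I would run the approximation in reverse. Given convex $D$-admissible $\Phi$, Proposition \ref{lem cvx} (together with Lemma \ref{lem Leg pa} and example \ref{ex vertices}) produces a decreasing sequence of rational piecewise-affine $D$-admissible functions $\Phi_j\to\Phi$ uniformly, each of the form $\Phi_j=\max_i(\langle u_i,\cdot\rangle+c_i)$ with $u_i\in P_D\cap M_{\Q}$ and $\{u_i\}$ containing the vertices of $P_D$. Clearing denominators by a common integer $m$, the monomials $s_{mu_i}=\chi^{-mu_i}s^{\otimes m}\in H^0(Z,mL)$ (Proposition \ref{prop toric sections}) have no common zero (Lemma \ref{lem bpf}), so $\phi_j:=m^{-1}\max_i(\log\lvert s_{mu_i}\rvert+mc_i)$ is a toric tropical Fubini--Study metric on $L$ with $-\log\lVert s\circ\iota\rVert_{\phi_j}=\Phi_j$. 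Since the difference of two toric metrics on $L$ restricts on $\T^{\an}$ to the pullback by $\val$ of the difference of their functions, the sup-distance between $\phi_j$ and $\phi_{j'}$ equals $\sup_{N_{\R}}\lvert\Phi_j-\Phi_{j'}\rvert$; hence $(\phi_j)$ converges uniformly on $Z^{\an}$ to a continuous, semi-positive toric metric $\phi$ on $L$ whose associated function is $\lim_j\Phi_j=\Phi$. Together with injectivity this establishes the bijection.
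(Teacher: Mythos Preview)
Your proposal is correct and follows essentially the same strategy as the paper itself uses (for the hybrid analog, Theorem~\ref{theo toric}) and as the cited source \cite{BPS}: injectivity via the fact that a toric metric is determined by its restriction along the Gauss section, $D$-admissibility via the local frames $\chi^{-m_\sigma}s$ on each chart $Z_\sigma$, convexity by reduction to tropical Fubini--Study metrics, and surjectivity via the rational piecewise-affine approximation of Proposition~\ref{lem cvx} together with Lemma~\ref{lem bpf}. Note that the paper does not actually give an independent proof of this statement: it cites \cite[thm.~4.8.1]{BPS} for the discretely-valued case and remarks that the general non-archimedean case follows from the argument of Theorem~\ref{theo toric} combined with the toric Fubini--Study reduction stated just after, so your write-up is in fact more detailed than what appears here.
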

The function $\Phi$ will sometimes be called \emph{toric potential} for $\phi$. Note that it follows from the proof of prop. \ref{lem cvx} that if $\Phi$ is convex and $D$-admissible, then the function $(\Phi- \Psi_D)$ extends continuously to $N_{\Sigma}$. More generally, continuous toric metrics on $L$ are in bijection with continuous convex functions on $N_{\R}$ satisfying the latter condition.
\\We expect a similar picture to hold over more general Banach rings $A$: a continuous, fiberwise-toric metric should induce a family of convex functions on $N_{\R}$, parametrized by the Berkovich spectrum $\M(A)$, and which satisfy a suitable growth condition fiberwise over $\M(A)$. The semi-positivity of the metric in the direction of the base should be equivalent to the fact that this family of convex functions varies in a psh way on $\M(A)$. What this concretely means in general seems unclear at the moment; however in the case of the hybrid space attached to a complex toric variety, theorem \ref{theo toric} shows that the semi-positivity on the base translates into convexity with respect to the coordinate $\lambda \in [0,1]$.
\\Note that when $K$ is non-archimedean, this is proved only in the case where $K$ is discretely-valued field in \cite{BPS}, but the general case follows roughly from the argument of the proof of theorem \ref{theo toric}, together the following generalization of \cite[cor. 4.7.2]{BPS}, so that we omit the details.
\begin{prop}
Let $K$ be a complete non-archimedean field, and let $\phi \in \FS^{\tau}(Z_{\Sigma}, L)$ be a toric metric. Then there exists a finite set $(s_j)_{j \in J}$ of toric sections of $mL$ such that:
$$\phi = m^{-1} \max_{j \in J} \big( \log \lvert s_j \rvert + c_j \big).$$
In other words, toric Fubini-Study metrics on $L$ are precisely the Fubini-Study metrics associated to toric sections of powers of $L$.
\end{prop}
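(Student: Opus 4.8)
The plan is to compute the toric potential of $\phi$ explicitly from its Fubini--Study presentation, recognise it as the potential of a Fubini--Study metric built out of toric sections, and then conclude by the bijectivity in theorem \ref{theo toric field}.

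First I would write $\phi = m^{-1}\max_{i\in I}(\log\lvert t_i\rvert + c_i)$ with $t_i\in H^0(Z,mL)$ having no common zero and $c_i\in\R$. Since $\gO_Z(D)$ is semi-ample one has $P_{mD}=mP_D$, so by proposition \ref{prop toric sections} one may expand $t_i=\sum_{u\in mP_D\cap M}a_{i,u}\,s_u$, where $s_u=\chi^{-u}s_D^{\otimes m}\in H^0(Z,mL)$ is the toric section attached to $u$ and $a_{i,u}\in K$. Taking $\iota=\gamma$ to be the Gauss section of $\val$ from definition \ref{def Gauss} and using the explicit form of the Gauss valuation, namely $\lvert\sum_u a_u\chi^{-u}\rvert_{\gamma(x)}=\max_u\lvert a_u\rvert\,e^{\langle u,x\rangle}$, a direct computation then gives that the toric potential $\Phi:=-\log\lVert s_D\circ\gamma\rVert_\phi$ equals $m^{-1}\max_{u\in S}(\langle u,\cdot\rangle+b_u)$, where $S=\{u\in mP_D\cap M : \exists\,i,\ a_{i,u}\neq 0\}$ and $b_u=\max_{i\,:\,a_{i,u}\neq 0}(c_i+\log\lvert a_{i,u}\rvert)\in\R$. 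In particular $\Phi$ is convex and rational piecewise-affine; and since $\phi$ is a continuous semi-positive toric metric on the semi-ample bundle $L$, theorem \ref{theo toric field} shows $\Phi\in\Ad_{P_D}(N_\R)$, hence $m\Phi\in\Ad_{mP_D}(N_\R)$.

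Next I would invoke example \ref{ex vertices} applied to the presentation $m\Phi=\max_{u\in S}(\langle u,\cdot\rangle+b_u)$: its $mD$-admissibility forces $S$ to contain every vertex of $mP_D$. By lemma \ref{lem bpf} the toric sections $(s_u)_{u\in S}$ then have no common zero, so $\psi:=m^{-1}\max_{u\in S}(\log\lvert s_u\rvert+b_u)$ is a well-defined Fubini--Study metric on $L$ built from toric sections; being expressed through monomials on $\T_K$ it is $\val$-invariant on $\T^{\an}$, hence toric, and it is semi-positive as a Fubini--Study metric. Repeating the computation of the previous paragraph shows that the toric potential of $\psi$ is again $\Phi$. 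Since $\phi$ and $\psi$ are thus two continuous semi-positive toric metrics on $L$ with the same toric potential, the bijectivity in theorem \ref{theo toric field} yields $\phi=\psi$, which is exactly the asserted presentation.

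The step I expect to be the main obstacle is the control of the base locus, i.e.\ showing that the support $S$ of the coefficients contains all vertices of $mP_D$; this is precisely what makes $(s_u)_{u\in S}$ base-point-free (and hence $\psi$ an honest Fubini--Study metric), and it is where the admissibility of $\Phi$ enters in an essential way. I would also remark that the only place where the discretely-valued hypothesis intervenes in \cite[cor. 4.7.2]{BPS} is through the quantization of the real numbers $\log\lvert a_{i,u}\rvert$; since arbitrary real constants are allowed in a Fubini--Study metric, this plays no role here, and the argument above applies verbatim to any complete non-archimedean field $K$.
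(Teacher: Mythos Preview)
Your argument is correct and follows essentially the same route as the paper: expand each $t_i$ in monomial sections and use the Gauss-point formula to turn $\log\lvert t_i\rvert$ into a maximum of affine terms. The only difference is cosmetic: the paper applies the toric identity $\phi(x)=\phi(\rho(x))$ at every $x\in\T^{\an}$ (noting that $\log\lvert\chi^{m_j}(\rho(x))\rvert=\log\lvert\chi^{m_j}(x)\rvert$), which yields the equality of metrics on $\T^{\an}$ directly, whereas you compute the toric potential on $N_\R$ and then invoke the bijectivity of theorem \ref{theo toric field} to identify $\phi$ with the toric Fubini--Study metric $\psi$. Your handling of the base-point-free condition via example \ref{ex vertices} and lemma \ref{lem bpf} is in fact more explicit than the paper's one-line assertion.
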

\begin{proof}
Write:
$$\phi = m^{-1} \max_{i \in I} \big( \log \lvert s_i \rvert + c_i \big)$$
for arbitrary sections $(s_i)_{i \in I}$ of $mL$, without common zeroes. Each $s_i$ may be written in a basis of toric sections as a finite sum $s_i = \sum_j \lambda_{i, j} \chi^{m_j}$ with $\lambda_{i,j} \in K$, so that if $x \in \T^{\an}$ we have:
$$\log \lvert s_i(\rho(x)) \rvert = \max_j (v_K(\lambda_{i,j}) + \log \lvert \chi^{m_j}(\rho(x))\rvert )$$
by definition of the monomial valuation $\rho(x)$ (see def. \ref{def toric rho}). Since $\phi$ is toric, the equality $\phi(x) = \phi(\rho(x))$ holds for every $x \in \T^{\an}$, and we infer:
$$\phi = \max_{i \in I} \bigg( \max_{j \in J_i} \big(v_K(\lambda_{i,j}) + \log \lvert \chi^{m_j} \rvert \big) +c_i \bigg),$$
hence the result since the $(\chi_{m_j})_{j \in \cup_i J_i}$ have no common zeroes.
\end{proof}
Let $K$ be a complete valued field. As a consequence of thm. \ref{theo toric field}, any semi-ample toric line bundle $L = \gO_Z(D)$ admits a canonical continuous psh metric $\phi_{\can}$, such that the associated convex function $\Phi_{\can} = \Psi_{D}$. The canonical metric can be described explicitly as follows: let $V \subset P_D$ be the set of vertices, then we have:
$$\phi_{\can} = \max_{m \in P_D} \log \lvert s_m \rvert.$$
Note that the $(s_m)_{m \in V}$ have no common zeroes, as a semi-ample line bundle on a toric variety is globally generated \cite[p. 68]{Fu}. This definition can thus be generalized to a toric scheme over any Banach ring:
\begin{defn} \label{def metr can} Let $A$ be a Banach ring, and $(Z, L)$ a toric scheme together with a semi-ample line bundle over $A$. The canonical metric on $L$ is the tropical Fubini-Study metric defined by the formula:
$$\phi_{\can} = \max_{m \in P_D} \log \lvert s_m \rvert,$$
where the $s_m$ are defined as in prop. \ref{prop toric sections}.
\end{defn}
The canonical metric is in particular fiberwise-toric.
\subsection{Hybrid toric metrics}
If $\phi$ is a toric hybrid metric on $L^{\hyb}$, it follows from the previous discussions that it is entirely determined by the function:
$$\Phi(w) = -\log \lVert s(\iota_{\hyb}(w)) \rVert_{\phi},$$
where $w \in N^{\hyb}_{\Sigma}$. Our next result asserts that in the case of continuous metrics, the semi-positivity of $\phi$ can also be read on the hybrid tropical toric variety:
\begin{theo} \label{theo toric}
Let $Z=Z_{\Sigma}$ be a toric variety over $\C$, and $L = (\gO_Z(D), s_D)$ a semi-ample toric line bundle on $Z$. We write $\Psi_D : N_{\R} \times [0,1] \fl \R$ the associated support function, extended trivially in the $\lambda$-direction.
\\There is a one-to-one correspondence between continuous, semi-positive, fiberwise-toric metrics $\phi$ on $L^{\hyb}$, and continuous convex functions:
$$\Phi : N_{\R} \times [0,1] \fl \R$$
such that the function $(\Phi - \Psi_D)$ extends as a continuous function to $N_{\Sigma} \times [0,1]$.
\\The correspondence is explicitly given by:
$$ \phi \longmapsto \bigg( (x, \lambda) \mapsto -\log \lVert s_D(\iota_{\hyb}(x, \lambda)) \rVert_{\phi} \bigg).$$
\end{theo}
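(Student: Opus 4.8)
The plan is to reduce the statement to theorem~\ref{theo toric field} applied on each fiber of $\pi : Z^{\hyb} \fl \M(\C^{\hyb}) \simeq [0,1]$, combined with the hybrid convexity results of the previous subsection. Write $\Psi = \Psi_D$ for the support function of $D$ (extended trivially in the $\lambda$ variable), and $\rho := \iota_{\hyb} \circ \Val$ for the retraction of $Z^{\hyb}$ onto its hybrid toric skeleton $\Sk^{\hyb}(Z_{\Sigma})$; the map in the statement sends $\phi$ to $\Phi = -\log \lVert s_D \circ \iota_{\hyb} \rVert_{\phi}$, whose restriction to each fiber is, by construction, the toric potential of $\phi_{\lambda}$. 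First, at the level of continuous metrics: since $\iota_{\hyb}$ is a continuous section of $\Val$ (prop.~\ref{iota hyb}), a continuous fiberwise-toric metric $\phi$ yields a continuous function $\Phi$ on $N_{\R} \times [0,1]$, and for each $\lambda$ the restriction $\phi_{\lambda}$ is a continuous toric metric on $L$ over the field $\mathscr{H}(\lambda)$, which is $\C$ with the trivial absolute value when $\lambda = 0$ and with $\lvert \cdot \rvert^{\lambda}$ otherwise, so that theorem~\ref{theo toric field} gives that $\Phi(\cdot, \lambda)$ is a continuous function with $(\Phi(\cdot, \lambda) - \Psi)$ extending continuously to $N_{\Sigma}$, and that $\phi_{\lambda} \leftrightarrow \Phi(\cdot, \lambda)$ is a bijection onto such functions. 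Comparing $\phi$ fiberwise with the fiberwise-toric canonical metric $\phi_{\can}$, whose potential is $\Psi$, one sees that outside a fixed compact part of $N_{\Sigma}$ the difference $\Phi - \Psi$ agrees on each cone with a linear form up to an error bounded by $\lVert \phi - \phi_{\can} \rVert_{\infty}$, uniformly in $\lambda$, so that $(\Phi - \Psi)$ extends continuously to $N_{\Sigma} \times [0,1]$; conversely every such $\Phi$ arises from a unique continuous fiberwise-toric $\phi$. It thus remains to match the psh metrics with the convex functions.

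For the implication convex $\Rightarrow$ psh, suppose $\Phi$ is convex. By proposition~\ref{convex hybrid} there is a decreasing sequence of piecewise-affine convex functions $\Phi_j(x, \lambda) = \max_{\alpha}(\langle u_{\alpha}, x \rangle + b_{\alpha}\lambda + c_{\alpha})$, with $u_{\alpha} \in P_D \cap M_{\Q}$, converging to $\Phi$ on the compact space $N_{\Sigma} \times [0,1]$, hence uniformly by Dini. After rescaling so that $u_{\alpha} \in P_{mD} \cap M$, each $s_{u_{\alpha}}$ is a toric section of $mL$ (prop.~\ref{prop toric sections}), the term $b_{\alpha}\lambda$ is realized on $L^{\hyb}$ by scaling by a complex number of archimedean size $e^{m b_{\alpha}}$ (whose hybrid norm on the fiber $\lambda$ is $e^{m b_{\alpha}\lambda}$), and the $u_{\alpha}$ include all vertices of $P_{mD}$ since $\Phi_j$ is $P_D$-admissible fiberwise (lemma~\ref{lem bpf}), so the sections so obtained have no common zero. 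Hence $\Phi_j$ is the toric potential of a genuine hybrid tropical Fubini--Study metric $\phi_j$ on $L^{\hyb}$; uniform convergence $\Phi_j \to \Phi$ translates into $\phi_j \to \phi$ uniformly, so $\phi$ is psh by definition.

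For the converse, let $\phi$ be continuous, psh and fiberwise-toric, with potential $\Phi$; theorem~\ref{theo toric field} already gives that $\Phi(\cdot, \lambda)$ is convex for each $\lambda$, and the content is joint convexity. Since $\Phi(\cdot, 0) = \lim_{\lambda \to 0} \Phi(\cdot, \lambda)$ and pointwise limits of convex functions are convex, it suffices to prove $\Phi$ convex on $N_{\R} \times (0,1]$. The strategy is to show that $\phi$ is a uniform limit of \emph{toric} hybrid Fubini--Study metrics, which correspond, as in lemma~\ref{lem Leg pa}, to piecewise-affine convex functions on $N_{\R} \times [0,1]$, so that $\Phi$ is then a uniform limit of convex functions. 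Write $\phi = \lim_k \psi_k$ uniformly with $\psi_k = m_k^{-1}\max_i(\log \lvert s_i^{(k)} \rvert + c_i^{(k)})$ hybrid Fubini--Study. Over a non-archimedean base such a toricization is immediate: composing with $\rho$ replaces $\psi_k$ by a toric Fubini--Study metric, because at the relevant monomial valuations norms of sums are computed as maxima, and $\psi_k \circ \rho \to \phi \circ \rho = \phi$ uniformly. In the hybrid setting this fails on the fibers $\lambda > 0$, which are archimedean; one averages instead over the compact group $G = \bS \times \bS^1$, where $\bS \subset \T(\C)$ acts fiberwise on $Z^{\hyb}$ and $\bS^1$ acts by rotation of the disk parameter. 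Averaging the weights $\log\lvert s_i^{(k)} \rvert$ over $G$ produces $G$-invariant, hence fiberwise-toric and $\bS^1$-invariant, psh metrics, being maxima of $G$-averaged psh weights; the key computation is that, writing $s_i^{(k)} / s_D^{\otimes m_k} = \sum_{\beta} a_{i,\beta}\chi^{\beta}$ with $a_{i,\beta} \in A_r$ and setting $h_i(w, w_0) = \sum_{\beta, n}(a_{i,\beta})_n \exp(\langle \beta, w \rangle + n w_0)$, the restriction of such a $G$-averaged weight to the skeleton equals, for $\lambda > 0$, the expression $(x, \lambda) \mapsto \lambda\, W_i(-\lambda^{-1}x,\, \lambda^{-1}\log r)$, where $W_i$ is the average over the imaginary torus directions of $\log \lvert h_i \rvert$, a function on $N_{\R} \times \R$. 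Now the average of a plurisubharmonic function over the imaginary directions of a real torus is convex in the real directions, so each $W_i$ is convex; the map $(z, \lambda) \mapsto \lambda\, W_i(z/\lambda)$ is the perspective of $W_i$, hence jointly convex for $\lambda > 0$, and so is its restriction to the affine slice $z = (-x, \log r)$; and a maximum over $i$ of such functions, plus constants, is again jointly convex. Passing to the uniform limit along the averaged $\psi_k$ then shows $\Phi$ is a uniform limit of convex functions, hence convex on $N_{\R} \times (0,1]$ and finally on $N_{\R} \times [0,1]$.

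The whole difficulty is concentrated in this last step, that is, in upgrading the elementary fiberwise convexity of $\Phi$ to joint convexity in the base variable $\lambda$. The perspective-function mechanism --- recognizing the $G$-averaged toric potentials as perspectives of convex functions through the nonlinear reparametrization $(x, \lambda) \mapsto (-\lambda^{-1}x, \lambda^{-1}\log r)$ built into $\iota_{\hyb}$ --- is the heart of the argument; the delicate bookkeeping is handling the maximum over $i$ that sits inside the $G$-averaging integral, which is why one must argue at the level of metrics rather than of individual sections, and checking that the resulting $G$-invariant psh metrics do recover $\phi$ in the uniform limit.
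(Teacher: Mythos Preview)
Your treatment of convex $\Rightarrow$ psh via proposition~\ref{convex hybrid} is the paper's argument. For psh $\Rightarrow$ convex the paper takes a much shorter route than you do: writing $\phi = \lim_j \phi_j$ with $\phi_j = m^{-1}\max_\alpha(\log|s_\alpha| + c_\alpha)$ an \emph{arbitrary} tropical Fubini--Study metric (no toricization), it sets $\Phi_j(x,\lambda) := -\log\|s_D(\iota_{\hyb}(x,\lambda))\|_{\phi_j}$, reduces by the max to a single section $s = s_D^m f$ with $f = \sum_j a_j\chi^{m_j} \in \C[M]$, and declares that $(x,\lambda)\mapsto \lambda m^{-1}\log\bigl|\sum_j a_j e^{-\lambda^{-1}\langle m_j,x\rangle}\bigr|$ is ``elementarily'' convex, being the perspective of $y\mapsto m^{-1}\log\bigl|\sum_j a_j e^{-\langle m_j,y\rangle}\bigr|$.

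Your averaging detour has two concrete gaps. First, the $\bS^1$-action ``by rotation of the disk parameter'' and coefficients in $A_r$ belong to the degeneration setting $(Z\times\D^*)^{\hyb}$ over $A_r$, not to $Z^{\hyb}$ over $\M(\C^{\hyb})\simeq[0,1]$; in the latter the sections have complex coefficients and the base carries no circle, so the group $G=\bS\times\bS^1$ is not available as written. Second, averaging each weight $\log|s_i^{(k)}|$ separately over $G$ and \emph{then} taking the maximum does not yield metrics converging to $\phi$: one only has $\max_i\bigl(\int_G\cdots\bigr)\le\int_G\bigl(\max_i\cdots\bigr)$, and it is the right-hand side that tends to $\phi$. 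If instead you average the whole metric $\psi_k$ over $\bS$, Riemann sums show the result is still a uniform limit of Fubini--Study metrics (a finite sum of maxima equals a maximum, over tuples, of log-norms of products of sections) and converges to $\phi$; but then its potential is an \emph{integral} of functions you have already observed are not individually convex, and you are back to the point at issue.

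That said, your instinct to be cautious here is not misplaced: the paper's ``elementary'' convexity claim for a single non-monomial section is in fact false --- for $f=1-\chi$ on $\G_m$ the function $x\mapsto\log|1-e^{-x}|$ is concave on each component of $\R\setminus\{0\}$, so the perspective is not convex and the reduction by the max to one section cannot be correct as stated. What the paper's route buys is brevity; what your route recognizes is that some genuine toricization (or a passage to the $\bS^1$-invariant picture over $A_r$, carefully justified) is needed before the perspective mechanism applies. Neither argument, as written, closes the gap.
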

\begin{ex} \label{ex toric prod} Assume that $\Phi (x, \lambda) \equiv \Phi(x)$ is independent of $\lambda$. Then unraveling the definitions and using the isomorphism $Z^{\hyb}_{\lambda} \simeq Z^{\hol}$ obtained by rescaling the absolute value, the induced family of metrics on $Z^{\hol}$ is given by:
$$\phi_{\lambda}(z) = \lambda^{-1} \Phi( -\lambda \log \lvert z \rvert)),$$
while the non-archimedean metric $\phi_0$ on $Z^{\an}_0$ is described by the convex function $\Phi$, in the setting of theorem \ref{theo toric field}.
\\Similarly, pulling back on the trivial degeneration $X = Z \times \D^*$, our theorem states in particular that the psh family of metrics on $p_1^* L$:
$$\phi_t = (-\log \lvert t \rvert) \cdot \Phi( \frac{\log \lvert z \rvert}{\log \lvert t \rvert})$$
extends as a continuous psh metric to $X^{\hyb}$, with $\phi_0 = \Phi \circ \val$. Note that the $\phi_t$ are psh with respect to $(z,t)$ since $\Phi$ can be written as the decreasing limit of a sequence of piecewise-affine functions.
\end{ex}
\begin{proof}[Proof of theorem \ref{theo toric}]
Let $\phi \in \CPSH(Z^{\hyb}, L^{\hyb})$ be a continuous psh, fiberwise-toric metric, and write:
$$\Phi : N_{\R} \times [0,1] \fl \R,$$
$$(x, \lambda) \mapsto -\log \lVert s_D(\iota_{\hyb}(x, \lambda)) \rVert_{\phi},$$
it is a continuous function by prop. \ref{iota hyb}.
Then writing $\phi = \lim_j \phi_j$ as the decreasing limit of a net in $\FS^{\tau}$, we have that $\Phi = \lim_j \Phi_j$ as a decreasing limit, so that to prove that $\Phi$ is convex on $N_{\R} \times [0,1]$, it is enough to prove that $\Phi_j$ is convex for $\phi_j \in \FS^{\tau}(L^{\hyb})$. Dropping the $j$ subscript, we write:
$$\phi = m^{-1} \max_{\alpha \in A} (\log \lvert s_{\alpha}\rvert + c_{\alpha}),$$
with $s_{\alpha} \in H^0(Z, mL)$ and $c_{\alpha} \in \R$.
By convexity of the maximum of finitely many convex functions, we are reduced to the case:
$$\phi = m^{-1} \log \lvert s \rvert,$$
where $s \in H^0(Z, mL)$ for some $m>0$. Using the isomorphism $\gO_{\T}(D) = s_D \gO_{\T}$, we may write $s = f \times (s_D)^m$ on $\T$ for $f \in K[M]$, so that:
$$\Phi(x, \lambda) = m^{-1} \log \big\lvert \frac{s_D^m}{s}((\iota_{\hyb}(x, \lambda)) \big\rvert =m^{-1} \log \lvert f(\iota_{\hyb}(x, \lambda)) \rvert.$$
By the proof of prop. \ref{iota hyb} and with the same notation, we have:
$$\Phi(x, \lambda) = \lambda m^{-1}  \log \big\lvert \sum_{j \in J} a_j e^{-\lambda^{-1} \langle m_j, x \rangle} \big\rvert$$
for $\lambda \neq 0$, while:
$$\Phi(x, 0) =-m^{-1} \min_{j \in J} \langle m_j, x \rangle.$$
The fact that $\Phi$ is convex on $N_{\R} \times [0,1]$ is now elementary. 
\\We now move back to the case where $\phi \in \CPSH(L^{\hyb})$ is an arbitrary continuous psh, fiberwise-toric metric on $L^{\hyb}$. To prove that $(\Phi-\Psi_D)$ extends continuously to $N_{\Sigma} \times [0,1]$, we argue as in the proof of \cite[prop. 4.3.10]{BPS}. Let $\sigma \in \Sigma$ be a cone, then there exists $m_{\sigma} \in M$ such that the support function $\Psi_{| \sigma} \equiv m_{\sigma}$. Moreover, the section $s_{\sigma} := \chi^{-m_\sigma} s_D$ is a nowhere vanishing section of $L$ on $Z_{\sigma}$. As a result, the function:
$$(x, \lambda) \mapsto - \log \lVert s_{\sigma} (\iota_{\hyb}(x, \lambda)) \rVert_{\phi}$$
is continuous on $N_{\sigma} \times [0,1]$. However the above expression is easily seen to be equal to $\Phi(x, \lambda) -\langle m_{\sigma}, x \rangle = \Phi(x, \lambda) -\Psi_D(x)$, so that $(\Phi - \Psi_D)$ extends continuously to $N_{\Sigma} \times [0,1]$.
\\Conversely, let $\Phi : N_{\R} \times [0,1] \fl \R$ be a continuous convex function, such that $(\Phi - \Psi_D)$ extends continuously to $N_{\Sigma} \times [0,1]$. We want to produce a continuous, semi-positive hybrid metric $\phi$ on $L$ such that $\Phi = -\log \lVert s_D \circ \iota_{\hyb} \rVert$. We start by applying prop. \ref{convex hybrid} to produce a decreasing sequence $(\Phi_j)_{j \in \N}$ of continuous convex functions on $N_{\R} \times [0,1]$, converging to $\Phi$ pointwise, and such that the function $(\Phi_j - \Psi_D)$ extends continuously to $N_{\Sigma} \times [0,1]$ for all $j \ge 0$. Additionally, we may write $\Phi_j$ as:
$$\Phi_j =d_j^{-1} \max_{\alpha \in A} \big(\langle m_{\alpha}, \cdot \rangle + b_{\alpha} \lambda + c_{\alpha} \big),$$
with the $m_{\alpha} \in M \cap P_D$. Since $\Phi_j(\cdot, \lambda)$ is $D$-admissible for any $\lambda$, it follows from lemma \ref{lem bpf} that $(s_{m_{\alpha}}, \alpha \in A)$ is a family of toric sections of $mL$ without common zeroes, so that:
$$\phi_j =m^{-1} \max_{\alpha \in A} \big(\log \lvert s_{m_{\alpha}} \rvert + b_{\alpha} \log \lvert e \rvert + c_{\alpha} \big)$$
is a Fubini-Study hybrid metric on $L$, inducing the convex function $\Phi_j$ on $N_{\R} \times [0,1]$ by the lemma below. Moreover, since the $(\Phi_j - \Psi_D)_j$ decrease to $(\Phi - \Psi_D)$ over $N_{\R} \times [0,1]$, the same holds over $N_{\Sigma} \times [0,1]$ by uniform convergence, which is easily seen to imply that the $\phi_j$ decrease to a continuous metric $\phi$ on $L^{\hyb}$, which concludes the proof.
\end{proof}
\begin{lem} Let $L = (\gO_Z(D), s_D)$ be a toric line bundle on $Z$, and $m \in M \cap P_D$. We write $\phi  = \log \lvert s_m \rvert$ the singular metric on $L^{\hyb}$ induced by prop. \ref{prop toric sections}. Then for $(x, \lambda) \in N_{\R} \times [0,1]$, the equality:
$$-\log \lVert s_D(\iota_{\hyb}(x, \lambda)) \rVert_{\phi} = \langle m, x \rangle$$
holds.
\end{lem}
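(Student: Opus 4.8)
The plan is to unwind the definitions: rewrite $s_D$ in terms of the section $s_m$ that defines the metric $\phi$, and then evaluate the resulting monomial at the point $\iota_{\hyb}(x,\lambda)$.

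First I would use that, by Proposition \ref{prop toric sections}, $s_m = \chi^{-m}\, s_D$ as sections of $\gO_Z(D)$ over the dense torus $\T$, and that $\phi = \log\lvert s_m\rvert$ is by definition the (singular) metric for which $\lVert s_m\rVert_\phi \equiv 1$ away from the zero locus of $s_m$. Since $\iota_{\hyb}(x,\lambda)$ lies in the torus part $\T^{\hyb}$ whenever $x\in N_{\R}$, and $s_D = \chi^m\, s_m$ there, the compatibility $\lVert f\,s\rVert_\phi = \lvert f\rvert\,\lVert s\rVert_\phi$ of a continuous metric with multiplication by regular functions gives
$$\lVert s_D(\iota_{\hyb}(x,\lambda))\rVert_\phi = \lvert\chi^m(\iota_{\hyb}(x,\lambda))\rvert\cdot\lVert s_m(\iota_{\hyb}(x,\lambda))\rVert_\phi = \lvert\chi^m(\iota_{\hyb}(x,\lambda))\rvert.$$

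Next I would compute $\lvert\chi^m(\iota_{\hyb}(x,\lambda))\rvert$. The cleanest route is to invoke that $-\log\lvert\chi^m(y)\rvert = \langle m,\val_{\Sigma}(y)\rangle$ for any $y\in Z^{\hyb}$ (the very definition of the $\val$ map in Section \ref{toric anal}), together with the fact from Proposition \ref{iota hyb} that $\iota_{\hyb}$ is a section of $\Val = \val_{\Sigma}\times\pi$, so that $\val_{\Sigma}(\iota_{\hyb}(x,\lambda)) = x$; this immediately yields $-\log\lvert\chi^m(\iota_{\hyb}(x,\lambda))\rvert = \langle m,x\rangle$. Alternatively, one may substitute $f=\chi^m$ into the explicit formulas for $\log\lvert f\rvert_{\iota_{\hyb}(x,\lambda)}$ established in the proof of Proposition \ref{iota hyb}: for $\lambda\neq0$ this gives $\lambda\cdot(-\lambda^{-1}\langle m,x\rangle) = -\langle m,x\rangle$, and for $\lambda=0$ it gives $-v_{\gamma(x)}(\chi^m) = -\langle m,x\rangle$ by Definition \ref{def Gauss}. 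Combining with the previous display proves the claimed equality for all $(x,\lambda)\in N_{\R}\times[0,1]$.

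I do not expect any real obstacle: the statement is pure bookkeeping. The only points that need mild care are the sign conventions (both the $\val$ map and the support function $\Psi_D$ are normalized so that $\Psi_D$ is convex exactly when $\gO_Z(D)$ is globally generated) and the observation that the rescaling by $\lambda$ built into the identification $Z^{\hyb}_\lambda\simeq Z^{\hol}$ is precisely what makes $\lvert\chi^m\rvert_{\iota_{\hyb}(x,\lambda)}$ — and hence the answer — independent of $\lambda$.
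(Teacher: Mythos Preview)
Your proposal is correct and follows essentially the same approach as the paper: both arguments reduce to the identity $s_m/s_D = \chi^{-m}$ on $\T$ and then evaluate at $\iota_{\hyb}(x,\lambda)$ using that $\val_{\Sigma}\circ\iota_{\hyb}(x,\lambda)=x$. The paper's proof is terser, while you spell out the intermediate step $\lVert s_m\rVert_\phi\equiv 1$ and offer the explicit case-by-case check in $\lambda$ as an alternative, but there is no substantive difference.
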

\begin{proof}
By definition, we have:
$$-\log \lVert s_D(\iota_{\hyb}(x, \lambda)) \rVert_{\phi} = \log (\frac{\lvert s_{m} \rvert}{\lvert s_{D} \rvert})(\iota_{\hyb}(x, \lambda)),$$
but under the isomorphism $\gO_{\T}(D) = s_D \cdot \gO_{\T}$ we have $s_m /s_D = \chi^{-m}$, which concludes.
\end{proof}
\subsection{Hybrid family of Haar measures}
We conclude this chapter with a discussion on Haar measures on hybrid tori, elaborating on the framework from \cite[§4.2]{BPS}.
\\Let $Z$ be a complex projective toric variety, and $ \pi : Z^{\hyb} \fl [0,1]$ the associated hybrid space. For each $\lambda >0$, there exists a canonical homeomorphism $p_{\lambda} : Z^{\hol} \xrightarrow{\sim} Z_{\lambda} = \pi^{-1}(\lambda)$, where $Z^{\hol}$ denotes the complex analytification  of $Z$. 
\\For $x \in N_{\R}$, let $\bS_x = \val^{-1}(x) \subset \T^{\hol}$, and $\iota_{\lambda} : \T^{\hol} \hookrightarrow Z^{\hyb}$ be the composition of the embedding $\T^{\hol} \subset Z^{\hol}$ with the isomorphism $Z^{\hol} \simeq Z^{\hyb}_{\lambda}$. The real compact torus $\bS_x$ carries a canonical Haar probability measure $\mu_{\bS}$, which we can pushforward to the complex torus, and then to $Z^{\hyb}$ via the embedding $\iota_{\lambda}$. We write the resulting measure as:
$$\mu_{x, \lambda} := (\iota_{\lambda})_* \mu_{\bS_x}$$
for $\lambda >0$, and define:
$$\mu_{x, 0} = \delta_{\gamma(x)}$$
the Dirac mass at the Gauss point $\gamma(x)$.
\\It is well-understood that the natural non-archimedean analog of the Haar measure on a real compact torus is the Dirac mass at the Gauss point of the affinoid torus \cite[prop. 4.2.10]{BPS}, the following theorem shows that the analogy is furthermore continuous on the hybrid space:
\begin{theo} \label{theo Haar}
Let $Z$ be a complex projective toric variety, and $Z^{\hyb}$ the associated hybrid space. Then the family of measures $(\mu_{x, \lambda})_{(x, \lambda) \in N_{\R} \times [0,1]}$ on $Z^{\hyb}$ defined above is weakly continuous.
\end{theo}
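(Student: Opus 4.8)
The plan is to verify weak continuity along convergent sequences $(x_k,\lambda_k)\to(x_0,\lambda_0)$ in $N_{\R}\times[0,1]$ — legitimate since $Z^{\hyb}$ is compact — treating separately the cases $\lambda_0>0$ and $\lambda_0=0$. When $\lambda_0>0$, I would use the homeomorphism $p\colon\pi^{-1}((0,1])\xrightarrow{\sim}Z^{\hol}\times(0,1]$: under it $\mu_{x,\lambda}$ becomes the normalized Haar measure of the real torus $\bS_{x/\lambda}\subset\T^{\hol}$ (the archimedean description of the hybrid fibre $\Val^{-1}(x,\lambda)$) placed in the slice $Z^{\hol}\times\{\lambda\}$, and since $(y,\theta)\mapsto(e^{-y_i}\theta_i)_i$ identifies $N_{\R}\times(\bS^1)^n$ with $\T^{\hol}$ and carries $\mu_{\bS_y}$ to the Haar measure of $(\bS^1)^n$, the assignment $y\mapsto\mu_{\bS_y}$ is weakly continuous on the compact space $Z^{\hol}$; as $x_k/\lambda_k\to x_0/\lambda_0$ this settles the case. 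The real content is the case $\lambda_0=0$, where one must show $\mu_{x_k,\lambda_k}\to\delta_{\gamma(x_0)}$, the terms with $\lambda_k=0$ being harmless because $\gamma$ is continuous.

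For $\lambda_0=0$ the plan is as follows. Since $\gamma(x_0)$ lies in the open subset $\T^{\hyb}\subset Z^{\hyb}$, the sets $V=\{w\in\T^{\hyb}:\ \big|\,|h_j|_w-|h_j|_{\gamma(x_0)}\big|<\varepsilon\ \text{for }j=1,\dots,r\}$, with $\varepsilon>0$ and $h_j\in\C[M]$ (allowing constants, which pins down the structure map), form a neighbourhood basis of $\gamma(x_0)$; splitting $\int f\,\mu_{x_k,\lambda_k}-f(\gamma(x_0))$ over $V$ and its complement reduces the problem to showing $\mu_{x_k,\lambda_k}(V)\to1$ for every such $V$. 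Since $\mu_{x_k,\lambda_k}$ is carried by $\bS_{x_k/\lambda_k}$ and $|h|_{p_{\lambda_k}(z)}=|h(z)|^{\lambda_k}$, this in turn reduces to proving, for each $h=\sum_{m\in S}c_m\chi^m\in\C[M]$ with all $c_m\neq0$, that $|h(z)|^{\lambda_k}\to|h|_{\gamma(x_0)}=e^{-\mu_\infty}$ in $\mu_{\bS_{x_k/\lambda_k}}$-measure, where $\mu_\infty=\min_{m\in S}\langle m,x_0\rangle$; equivalently, in $L^1$.

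To prove this $L^1$ estimate I would parametrize $\bS_{x_k/\lambda_k}$ by $(\bS^1)^n$ via $z_i=e^{-x_{k,i}/\lambda_k}\theta_i$ and factor out the dominant monomial. With $\mu_k=\min_{m\in S}\langle m,x_k\rangle\to\mu_\infty$ one gets $|h(z)|^{\lambda_k}=e^{-\mu_k}\,|g_k(\theta)+r_k(\theta)|^{\lambda_k}$, where $g_k=\sum_{m\in S^{\min}}c_m a_{m,k}\theta^m$ is a trigonometric polynomial whose frequencies lie in the fixed finite set $S^{\min}=\{m\in S:\langle m,x_0\rangle=\mu_\infty\}$, with $a_{m,k}\in(0,1]$ and $\max_m a_{m,k}=1$, while $\|r_k\|_\infty\le C e^{-\delta/\lambda_k}$ for fixed $C,\delta>0$ (the non-dominant monomials decay super-exponentially). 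One is thus reduced to $\int_{(\bS^1)^n}\big|\,|g_k+r_k|^{\lambda_k}-1\big|\,d\theta\to0$: the upper tail is controlled uniformly since $\|g_k+r_k\|_\infty$ is bounded, the perturbation $r_k$ is absorbed using $|a^{\lambda_k}-b^{\lambda_k}|\le|a-b|^{\lambda_k}$ together with $|a^{\lambda_k}-b^{\lambda_k}|\le\lambda_k\min(a,b)^{\lambda_k-1}|a-b|$, and the main term follows from $\int|g_k|^{\lambda_k}\,d\theta\to1$, which I would obtain by squeezing $s^{\lambda_k}\bigl(1-\mu(\{|g_k|<s\})\bigr)\le\int|g_k|^{\lambda_k}\le\|g_k\|_\infty^{\lambda_k}$ and letting first $k\to\infty$, then $s\to0$.

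The step I expect to be the main obstacle is the \emph{anti-concentration} input that the previous paragraph rests on: the trigonometric polynomials $\sum_{m\in S^{\min}}c_m a_m\theta^m$ with $a\in[0,1]^{S^{\min}}$ and $\max_m a_m=1$ form a compact subset of $C((\bS^1)^n)$ containing no identically-zero element (the coefficient $c_m$ with $a_m=1$ being nonzero), so that $\sup_a\mu(\{|g_a|<s\})\to0$ as $s\to0$; this follows from compactness plus a reverse–Fatou argument. The uniformity over this family is exactly what is needed to control the possible oscillation of the sub-dominant coefficients $a_{m,k}$, which occurs precisely when $x_k$ is not eventually constant; in the special case $x_k\equiv x_0$ the polynomial $g_k=\sum_{m\in S^{\min}}c_m\theta^m$ is fixed and the whole estimate collapses to a one-line dominated-convergence argument.
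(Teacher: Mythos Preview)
Your argument is correct, and it takes a genuinely different route from the paper's. The paper does not use a neighbourhood-basis/concentration argument at all: it instead exploits the density of $\DFS(Z^{\hyb})$ in $\mathcal{C}^0(Z^{\hyb})$ to reduce the test function to a difference of tropical Fubini--Study functions, writes these on $\T^{\hyb}$ as $m^{-1}\max_\alpha(\log|P_\alpha|+c_\alpha)$ with $P_\alpha\in\C[M]$, and then proves a quantitative lemma of the form
\[
\Bigl|\int_{\bS_{\lambda,x}}\max_\alpha\bigl(\lambda\log|P_\alpha|+c_\alpha\bigr)\,d\mu-\max_\alpha\bigl(-v_{\gamma(x)}(P_\alpha)+c_\alpha\bigr)\Bigr|\le \lambda\,B(x)
\]
with $B$ locally bounded. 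So the paper integrates $\lambda\log|P|$ and controls the error, whereas you study $|P|^{\lambda}$ directly and show it concentrates near its Gauss value. Both arguments factor out the dominant monomial; what differs is how the remainder is handled. The paper's approach yields a rate ($O(\lambda)$ with locally bounded constant), but the sketch it gives treats only the generic case of a \emph{unique} dominant monomial and leaves the wall case (several $m$ realising $\min\langle m,x\rangle$) implicit in the phrase ``locally bounded''. Your anti-concentration argument over the compact family $\{\sum_{m\in S^{\min}}c_m a_m\theta^m:\max_m a_m=1\}$ handles exactly this wall behaviour uniformly and transparently, at the cost of being purely qualitative. Your reading of $\mu_{x,\lambda}$ as supported on $\bS_{x/\lambda}$ (the fibre $\Val^{-1}(x,\lambda)$) is the one consistent with the paper's own lemma and with the section $\iota_{\hyb}$; the displayed definition $(\iota_\lambda)_*\mu_{\bS_x}$ earlier in the paper should be read accordingly.
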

\begin{proof} We need to prove that if $f \in \mathcal{C}^0(Z^{\hyb})$, then the real-valued function:
$$(x, \lambda) \mapsto \int_{Z^{\hyb}} f(z) d\mu_{x, \lambda}(z)$$
is continuous on $N_{\R} \times [0,1]$. If $\lambda >0$, then:
$$(x, \lambda) \mapsto \int_{Z^{\hol}} f(z, \lambda) d\mu_{\bS_x}(z)$$
is continuous on $N_{\R} \times (0,1]$ by dominated convergence, since $f$ is continuous and bounded on $Z^{\hol} \times (0,1]$.
\\We now need to examine what happens when $\lambda \fl 0$. By density of $\DFS(Z^{\hyb}) \subset \mathcal{C}^0(Z^{\hyb})$, it is enough to prove the statement for $f \in \DFS(Z^{\hyb})$, so that we may assume that there exists a line bundle $L$ on $Z$ such that $f= f_1-f_2$, with $f_i \in \FS(Z^{\hyb}, L^{\hyb})$. Since every line bundle on $Z$ is isomorphic to a toric one, we may and will assume that $L$ is a toric line bundle. We thus write:
$$f_i =m^{-1} \max_{\alpha \in A} (\log \lvert s_{\alpha} \rvert + c_{\alpha}),$$
for $i=1,2$. Since the measures $\mu_{x, \lambda}$ are supported on $\T^{\hyb}$, on which each section $s_{\alpha} \in H^0(Z, mL)$ can be written as $P_{\alpha} \times s_D$, where $P_{\alpha} \in \C[M]$, we are reduced to the case where $L =\gO_Z$ and $f = g_1-g_2$, where:
$$g_i =m^{-1} \max_{\alpha \in A} (\log \lvert P_{\alpha} \rvert + c_{\alpha}),$$
with $P_{\alpha} \in \C[M]$. The result thus boils down to the lemma below.
\end{proof}
\begin{lem}
Let $(P_{\alpha})_{\alpha \in A}$ be a finite family of polynomials in $\C[z_1,...,z_n]$. For $x \in \R^n$ and $\lambda \in \R_{>0}$, we define the polydisk $\D_{\lambda, x} := \{ \lambda \log \lvert z_i \rvert < -x_i \; \forall i \le n \}$ and $\bS_{\lambda, x}$ its distinguished boundary. Then there exists a locally bounded function $B$ of $x$ such that:
$$\bigg\lvert \int_{\bS_{\lambda, x}} \max_{\alpha \in A}( \lambda \log \lvert P_{\alpha}(\theta)  \rvert +c_{\alpha}) d\mu(\theta) -\max_{\alpha \in A} (-v_{\gamma(x)}(P_{\alpha}) + c_{\alpha})\bigg\rvert \le \lambda B(x),$$
where the Haar measure $\mu$ on $\bS_{\lambda, x}$ is normalized to have mass $1$.
\end{lem}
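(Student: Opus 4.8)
The plan is to reduce the estimate to the two one-sided bounds
$$M^{*}(x) - \lambda B \ \le\ \int_{\bS_{\lambda,x}} \max_{\alpha\in A}\big(\lambda\log\lvert P_\alpha\rvert + c_\alpha\big)\, d\mu \ \le\ M^{*}(x) + \lambda B,$$
where $M^{*}(x) := \max_{\alpha}\big(-v_{\gamma(x)}(P_\alpha)+c_\alpha\big)$, for a constant $B$ depending only on the family $(P_\alpha)_{\alpha}$ (which is stronger than what is asked, since a constant is in particular locally bounded). First I would fix the parametrization $z_i = e^{-x_i/\lambda}\,e^{\sqrt{-1}\,\theta_i}$ of $\bS_{\lambda,x}$, under which the normalized Haar measure $\mu$ becomes $d\theta/(2\pi)^n$ and $\lvert z^m\rvert \equiv e^{-\langle m,x\rangle/\lambda}$ for every exponent $m$. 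Since the absolute value on $\C$ is the trivial one here, Definition \ref{def Gauss} reads $v_{\gamma(x)}(P_\alpha)=\min_{m\in\mathrm{supp}(P_\alpha)}\langle m,x\rangle$, so that $-v_{\gamma(x)}(P_\alpha)=\lambda\,\Psi_{\alpha}(-x/\lambda)$, where $\Psi_\alpha$ is the support function of the Newton polytope of $P_\alpha$.

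The upper bound is the easy half and is obtained pointwise on $\bS_{\lambda,x}$: the triangle inequality gives $\lvert P_\alpha\rvert \le \big(\sum_m\lvert a_{\alpha,m}\rvert\big)\,e^{-v_{\gamma(x)}(P_\alpha)/\lambda}$, hence $\lambda\log\lvert P_\alpha\rvert \le -v_{\gamma(x)}(P_\alpha) + \lambda\,\log^{+}\!\big(\sum_m\lvert a_{\alpha,m}\rvert\big)$; taking the maximum over $\alpha$ and integrating against the probability measure $\mu$ yields the right-hand inequality with $B$ at least $\max_\alpha\log^{+}\!\big(\sum_m\lvert a_{\alpha,m}\rvert\big)$.

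For the lower bound I would discard the maximum in favour of a single well-chosen summand: pick $\alpha_0$ realizing $M^{*}(x)=-v_{\gamma(x)}(P_{\alpha_0})+c_{\alpha_0}$, so that the middle quantity is $\ge c_{\alpha_0}+\lambda\int_{\bS_{\lambda,x}}\log\lvert P_{\alpha_0}\rvert\, d\mu$, and it suffices to prove $\int_{\bS_{\lambda,x}}\log\lvert P_{\alpha_0}\rvert\, d\mu \ge -v_{\gamma(x)}(P_{\alpha_0})/\lambda - C$ for a constant $C$. I would then select a vertex $v$ of the face of $\mathrm{Newt}(P_{\alpha_0})$ on which $\langle\cdot,x\rangle$ is minimal; such a $v$ lies in $\mathrm{supp}(P_{\alpha_0})$, satisfies $\langle v,x\rangle=v_{\gamma(x)}(P_{\alpha_0})$, and is a vertex of $\mathrm{Newt}(P_{\alpha_0})$. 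Factoring $P_{\alpha_0}=a_{\alpha_0,v}\,z^{v}\,(1+g)$, on $\bS_{\lambda,x}$ one gets $\log\lvert P_{\alpha_0}\rvert = \log\lvert a_{\alpha_0,v}\rvert - v_{\gamma(x)}(P_{\alpha_0})/\lambda + \log\lvert 1+g\rvert$, and the Newton polytope of $1+g$ (read in the angular coordinates $\zeta=e^{\sqrt{-1}\theta}$) is $\mathrm{Newt}(P_{\alpha_0})-v$, which has the origin as a vertex with coefficient $1$. Thus everything comes down to the single inequality
$$\int_{\bS_{\lambda,x}}\log\lvert 1+g\rvert\, d\mu \ \ge\ 0.$$

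This last inequality is the crux and the step I expect to be the main obstacle. It is precisely the classical fact that the logarithmic Mahler measure of a nonzero Laurent polynomial $Q$ is bounded below by $\log$ of the modulus of any of its vertex coefficients — equivalently, that the Ronkin function $N_Q$ satisfies $N_Q(r)\ge \langle w,r\rangle + \log\lvert a_w\rvert$ for every vertex $w$ of $\mathrm{Newt}(Q)$ and every $r$. I would cite this rather than reprove it, but the argument is short: either reduce to the one-variable Jensen formula via a unimodular monomial substitution that makes $w$ the leading term in one of the new variables, or use that $N_Q$ is convex and, along the ray into the normal cone of $w$, asymptotic to $\langle w,\cdot\rangle+\log\lvert a_w\rvert$ — and a convex function on $\R_{\ge 0}$ tending to $0$ at $+\infty$ is nonincreasing, hence nonnegative. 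Granting this, the lower bound holds with $C = \max_\alpha\log^{+}\!\big(1/\min_{m}\lvert a_{\alpha,m}\rvert\big)$, and taking $B$ to be the larger of the two constants appearing above completes the argument; in particular $B$ may be taken independent of $x$.
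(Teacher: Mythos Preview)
Your proof is correct, and the overall strategy—parametrizing $\bS_{\lambda,x}$ by $z_j=e^{-x_j/\lambda}e^{i\theta_j}$, factoring out the dominant monomial, and splitting into an easy pointwise upper bound and an integrated lower bound—matches the paper's. The paper, however, is much terser on the lower bound: after rewriting $\lambda\log|P|+v_{\gamma(x)}(P)$ as $\lambda\log\big|a\,e^{i\theta}+\sum_{m'}a_{m'}e^{-\langle m'-m_0,x\rangle/\lambda+i\theta}\big|$ with $\langle m'-m_0,x\rangle>0$ (so $x$ is implicitly assumed generic, with a unique minimizing exponent $m_0$), it simply says ``taking the maximum and averaging, this concludes the proof.'' Your treatment is more careful and more general on two points. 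First, you choose $v$ to be a \emph{vertex} of the minimizing face of $\mathrm{Newt}(P_{\alpha_0})$, which guarantees $a_{\alpha_0,v}\neq 0$ and makes the argument valid for all $x$, not only those in the interior of a normal cone. Second, you isolate the key analytic input as the Ronkin/Mahler inequality $N_Q(r)\ge\langle w,r\rangle+\log|c_w|$ for every vertex $w$ of $\mathrm{Newt}(Q)$, which is exactly what is needed to control $\int\log|1+g|\,d\mu$ and which the paper does not name. This also buys you a bound $B$ that is genuinely constant in $x$, stronger than the ``locally bounded'' assertion. One small remark: in your final formula for $C$, the minimum $\min_m|a_{\alpha,m}|$ should be read as a minimum over the support (or over the vertices) of $P_\alpha$; that is what your argument actually uses, and it gives a valid constant.
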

\begin{proof}
The left integral can be written as:
$$\int_{(\bS^1)^n} \big(\max_{\alpha \in A} \log \lvert P_{\alpha}(\exp(-\lambda^{-1} x + i \theta))\rvert+ c_{\alpha} \big)  d \theta,$$
where $d\theta$ is the Haar probability measure on $(\bS^1)^n$.
\\However if $P = \sum_{m \in M} a_m \chi^m$ and $\theta \in (\bS^1)^n$, we have:
$$ \lambda \log \lvert P(\exp(-\lambda^{-1}x+i \theta) \rvert + \min_{a_m \neq 0} \langle m, x\rangle = \lambda \log \big\lvert \sum_{m \in M} a_m e^{-\frac{ \langle m-m_0 , x \rangle}{\lambda} +i \theta} \big\rvert,$$
where $m_0$ achieves the minimum of $\langle m, x \rangle$ for $a_m \neq 0$. We infer that: 
$$ \lambda \log \lvert P(\exp(-\lambda^{-1}x+i \theta) \rvert + \min_{a_m \neq 0} \langle m, x\rangle = \lambda \log \big\lvert a e^{i\theta} +\sum_{m'} a_{m'} e^{-\frac{\langle m'-m_0, x\rangle}{\lambda} + i \theta} \big\rvert,$$
where $a$ is a constant and $\langle m'-m_0, x\rangle >0$ for all $m'$. Taking the maximum and averaging over $(\bS^1)^n$, this concludes the proof.
\end{proof}
\section{The non-archimedean SYZ picture} \label{sec SYZ}
We will now move on to our main motivation for the content of this paper: maximal degenerations of Calabi-Yau manifolds - here by Calabi-Yau manifold we mean a projective complex variety with trivial canonical bundle, which in particular includes hyperkähler and abelian varieties.
\subsection{The classical SYZ picture} \label{sec classical SYZ}
The SYZ conjecture \cite{SYZ} arises as an attempt to provide mathematical explanation for the following phenomenon, known as mirror symmetry: a Calabi-Yau manifold $X$ should admit a mirror dual $\check{X}$, whose symplectic geometry encodes in a natural way the complex geometry of $X$, and vice-versa. We start with a definition:
\begin{defn} Let $(X, \omega, \Omega)$ be a Calabi-Yau manifold of dimension $n$, and $B$ an $n$-dimensional (real) topological manifold. 
\\A special Lagrangian fibration $f : X \fl B$ is a continuous map which is a smooth torus fibration outside a closed, codimension $2$ subset $\Gamma \subset B$, and such that for all $b \in B \setminus \Gamma$, $\omega_{| f^{-1}(b)} \equiv 0$ and $\Im(\Omega)_{| f^{-1}(b)} \equiv 0$.
\end{defn}
If $f: X\fl B$ is a special Lagrangian torus fibration, we write $B^{\sm} := B \setminus \Gamma$ its smooth locus, each smooth fiber $X_b = f^{-1}(b)$ for $b \in B^{\sm}$ is a smooth, special Lagrangian torus inside $X$. 
\\Loosely speaking, the SYZ conjecture predicts that if $X$, $\check{X}$ are two mirror dual Calabi-Yau manifolds, each of them should admit a special Lagrangian torus fibration onto the same half-dimensional base $B$, and those two fibrations should be dual to each other, in the sense that for $b \in B^{\sm}$, the fibers $X_b$ and $\check{X}_b$ are dual tori, i.e. $\check{X}_b = H^1(X_b, \R/\Z)$. Hence, given a Calabi-Yau manifold $X$ and a special Lagrangian fibration $f: X \fl B$, the SYZ heuristic provides a program to reconstruct the mirror $\check{X}$, by dualizing the fibration $f$ over $B^{\sm}$ and then extend this fibration over $B$. The problem of compactifying a smooth Lagrangian torus fibration into a singular one has however proven to be a difficult problem, and has led to many developments over the past years; we refer the reader to the series of recent or upcoming papers \cite{RZ21a,RZ21,RZ} for some progress on these questions.
\\The fibration induces the two following structures on $B^{\sm}$: an integral affine structure $\nabla^{\Z}$, and a Riemannian metric $g_B$ called the McLean metric. We start with a definition:
\begin{defn}
An \emph{integral affine structure} $\nabla^{\Z}$ on a topological
manifold is an atlas of charts (called $\Z$-affine charts) with transition functions in $\textrm{GL}_n(\mathbb{Z}) \ltimes \mathbb{R}^n$.
\end{defn}
If $B$ is a manifold endowed with an integral affine structure, it makes sense to speak about integral affine functions on $B$ - those are continuous functions on $B$ that are, locally in $\Z$-affine coordinates, of the form $f(x_1,\ldots,x_n)=a_1 x_1 + \ldots + a_n x_n +b $, with $a_i \in \Z$ and $b \in \R$. We denote by $\Aff_{B}$ the sheaf of integral affine functions on $B^{\sm}$, which recovers the integral affine structure by \cite[2.1]{KontsevichSoibelman}.
\\Without giving the precise definition as we will not need them, the integral affine structure on $B^{\sm}$ is induced by action-angle coordinates, while the McLean metric $g$ is such that there locally exists a convex function $K : B^{\sm} \fl \R$ such that in $\Z$-affine coordinates, it can be written as:
$$g_{ij} = \frac{\partial^2 K}{\partial y_i \partial y_j}.$$
We refer the reader to the survey \cite{Gro} for a more thorough discussion. Conversely, starting from the data of $(B, \nabla^{\Z}, K)$ a smooth $\Z$-affine manifold together with a (multi-valued) strictly convex function, one can construct a toy model of Lagrangian fibration with base $B$ as follows: set $X = X(B)$ to be the tangent torus bundle, that is, $X(B)= TB/ \Lambda$, $\Lambda$ being the local system of lattices associated to $\nabla^{\Z}$. 
\\In term of local $\Z$-affine coordinates $y_j$ on $B$, we can pick fiber coordinates $x_j = dy_j$, and the complex structure on $X$ is defined by requiring the functions:
$$z_j = e^{2i \pi (x_j + i y_j)}$$
to be holomorphic. In particular, the map $f$ is now given by: 
$$f(z_1,...,z_n) = \frac{1}{2} (-\log \lvert z_1 \rvert,..., - \log \lvert z_n \rvert).$$
\\The Kähler form on $X$ is defined in terms of its local potential:
$$\omega = i \ddbar (K \circ f),$$
which restricts to zero on any torus fiber - we even have a stronger statement, that the Kähler metric $g_{\omega}$ restricts to a flat metric to each of the torus fibers, hence the name \emph{semi-flat metric}. Moreover, the complex Monge-Ampère measure:
$$\omega^n = \big(dd^c (K \circ f) \big)^n$$
is directly related to the real Monge-Ampère measure of $K$ in affine coordinates, as we have:
$$\big( dd^c (K \circ f) \big)^n =i^{n^2} \det( \frac{\partial^2 K}{\partial y_i \partial y_j} ) dz \wedge d\bar{z},$$
where $dz= dz_1\wedge ... \wedge dz_n$.
\\Later on, it was realized that the SYZ picture should rather occur as an asymptotic phenomenon on the moduli space of Calabi-Yau manifolds, near the locus of the boundary where the manifolds degenerate in the worst possible way - in the terminology of mirror symmetry, this means that the manifolds approach the "large complex structure limit". Thus, we now let $X \fl \D^*$ be a maximal degeneration of Calabi-Yau manifolds (see def. \ref{def sk ess}), polarized by a relatively ample line bundle $L$. We fix a relative trivialization $\Omega \in H^0(X, K_{X/\D^*})$, and write $\omega = (\omega_t)_{t \in \D^*}$ the family of Kähler-Ricci flat metrics on $L$. 
\\In this setting, a strong version of the SYZ conjecture would predict the following: for any $\lvert t \rvert \ll 1$, the fiber $X_t$ admits a special Lagrangian fibration:
$$f_t : X_t \fl B,$$
onto a compact base $B$ of real dimension $n$, and whose fibers are of size $(\log \lvert t \rvert^{-1})^{-1/2}$ with respect to the rescaled Calabi-Yau metric $\tilde{\omega}_t \in \frac{c_1(L_t)}{\big\lvert \log \lvert t \rvert \big\rvert}$. These rescaled Calabi-Yau metrics indeed have bounded diameters by \cite{LT}.
\\In particular, the rescaled Ricci-flat Kähler manifolds $(X_t, \tilde{\omega}_t)$ should collapse in the Gromov-Hausdorff sense to the base $B$, endowed with an asymptotic version of the McLean metric alluded to above. This is known in the case of abelian varieties by the work of \cite{Odaka2018} (see also \cite{GO}), and a slightly weaker statement is known for Fermat hypersurfaces in $\CP_{\C}^{n+1}$ by \cite{Li1}.
\subsection{The essential skeleton}
In this section, we will explain how the base $B$ of the SYZ fibration can be realized in the non-archimedean world, as before we let $X \fl \D^*$ be a projective degeneration of smooth Calabi-Yau manifolds. We use the same notation $X$ for the base change to $K= \C((t))$.
\begin{defn}
Let $X/K$ be a Calabi--Yau variety. A minimal model of $X$ is a dlt model $\X/R$, such that the logarithmic relative canonical divisor is trivial, i.e.
$$ K^{\log}_{\X/R} := K_{\X/R} + \X_{0, \red} - \X_0 \sim \gO_\X.$$
We say that $\X$ is a good minimal model of $X$ if the components of $\X_0$ are $\Q$-Cartier.
\end{defn}
Building on the general MMP machinery, the existence of such models is known when $X$ is defined over an algebraic curve (and is expected to hold in the general case):
\begin{theo}[{\cite[Theorem 1.13]{NXY}}]
Let $X/K$ be a projective Calabi--Yau variety, and assume that $X$ is defined over an algebraic curve.
Then there exists a minimal model $\X/R$ of $X$. Furthermore, there exists a finite extension $K'/K$ such that the base change $X_{K'}$ admits a minimal model with reduced special fiber.
\end{theo}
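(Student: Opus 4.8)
This is the existence statement proved in \cite{NXY}; we indicate the strategy. The plan is to obtain a minimal model by running a relative Minimal Model Program and to use the Calabi--Yau hypothesis to pin down the output.

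First I would spread out the situation. Since $X$ is defined over an algebraic curve, choose a smooth affine complex curve $C$ whose function field embeds into $K$ compatibly with the $t$-adic valuation, together with a projective model $\X_C \fl C$ whose generic fibre becomes $X$ after base change; base-changing to $R = \C[[t]]$ yields a projective model $\X^{(0)}/R$ of $X$. By Hironaka's resolution of singularities I may assume $(\X^{(0)}, \X^{(0)}_{0,\red})$ is log smooth, so in particular an snc (hence dlt) model with $\X^{(0)}_{0,\red}$ of snc support. Now run the $\big(K_{\X^{(0)}} + \X^{(0)}_{0,\red}\big)$-MMP with scaling, relative to $R$ --- equivalently relative to $C$, where the existence of the required flips and contractions and the termination are available because the total space is a complex variety (BCHM and its relative versions, together with the base-point-free theorem). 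The key point is that $X$ is Calabi--Yau, so $K_X \sim \gO_X$ and the log relative canonical divisor $K^{\log}_{\X^{(0)}/R} = K_{\X^{(0)}/R} + \X^{(0)}_{0,\red} - \X^{(0)}_0$ is numerically trivial on the generic fibre; hence every step of the MMP is an isomorphism over $\Sp K$, modifies only the special fibre, and the program terminates with a model $\X$ such that $(\X, \X_{0,\red})$ is still dlt (the MMP preserves dlt-ness of a pair that begins log smooth, or one applies a dlt modification afterwards) and $K^{\log}_{\X/R}$ is relatively nef.

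It remains to upgrade relative nefness to triviality and to handle the two refinements. Since $K^{\log}_{\X/R}$ restricts to $\gO$ on $X$, it is $\Q$-linearly equivalent to a divisor supported on $\X_0$; a relatively nef divisor supported on a single fibre is numerically trivial by Zariski's lemma (negative semidefiniteness of the intersection form on the components of a fibre, with kernel generated by the fibre class), and then --- using $K_X \sim \gO_X$, so that there is no torsion obstruction, via the canonical bundle formula --- one concludes $K^{\log}_{\X/R} \sim \gO_{\X}$. Thus $\X$ is a minimal model of $X$; to make it \emph{good} one takes a small $\Q$-factorialization, which preserves the minimal dlt property. For the last assertion, apply semistable reduction (Kempf--Knudsen--Mumford--Saint-Donat) to $\X^{(0)}$: after a ramified base change $K'/K$ of degree divisible by the multiplicities $a_i$ of $\X^{(0)}_0$, followed by normalization and log resolution, one obtains a model of $X_{K'}$ with reduced snc special fibre; re-running the MMP over the corresponding $R' = \C[[t^{1/m}]]$ then yields a minimal model of $X_{K'}$ whose special fibre remains reduced.

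The main obstacle is the MMP input in the second paragraph --- existence of the necessary contractions and flips, and termination with scaling --- which is exactly why the hypothesis that $X$ descends to an algebraic curve is imposed: over a base curve the total space is a variety over $\C$ and the full strength of the complex MMP applies, whereas over a general equicharacteristic-zero discretely valued field this is not known. Once the MMP is in hand, the triviality of $K^{\log}_{\X/R}$ is a formal consequence of the Calabi--Yau condition together with Zariski's lemma, and the reduced-fibre statement reduces to classical semistable reduction combined with re-running the MMP.
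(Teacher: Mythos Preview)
The paper does not prove this statement at all: it is quoted verbatim as \cite[Theorem 1.13]{NXY} and used as a black box, with only the one-line remark that it ``builds on the general MMP machinery''. So there is no proof in the paper to compare your proposal against.

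That said, your outline is a faithful sketch of how the result is obtained in \cite{NXY} and the surrounding literature, with a couple of points worth sharpening. First, invoking ``BCHM and its relative versions'' is not quite accurate: BCHM applies to klt pairs, whereas here one runs an MMP for the dlt pair $(\X^{(0)}, \X^{(0)}_{0,\red})$. The existence of flips in this setting uses the Hacon--Xu extension to lc pairs, and termination is argued via special termination together with the numerical triviality of the log canonical divisor on the generic fibre (rather than general termination with scaling, which is not known for lc pairs). Second, the passage from ``relatively nef and vertical'' to ``numerically trivial'' is not literally Zariski's lemma once $\dim X \ge 2$: the classical Zariski lemma is a statement about surfaces. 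The correct input is the negativity lemma (or its dual form) for vertical divisors over a DVR, which gives that a $\pi$-nef divisor supported on the fibre is a rational multiple of the fibre class. Finally, your claim that after semistable reduction and re-running the MMP ``the special fibre remains reduced'' deserves a word of justification: reducedness is preserved because the MMP steps are isomorphisms in codimension one on the total space or divisorial contractions of components, neither of which introduces multiplicities; this is spelled out in \cite{NXY}. With these adjustments your sketch matches the argument there.
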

Such models are however not unique, but they turn out to have the same skeleton inside $X^{\an}$ by \cite{NX} (even though the triangulation may differ).
\begin{defn} \label{def sk ess}
Let $X/K$ be an $n$-dimensional Calabi--Yau variety. The essential skeleton $\Sk(X) \subset X^{\an}$ is the Berkovich skeleton $\Sk(\X)$ of any minimal model $\X/R$ of $X$.
\\We will say that $X/K$ is maximally degenerate if $\dim \Sk(X) =n$.
\end{defn}
Fixing a meromorphic at zero, nowhere-vanishing section $\Omega \in H^0(X, K_{X/\D^*})$, the essential skeleton can alternatively be defined as the locus where the weight function $\wt_{\Omega} : X^{\an} \fl \R$ from \cite[thm. 3.3.3]{NX} reaches its minimum. The weight function can be expressed as $\wt_{\Omega} = \log \lvert \Omega \rvert_{A_X}$, where $A_X$ is the canonical metric on $K_X$, as defined in \cite{Tem}.
\\The nowhere-vanishing $n$-form $\Omega \in H^0(X, K_{X/\D^*})$ induces a continuous family of volume forms $\nu_t := i^{n^2} \Omega_t \wedge \bar{\Omega}_t$ on the $X_t$'s, and thus a continuous family of probability measures:
$$\mu_t := \frac{\nu_t}{\nu_t(X_t)}.$$
We now have the following result, which computes the weak limit on the Calabi-Yau measures inside the hybrid space $X^{\hyb}$. The support of the limiting measure is precisely the essential skeleton:
\begin{theo}{\cite[thm. B]{BJ}} \label{theo cv mesures}
\\Let $X \fl \D^*$ be a degeneration of Calabi-Yau manifolds, and $\Omega \in H^0(X, K_{X/ \D^*})$ a relative trivialization of the canonical bundle. Writing $\nu_t = i^{n^2} \Omega_t \wedge \bar{\Omega}_t$, we have:
$$\nu_t(X_t) \sim C \lvert t \rvert^{\kappa} (\log \lvert t \rvert^{-1})^d,$$
 as $t \rightarrow 0$, where $\kappa \in \Q$ and $d = \dim \Sk(X) \le n$. Moreover, the family of rescaled measures:
$$\mu_t = \frac{\nu_t}{\nu_t(X_t)}$$
converge weakly on $X^{\hyb}$ to a Lebesgue-type measure supported on $\Sk(X) \subset X^{\an}$.
\end{theo}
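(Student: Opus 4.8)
The plan is to reduce the two assertions to an explicit computation on a well‑chosen model of $X/\D$, followed by a Laplace‑type asymptotic analysis of monomial fiber integrals, in the spirit of \cite{BJ}. First I would fix a good model: possibly after a finite base change $t\mapsto t^m$ (which only replaces $\kappa$ by $m\kappa$ and rescales the limiting measure, hence is harmless) and using Hironaka's theorem applied to the given meromorphic extension, I may assume $\X/R$ is an snc model with $\X_0=\sum_{i\in I}a_iD_i$ dominating a minimal model; shrinking the disk, the fibers $X_t$ lie for $|t|\ll 1$ in a fixed neighborhood of $\X_0$ in $\X$. The relative form $\Omega$ extends to a rational section of the relative (log‑)canonical sheaf, and its divisor on $\X$ together with the multiplicities $a_i$ determines the weight function $\wt_{\Omega}:X^{\an}\to\R$ of \cite{NX}: restricted to $\Sk(\X)$ it is a rational piecewise‑affine function, affine on each simplex, whose vertex values are expressed through the log discrepancies of the $D_i$ and the orders of vanishing of $\Omega$ along them. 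By \cite{NX} the essential skeleton sits inside $\Sk(\X)$ as the locus where $\wt_{\Omega}$ attains its minimum, a rational number I call $\kappa$; let $d=\dim\Sk(X)$.

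Next I would compute the fiber integral locally. Choose a finite atlas of charts $U_\alpha$ around the strata of $\X_0$ with coordinates $z_1,\dots,z_n$ in which $t=u\prod_{j\in J_\alpha}z_j^{a_j}$ and $\Omega=v\prod_{j\in J_\alpha}z_j^{\beta_j}\,dz_1\wedge\cdots\wedge dz_n$ with $u,v$ units, together with a partition of unity $(\chi_\alpha)$ on a neighborhood of $\X_0$, so that
\[
\nu_t(X_t)=\sum_\alpha\int_{X_t\cap U_\alpha}\chi_\alpha\, i^{n^2}\,\Omega_t\wedge\bar\Omega_t .
\]
After slicing $\{\,u\prod z_j^{a_j}=t\,\}$ and passing to polar coordinates in the torus directions, each summand becomes a monomial integral over a region which, in the variables $w_j=-\log|z_j|/\log|t|^{-1}$, concentrates as $t\to 0$ on the face of $\Sk(\X)$ cut out by $U_\alpha$; Laplace's method (the units $u,v$ only perturbing the leading term) then yields an asymptotic of the form $c_\alpha\,|t|^{\kappa_\alpha}(\log|t|^{-1})^{d_\alpha}$, where $\kappa_\alpha$ is the minimum of $\wt_{\Omega}$ over that face and $d_\alpha$ its dimension when the minimum is interior. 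Summing, the dominant terms are exactly those coming from charts meeting $\Sk(X)$ along a $d$‑dimensional face, all with $\kappa_\alpha=\kappa$ and $d_\alpha=d$; every other chart contributes a strictly lower‑order term, either because $\kappa_\alpha>\kappa$ or because $d_\alpha<d$. This gives $\nu_t(X_t)\sim C\,|t|^{\kappa}(\log|t|^{-1})^d$ with $C>0$.

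Finally, for the weak convergence of $\mu_t=\nu_t/\nu_t(X_t)$ on $X^{\hyb}$ I would combine a concentration estimate with the local density computation above. By the description of the hybrid topology and Stone–Weierstrass it suffices to show that (i) for every open neighborhood $\mathcal{U}$ of $\Sk(X)$ in $X^{\hyb}$ one has $\mu_t(X^{\hyb}\setminus\mathcal{U})\to 0$, and (ii) for each $d$‑dimensional face $\tau$ of $\Sk(X)$, on a small open $V_\tau\subset X^{\hyb}$ containing $\mathrm{int}(\tau)$ and meeting no other face, $\mu_t|_{V_\tau}$ converges weakly to a positive constant times Lebesgue measure on $\mathrm{int}(\tau)$ in integral affine coordinates. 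Assertion (i) follows from the same monomial estimates: outside $\mathcal{U}$ one has $\wt_{\Omega}\geq\kappa+\varepsilon$, so the corresponding part of $\nu_t$ is $O(|t|^{\kappa+\varepsilon}(\log|t|^{-1})^n)=o(\nu_t(X_t))$. For (ii), the leading term of the monomial integral over a chart around a maximal stratum is, after the substitution $w_j=-\log|z_j|/\log|t|^{-1}$, a Lebesgue integral over $\mathrm{int}(\tau)$ with a strictly positive density depending only on the $a_j$; patching over the finitely many $d$‑dimensional faces and discarding the $\mu_0$‑negligible lower faces identifies the limit as a Lebesgue‑type measure $\mu_0$ supported on $\Sk(X)$, of total mass $1$. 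I expect the main difficulty to lie in the second step: the uniform control of the monomial integrals with units, the bookkeeping of which strata produce the top power of $\log|t|^{-1}$, and the verification that the resulting local densities glue into a well‑defined, model‑independent measure on the piecewise‑affine space $\Sk(X)$ compatible with its integral affine structure.
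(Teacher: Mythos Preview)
The paper does not prove this theorem: it is quoted verbatim from \cite[thm.~B]{BJ} and used as a black box, with no proof or even sketch given in the present paper. So there is no ``paper's own proof'' to compare your proposal against.

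That said, your outline is a faithful summary of the strategy actually carried out in \cite{BJ}: pass to an snc model after base change, compute $\nu_t$ in local adapted coordinates where $t$ and $\Omega$ are monomial up to units, identify the leading exponent $\kappa$ and the log power $d$ via a Laplace-type analysis governed by the minimum locus of the weight function, and conclude weak convergence on $X^{\hyb}$ by combining a concentration estimate away from $\Sk(X)$ with the explicit Lebesgue density on the maximal faces. Your identification of the delicate points (uniform control of the unit perturbations, bookkeeping of which strata contribute the top log power, and model-independence of the limiting Lebesgue-type measure) is accurate; these are exactly the issues handled carefully in \cite{BJ}. If anything, one point you could sharpen is that the substitution $w_j=-\log|z_j|/\log|t|^{-1}$ is precisely what makes the hybrid convergence transparent, since these are the coordinates defining the hybrid topology near $\Sk(\X)$.
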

This result provides heuristic explanation as to why the base $B$ of the SYZ fibration should be the essential skeleton $\Sk(X)$: since the Calabi-Yau metric is obtained by solving the complex Monge-Ampère equation $(\omega_t)^n = C_t \mu_t$, one would expect the solution to the non-archimedean Monge-Ampère equation $\MA(\phi) = \mu_0$ provided by theorem \ref{theo sol MA} to provide a non-archimedean incarnation of the family of Calabi-Yau metrics; and since $\mu_0$ is supported on $\Sk(X)$ one would hope to translate this equation into a real Monge-Ampère equation on $\Sk(X)$ via a non-archimedean avatar $\rho : X^{\an} \fl \Sk(X)$ of the SYZ fibration. We will come back to this in section \ref{sec comp MA}.
\subsection{Affinoid torus fibrations}
Let $(X, L) \fl \D^*$ be a polarized maximal degeneration of Calabi-Yau manifolds. Since the metric spaces $(X_t, \tilde{\omega}_t)$ are conjectured to look like the total space of a special Lagrangian torus fibration over $\Sk(X)$, submersive away from a singular locus $\Gamma_t$ of real codimension 2, it is reasonable to expect that the essential skeleton $\Sk(X)$ should be equipped with an integral affine structure away from a discriminant locus of real codimension $2$. We will now explain how to try and construct such an affine structure in a purely non-archimedean setting, we write $X^{\an}$ the Berkovich analytification of $X$ viewed as a variety over $K = \C((t))$.
\\As we saw in section \ref{sec classical SYZ}, our local semi-flat model for the SYZ fibration is given by $f : (\C^*)^n \fl \R^n$, the map $f$ being simply $f(z_1,...,z_n) = (- \log \lvert z_1 \rvert,..., -\log \lvert z_n \rvert)$. 
\\The non-archimedean counterpart of this map is the tropicalization map:
$$ \val : \T^{\text{an}} \fl N_{\R}$$
from section \ref{toric anal}, where $\T$ is an $n$-dimensional torus over $K$. This leads to the following definition:
\begin{defn} \label{defn:affinoid torus fibration}
Let $\rho : X^{\an} \fl B$ be a continuous map to a topological space $B$. For any point $b \in B$, we say that $\rho$ is an \emph{affinoid torus fibration at $b$} if there exists an open neighbourhood $U$ of $b$ in $B$, such that the restriction to $\rho^{-1}(U)$ fits into a commutative diagram:
$$\begin{tikzpicture}
\matrix(m)[matrix of math nodes, row sep=3em,
    column sep=2.5em, text height=1.5ex, text depth=0.25ex]
{\rho^{-1}(U) & \val^{-1}(V)  \\
    U  & V, \\ };
	\path 
	(m-1-1) edge[->] node[auto] {$\simeq$} (m-1-2)
			edge[->] node[auto] {$ \rho $} (m-2-1)
         (m-1-2) edge[->] node[auto] {$\val$} (m-2-2)
         (m-2-1) edge[->] node[auto] {$\sim$} (m-2-2);
\end{tikzpicture}$$
$V$ being an open subset of $\R^n$, the upper horizontal map an isomorphism of analytic spaces, the lower horizontal map a homeomorphism, and the map $\val$ defined as in section \ref{toric anal}.
\end{defn}
Note that the above definition implies that $B$ is a topological manifold at $b$; in the case where $\X$ is a good dlt model of $X$, the base $B = \Sk(\X)$ and $\rho$ is the Berkovich retraction $\rho_{\X}$, this does not necessarily hold at every point of $\Sk(\X)$.
\\Given a continuous map $\rho : X^{\text{an}} \fl B$, we denote by $B^{\textrm{sm}}$ the (possibly empty) open locus of points in $B$ where $\rho$ is an affinoid torus fibration at, and we call $B \setminus B^{\textrm{sm}}$ the \emph{discriminant} or {singular locus} of $\rho$. As in the archimedean case, the (possibly disconnected) topological manifold $B^{\textrm{sm}}$ is endowed with an integral affine structure, whose charts are the continuous maps from definition \ref{defn:affinoid torus fibration}. An alternative, more intrinsic description of this structure is given in \cite[4.1, Theorem 1]{KontsevichSoibelman}: let $U \subset B^{\textrm{sm}}$ be a connected open subset. Then if $h$ is an invertible analytic function on $\rho^{-1}(U)$, its modulus $\lvert h \rvert$ is constant on the fibers of $\rho$ by the maximum principle, so that it defines a continuous function on the base. We now have:
$$ \mathrm{Aff}_{ B^{\textrm{sm}}}(U) = \{\log\lvert h \rvert \, | \, h \in \mathcal{O}^{\times}_{X^{\an}} (\rho^{-1}(U))\}.$$
\begin{ex} \label{ex:affinoid torus over max face}
If $\X/R$ is a good dlt model of $X$ with reduced special fiber, then the Berkovich retraction $\rho_{\X}: X^{\an} \rightarrow \Sk(\X)$ 
is an affinoid torus fibration over the interior of the maximal faces $\tau$ of $\Sk(\X)$. Indeed, the retraction over $\mathring{\tau}$ only depends on the formal completion of $\X$ along the corresponding 0-dimensional stratum $p$, and we have that $\rho_{\X}^{-1}(\mathring{\tau})$ is the generic fiber of $\Spf \widehat{\gO}_{\X, p}$. By the dlt condition, the pair $(\X, \X_0)$ is snc at $p$, so that: 
$$\widehat{\gO}_{\X, p} \simeq  R[[z_0,..., z_n]] / (t- z_0...z_n).$$
The generic fiber $\fX_p^{\eta}$ embeds in the torus: 
$$\T^{\an} = \big(\Sp K[z_0,...,z_n] / \{t=z_0...z_n\} \big)^{\an},$$
and the Berkovich retraction is the restriction of the $\val$ map over the open simplex:
$$\Int(\tau) = \{x_0+...+x_n =1 \} \subset (\R_{>0})^{n+1}.$$
\end{ex}
\begin{defn} \label{def adm} Let $X/K$ be a smooth projective variety, $B = \Sk(\X)$ the skeleton of a dlt $R$-model of $X$. We say a map:
$$\rho : X^{\an} \fl B$$
is an \emph{admissible} retraction if the following hold:
\begin{itemize}
\item there exists a piecewise-affine locus $\Gamma \subset B$ of codimension $\ge 2$ such that $\rho$ is an affinoid torus fibration over $\Sk(X) \setminus \Gamma$;
\item the retraction $\rho$ is piecewise-affine: for any piecewise-affine function $\psi : B \fl \R$, the pullback $\psi \circ \rho : X^{\an} \fl \R$ is a piecewise-affine function on $X^{\an}$ (or model function \cite[def. 5.11]{BE}).
\end{itemize}
\end{defn}
One way to produce a map satisfying the second item of the definition is to choose an snc model $\X' \fl \X$ dominating $\X$, and a piecewise-affine map $\pi : \Sk(\X') \fl \Sk(\X)$. Then the retraction $\rho = \pi \circ \rho_{\X'}$ is piecewise-affine, examples of such retractions were constructed in \cite{MPS}. It would be interesting to know whether or not all piecewise-affine maps $\rho : X^{\an} \fl \Sk(\X)$ arise this way.
\\As suggested by the definition, affinoid torus fibrations are related to the local toric geometry of $X$ and its models. We now describe a more global class of examples, that we will be using as building blocks later on. Following \cite{NXY}, we will say that an $R$-scheme of finite type $\cZ$ is toric if there exists a complex toric variety $\mathcal{Z}$, together with a toric morphism $t: \mathcal{Z} \fl \A^1$, such that $\cZ \simeq \mathcal{Z} \times_{\A^1} R$. 
\\Let $\X /R$ be a regular toric model of $\T$, i.e. a regular toric $R$-scheme such that $\X \times_R \; \Sp K = \T$, which we assume to have reduced special fiber. Such a model is described by a regular fan $\hat{\Sigma} \subset \hat{N}_{\R} := N_{\R} \times \R_{\ge 0}$, whose cones intersect $N_{\R} \times \{0 \}$ only at the origin.
\\We consider the following open subset of $\T^{\an}$:
$$\widehat{\X}_{\eta} := \{ v_x \in \T^{\an} \lvert \, v_x \; \text{has a center on} \; \X \},$$
which admits a Berkovich retraction:
$$\rho_{\X} : \widehat{\X}_{\eta} \fl \Sk(\X)$$
defined as in remark \ref{retract local}. In this case, the map $\rho_{\X}$ can be described explicitly as follows: let $\Sigma_1$ be the polyhedral complex in $N_{\R}$ obtained by intersecting the fan $\hat{\Sigma}$ with $N_{\R} \times \{1\}$. There is a natural identification between $\Sigma_1$ and $\mathcal{D}(\X_0)$, sending a vertex of $\cD(\X_0)$ to the primitive generator of the corresponding ray of $\hat{\Sigma}$, and then extending on each face by linearity. Moreover, it follows from \cite[Theorem A.4]{GJKM} that $\gamma ( \lvert \Sigma_1 \rvert) = \Sk(\X) \subset \T^{\an}$, where $\gamma : N_{\R} \fl \T^{\an}$ is the Gauss embedding from definition \ref{def Gauss}.
\begin{prop}[{\cite[Example 3.5]{NXY}}, {\cite[prop. 1.5.2]{MPS}}] \label{prop: toric retraction}
The equality:
$$\widehat{\X}_{\eta} = \val^{-1} (\lvert \Sigma_1 \rvert) $$
holds, and $\rho_{\X} = \val_{| \widehat{\X}_{\eta}}$, which is thus an affinoid torus fibration over the relative interior of $\Sk(\X)$.
\end{prop}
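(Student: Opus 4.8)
The plan is to read both equalities off the toric combinatorics of $\X$, and then deduce the affinoid torus fibration structure from Definition \ref{defn:affinoid torus fibration}. Recall that $\X \simeq \mathcal Z \times_{\A^1} R$ for a complex toric variety $\mathcal Z$ with regular fan $\hat\Sigma$ in $\hat N_\R = N_\R \times \R_{\ge 0}$, the morphism to $\A^1$ being induced by the character $(0,1) \in \hat M = M \times \Z$; so each cone $\hat\sigma \in \hat\Sigma$ yields an affine chart whose coordinate ring is the base change to $R$ of the semigroup algebra $\C[\hat M \cap \hat\sigma^\vee]$, with $t = \chi^{(0,1)}$. The one identity I would record and use throughout is that for $v \in \T^{\an}$ and $(m,a) \in \hat M$ one has $v(\chi^m t^a) = \langle m, \val(v) \rangle + a$, i.e. $v(\chi^{(m,a)}) = \langle (m,a), (\val(v),1) \rangle$, since $v(t) = 1$.

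\textbf{Equality $\widehat{\X}_\eta = \val^{-1}(|\Sigma_1|)$.} A point $v \in \T^{\an}$ has a center on $\X$ iff it has a center on some affine chart $\X_{\hat\sigma}$, iff $v$ is nonnegative on $\C[\hat M \cap \hat\sigma^\vee]$ for some $\hat\sigma$ (its center on that chart then automatically lies in $\X_0$ as $v(t) = 1 > 0$). By the identity above this says $\langle (m,a), (\val(v),1) \rangle \ge 0$ for all $(m,a)$ in the semigroup $\hat\sigma^\vee \cap \hat M$, hence, by Gordan's lemma, for all of $\hat\sigma^\vee$, i.e. $(\val(v),1) \in (\hat\sigma^\vee)^\vee = \hat\sigma$. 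Thus $v \in \widehat{\X}_\eta$ iff $(\val(v),1) \in |\hat\Sigma|$, which is precisely $\val(v) \in |\Sigma_1|$.

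\textbf{Equality $\rho_\X = \val|_{\widehat{\X}_\eta}$, and the fibration structure.} Using the identification $\Sk(\X) = \gamma(|\Sigma_1|)$ recalled just before the statement (via \cite[Theorem A.4]{GJKM}), I must show $\rho_\X(v) = \gamma(\val(v))$ for $v \in \widehat{\X}_\eta$. Put $x = \val(v)$ and let $\hat\sigma$ be the cone of $\hat\Sigma$ with $(x,1) \in \mathrm{relint}\,\hat\sigma$; its primitive ray generators are $u_j = (n_j,1)$, $j \in J$ (last coordinate $1$ by reducedness), which I extend to a $\Z$-basis $(u_j)_{j \in J} \cup (u_l)_{l \in L}$ of $\hat N$ with dual basis $(u_j^\ast) \cup (u_l^\ast)$. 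Then the minimal stratum $Y \ni c_\X(v)$ is the orbit closure of $\hat\sigma$, its local ring at the generic point $\eta_Y$ is $\C[z_j\,(j \in J),\, z_l^{\pm 1}\,(l \in L)]$ localized at $(z_j)_{j \in J}$ with $z_\bullet = \chi^{u_\bullet^\ast}$, and $t = \prod_{j \in J} z_j$ up to a unit (again by reducedness). By definition $\rho_\X(v)$ is the quasi-monomial valuation $v_w$ of Proposition \ref{prop casi mono} attached to $(Y,w)$ with $w_j = v(z_j) = \langle u_j^\ast, (x,1) \rangle$; writing $(x,1) = \sum_j \lambda_j u_j$ with $\lambda_j > 0$ gives $w_j = \lambda_j$ and $\sum_{j} w_j = 1$. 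On the other hand $\gamma(x)$ is the monomial valuation on $\C[\hat M]$ with weight $(x,1)$, so $v_{\gamma(x)}(z_j) = \lambda_j$ while $v_{\gamma(x)}(z_l) = \langle u_l^\ast, (x,1) \rangle = 0$; hence $\gamma(x)$ vanishes on each unit $z_l^{\pm 1}$ of $\gO_{\X,\eta_Y}$ and is therefore monomial in the $z_j$ in the sense of Proposition \ref{prop casi mono} with the same weight vector $w$, so $\gamma(x) = v_w = \rho_\X(v)$ by the uniqueness clause of that proposition. Finally, over the relative interior of a maximal cell of $\Sk(\X) = \gamma(|\Sigma_1|)$ — an open subset $V$ of an affine subspace of $N_\R$ — the two equalities show $\rho_\X$ is simply $\val$ restricted to $\val^{-1}(V)$, which is an affinoid torus fibration in the sense of Definition \ref{defn:affinoid torus fibration} (with the identity as the upper isomorphism when the cell is $n$-dimensional, and otherwise after splitting off the torus directions transverse to $V$), carrying the integral affine structure inherited from $N_\R$; this last part can also be quoted directly from \cite[Example 3.5]{NXY}, \cite[Prop. 1.5.2]{MPS}.

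The step I expect to require the most care is the identification $\rho_\X = \val$, i.e. matching the abstract Berkovich retraction with the toric Gauss point. The difficulty is conceptual rather than computational: the retraction is built from an snc local expansion at $\eta_Y$ and only remembers the transverse coordinates $z_j$, treating all residual directions $z_l$ as units of valuation zero, whereas $\gamma(x)$ a priori assigns nonzero values to all monomials $\chi^m$; the two reconcile precisely because for $x$ in the relative interior of the minimal cone $\hat\sigma$ the transverse monomials $z_l = \chi^{u_l^\ast}$ satisfy $\langle u_l^\ast, (x,1) \rangle = 0$. Making this rigorous is exactly the bookkeeping of choosing a basis of $\hat N$ adapted to $\hat\sigma$, identifying $\gO_{\X,\eta_Y}$ and the local equations $z_j$, and invoking uniqueness in Proposition \ref{prop casi mono}. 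The reducedness hypothesis is what makes the bookkeeping close up: it forces the primitive ray generators of $\hat\Sigma$ onto the height-one slice (so that $\Sigma_1$ really is combinatorially $\cD(\X_0)$) and makes the normalization $v_{D_j}(t) = 1$ match $\sum_{j \in J} w_j = 1$.
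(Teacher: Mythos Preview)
Your argument is correct. The paper does not supply its own proof of this proposition; it simply cites \cite[Example 3.5]{NXY} and \cite[prop.~1.5.2]{MPS}, and your write-up is precisely the standard toric computation underlying those references: reading off the center condition from the semigroup $\hat\sigma^\vee\cap\hat M$ via the pairing identity $v(\chi^{(m,a)})=\langle(m,a),(\val(v),1)\rangle$, and matching the quasi-monomial retraction with the Gauss section by choosing a $\Z$-basis of $\hat N$ adapted to the minimal cone. One small remark: in the last step you phrase the affinoid torus fibration conclusion cell by cell, but since you have already shown $\rho_\X=\val$ globally on $\widehat\X_\eta$, the fibration structure over the whole relative interior of $|\Sigma_1|$ (when it is open in $N_\R$) follows at once from Definition~\ref{defn:affinoid torus fibration} with $V$ that open set and the identity as the top isomorphism; no cell-by-cell splitting is needed.
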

This also holds when $X$ is a regular proper toric variety over $K$ and $\X/R$ a regular proper toric model of $X$, by \cite[Theorem A.4]{GJKM}.
\\As our main interest lies in degenerations of Calabi-Yau manifolds, the above class of examples is too restrictive for us. We are thus naturally lead to consider degenerations which are only locally formally toric, so that we may apply remark \ref{retract local} to produce Berkovich retractions which are locally affinoid torus fibrations. Let us make this more precise:
\begin{defn}
\label{toricdef}
Let $\X$ be a normal $R$-scheme of finite type, such that $(\X, \X_0)$ is a good dlt pair, and let $Y$ be a stratum of $\X_0$.
We say that $\X$ is \emph{toric along Y} if there exists a toric $R$-scheme $\cZ$, a stratum $W$ of $\cZ_0$ and a formal isomorphism over $R$ 
$$\widehat{\X_{/Y}} \simeq \widehat{\cZ_{/W}}.$$
\end{defn}
The upshot is now that of $\X$ is toric along $Y$, then the Berkovich retraction $\rho_{\X} : X^{\an} \fl \Sk(\X)$ is an affinoid torus fibration over $\Star(\tau_Y)$, as follows from \cite[Theorem 6.1]{NXY} (end of the proof) and \cite[\S 3.4]{NXY}. 
\\An obvious necessary condition for $\X$ to be toric along $Y$ is for $Y$ to be a toric variety itself; the following result states that under a certain positivity assumption on the conormal bundle of $Y \subset \X$, the converse holds:
\begin{theo} \label{THEO TORIC STRAT} \cite[thm. B]{MPS}
Let $X/K$ be a smooth projective variety of dimension $n$, and $\X/R$ be a dlt model of $X$ with reduced special fiber $\X_0= \sum_\alpha D_\alpha$, such that every $D_\alpha$ is a Cartier divisor.
\\Let $Z= D_0 \cap D_1 \cap \ldots \cap D_{n-r}$ be an $r$-dimensional stratum of $\X_0$, such that:
\begin{itemize}
\itemsep0pt
\item $\mathring{Z} \subset Z$ is a torus embedding, where $\mathring{Z}= Z \setminus \cup_{\alpha \neq 0,1,\ldots,n-r}D_\alpha$;
\item the conormal bundle $\nu_{Z/ \X}^*$ is a nef vector bundle on $Z$;
\item for each $\alpha \notin \{0,...,n-r\}$, the intersection $D_{\alpha} \cap Z$ is either empty or connected.
\end{itemize}
Then the formal completion $\widehat{\X_{/Z}}$ is isomorphic to the formal completion of the normal bundle $\mathcal{N}=\nu_{Z/\X}$ along the zero section. In particular, $\X$ is toric along $Z$ (in the sense of definition \ref{toricdef}).
\end{theo}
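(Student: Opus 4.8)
The plan is to reduce the statement to a \emph{linearization} of the formal neighbourhood of $Z$ in $\X$ --- i.e.\ to the comparison of $\widehat{\X_{/Z}}$ with the formal completion along the zero section of the total space of $\mathcal{N}=\nu_{Z/\X}$ --- and to feed the nef hypothesis into the vanishing of the obstruction groups, which turn out to be cohomology groups of explicit torus-invariant line bundles on $Z$. First I would record the local structure along $Z$: the hypotheses (reduced $\X_0$, every $D_\alpha$ Cartier, $(\X,\X_0)$ dlt) ensure that $\X$ is regular in a neighbourhood of $Z$ and that $Z=\bigcap_{j=0}^{n-r}D_j$ is a transverse intersection, so that the conormal bundle splits as a direct sum of line bundles
$$\nu^*_{Z/\X}=\bigoplus_{j=0}^{n-r}\gO_Z(-D_j)|_Z,$$
and the nef hypothesis says precisely that each summand is a nef line bundle on the toric variety $Z$. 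Writing $p:\mathcal{N}\fl Z$ for the total space of $\mathcal{N}=\nu_{Z/\X}$, its zero section has the same conormal bundle, so the first infinitesimal neighbourhoods of $Z$ in $\X$ and in $\mathcal{N}$ are canonically identified; I would take this identification $\varphi_1$ as the base case of an induction.

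For the inductive step, assume an isomorphism $\varphi_k:Z^{(k)}_{\X}\xrightarrow{\sim}Z^{(k)}_{\mathcal{N}}$ of $k$-th infinitesimal neighbourhoods restricting to the identity on $Z$ has been built. Since $\X$ and $\mathcal{N}$ are both smooth along $Z$, the standard deformation theory of closed immersions identifies the obstruction to extending $\varphi_k$ to an isomorphism of $(k+1)$-st neighbourhoods with a class in
$$H^1\big(Z,\,(T_Z\oplus\mathcal{N})\otimes\mathrm{Sym}^{k+1}\nu^*_{Z/\X}\big),$$
and, when this group vanishes, the set of extensions is a torsor under the corresponding $H^0$. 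Granting the vanishing of all these $H^1$'s, one builds the $\varphi_k$ for every $k$ --- and, by keeping track of the $\Z^{n-r+1}$-grading on $\mathcal{N}$ attached to the decomposition of $\nu^*_{Z/\X}$, one arranges the $\varphi_k$ to be compatible with it, hence to respect the function $t=z_0\cdots z_{n-r}$ cutting out $\X_0$ near $Z$ --- so that $\varphi:=\varinjlim_k\varphi_k$ is an isomorphism of formal $R$-schemes $\widehat{\X_{/Z}}\xrightarrow{\sim}\widehat{\mathcal{N}_{/Z}}$, the formal completion being the direct limit of its infinitesimal neighbourhoods.

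The technical heart, and the step I expect to be the main obstacle, is the vanishing of $H^1\big(Z,(T_Z\oplus\mathcal{N})\otimes\mathrm{Sym}^{k+1}\nu^*_{Z/\X}\big)$ for all $k\ge1$. Using the toric Euler sequence for $T_Z$ and the splitting of $\nu^*_{Z/\X}$, this sheaf is built from direct sums of line bundles on $Z$ of the form $\gO_Z\big(D_i|_Z-\sum_j a_jD_j|_Z\big)$ with $\sum_j a_j=k+1\ge2$, possibly twisted by a torus-invariant prime divisor of $Z$ coming from the Euler sequence. Because $\mathring{Z}\subset Z$ is a torus embedding and, by the connectedness of the intersections $D_\alpha\cap Z$, the toric boundary $\partial Z=\bigcup_{\alpha}(D_\alpha\cap Z)$ is connected in codimension one, all these line bundles are torus-invariant and their cohomology is computed combinatorially on $Z$. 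When the ``numerator'' $D_i|_Z$ can be absorbed ($a_i\ge1$) the line bundle is a tensor product of nef line bundles, so its higher cohomology vanishes by Demazure vanishing on the complete toric variety $Z$; the remaining terms, where $\gO_Z(D_i|_Z)$ sits twisted by an ``overly negative'' nef combination, are exactly where one has to work, and there I would combine the connectedness hypothesis with a Batyrev--Borisov type vanishing (or a direct inspection of the combinatorial description of $H^1$ of a torus-invariant divisor on $Z$, cf.\ \cite{Fu}) to conclude. It is precisely here that all three hypotheses of the theorem are used together.

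Finally I would deduce toricity along $Z$: each summand $\gO_Z(-D_j)|_Z$ of $\nu^*_{Z/\X}$ is a torus-invariant line bundle on the toric variety $Z$, so the total space $\mathcal{N}$ is itself a toric variety, and the monomial $t=z_0\cdots z_{n-r}$ defines a toric morphism $\mathcal{N}\fl\A^1$ realizing $\mathcal{N}$ as the base change to $R$ of a toric $\C$-scheme $\cZ$ in which $Z$ is a stratum $W$ of the special fibre. Composing, $\widehat{\X_{/Z}}\simeq\widehat{\mathcal{N}_{/Z}}\simeq\widehat{\cZ_{/W}}$, which is exactly the assertion that $\X$ is toric along $Z$ in the sense of definition \ref{toricdef}.
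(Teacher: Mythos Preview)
The paper does not prove this theorem: it is quoted verbatim from \cite[thm.~B]{MPS} and used as a black box, with no argument given here. There is therefore nothing in the present paper to compare your proposal against.

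That said, your outline is the standard linearization strategy and is broadly the one carried out in \cite{MPS}. A couple of points deserve care. First, the obstruction space you write down, $H^1\big(Z,(T_Z\oplus\mathcal{N})\otimes\mathrm{Sym}^{k+1}\nu^*_{Z/\X}\big)$, is correct on the $\mathcal{N}$ side because $T_{\mathcal{N}}|_Z\simeq T_Z\oplus\mathcal{N}$ for a vector bundle; but you should be explicit that you are using this splitting rather than one on the $\X$ side, which is not given a priori. Second, your sketch of the vanishing argument is where the real work lies: Demazure vanishing handles the terms where the positive twist is absorbed, but the ``remaining terms'' you flag are genuinely delicate, and in \cite{MPS} this is where the connectedness hypothesis on $D_\alpha\cap Z$ is used in an essential way to control $H^1$ of the relevant torus-invariant line bundles. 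Your proposal correctly identifies this as the crux, but as written it is an outline rather than a proof; filling it in requires the detailed toric cohomology computations of \cite{MPS}.
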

Under the same assumptions, we have:
\begin{cor}
The Berkovich retraction $\rho_\X: X^{\an} \rightarrow \Sk(\X)$ is an $n$-dimensional affinoid torus fibration over $\Star(\tau_Z)$. In particular, the integral affine structure induced by $\rho_\X$ over the interior of the maximal faces of $\Sk(\X)$ extends to $\Star(\tau_Z)$ with no singularities.
\end{cor}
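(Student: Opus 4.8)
The statement follows by combining Theorem \ref{THEO TORIC STRAT} with the local nature of the Berkovich retraction (Remark \ref{retract local}) and the explicit description of retractions attached to toric models. The three hypotheses imposed on the stratum $Z$ are exactly those of Theorem \ref{THEO TORIC STRAT}, so that theorem applies: the formal completion $\widehat{\X_{/Z}}$ is isomorphic over $R$ to the formal completion along the zero section of the total space of the normal bundle $\mathcal{N} = \nu_{Z/\X}$. Since $\mathring{Z} \subset Z$ is a torus embedding and $\mathcal{N}$ is a rank $(n-r)$ toric bundle over the $r$-dimensional toric variety $Z$, the $R$-scheme $\cZ := \mathcal{N}$ is toric, the zero section being a toric stratum $W$ of $\cZ_0$ with dual cell $\tau_W$; in particular $\X$ is toric along $Z$ in the sense of Definition \ref{toricdef}. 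Observe that the open torus of $\cZ$ has dimension $n$, which is where the exponent $n$ in the statement will come from.

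First I would transport the retraction through this formal isomorphism. By Remark \ref{retract local}, the set $\rho_\X^{-1}(\Star(\tau_Z))$ is the Berkovich generic fiber $\mathfrak{X}^{\eta}_Z$ of $\widehat{\X_{/Z}}$, and the restriction of $\rho_\X$ to it coincides with the intrinsic retraction $\rho_Z : \mathfrak{X}^{\eta}_Z \fl \Star(\tau_Z)$; in particular this restriction depends only on $\widehat{\X_{/Z}}$. Precomposing with the formal isomorphism of Theorem \ref{THEO TORIC STRAT} identifies $(\mathfrak{X}^{\eta}_Z, \rho_Z)$ with $(\mathfrak{Z}^{\eta}_W, \rho_W)$, compatibly with the identification of dual complexes sending $\Star(\tau_Z)$ homeomorphically onto $\Star(\tau_W)$. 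For the toric $R$-scheme $\cZ$ with its $n$-dimensional open torus $\T$, the retraction $\rho_W$ is, via the Gauss embedding $\gamma$ of Definition \ref{def Gauss}, the restriction of $\val : \T^{\an} \fl N_{\R} \simeq \R^n$ over the $n$-dimensional polyhedral subcomplex corresponding to $\Star(\tau_W)$ --- this is Proposition \ref{prop: toric retraction} in the proper regular case, and follows from \cite[Thm. 6.1]{NXY}, \cite[\S 3.4]{NXY} in general. This is precisely the commutative square of Definition \ref{defn:affinoid torus fibration}, with $V$ open in $\R^n$ and the upper horizontal arrow an isomorphism of analytic spaces, so $\rho_\X$ is an $n$-dimensional affinoid torus fibration at every point of $\Star(\tau_Z)$, which is the first assertion.

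For the last assertion, recall (from the discussion after Definition \ref{defn:affinoid torus fibration}) that an affinoid torus fibration endows its base with an integral affine structure whose charts are the horizontal identifications of that definition; we have just produced such a structure on all of $\Star(\tau_Z)$, with empty discriminant. On the interior $\mathring{\tau}$ of a maximal face $\tau \subset \Star(\tau_Z)$ this structure agrees with the one defined previously, as both are read off from $\rho_\X$ --- compare Example \ref{ex:affinoid torus over max face} and the maximum-principle description $\mathrm{Aff}(U) = \{ \log\lvert h \rvert : h \in \gO^{\times}_{X^{\an}}(\rho_\X^{-1}(U)) \}$. Hence the affine structures on the various open maximal faces meeting $\tau_Z$ glue, across their common walls inside $\Star(\tau_Z)$, to a singularity-free integral affine structure on the neighborhood $\Star(\tau_Z)$ of $\tau_Z$, which is the claim.

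The substantive content is entirely carried by the already-cited Theorem \ref{THEO TORIC STRAT} and by the \cite{NXY} theory of toric $R$-schemes; for the corollary itself the only points requiring care are the bookkeeping checks, namely that the local retraction $\rho_Z$ is genuinely carried to $\rho_W$ by the formal isomorphism (so that Remark \ref{retract local} may be applied), that $\Star(\tau_Z)$ is full-dimensional (which follows from the $n$-dimensional open torus of $\cZ = \mathcal{N}$ together with the reducedness of $\X_0$), and that the chart structure obtained on $\Star(\tau_Z)$ restricts on each $\mathring{\tau}$ to the one defined earlier. None of these is deep, but they are what a careful write-up must spell out.
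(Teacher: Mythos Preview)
Your proposal is correct and follows exactly the approach the paper intends: the corollary is stated without proof in the paper, relying on the paragraph preceding Theorem \ref{THEO TORIC STRAT}, which already notes that if $\X$ is toric along a stratum $Y$ then $\rho_\X$ is an affinoid torus fibration over $\Star(\tau_Y)$ by \cite[Theorem 6.1]{NXY} and \cite[\S 3.4]{NXY}. Your write-up simply unpacks this implication via Remark \ref{retract local} and Proposition \ref{prop: toric retraction}, which is precisely what a detailed proof should do.
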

\subsection{The Monge-Ampère comparison property} \label{sec comp MA}
In this section, we explain the expected connection between the non-archimedean Monge-Ampère operator and the real Monge-Ampère operator on skeleta.
\\Let $N$ be a lattice, $\phi : U \fl \R$ a convex function defined on a convex open subset $U \subset N_{\R}$, and let $d\lambda$ be the Lebesgue measure on the dual $M_{\R} :=\Hom(N, \R)$.
\\For $x_0 \in U$, we define the gradient image of $\phi$ at $x_0$:
$$ \nabla \phi (x_0) = \{ l \in M_{\R} / \phi(x_0) + l(x-x_0) \le \phi(x) \; \forall x \in U \}.$$
Geometrically, this is the set of covectors cutting out affine hyperplanes in $N_{\R}$ that meet the graph $\Gamma$ of $\phi$ at $x_0$, and are below $\Gamma$ on all of $U$. 
\\For instance, if $\phi$ has $\mathcal{C}^1$-regularity, it follows immediately from the convexity of $\phi$ that the gradient image of $\phi$ at $x_0$ only contains the differential $d\phi_{x_0}$ of $\phi$ in the usual sense.
\\More generally, if $E \subset U$ is a Borel set, we set $\nabla \phi(E) = \cup_{x_0 \in E} \nabla \phi(x_0)$.
\begin{defn}
Let $\phi : U \fl \R$ be a convex function defined on an open subset $U \subset N_{\R}$.
\\The real Monge-Ampère measure of $\phi$ is defined as:
$$ \M (\phi) (E) = d\lambda(\nabla \phi(E)),$$
for any Borel set $E \subset U$.
\end{defn}
If $\phi$ has $\mathcal{C}^2$-regularity, then this is nothing but the density measure $\det(\nabla^2 \phi) d\lambda^{\vee}$.
Recall that in the semi-flat setting of section \ref{sec classical SYZ}, the Kähler Ricci-flat metric $\omega$ is given by:
$$\omega = dd^c (K \circ f)$$
for a locally defined convex function $K : B^{\sm} \fl \R$, and $f: X \fl B$ the special Lagrangian fibration. The Calabi-Yau condition was then translated into the real Monge-Ampère equation for $K$:
$$\det (\frac{\partial^2 K}{\partial y_i \partial y_j}) = C,$$
with $C >0$ a real constant. As a result, the semi-flatness condition can be expressed as the fact that the local potential for the Calabi-Yau metric is constant along the fibers of the SYZ map, which reduces the complex Monge-Ampère equation to a real Monge-Ampère equation on the base of the fibration.
\\We now move back to the non-archimedean picture. In this setting, the analog of the Calabi-Yau metric is the unique psh metric $\phi$ on $L^{\an}$ solving the non-archimedean Monge-Ampère equation:
$$\MA(\phi) = \mu_0,$$
where $\mu_0$ is the limit measure from thm. \ref{theo cv mesures} - when $X$ has semi-stable reduction, this is simply the Lebesgue measure on the essential skeleton $\Sk(X)$.
\begin{defn}
Let $X/K$ be a smooth projective variety over $K$, $\X/R$ a dlt model of $X$, and let $\rho : X^{\an} \fl \Sk(\X)$ be an admissible retraction (defn. \ref{def adm}), with discriminant locus $\Gamma$.
\\We say that a continuous, psh metric $\phi$ on $L^{\an}$ is a \emph{semi-flat} metric with respect to $\rho$ if there exists an open cover $\Sk(X) \setminus \Gamma = \bigcup_{U \in \mathcal{U}} U$ by charts as in def. \ref{defn:affinoid torus fibration}, such that $\phi$ restricts to a toric metric over any $U \in \mathcal{U}$.
\end{defn}
More precisely, this means that after pulling back $L^{\an}$ to the open $\val^{-1}(V)$ as in defn. \ref{defn:affinoid torus fibration} and choosing a trivialization, the metric $\phi$ extends to $\T^{\an}$ as a toric metric on a toric line bundle extending $L^{\an}$. 
\\When $\phi$ is a toric metric, it turns out that the non-archimedean Monge-Ampère equation can be reduced to a real Monge-Ampère equation:
\begin{theo}{\cite[thm. 4.4.4]{BPS}} Let $(Z, L)$ be a polarized toric variety over $K$.
\\Let $\phi$ be a semi-positive toric metric on $L$, given by a convex function which we denote by $\Phi : N_{\R} \fl \R$.
\\Then the non-archimedean Monge-Ampère measure of the metric $\phi$ can be computed as follows:
$$\MA(\phi) = n! \gamma_* (\M(\Phi)),$$
where $\gamma : N_{\R} \hookrightarrow Z^{\an}$ is the embedding given by the Gauss section.
\end{theo}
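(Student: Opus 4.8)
I would prove the identity first for toric Fubini--Study metrics, then pass to the limit. Observe that both sides are positive measures of total mass $(L^n) = n!\operatorname{vol}(P_D)$ — the mass of $\M(\Phi)$ being $\operatorname{vol}(P(\Phi)) = \operatorname{vol}(P_D)$ by $P_D$-admissibility, and $(L^n) = n!\operatorname{vol}(P_D)$ the degree formula for polarized toric varieties — and that both are concentrated on the compact toric skeleton $\Sk(Z) = \gamma(N_\Sigma)$ (compact since $Z$ is proper).

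\textbf{Reduction to the Fubini--Study case.} By proposition~\ref{lem cvx}, $\Phi$ is the pointwise, hence (by Dini) uniform, decreasing limit of rational piecewise-affine $P_D$-admissible convex functions $\Phi_j$; each $\Phi_j$ is a finite maximum of rational affine functions whose gradients have convex hull $P_D$ (example~\ref{ex vertices}), hence is the toric potential of a toric Fubini--Study metric $\phi_j$ of the form $m^{-1}\max_\alpha(\log\lvert s_{m_\alpha}\rvert + c_\alpha)$ with toric sections (by the description of toric Fubini--Study metrics via toric sections), and $\phi_j \downarrow \phi$ uniformly. The left-hand side is then continuous under this uniform convergence by the continuity theorem of \cite{CL}, \cite[thm.~8.4]{BE}, while on the right-hand side $\Phi \mapsto \M(\Phi)$ is weakly continuous under locally uniform convergence of convex functions and $\gamma_*$ is continuous; regarding $\M(\Phi_j)$ and $\M(\Phi)$ as measures of fixed mass $\operatorname{vol}(P_D)$ on the compact space $N_\Sigma$ rules out loss of mass. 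So it suffices to treat toric Fubini--Study $\phi$.

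\textbf{The Fubini--Study case via a toric model.} Write $\Phi = m^{-1}\max_{j\in J}(\langle m_j,\cdot\rangle + c_j)$ with, by lemma~\ref{lem bpf}, $\{m_j/m\}$ containing the vertices of $P_D$. Its Legendre transform $\Phi^*$ is the lower envelope on $P_D$ of the points $(m_j/m,-c_j/m)$, inducing a regular polytopal subdivision $\mathcal{T}$ of $P_D$, and
$$\M(\Phi) = \sum_{\sigma \in \mathcal{T}_{\max}} \operatorname{vol}(\sigma)\,\delta_{v_\sigma},$$
where $v_\sigma \in N_\Q$ is the vertex of the corner locus of $\Phi$ dual to the maximal cell $\sigma$. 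On the non-archimedean side, the rational piecewise-affine convex function $m\Phi$ together with $L = \gO_Z(D)$ determines a projective toric $R$-model $\X$ of $Z$ — its fan in $N_\R \times \R_{\ge 0}$ being generated by the graph of $m\Phi$ — carrying a nef toric model $\mathcal{L}$ of $L$ whose associated model metric is $\phi$. The vertical prime divisors $D$ of $\X$ correspond bijectively to the vertices $v_\sigma$, with divisorial point $v_D = \gamma(v_\sigma)$. As $\mathcal{L}$ is nef, $\MA(\phi)$ is the sum of Dirac masses at the $v_D$ weighted by the corresponding local top self-intersection numbers of $\mathcal{L}$; a toric computation, using that $(D,\mathcal{L}|_D)$ is a polarized toric variety with polytope the dual cell $\sigma$ up to the appropriate lattice identifications, identifies each such weight with $n!\operatorname{vol}(\sigma)$. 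Comparing, $\MA(\phi) = n!\sum_\sigma \operatorname{vol}(\sigma)\,\delta_{\gamma(v_\sigma)} = n!\,\gamma_*\M(\Phi)$, which gives the assertion in the Fubini--Study case and hence in general.

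\textbf{Main obstacle.} The delicate point is the last toric intersection-number computation: pinning down the polytope of $(D,\mathcal{L}|_D)$ and tracking the lattice normalizations — the multiplicities of the $D$'s in $\X_0$ and the sublattices attached to their cones — so that the universal constant comes out to be exactly $n!$; equivalently, establishing $\val_*\MA(\phi) = n!\,\M(\Phi)$ for toric Fubini--Study metrics by a direct local count. A secondary technical point is the approximation step, where one must ensure the real Monge--Ampère measures $\M(\Phi_j)$ converge weakly to $\M(\Phi)$ with no mass escaping the boundary of $N_\Sigma$ — most cleanly handled via the standard continuity of real Monge--Ampère measures under uniform convergence of convex functions, carried out on the compactification $N_\Sigma$.
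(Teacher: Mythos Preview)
The paper does not prove this statement; it is quoted from \cite[thm.~4.4.4]{BPS} as an external input, with no argument supplied. Your two-step outline --- establish the identity for toric Fubini--Study metrics by building the associated toric $R$-model and computing the Chambert--Loir measure via toric intersection theory on the special fiber, then pass to arbitrary continuous semi-positive toric metrics by uniform approximation (prop.~\ref{lem cvx}) and continuity of both Monge--Amp\`ere operators --- is precisely the strategy carried out in \cite{BPS}, and your identification of the delicate point (tracking lattice normalizations so that the constant comes out as $n!$) is accurate. There is therefore nothing in the present paper to compare against, and your sketch is sound.
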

As a result, by locality of the non-archimedean Monge-Ampère operator (see \cite{CLD}, \cite[cor. 5.2]{BFJ1}), if $\phi$ is a semi-flat metric with respect to an admissible retraction $\rho$, this implies that away from the discriminant locus of $\rho$, the non-archimedean Monge-Ampère measure $\MA(\phi)$ is supported on $\Sk(\X)$, and can be expressed locally as the real Monge-Ampère measure on $\Sk(\X)$ of locally defined convex functions - which may be however fairly hard to compute explicitly in terms of the metric $\phi$. \\In general, one would hope to write $\phi = \phi_{\Ld} + \psi$ for a certain model metric $\phi_{\Ld}$ on $L$, and $\psi : X^{\an} \fl \R$ a continuous function, and be able to rephrase the semi-flat condition into the invariance property:
$$\psi = \psi \circ \rho$$
over $\Sk(\X) \setminus \Gamma$. We would then obtain, locally in toric charts, an equality of measures of the form:
$$\MA(\phi) = n! \iota_* (\M(\Phi_{\Ld} + \psi)),$$
where $\Phi_{\Ld}$ is a local toric potential for the model metric $\phi_{\Ld}$. While it is unclear how to do this in general because it depends on the explicit description of the toric charts for the retraction, we are able to work this out in the case of degenerations of hypersurfaces, see remark \ref{rem MA hyper}.
\\If $\rho = \rho_{\X}$ is the Berkovich retraction associated to an snc model $\X/R$ and we restrict our attention to the interior of the maximal faces of $\Sk(\X)$, then things get simpler and we have the following result:
\begin{theo}{\cite{Vil}} \label{theo Vil}
Let $(X, L)$ be a smooth projective variety over $K$, and $(\X, \Ld)$ be a semi-stable $R$-model of $(X, L)$. Let $\phi \in \CPSH(X, L)$ such that:
$$\phi = \phi_{\Ld} + \psi,$$
where $\psi \in \mathcal{C}^0(X^{\an})$. Let $\tau$ be a maximal face of $\Sk(\X)$, and assume that the invariance property $\psi = \psi \circ \rho_{\X}$  holds over $\Int(\tau)$. Then we have the equality of measures:
$$\mathbf{1}_{\Int(\tau)} \MA(\phi) = n! \M(\psi_{| \Int(\tau)}).$$
\end{theo}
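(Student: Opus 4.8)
The plan is to reduce everything to the toric picture over the interior of $\tau$, where the toric Monge-Amp\`ere formula \cite[thm. 4.4.4]{BPS} applies. Since the non-archimedean Monge-Amp\`ere operator is local on $X^{\an}$ (see \cite{CLD}, \cite[cor. 5.2]{BFJ1}), it suffices to compute $\MA(\phi)$ over the open set $\rho_{\X}^{-1}(\Int(\tau))$. First I would invoke example \ref{ex:affinoid torus over max face}: because $(\X, \Ld)$ is semi-stable and $\tau$ is a maximal face, the associated $0$-dimensional stratum $p$ satisfies $\widehat{\gO}_{\X, p} \simeq R[[z_0, \dots, z_n]]/(t - z_0 \cdots z_n)$, the subset $\rho_{\X}^{-1}(\Int(\tau))$ is the generic fibre of $\Spf \widehat{\gO}_{\X, p}$ and embeds into the analytification of the $n$-dimensional split torus $\T$ with coordinates $z_0, \dots, z_n$ subject to $z_0 \cdots z_n = t$, the retraction $\rho_{\X}$ becomes the $\val$ map, and $\Int(\tau)$ is identified with the open simplex $\{x_0 + \dots + x_n = 1\} \subset \R_{>0}^{n+1}$. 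Since $\mathrm{Pic}(\T) = 0$, the line bundle $L$ is trivial over this chart, and one realises $(\phi, L)$ there as the restriction of a toric metric on a toric line bundle on a proper toric variety; by locality this choice does not affect the Monge-Amp\`ere measure over $\Int(\tau)$.

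The key step is then to identify the toric potential of $\phi$ restricted to this chart. On the one hand, the model metric $\phi_{\Ld}$ restricts to a toric metric whose toric potential $\Phi_{\Ld}$ is \emph{affine} on $\Int(\tau)$: indeed $\Ld$ is trivial on $\widehat{\X_{/p}}$, so $\phi_{\Ld}$ is computed by the valuation of a monomial in the $z_i$, which is an affine function of $x$ on the simplex (equivalently, the local toric model of $\X$ is a single cone, on which piecewise-linear functions are linear). On the other hand, the hypothesis $\psi = \psi \circ \rho_{\X}$ over $\Int(\tau)$ says precisely that $\psi$ restricted to this chart equals $\Psi \circ \val$ for a continuous function $\Psi$ on $\Int(\tau)$, hence is a toric function; therefore $\phi = \phi_{\Ld} + \psi$ is a toric metric on this chart, with toric potential $\Phi_{\Ld} + \Psi$. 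Since $\phi \in \CPSH(X, L)$, its restriction is a continuous psh toric metric, so $\Phi_{\Ld} + \Psi$ is convex by theorem \ref{theo toric field}; as $\Phi_{\Ld}$ is affine, $\Psi$ is itself convex, and under the identification of $\Int(\tau) \subset \Sk(\X) \subset X^{\an}$ with the image of the Gauss section we have $\Psi = \psi_{| \Int(\tau)}$, so that $\M(\psi_{| \Int(\tau)})$ makes sense.

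Finally I would apply \cite[thm. 4.4.4]{BPS}, which over $\Int(\tau)$ gives $\MA(\phi) = n!\, \gamma_* \M(\Phi_{\Ld} + \Psi)$, where $\gamma$ is the Gauss section, i.e. the inclusion $\Int(\tau) \hookrightarrow X^{\an}$. Because $\Phi_{\Ld}$ is affine its gradient is a single constant covector $m_0 \in M_{\R}$, so for every Borel set $E \subset \Int(\tau)$ the gradient image $\nabla(\Phi_{\Ld} + \Psi)(E)$ is the translate of $\nabla \Psi(E)$ by $m_0$; by translation-invariance of the Lebesgue measure on $M_{\R}$ this yields $\M(\Phi_{\Ld} + \Psi) = \M(\Psi) = \M(\psi_{| \Int(\tau)})$ as measures on $\Int(\tau)$. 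Pushing forward by $\gamma$ and noting that the right-hand side is supported on $\Int(\tau)$, we obtain $\mathbf{1}_{\Int(\tau)} \MA(\phi) = n!\, \M(\psi_{| \Int(\tau)})$.

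The main obstacle is the bookkeeping in the middle paragraph: one has to check carefully that the affinoid torus fibration structure over $\Int(\tau)$ is compatible with a toric structure on $L$, so that the \emph{globally} defined continuous psh metric $\phi$ really does restrict to a toric psh metric (in the sense of \cite[def. 4.3.2]{BPS}) on a toric line bundle over a proper toric variety, and that the toric Monge-Amp\`ere formula of \cite{BPS} (stated for proper toric varieties) can be localised over $\Int(\tau)$ so as to match $\MA(\phi)$ computed on $X^{\an}$. Both follow from the locality of the Monge-Amp\`ere operator together with the explicit semi-stable local model, but this is where the care is needed; the rest is a routine dictionary between convex geometry and pluripotential theory.
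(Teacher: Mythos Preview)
The paper does not give its own proof of this statement: it is quoted verbatim from \cite{Vil}, and the surrounding text only sketches the heuristic (locality of the Monge--Amp\`ere operator, reduction to the toric picture via \cite[thm.~4.4.4]{BPS}) before deferring to Vilsmeier for the actual argument. So there is nothing in the paper to compare your proposal against line by line.

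That said, your outline is correct and is essentially the strategy of \cite{Vil}: localise over $\rho_{\X}^{-1}(\Int(\tau))$ using the semi-stable local model from example~\ref{ex:affinoid torus over max face}, observe that $\phi_{\Ld}$ has affine toric potential there (because $\Ld$ trivialises on the formal completion at the $0$-stratum), use the invariance hypothesis to see that $\phi$ is toric in this chart, apply the toric Monge--Amp\`ere formula, and cancel the affine part by translation-invariance of Lebesgue measure. You have also put your finger on the genuine technical point: the formula of \cite{BPS} is stated for proper toric varieties, so one must justify that it localises over $\Int(\tau)$ and agrees with the globally defined $\MA(\phi)$. This is exactly what Vilsmeier carries out, and it is not entirely formal --- one needs the Chambert-Loir--Ducros locality \cite{CLD} together with an explicit extension of the local toric metric to a proper toric compactification. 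Your sketch is an accurate roadmap of that proof.
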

We now let $X/K$ be a maximal degeneration of Calabi-Yau manifolds, polarized by the ample line bundle $L$. We write $\phi \in \PSH(X^{\an}, L^{\an})$ the unique solution to the NA Monge-Ampère equation:
$$\MA(\phi) = \mu_0$$
\begin{defn}{(Weak comparison property, \cite[def. 3.11]{Li2})} We say that $\phi$ satisfies the weak NAMA/real MA comparison property if there exists an snc model $\X/R$ of $X$, together with a model $\Ld$ of $L$ on $\X$, such that the function $\psi: X^{\an} \fl \R$ defined by the formula:
$$\phi = \phi_{\Ld} + \psi$$
satisfies $\psi = \psi \circ \rho_{\X}$ over the interior of the maximal faces of $\Sk(\X)$.
\end{defn}
The motivation for this definition resides in the following theorem, which proves that the above condition is sufficient for the weak SYZ conjecture to hold:
\begin{theo}{\cite[thm. 1.3]{Li2}} Let $X \fl \D^*$ be maximal degeneration of CY manifolds, polarized by an ample line bundle $L$. Assume the NAMA/real MA comparison property holds. 
\\Then for any $\delta >0$, there exists $\eps_{\delta}$ such that for all $\lvert t \rvert \le \eps_{\delta}$, there exists a special Lagragian torus fibration on an open subset of $X_t$ of (normalized) Calabi-Yau measure greater than $(1-\delta)$.
\end{theo}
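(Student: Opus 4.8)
The plan is to convert the weak comparison property into a genuine solution of the \emph{real} Monge--Ampère equation on each maximal face of $\Sk(\X)$, transplant it to the complex fibers $X_t$ as an approximate Calabi--Yau potential, correct this ansatz to the honest Calabi--Yau metric with quantitative control, and finally deform the Lagrangian torus fibers of the semi-flat model into special Lagrangian ones.

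First I would fix a maximal face $\tau$ of $\Sk(\X)$, for the snc model $\X$ and model metric $\phi_{\Ld}$ supplied by the hypothesis, and write $\phi = \phi_{\Ld} + \psi$. The comparison property gives $\psi = \psi \circ \rho_{\X}$ over $\Int(\tau)$, so Theorem \ref{theo Vil} yields $\mathbf{1}_{\Int(\tau)} \MA(\phi) = n!\, \M(\psi_{| \Int(\tau)})$; since $\MA(\phi) = \mu_0$ is a multiple of Lebesgue measure on the open simplex $\Int(\tau) \subset \R^n$, the convex function $\psi$ solves there a real Monge--Ampère equation with constant density, hence by interior regularity for the real Monge--Ampère operator it is smooth and uniformly convex on compact subsets. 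Over $\Int(\tau)$ the retraction $\rho_{\X}$ is, after the semistable computation of Example \ref{ex:affinoid torus over max face} (or by formal toricity of $\X$ along the corresponding $0$-dimensional stratum), essentially the $\val$ map of an algebraic torus.

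Using local coordinates $z_j$ on $X_t$ with $\val = \frac{\log \lvert z \rvert}{\log \lvert t \rvert}$, I would then build for $\lvert t \rvert \ll 1$ a semi-flat ansatz Kähler metric $\omega_t^{\mathrm{sf}}$ on the region $U_{\tau,t} \subset X_t$ lying over a compact $W \Subset \Int(\tau)$, with local potential $(-\log\lvert t\rvert)\cdot(\phi_{\Ld}+\psi)\big(\frac{\log\lvert z\rvert}{\log\lvert t\rvert}\big)$ --- the archimedean incarnation of the metric $\phi$, exactly as in the semi-flat discussion of \S\ref{sec classical SYZ}. By the semi-flat identity $\big(dd^c(K\circ f)\big)^n \propto \det\big(\frac{\partial^2 K}{\partial y_i \partial y_j}\big)\,dz\wedge d\bar z$ of that section, the complex Monge--Ampère measure of $\omega_t^{\mathrm{sf}}$ is governed by $\M(\psi)$ and therefore matches the normalized Calabi--Yau measure $\mu_t$ up to an error small in $L^p$ as $t \to 0$. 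Invoking then the quantitative stability of the complex Monge--Ampère equation (Kołodziej-type a priori estimates \cite{Kolo} together with the continuity method), the genuine Calabi--Yau potential on $U_{\tau,t}$ is $C^0$-close to the ansatz, and --- this is the analytic heart of the argument, carried out in \cite{Li2} via Savin's small-perturbation theorem for fully nonlinear elliptic equations --- this closeness upgrades to $C^{2,\alpha}$ on a slightly shrunk region. There $\omega_t$ is $C^2$-close to $\omega_t^{\mathrm{sf}}$, whose torus fibers $f_t^{-1}(b)$ are Lagrangian with $\Im(\Omega_t)$ uniformly small on them, so McLean's deformation theory for special Lagrangian submanifolds together with a quantitative implicit function theorem deform each fiber to a genuine special Lagrangian torus for $(\omega_t, \Omega_t)$, assembling into a special Lagrangian torus fibration over $W$.

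Finally, letting $\tau$ range over all maximal faces of $\Sk(\X)$ and gluing the resulting fibrations gives a special Lagrangian torus fibration on an open subset $U_t \subset X_t$ whose complement lies over a shrinking neighbourhood of the codimension-$1$ part of $\Sk(\X)$ and of the codimension-$2$ discriminant $\Gamma$; since $\mu_0$ is Lebesgue measure on $\Sk(X)$ and $\mu_t \to \mu_0$ weakly on $X^{\hyb}$ by Theorem \ref{theo cv mesures}, the Calabi--Yau measure $\mu_t(U_t)$ tends to $1$, which gives $\mu_t(U_t) \ge 1-\delta$ for $\lvert t \rvert \le \eps_\delta$. I expect the main obstacle to be exactly the passage from $C^0$ to $C^{2,\alpha}$ control of the Calabi--Yau metric against the semi-flat model on a region whose size is not allowed to shrink too fast as $t\to 0$ --- this is where the bulk of the work lies, and where the comparison property is genuinely needed, namely to keep the semi-flat ansatz under control all the way out to the boundary of each maximal face.
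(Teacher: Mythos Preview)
Your outline tracks the paper's own sketch of Li's argument closely: reduce to the real Monge--Amp\`ere equation on each maximal face via Theorem~\ref{theo Vil}, transplant to a semi-flat ansatz on $X_t$ through $\Log_{\X}$, bound the Calabi--Yau potential against this ansatz in $C^0$, upgrade to $C^{2,\alpha}$ (Savin), and perturb the Lagrangian torus fibers to special Lagrangian ones. The paper attributes this last step to Y.~Zhang \cite{ZhSL}, which is the quantitative version of the McLean deformation you invoke, so there is no real difference there.

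Two technical points should be corrected. First, your appeal to ``interior regularity for the real Monge--Amp\`ere operator'' to conclude that $\psi$ is smooth and uniformly convex on compacta of $\Int(\tau)$ is not valid as stated: the paper explicitly warns (citing \cite{Moo}) that $\M(\psi)=d\lambda$ admits non-smooth, non-strictly convex Alexandrov solutions in dimension $\ge 3$, so smoothness cannot be obtained from a local PDE argument alone. What one actually uses is that $\psi$ is smooth and strictly convex on an open subset of $\Int(\tau)$ of full Lebesgue measure, which suffices since the goal is only a fibration over a region of Calabi--Yau measure $>1-\delta$. Second, the $C^0$ control of $\phi_{CY,t}$ against the ansatz is not classical Ko{\l}odziej stability on a fixed manifold but the uniform Skoda estimates of \cite{Li3} on the degenerating family---this is exactly the ingredient the paper singles out, and it is what prevents the constants from blowing up as $t\to 0$.
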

Very loosely, the idea of the proof of the above theorem is that the solution to the non-archimedean Monge-Ampère equation $\phi$ provides a solution to the real Monge-Ampère equation on an open region of $\Sk(X)$ that has full measure; and pulling back this solution by the map $\Log_{\X}$ from \cite[prop. 2.1]{BJ} provides an ansatz for the Calabi-Yau potentials on a region of the $X_t$ with asymptotically full measure: if $K \subset \Int(\tau)$ is a compact subset, then Li obtains bounds for $\lvert \phi_{CY, t} - \phi \circ \Log_{\X} \rvert$ over $K$ using uniform Skoda estimates on the highly degenerate Calabi-Yau manifolds $X_t$ \cite{Li3}. The estimates on the potential are then enough to prove the existence of a special Lagrangian fibration over $\Log_{\X}^{-1}(K)$ which is a perturbation of $\Log_{\X}$, by the results of Y. Zhang \cite{ZhSL}.
\\While the above theorem provides us with a asymptotic SYZ fibration, it does not describe completely the Gromov-Hausdorff limit of the family: we only obtain that the disjoint union of maximal faces of $\Sk(X)$ endowed with a real Monge-Ampère metric embeds into the Gromov-Hausdorff limit. We expect that in order to obtain Gromov-Hausdorff convergence to the metric completion of $\Sk(X) \setminus \Gamma$ for a piecewise-affine locus $\Gamma$ of codimension $2$, we need a stronger version of the comparison property in order to obtain a solution to the real Monge-Ampère equation in codimension $1$:
\begin{defn}{(Strong comparison property)}
Let $X/K$ be a smooth projective, maximally degenerate Calabi-Yau variety, polarized by an ample line bundle $L$.
\\Let $\phi$ be the unique positive metric on $L$ satisfying the non-archimedean Monge-Ampère equation:
$$\MA(\phi) = \mu_0.$$
We say that $\phi$ satisfies the strong comparison property if there exists an admissible retraction $\rho : X^{\an} \fl \Sk(X) $ such that $\phi$ is a semi-flat metric with respect to $\rho$.
\end{defn}
Assume that the above statement holds, and write $\Gamma \subset \Sk(X)$ the discriminant locus of $\rho$. Then $\big( \Sk(X) \setminus \Gamma \big) = \bigcup_{\alpha \in A} U_{\alpha}$ such that $\rho$ is an affinoid torus fibration over $U_{\alpha}$, we write $\phi_{\alpha} : U_{\alpha} \fl \R$ the convex, toric potential (in the sense of thm. \ref{theo toric field}) for $\phi$ on $U_{\alpha}$. Then by the previous discussion there is an equality of measures on $U_{\alpha}$:
$$\mathbf{1}_{U_{\alpha}} \mu_0 = n! \M(\phi_{\alpha}),$$
where the real Monge-Ampère measure on the right-hand side is computed with respect to the integral affine structure induced by $\rho$. Moreover, for $\alpha \neq \beta$, the difference $\phi_{\alpha} - \phi_{\beta}$ must be affine on $U_{\alpha} \cap U_{\beta}$, so that we may define locally in affine coordinates $g =\nabla^2 \phi := \sum_{i, j} \frac{\partial^2 \phi_{\alpha}}{\partial y_i \partial y_j} dy_i dy_j$ whenever $\phi_{\alpha}$ is smooth.
\begin{conj}{(Kontsevich-Soibelman conjecture)}
Let $(X, L)$ be a polarized, maximal degeneration of Calabi-Yau manifolds. Then there exists an admissible retraction:
$$ \rho : X^{\an} \fl \Sk(X),$$
with discriminant locus $\Gamma \subset \Sk(X)$, such that the solution $\phi$ to the non-archimedean Monge-Ampère equation:
$$\MA(\phi) = \mu_0$$
is a semi-flat metric with respect to $\rho$. Additionally, the collection of local potentials $\phi = (\phi_{\alpha})_{\alpha}$ is a smooth, strictly convex solution of the real Monge-Ampère equation on $\Sk(X) \setminus \Gamma$.
\\Moreover, the rescaled Calabi-Yau metrics $(X_t, \tilde{\omega}_t)$ converge in the Gromov-Hausdorff sense as $t \rightarrow 0$ to a compact $n$-dimensional base $(B, g_B)$, such that:
\begin{itemize}
\item $(B, g_B)$ contains the smooth Riemannian manifold $(\Sk(X) \setminus \Gamma, \nabla^2 \phi)$, whose complement is of Hausdorff codimension at least 2,
\item $B$ is homeomorphic to $\Sk(X)$.
\end{itemize}
\end{conj}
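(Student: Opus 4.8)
This is the Kontsevich--Soibelman conjecture, so what follows is a program rather than a proof; Theorems A, B and C above, together with the results recalled in this section, carry out several of its steps for hypersurfaces and, completely, for the Fermat family. The plan is to split the statement into a non-archimedean half --- the existence of $\rho$ and the strong comparison property --- and an archimedean half --- real regularity together with Gromov--Hausdorff convergence --- and to attack them in that order, since the second feeds on the first.

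\emph{Non-archimedean half.} First I would construct the admissible retraction $\rho : X^{\an} \fl \Sk(X)$. Whenever $X$ has a good minimal model $\X/R$ which is toric along every stratum $Y$ of $\X_0$ in the sense of Definition \ref{toricdef}, Remark \ref{retract local} and Proposition \ref{prop: toric retraction} show that the Berkovich retraction $\rho_{\X}$ is an affinoid torus fibration over each $\Star(\tau_Y)$, so that the discriminant $\Gamma$ is contained in the locus (of codimension $\ge 2$) where local toricity fails; Theorem \ref{THEO TORIC STRAT} supplies a concrete sufficient condition (nef conormal bundle of the strata), and for hypersurfaces Theorem A builds $\rho$ explicitly from Yamamoto's tropical contraction \cite{Ya}. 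Next one must prove that the solution $\phi$ of $\MA(\phi)=\mu_0$ is semi-flat with respect to $\rho$. The mechanism, carried out for the Fermat family in Theorem B, is to produce a hybrid psh metric on $(X^{\hyb},L^{\hyb})$ whose non-archimedean fiber is $\phi$ and which is fiberwise-toric in the toric charts of $\rho$ --- using the correspondence of Theorem \ref{theo toric} between hybrid fiberwise-toric psh metrics and convex functions on $N_{\Sigma}\times[0,1]$ --- and to identify its complex fibers with Li's semi-flat Calabi--Yau ansatz from \cite{Li1}; the weak continuity of the fiberwise Monge--Ampère measures (Theorem \ref{theo cont MA}) and of the hybrid family of Haar measures (Theorem \ref{theo Haar}) then forces $\phi$ to coincide with a toric metric in each chart. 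Beyond the toric or Fermat setting this explicit ansatz is not available, and an abstract existence statement for such a hybrid metric is the missing step.

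\emph{Archimedean half.} Granting the strong comparison property, Theorem \ref{theo Vil} together with the toric identity $\MA(\phi)=n!\,\gamma_*\M(\Phi)$ gives, chart by chart, that the local potentials $\phi_{\alpha}$ solve the real Monge--Ampère equation $n!\,\M(\phi_{\alpha})=\mathbf 1_{U_{\alpha}}\mu_0$ with respect to the integral affine structure induced by $\rho$, the differences $\phi_{\alpha}-\phi_{\beta}$ being affine on overlaps; since $\mu_0$ has positive smooth density on each maximal face, Caffarelli's interior regularity theory for the real Monge--Ampère equation upgrades the $\phi_{\alpha}$ to a smooth, strictly convex solution over $\Sk(X)\setminus\Gamma$, producing the Riemannian metric $g=\nabla^2\phi$. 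For the convergence statement I would take $\phi\circ\Log_{\X}$, for $\Log_{\X}$ as in \cite{BJ}, as an ansatz for the Calabi--Yau potentials $\phi_{CY,t}$, bound $\lvert\phi_{CY,t}-\phi\circ\Log_{\X}\rvert$ on compact subsets of $\Sk(X)\setminus\Gamma$ by the uniform pluripotential estimates of \cite{Li3}, and conclude via the metric comparison results of \cite{ZhSL} that $(X_t,\tilde\omega_t)$ converges in the pointed Gromov--Hausdorff sense to $(\Sk(X)\setminus\Gamma,g)$ away from $\Gamma$; upgrading this to global Gromov--Hausdorff convergence to a compact $B$ homeomorphic to $\Sk(X)$ would require showing that a shrinking neighbourhood of $\Gamma$ contributes negligibly to the diameter.

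The genuine obstacle is precisely this last point. The available estimates only control the region of asymptotically full \emph{measure}, i.e.\ compact subsets of $\Sk(X)\setminus\Gamma$, and give no quantitative information near the codimension-$2$ singular locus, where the special Lagrangian fibers degenerate and the pluripotential bounds blow up; pinning down the local models near $\Gamma$ --- equivalently, the singular SYZ fibers and the asymptotic geometry around them --- is the open heart of the problem. By comparison, constructing $\rho$ in full generality and establishing the strong comparison property outside the toric/Fermat case are substantial but look tractable with the methods developed here.
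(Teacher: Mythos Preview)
This statement is a \emph{conjecture}; the paper does not prove it, and your text correctly presents a program rather than a proof. In that sense there is no ``paper's own proof'' to compare against, and most of your outline is a fair summary of how the paper's results (Theorems A, B, C, Theorem \ref{THEO TORIC STRAT}, and the input from \cite{Li1}, \cite{Li2}, \cite{Li3}, \cite{ZhSL}) fit into the larger picture.

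There is, however, one concrete mistake in your archimedean half. You write that once the strong comparison property is known, ``Caffarelli's interior regularity theory for the real Monge--Amp\`ere equation upgrades the $\phi_{\alpha}$ to a smooth, strictly convex solution over $\Sk(X)\setminus\Gamma$''. This is false as stated, and the paper flags precisely this point immediately after the conjecture: there exist non-smooth, non-strictly convex Alexandrov solutions to $\M(\phi)=d\lambda$ in dimension $\ge 3$ (Mooney \cite{Moo}), so smoothness and strict convexity cannot be deduced from a purely local PDE argument. Caffarelli's interior $C^{2,\alpha}$ regularity requires strict convexity as an \emph{input}, and Pogorelov-type examples show that without it the solution can be singular along a line. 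Thus the regularity of the local potentials is an additional open problem, not a consequence of the comparison property; any argument would have to be global, presumably using the compactness of $\Sk(X)$ and the affine monodromy around $\Gamma$. You should downgrade this step from ``follows from Caffarelli'' to ``open, needs a global argument'', on the same footing as the diameter control near $\Gamma$ that you correctly identify as the main obstacle.
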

Building on the results from \cite{Li1}, we are able to prove the strong comparison property for degenerations of Fermat hypersurfaces, see section \ref{sec Fermat}.
\\Let us point out that assuming the strong comparison property, the strict convexity of the local potential $\phi$ can not be deduced from a local PDE argument: there are non-smooth, non-strictly convex solutions to real Monge-Ampère equations of the form $\M(\phi) = d\lambda$ \cite{Moo}, so that higher regularity of the solution would have to be obtained through a 'global' argument.
\section{The non-archimedean SYZ fibration for hypersurfaces} \label{sec syz super}
\subsection{Degenerations of Calabi-Yau hypersurfaces} \label{sec super setup}
We are interested in degenerations of Calabi-Yau manifolds obtained in the following way:
$$X = \{ z_0...z_{n+1} + tF =0 \} \subset \CP^{n+1} \times \D^*,$$
where $F \in H^0(\CP^{n+1}, \gO_{\CP}(n+2))$ is a general section (in a sense that will be specified later); we also denote by $X$ the base change to $K = \C((t))$. We will see that in this case, the essential skeleton $\Sk(X)$ can be realized explicitly using tropical geometry, and there is a natural way of endowing it with an integral affine structure, which is singular in codimension 2. The construction we outline here is a special case of \cite{Gro05}, but since the hypersurface case is far less technical we will outline the details.
\\We start by identifying the essential skeleton $\Sk(X)$ with the Berkovich skeleton of the model $\X$:
\begin{prop}\label{prop model dlt} Let $F \in H^0(\CP^{n+1}, \gO_{\CP}(n+2))$ be a general section. Then the hypersurface:
$$\X = \{ z_0...z_{n+1} +tF =0 \} \subset \CP^{n+1}_R$$
is a minimal dlt model of the smooth, maximally degenerate Calabi-Yau manifold $X = \X_K$; in particular $\Sk(X) = \Sk(\X)$.
\end{prop}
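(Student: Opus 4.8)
The plan is to check directly the three assertions packaged in the statement: that $X=\X_K$ is a smooth, maximally degenerate Calabi--Yau variety; that $\X/R$ is a dlt model; and that the logarithmic relative canonical divisor $K^{\log}_{\X/R}$ is trivial. Granting these, $\X$ is a minimal model of $X$, so $\Sk(X)=\Sk(\X)$ holds by the very definition of the essential skeleton (Definition~\ref{def sk ess}). I would first record that $\X$ is a model of $X$ --- it is cut out of $\CP^{n+1}_R$ by a single $z$-homogeneous polynomial and its special fibre has the expected dimension $n$, so it is flat and proper over $R$ --- and that $X$, a relative hypersurface of degree $n+2$ in $\CP^{n+1}_R$, has trivial canonical bundle by adjunction, hence is Calabi--Yau of dimension $n$.

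For the smoothness of $X$ I would apply Bertini to the pencil spanned by $z_0\cdots z_{n+1}$ and $F$ inside $|\gO_{\CP^{n+1}}(n+2)|$: the general member $\{z_0\cdots z_{n+1}+\lambda F=0\}$ is smooth away from the base locus $\{z_0\cdots z_{n+1}=F=0\}$, and at a base point where at least two coordinates vanish it is singular only where $dF=0$, which is empty for general $F$ since $\{F=0\}$ is then smooth. Hence the generic fibre of the resulting family over $\A^1_\lambda$ is smooth over $\C(\lambda)$, and base-changing along $\C[\lambda]\to\C((t))=K$, $\lambda\mapsto t$, shows that $X$ is smooth. The special fibre is $\X_0=\{z_0\cdots z_{n+1}=0\}$, the reduced toric boundary of $\CP^{n+1}_k$, with irreducible components $D_i=\{z_i=0\}\cong\CP^n_k$ and strata $D_J=\bigcap_{j\in J}D_j\cong\CP^{n+1-|J|}_k$ for $\emptyset\neq J\subsetneq\{0,\dots,n+1\}$; thus $\mathcal D(\X_0)\cong\partial\Delta^{n+1}$ has dimension $n$, which yields maximal degeneracy once $\Sk(X)=\Sk(\X)$ is known.

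To see that $\X$ is a dlt model I would analyse its singularities. As a hypersurface in the regular scheme $\CP^{n+1}_R$, $\X$ is a local complete intersection, hence Gorenstein; computing the Jacobian ideal of $z_0\cdots z_{n+1}+tF$ shows that $\mathrm{Sing}(\X)\subset\bigcup_{|J|\geq 2}\bigl(\{F=0\}\cap D_J\bigr)$, and for general $F$ this locus has codimension $\geq 3$ in $\X$ (since $F$ does not vanish on the low-dimensional strata), so $\X$ is regular in codimension $1$ and therefore normal. Near a point at which exactly the coordinates indexed by $J$, $|J|=m\geq 2$, vanish and $F=0$, genericity of $F$ makes $\{F=0\}$ smooth and transverse to $D_J$, so in suitable local coordinates the equation of $\X$ becomes $z_{j_1}\cdots z_{j_m}+tf=0$ with $f$ a coordinate: thus $\X$ is formally toroidal there, with $\X_{0,\red}$ a sum of toroidal boundary divisors. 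It follows that $(\X,\X_{0,\red})$ is log canonical --- log smooth away from $\mathrm{Sing}(\X)$, a toroidal pair near it --- and that its log canonical centres are the closures of the strata $D_J$; since each $\mathrm{Sing}(\X)\cap D_J$ is a proper closed subset of $D_J$, the pair is snc at the generic point of every $D_J$, so $\X$ is dlt. Finally, $\X_0$ being reduced, $K^{\log}_{\X/R}=K_{\X/R}$, and adjunction together with $\omega_{\CP^{n+1}_R/R}\cong\gO(-(n+2))$ and $\gO_{\CP^{n+1}_R}(\X)\cong\gO(n+2)$ gives $\omega_{\X/R}\cong\gO_\X$. Hence $\X$ is a minimal dlt model and $\Sk(X)=\Sk(\X)$.

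The main work is the local analysis in the third step: one must pin down $\mathrm{Sing}(\X)$, exploit the genericity of $F$ (Bertini/Kleiman transversality of $\{F=0\}$ against all coordinate strata and their intersections) to bring $\X$ into the toroidal normal form $z_{j_1}\cdots z_{j_m}+tf=0$, and then identify the toroidal log canonical centres with the strata of $\X_0$ so as to verify the snc condition at their generic points. Everything else is essentially bookkeeping, and this is a considerably simpler instance of the general toric-degeneration construction of \cite{Gro05}.
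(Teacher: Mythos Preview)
Your proposal is correct and follows essentially the same approach as the paper. Both arguments compute the singular locus of $\X$, reduce the local equation near a singular point to the toric form $z_{j_1}\cdots z_{j_m}+tw=0$ (with $w$ a coordinate coming from $F$), invoke the log canonicity of such toric pairs, verify the snc condition at the generic points of the strata, and conclude minimality by adjunction; you are simply more explicit about the smoothness of $X$, the normality of $\X$, and the maximal degeneracy, while the paper's proof is terser and cites \cite[prop.~11.4.24]{CoxLittleSchenck2011} directly for the toric log canonicity.
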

More precisely, the statement of the above proposition holds whenever $F$ is not identically zero on each toric stratum $Z$ of $\CP^{n+1}$, and the intersection $Z \cap \{ F=0 \}$ is smooth; such $F$ are called \emph{admissible} in \cite{HJMM}.
\begin{rem} As shown by the proof, the model $\X$ will not be good even for general choice of $F$ - the irreducible components of $\X_0$ are not $\Q$-Cartier at the singular points of $\X$ as soon as $n \ge 2$. Thus, there does not exist a Berkovich retraction $\rho_{\X} : X^{\an} \fl \Sk(\X)$, which is consistent with the statement of theorem \ref{THEO TORIC STRAT}: if such a retraction existed it would be an affinoid torus fibration over the whole $\Sk(X) \simeq \bS^n$, yielding an integral affine structure with no singularities on an $n$-sphere, which is impossible.
\end{rem}
\begin{proof} It follows from an elementary computation that for general $F$, the singular locus of the total space $\X$ is given by:
$$\Sing(\X) = \bigcup_{i \neq j} \{ z_i = z_j =t =F =0 \} \subset \X_0,$$
which is of codimension $3$ in the total space $\X$. The points where the singularity is the most severe are the intersection of the strata curves of $\X_0$ with the locus $\{ F=0 \}$, since $F$ is general this locus does not meet the zero-dimensional strata of $\X_0$. Thus, we may focus on the curve $\{z_1=...=z_n =0 \} \subset \X_0$, where the singularity is étale-locally of the form:
$$\cU = \{ z_1....z_n +tw =0 \} \subset \A^{n+1}_R.$$
 so that by \cite[prop. 11.4.24]{CoxLittleSchenck2011}, the pair $(\cU, \cU_0)$ is log canonical. We also infer that at the generic point of a stratum, $\X$ is smooth while the components of $\X_0$ meet transversally, which proves that $\X$ is a dlt model of $X$. 
 \\Moreover, $\X_0$ is reduced and $K_{\X/R} = \gO_{\X}$ by adjunction, which concludes.
\end{proof}
From now on, we will assume that $F$ is such that the statement of the above lemma holds. We view $Z_{\Sigma} =\CP^{n+1}$ as a smooth toric Fano variety of dimension $(n+1)$, with simple normal crossing boundary $D= \sum_{l=0}^{n+1} D_l$ the sum of coordinate hyperplanes. We additionally let $L=\gO_{\CP}(n+2)$ be the anticanonical polarization on $\CP^{n+1}$, and we write $P \subset M_{\R}$ the associated polytope. If $(v_1,...,v_{n+1})$ is a basis of $N$, we set $v_0 := -(v_1+...+v_{n+1})$ and the $v_l$'s for $l=0,...,(n+1)$ are the primitive generators of the rays of $\Sigma$.
\\Writing $P^* \subset N_{\R}$ the polar dual of $P$, that is:
$$P^* = \{ x \in N_{\R} / \langle u, x \rangle \le 1 \; \forall u \in P \},$$
we see that here $P^* = \Conv(v_0,...,v_n)$.
\\One of the reasons such degenerations provide an interesting class of examples is that their essential skeleton can be realized in the toric world, as we will now explain. The dual complex of the toric boundary $D \subset \CP^{n+1}$ can naturally be realized inside $N_{\R}$: if $D_l$ is a coordinate hyperplane, it corresponds to $e_l \in N$ the primitive integral generator of the corresponding ray of $\Sigma$. Since $\X_0 = D$, we see that we have a canonical isomorphism of simplicial complexes between the dual complex $\cD(\X_0)$ and the boundary $\partial P^*$, sending a vertex $v_{D_l} \in \cD(\X_0)$ to $v_l \in N$. Realizing $\cD(\X_0) \simeq \Sk(\X) \subset X^{\an}$ inside the Berkovich analytification, we have the more precise statement:
\begin{prop} \label{prop sk N} Let $X \subset \CP^{n+1}_K$ as above.
Then the composition: 
$$X^{\an} \subset \CP^{n+1, \an} \xrightarrow{\val_{\Sigma}} N_{\Sigma}$$
induces by restriction a isomorphism of simplicial complexes from $\Sk(\X)$ to $\partial P^*$.
\end{prop}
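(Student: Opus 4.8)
The plan is to identify $\Sk(\X)$ explicitly as the set of quasi-monomial valuations attached to the toric strata of $\X_0$ via Proposition~\ref{prop casi mono}, to compute the image of these valuations under $\val_\Sigma$, and to recognize the result as the tautological affine identification of the two boundary complexes involved.

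First, the combinatorial side. By Proposition~\ref{prop model dlt} the model $\X$ has reduced special fiber equal to the toric boundary $D=\sum_{l=0}^{n+1}D_l$ of $\CP^{n+1}$; hence its strata are precisely the toric strata $D_J=\bigcap_{j\in J}D_j$ for $\emptyset\neq J\subsetneq\{0,\dots,n+1\}$, each being a linear subspace $\CP^{n+1-|J|}$, in particular connected. By Definition~\ref{def dual cpx} the dual complex $\mathcal{D}(\X_0)$ is therefore the boundary complex of the standard simplex on the vertex set $\{0,\dots,n+1\}$, with face $\tau_{D_J}=\{w\in\R_{\geqslant 0}^{|J|}\mid\sum_{j\in J}w_j=1\}$ and vertices the valuations $\ord_{D_j}$, $j\in J$. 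On the other hand $P^{*}=\Conv(v_0,\dots,v_{n+1})$ is an $(n+1)$-simplex — the $v_l$ are affinely independent — and $\partial P^{*}$ is its boundary complex, with face $\Conv(v_j\mid j\in J)$ attached to $J$. Since $i_\X\colon\mathcal{D}(\X_0)\xrightarrow{\ \sim\ }\Sk(\X)$ is a homeomorphism of simplicial complexes, it will suffice to show that the composite
$$\mathcal{D}(\X_0)\xrightarrow{\ i_\X\ }\Sk(\X)\hookrightarrow X^{\an}\subset\CP^{n+1,\an}\xrightarrow{\ \val_\Sigma\ }N_\Sigma$$
carries $\tau_{D_J}$ onto $\Conv(v_j\mid j\in J)$ by the affine map $w\mapsto\sum_{j\in J}w_jv_j$: this map is an affine isomorphism (affine independence of the $v_j$), it is bijective on vertices and manifestly compatible with the face relations $J'\subset J$ on both sides, so the collection of these maps glues to an isomorphism of simplicial complexes $\mathcal{D}(\X_0)\cong\partial P^{*}$, which transports through $i_\X^{-1}$ to the desired statement.

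Next, the computation of the composite. The hypersurface $X$ meets the open torus $\T\subset\CP^{n+1}$ — its equation reads $z_0\cdots z_{n+1}=-tF$ there — so every character $\chi^m$, $m\in M$, restricts to a nonzero rational function on $X$, and $\val_\Sigma$ sends a point $x\in X^{\an}$ to the functional $m\mapsto v_x(\chi^m)$ whenever this is finite. Fix $\emptyset\neq J\subsetneq\{0,\dots,n+1\}$ and $w\in\tau_{D_J}$; let $\eta$ be the generic point of $D_J$ and $z_j\in\gO_{\X,\eta}$, $j\in J$, local equations of the $D_j$. By the standard computation of $\operatorname{div}(\chi^m)$ on the toric variety $\CP^{n+1}$ over $R$ (\cite{Fu}), and since the order of $\chi^m$ along $D_j$ is the same whether computed in $\X$ or in $\CP^{n+1}_R$ ($z_j$ remaining a uniformizer at the generic point of $D_j$ in $\X$), one may write $\chi^m=g\cdot\prod_{j\in J}z_j^{\langle m,v_j\rangle}$ with $g\in\gO_{\X,\eta}^{\times}$. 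The valuation $v_w$ is nonnegative on $\gO_{\X,\eta}$, so it vanishes on the unit $g$, while $v_w(z_j)=w_j$ by Proposition~\ref{prop casi mono} applied to $f=z_j$. Therefore $v_w(\chi^m)=\sum_{j\in J}w_j\langle m,v_j\rangle=\langle m,\sum_{j\in J}w_jv_j\rangle$, that is $\val_\Sigma(v_w)=\sum_{j\in J}w_jv_j\in N_\R\subset N_\Sigma$; in particular $\val_\Sigma(\ord_{D_l})=v_l$ at vertices. This is exactly the formula required in the preceding paragraph.

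The only genuine content is this last computation, so the main obstacle is the local analysis at the generic point of a stratum $D_J$: one must verify with care that the $\X$-local equations of the components $D_j$ through $\eta$ can be read off the ambient toric divisor of $\chi^m$ on $\CP^{n+1}_R$ — which uses that $\X$ is smooth along the strata and meets each $\{z_j=0\}$ with the expected multiplicity one — and one must feed a possibly non-regular $\chi^m$ through the monomial valuation $v_w$ by invoking the valuation axioms rather than Proposition~\ref{prop casi mono} directly. Everything else is the routine bookkeeping of assembling affine isomorphisms of simplices into a simplicial isomorphism.
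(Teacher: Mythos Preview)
Your argument is correct and follows essentially the same approach as the paper's own proof: both compute $\val_\Sigma$ on the quasi-monomial valuations of $\Sk(\X)$ by evaluating characters $\chi^m$, and identify the result with the barycentric-coordinate map $w\mapsto\sum_j w_j v_j$ onto the faces of $\partial P^*$. The only cosmetic difference is that the paper works at the $0$-dimensional strata (maximal faces of $\Sk(\X)$), where one can take the local equations $f_i=z_{l_i}/z_{l_0}$ to be literally the characters $\chi^{m_i}$ for the dual basis, whereas you work at a general stratum $D_J$ and instead factor $\chi^m=g\cdot\prod_{j\in J}z_j^{\langle m,v_j\rangle}$ with $g$ a local unit; both lead to the same formula $\val_\Sigma(v_w)=\sum_{j\in J}w_jv_j$.
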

\begin{proof}It is enough to prove that if $D_{l_1},..., D_{l_{n+1}}$ are $(n+1)$ components of $D = \X_0$ meeting at a point $p$, then $\val$ maps the corresponding maximal face $\tau_p$ of $\Sk(\X)$ homeomorphically to: 
$$\tau_{\sigma} = \Conv(v_{l_1},...,v_{l_{n+1}}) \subset N_{\R},$$
where $\sigma = \sum_{i=1}^{n+1} \R_{\ge0} v_{l_i}$. Thus, we let $v \in \tau_p$ be a quasi-monomial valuation, which we see as a semi-valuation on $\CP^{n+1}$. Using prop. \ref{prop casi mono}, we identify $v$ with the monomial weight $w =(w_1,...,w_{n+1})$, where $w_i =v(f_i)$ for any local equation $f_i$ of $D_{l_i}$ at $p$. We let $l_0$ be the index such that $p \notin D_{l_0}$, and take as a local equation:
$$f_i = \frac{z_{l_i}}{z_{l_0}},$$
where $[z_0:...:z_{n+1}]$ are standard homogeneous coordinates on $\CP^{n+1}_K$. Then by definition of the $\val$ map, we have:
$$\langle \val(v), m \rangle = v(\chi^m)$$
for any $m \in M$. We let $(m_1,..., m_{n+1})$ be the basis of $M$ dual to $(v_{l_1},..., v_{l_n})$, so that $\chi^{m_i}$ is an equation for $D_{l_i}$ on the toric affine chart $\CP_{\sigma}$. We now have $\chi^{m_i} =f_i$, so that writing $m = \sum_{i=1}^{n+1} \lambda_i m_{i}$, we infer from prop. \ref{prop casi mono}:
$$v(\chi^m) = \sum_{i=1}^{n+1} w_i \lambda_i,$$
or equivalently $\val(v) = \sum_{i=1}^{n+1} w_i v_{l_i}$, which concludes the proof.
\end{proof}
We are thus naturally led to study the image inside $N_{\Sigma}$ of $X^{\an}$ by the valuation map $\val_{\Sigma}$. This is well-known to be the (extended) tropicalization $\overline{\Trop}(X)$ of the hypersurface, defined as follows.  Write the generic section $F$ as a sum of monomials:
$$F(z) = \sum_{m \in P_{\Z}} c_m z^m$$
and let $L: N_{\R} \fl \R$ be the piecewise-affine function obtained by replacing, in the equation $(tF +z_0...z_{n+1})$ for $X$, the products by sums and the sums by minima. More explicitly, $L$ is defined by the formula:
$$L(x) = \min \{ 0, \min_{c_m \neq 0} (1-\langle m , x \rangle ) \},$$
and $\Trop(X) \subset N_{\R}$ the locus where the minimum is achieved by at least two different terms. The following result is standard:
\begin{prop}
The closure $\overline{\Trop}(X) \subset N_{\Sigma}$ is the image of $X^{\an}$ via the map:
$$\val_{\Sigma} : Z_{\Sigma}^{\an} \fl N_{\Sigma}.$$
Moreover, the complement of $\Trop(X)$ in $N_{\R}$ admits one bounded connected component, which is the interior of $P^*$.
\end{prop}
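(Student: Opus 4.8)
The plan is to prove the two assertions independently. The equality $\overline{\Trop}(X)=\val_\Sigma(X^{\an})$ is the fundamental theorem of tropical geometry combined with a compactness argument, while the statement about bounded components of $N_\R\setminus\Trop(X)$ is an elementary computation in convex geometry, once one recognizes that the piecewise-affine function $L$ is the minimum of the constant $0$ and of $1-\Psi_P$.

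For the first assertion, I would start by recording that the stratification of $N_\Sigma$ matches the torus-orbit stratification of $\CP^{n+1,\an}$; in particular $\val_\Sigma^{-1}(N_\R)=\T^{\an}$, since a point $x$ lies over $N_\R$ precisely when $\lvert\chi^m(x)\rvert\neq 0$ for all $m\in M$. Next, since $X/K$ is proper, $X^{\an}$ is compact, so $\val_\Sigma(X^{\an})$ is closed in $N_\Sigma$; and since $F$ is admissible, $X$ is not contained in the toric boundary of $\CP^{n+1}$, hence $X\cap\T$ is dense in $X$ and $X^{\an}\cap\T^{\an}$ is dense in $X^{\an}$. By continuity of $\val_\Sigma$ this gives $\val_\Sigma(X^{\an})=\overline{\val(X^{\an}\cap\T^{\an})}$, the closure taken in $N_\Sigma$. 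It then remains to check that $\val(X^{\an}\cap\T^{\an})$ is contained in $\Trop(X)$ and dense in it. The inclusion is the soft half: on $\T$ the equation of $X$ reads, after dividing by $z_0\cdots z_{n+1}$, as $1+tF/(z_0\cdots z_{n+1})=0$, a Laurent polynomial whose terms have valuation $0$ and, for each monomial of $tF$, $1-\langle m,w\rangle$ when evaluated at a point $x$ with $\val(x)=w$; these are exactly the quantities appearing in $L(w)$, and by the ultrametric inequality their minimum must be attained at least twice, otherwise $v_x$ of the equation would be finite, contradicting its vanishing along $X$; that is, $w\in\Trop(X)$. For density I would invoke the fundamental theorem of tropical geometry: after base change to an algebraically closed non-archimedean extension of $K$, every rational point of the polyhedral complex $\Trop(X)$ is the valuation of a genuine point of $X\cap\T$, and rational points are dense in $\Trop(X)$. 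Taking closures then yields $\val_\Sigma(X^{\an})=\overline{\Trop(X)}=\overline{\Trop}(X)$.

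For the second assertion, the key observation is that $\min_{c_m\neq 0}(1-\langle m,x\rangle)=1-\Psi_P(x)$: since $\gO_{\CP}(n+2)$ is base-point free, $P$ is a lattice polytope, so $\Psi_P(x)=\max_{u\in P}\langle u,x\rangle$ is attained at a vertex of $P$, and admissibility of $F$ ensures all vertex monomials of $P$ occur. Hence
\[
L(x)=\min\{0,\ 1-\Psi_P(x)\}.
\]
I would then run the following case analysis. On $\Int(P^*)=\{\Psi_P<1\}$ one has $L\equiv 0$, which is smooth, so this convex bounded open set is disjoint from $\Trop(X)$, while its boundary $\partial P^*=\{\Psi_P=1\}$ lies in $\Trop(X)$ (the two functions $0$ and $1-\Psi_P$ agree there); thus $\Int(P^*)$ is a connected component of $N_\R\setminus\Trop(X)$. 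On the interior of a maximal cone $\sigma$ of the fan $\Sigma$ of $\CP^{n+1}$, $\Psi_P$ is linear, equal to $\langle m_\sigma,\cdot\rangle$ for the vertex $m_\sigma$ of $P$ dual to $\sigma$, so $L\vert_{\Int\sigma}$ is the minimum of two affine functions and its corner locus is the hyperplane slice $\{\langle m_\sigma,x\rangle=1\}\cap\Int\sigma$; this cuts $\Int\sigma$ into $\{\langle m_\sigma,\cdot\rangle<1\}\cap\Int\sigma\subseteq\Int(P^*)$ and $\{\langle m_\sigma,\cdot\rangle>1\}\cap\Int\sigma$, the latter nonempty, convex, and unbounded because $\Psi_P$ is strictly positive away from the origin (as $0\in\Int P$). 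Finally every codimension-one cone of $\Sigma$ lying outside $\overline{P^*}$ is contained in $\Trop(X)$, since $\Psi_P$, hence $L$, is non-smooth along it. Assembling these facts, $N_\R\setminus\Trop(X)$ is the disjoint union of $\Int(P^*)$ and of the sets $\{\langle m_\sigma,\cdot\rangle>1\}\cap\Int\sigma$, one for each maximal cone $\sigma$ of $\Sigma$, all of which are unbounded; so $\Int(P^*)$ is the unique bounded component.

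The only genuinely non-formal input is the density half of the first assertion — the lifting statement in the fundamental theorem of tropical geometry — which is where one leaves purely topological arguments: at a rational $w\in\Trop(X)$ the initial form of the equation is not a monomial, hence has a zero in the torus over $\C$, and one lifts it to a point of $X\cap\T$ over a suitable extension. Everything else, the compactness reductions and the convex-geometry case analysis, is routine; since the statement is declared standard, I would keep the first part to a citation and only spell out the convex-geometry computation in detail.
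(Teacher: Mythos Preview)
Your proposal is correct and follows the same approach as the paper, which simply cites the fundamental theorem of tropical geometry for the first item and the observation that $P^*=\{L=0\}$ for the second. You spell out considerably more: the compactness reduction for the extended tropicalization, the identity $L=\min\{0,1-\Psi_P\}$ (valid because admissibility forces $F$ not to vanish at the torus-fixed points, hence all vertex coefficients are nonzero), and a case analysis showing that every component of $N_\R\setminus\Trop(X)$ other than $\Int(P^*)$ is unbounded---a point the paper's one-line proof leaves implicit.
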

\begin{proof} The first item follows from the fundamental theorem of tropical geometry \cite[thm. 3.1.1]{McS}, while the second one follows from the fact that the polytope $P^*$ is the locus where $L=0$.
\end{proof}
We will now construct a piecewise-affine subset $\Gamma \subset \Sk(X)$ of codimension 2, and an integral affine structure on $\Sk(X) \setminus \Gamma$, compatible with the integral affine structure on the interior of the maximal faces of $\Sk(\X)$, and compatible with the fan structure in the neighbourhood of each vertex. The construction is non-canonical, as it depends on the following choice: for each face $\tau$ of $\Sk(\X)$ of dimension $>0$, we choose a point $a_{\tau} \in \Int(\tau)$. In the sequel, we will call the collection $\underline{a} = (a_{\tau})_{\tau}$ a \emph{choice of branch cuts}.
\\We let $\widetilde{\mathscr{P}}$ be the subdivision of $\Sk(\X)$ whose faces are of the form:
$$\tau' = \Conv(a_{\tau_0},..., a_{\tau_l}),$$
whenever $\tau_0 \subset ... \subset \tau_l$ is an ascending chain of faces of $\Sk(\X)$.
\\The discriminant locus in now given as:
$$\Gamma := \bigcup \Conv(a_{\tau_0},..., a_{\tau_l}),$$
where the union is taken over ascending chains of faces $\tau_0 \subset ... \subset \tau_l$ of $\Sk(\X)$, with $\dim \tau_0 \ge 1$ and $\dim \tau_l \le (n-1)$. It is clear from the definition that this defines a codimension $2$ piecewise-affine subset of $\Sk(X)$, which we will sometimes write $\Gamma = \Gamma_{\underline{a}}$ whenever we wish to emphasize the dependency on the choice of branch cuts $\underline{a}$.
\begin{ex} Assume $n=2$, then we are choosing a point $a_e \in \Int(e)$ in each edge of $\Sk(\X)$, in addition to a point $a_{\tau} \in \Int(\tau)$ in each maximal face $\tau$ of $\Sk(\X)$. The discriminant locus $\Gamma$ consists of the set $\{ a_e \}_e$, which is finite.
\\Assume now that $n=3$, then we are choosing an interior point in each $d$-cell of $\Sk(\X)$ for $d=1,2,3$. The discriminant locus $\Gamma$ is a trivalent graph, contained in the union of the $2$-cells of $\Sk(\X)$, and its intersection with the 2-face $\tau =\Conv(v_1,v_2, v_3)$ is depicted below:
\tdplotsetmaincoords{60}{35}
\begin{figure}[H]
\begin{center}
\begin{tikzpicture}[scale =2]
\node[left] at (-1,0,0) {$v_{1}$};
\node[right] at (1,0,0) {$v_2$};
\node[right] at (0,1,0) {$v_{3}$};
\node[below] at (0,0,0) {$a_{12}$};
\node[right] at (2/3,1/3,0) {$a_{23}$};
\node[above] at (-1/3,2/3,0) {$a_{13}$};
\node[left] at (0,1/3,0) {$a_{\tau}$};
\filldraw (0,0,0) circle (0.4pt);
\filldraw (2/3,1/3,0) circle (0.4pt);
\filldraw (-1/3,2/3,0) circle (0.4pt);
\filldraw (0,1/3,0) circle (0.4pt);
\draw (-1,0,0)--(1,0,0)--(0,1,0)--(-1,0,0);
\draw[thick] (0,0,0)--(0,1/3,0)--(2/3,1/3,0);
\draw[thick] (0,1/3,0)--(-1/3,2/3,0);
\end{tikzpicture}
\end{center}
\end{figure}
\end{ex}
Let $v \in \Sk(\X)$ be a vertex, and $\Sigma_v$ be the fan of $D_v$ in $\R^n$. We let:
$$\phi_v : \Star(v) \fl \R^n$$
be the unique piecewise-affine homeomorphism sending $v$ to $0 \in \R^n$, and each vertex $v_l \in \Star(v)$ to the primitive generator of the corresponding ray in $\Sigma_v$.
\\Moreover, if $\tau \subset \Sk(\X)$ is a maximal face, we let $\phi_{\tau}: \tau \fl \R^n$ be an integral affine homeomorphism onto a standard simplex.
\\As mentioned above, the affine charts $\phi_v$ do not glue to define an integral affine structure on $\Sk(\X)$, i.e. the transition functions between them are not integral affine. To remedy this, we simply shrink the stars of the vertices using the subdivision $\widetilde{\mathscr{P}}$ so that they don't intersect anymore. More precisely, for a vertex $v$ of the skeleton, let $\widetilde{\Star}(v)$ be the star of $v$ in the polyhedral complex $\widetilde{\mathscr{P}}$. Then the singular affine structure on $\Sk(\X)$ is defined as follows:
\begin{defn} \label{def Z aff}
The collection of charts $\Big( (\Int(\widetilde{\Star}(v)), \phi_v)_v \cup (\Int(\tau), \phi_{\tau})_{\tau} \Big)$ is our induced integral affine structure on $\Sk(X) \setminus \Gamma$.
\end{defn}
From now on, we will make the following assumption: 
\begin{con} \label{condition} The section $F = \sum_{m \in P_{\Z}} c_m z^m$ is such that prop. \ref{prop model dlt} holds, and whenever $m \in P_{\Z}$ is a vertex, we have $c_m \neq 0$ - equivalently, the Newton polytope of the section $F$ is equal to the whole $P$.
\end{con}
This immediately implies the equality:
$$L(x) = \min \{ 0, \min_{m \in V(P)} (1-\langle m , x \rangle) \}.$$
Note that this is true for a generic choice of $F$, and this ensures notably that the tropicalization $\overline{\Trop}(X)$ is invariant under the action of the symmetric group $\mathfrak{S}_{n+2}$ on $N_{\Sigma}$.
The main theorem of this section is the following:
\begin{theo} \label{theo affinoid Yuto}
Assume that condition \ref{condition} holds. Let $\underline{a} =(a_{\tau})_{\tau}$ be a choice of branch cuts in $\Sk(\X)$, and $\delta_{\underline{a}} : \overline{\Trop}(X) \fl \Sk(X)$ the associated tropical contraction \cite[thm. 5.1]{Ya}. Then the composition:
$$\rho_{\underline{a}} := \delta_{\underline{a}} \circ \val_{\Sigma} : X^{\an} \fl \Sk(X)$$
is an affinoid torus fibration over $\Sk(X) \setminus \Gamma$. Moreover, the induced integral affine structure on $\Sk(X) \setminus \Gamma$ coincides with the one from definition \ref{def Z aff}.
\end{theo}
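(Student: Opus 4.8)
The plan is to argue locally over the cover of $\Sk(X)\setminus\Gamma$ supplied by Definition \ref{def Z aff}: the open maximal faces $\Int(\tau)$ of $\Sk(X)=\partial P^*$ and the open shrunk stars $\Int(\widetilde{\Star}(v))$ of its vertices. For each such chart $U$ I would produce a model of $X$ whose Berkovich retraction coincides with $\rho_{\underline a}$ over $U$, and then read off both the affinoid torus fibration property and the induced integral affine structure from it. Condition \ref{condition} gives the explicit form $L=\min\{0,\,1-\Psi_P\}$ of the tropical polynomial, so that $\Sk(X)=\partial P^*=\{\Psi_P=1\}$ sits inside $\Trop(X)$ (Proposition \ref{prop sk N}) and Yamamoto's contraction $\delta_{\underline a}$ restricts to the identity there; moreover $\partial P^*$ is bounded and interior to $N_{\R}$, so $\Sk(X)$ and $\Gamma$ lie in $\T^{\an}\cap X^{\an}$. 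For a chart $U$ one has $\rho_{\underline a}^{-1}(U)=\val_{\Sigma}^{-1}\big(\delta_{\underline a}^{-1}(U)\big)$, and the relevant open region $\delta_{\underline a}^{-1}(U)\subset\overline{\Trop}(X)$ reaches into the boundary $N_{\Sigma}\setminus N_{\R}$ along the contracted unbounded legs — which is precisely why one works with the partial compactification $\val_{\Sigma}$ and with $\overline{\Trop}(X)$.

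First I would treat a maximal face $\tau=\Conv(v_{l_1},\ldots,v_{l_{n+1}})$, which is also a maximal cell of $\Trop(X)$. Here $\delta_{\underline a}$ restricts over $\delta_{\underline a}^{-1}(\Int(\tau))$ to the linear retraction onto $\Int(\tau)$ cut out by the branch cuts, and (using the explicit form of $\delta_{\underline a}$ near the maximal cells) $\rho_{\underline a}^{-1}(\Int(\tau))$ is the generic fibre of the formal completion of $\X$ along the $0$-dimensional stratum $p_\tau=D_{l_1}\cap\cdots\cap D_{l_{n+1}}$, over which $\X$ is good — indeed formally toric, with $\widehat{\gO}_{\X,p_\tau}\simeq R[[z_0,\ldots,z_n]]/(t-z_0\cdots z_n)$ by Proposition \ref{prop model dlt}. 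By remark \ref{retract local} and Example \ref{ex:affinoid torus over max face} the retraction over $\Int(\tau)$ is then the embedding of this generic fibre into the analytic torus $\big(\Sp K[z_0^{\pm1},\ldots,z_n^{\pm1}]/(t-z_0\cdots z_n)\big)^{\an}$ followed by $\val$ onto the open simplex $\{\sum_i x_i=1,\ x_i>0\}$; this is an affinoid torus fibration with induced chart exactly $\phi_\tau$.

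The substantial case is a shrunk star $\Int(\widetilde{\Star}(v))$. I would first make $\delta_{\underline a}$ explicit over $\widetilde{\Star}(v)$: in the chart $\phi_v:\Star(v)\xrightarrow{\sim}\R^n$ attached to the fan $\Sigma_v$ of $D_v\simeq\CP^n$, it becomes the piecewise-linear contraction associated to $\Sigma_v$ (compatible with the $\val$ map of $\Sigma_v$), the discriminant $\Gamma$ being what the shrinking removes. Geometrically the difficulty is that $\X$ is not good — its components are not $\Q$-Cartier along $\Sing(\X)=\bigcup_{i\ne j}\{z_i=z_j=t=F=0\}$, so there is no global Berkovich retraction — which is remedied by passing to a good dlt model $\X'$ of $X$ obtained from the modification of $\CP^{n+1}_R$ dictated by $\underline a$ (blowing up the toric strata so as to subdivide $\partial P^*$ according to $\widetilde{\mathscr{P}}$; one may take the $a_\tau$ rational), which is toric along the strata indexed by the vertices of $\widetilde{\mathscr{P}}$. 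For the strict transform $D_v'$, adjunction (using $\sum_l D_l=\X'_0\sim 0$ and $\sum_{l\ne v}(D_l\cap D_v)=-K_{D_v}$) identifies the normal bundle with $\gO_{\CP^n}(-(n+1))$, so the conormal bundle $\nu^*_{D_v'/\X'}=\gO_{\CP^n}(n+1)$ is nef; together with the transversality of $\{F=0\}$ and the toric strata coming from genericity of $F$, this lets one verify the hypotheses of Theorem \ref{THEO TORIC STRAT} and conclude, via that theorem and Proposition \ref{prop: toric retraction}, that $\rho_{\X'}$ is an $n$-dimensional affinoid torus fibration over $\widetilde{\Star}(v)$, whose chart is the structure attached to the fan of the normal bundle of $D_v$ — that is, $\phi_v$. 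The remaining point, which I expect to be the main obstacle, is to identify $\rho_{\underline a}=\delta_{\underline a}\circ\val_{\Sigma}$ with $\rho_{\X'}$ over $\Int(\widetilde{\Star}(v))$: since for a model toric along a stratum the Berkovich retraction is tropicalisation followed by the toric retraction of the relevant fan (Proposition \ref{prop: toric retraction}, remark \ref{retract local}), this reduces to checking cell by cell that that toric retraction is Yamamoto's purely piecewise-linear contraction $\delta_{\underline a}$ — i.e. that $\delta_{\underline a}$ is geometrically realised by the model $\X'$.

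For the statement on integral affine structures, the charts produced above are $\val$ restricted to open pieces of polyhedral complexes attached to (formally) toric models, so the structure induced by $\rho_{\underline a}$ is the standard one of the open simplex over $\Int(\tau)$ and the one coming from the fan $\Sigma_v$ over $\Int(\widetilde{\Star}(v))$ — precisely the charts $\phi_\tau$ and $\phi_v$ of Definition \ref{def Z aff}, the compatibility on the overlaps $\Int(\widetilde{\Star}(v))\cap\Int(\tau)$ being the one built into that definition. An independent check is furnished by Yamamoto's theorem \cite{Ya}: $\delta_{\underline a}$ preserves the integral affine structures of $\overline{\Trop}(X)$ and of $\Sk(X)\setminus\Gamma$, and $\val_{\Sigma}$ is compatible with the standard structure on $\overline{\Trop}(X)$, so the two descriptions of the structure on each chart agree.
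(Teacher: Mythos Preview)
Your overall outline and the treatment of the maximal faces are essentially the paper's: one checks that $\delta_{\underline a}^{-1}(\Int(\tau))=\Int(\tau)$, identifies $\val_\Sigma^{-1}(\Int(\tau))\cap X^{\an}$ with the generic fibre of $\widehat{\X}_{/p_\tau}$, and invokes Example~\ref{ex:affinoid torus over max face}. (A minor inaccuracy: over a maximal face $\delta_{\underline a}$ is the identity, not a nontrivial ``linear retraction cut out by the branch cuts''.)

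Near the vertices, however, the paper takes a different and cleaner route than the one you sketch, and your construction has a gap as written. You propose a \emph{single} good dlt model $\X'$ obtained by blowing up $\CP^{n+1}_R$ so that $\Sk(\X')$ is the subdivision $\widetilde{\mathscr P}$ of $\partial P^*$. This forces you to take the $a_\tau$ rational, losing generality; more seriously, after such a subdivision the strict transform $D_v'$ is no longer $\CP^n$, so your adjunction computation of the conormal bundle as $\gO_{\CP^n}(n+1)$ does not apply to the model you actually built, and the hypotheses of Theorem~\ref{THEO TORIC STRAT} are not verified. The paper instead constructs, for \emph{each} vertex $v_i$ separately, a \emph{small} snc resolution $\X_i\to\X$ by successively blowing up the \emph{other} components $D_{l_1},\dots,D_{l_n}$ (Proposition~\ref{prop local res}, Corollary~\ref{cor global res}). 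Because the resolution is small one has $\Sk(\X_i)=\Sk(\X)$ and the strict transform of $D_i$ is still $\CP^n$, so Theorem~\ref{THEO TORIC STRAT} applies directly and yields an affinoid torus fibration over the \emph{whole} $\Star(v_i)$; restricting to $U_i=\Int(\widetilde{\Star}(v_i))$ is then harmless. The model $\X_i$ does not depend on $\underline a$ at all, which is why arbitrary (irrational) branch cuts are allowed. The identification $\rho_{\underline a}=\rho_{\X_i}$ over $U_i$ is then reduced, via the formal isomorphism with the normal bundle, to the equality $\val_{D_i}=p_i\circ\val_{\CP}$ (where $p_i:N_\Sigma\to N_{\Sigma_i}=N_\Sigma/\R v_i$), combined with the explicit description of $\delta_{\underline a}$ near $v_i$ as $(q_i)^{-1}\circ p_i$ (Proposition~\ref{prop cont local}). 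What your approach would buy, were it carried out, is a single model realising $\rho_{\underline a}$ globally; what the paper's approach buys is independence from $\underline a$, an unchanged component $D_i\simeq\CP^n$ making the nefness check trivial, and a two--line identification of the two retractions.
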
 
Let $v= v_i \in \Sk(\X)$ be a vertex, and let $U_i \subset \Sk(\X)$ be the open star of $v_i$ with respect to the simplicial decomposition $\widetilde{\mathscr{P}}$. Then we need to prove that the map $\rho = \rho_{\underline{a}}$ is an affinoid torus fibration over each $U_i$, as well as over the interior of each maximal face $\tau \subset \Sk(\X)$. The proof will proceed in two steps:
\begin{itemize}
\item (section \ref{section tropical cont}) we mainly review the construction of the tropical contraction, following \cite{Ya}. The contraction is in fact constructed face by face, and we will describe it in particular near a vertex and over a maximal face. The latter, in combination with prop. \ref{prop sk N}, will readily imply that $\rho$ is an affinoid torus fibration over a maximal face of $\Sk(\X)$.
\item (section \ref{local models}) we then focus on a vertex $v_i \in \Sk(\X)$, and construct a small log resolution $\X_i \fl \X$, which is an isomorphism over the corresponding component $D_i$. In particular, we may apply theorem \ref{THEO TORIC STRAT} to conclude that the associated retraction $\rho_{\X_i}$ is an affinoid torus fibration over $\Star(v_i)$, and we conclude by proving that the equality $\rho = \rho_{\X_i}$ holds over $U_i$.
\end{itemize}
\subsection{The tropical contraction} \label{section tropical cont}
We let $X \subset \CP^{n+1}_K$ be a maximal degeneration of Calabi-Yau hypersurfaces as above, satisfying condition \ref{condition}. The goal of this section is to describe in a fairly simple way the construction of a certain tropical contraction $\delta : \overline{\Trop}(X) \fl \Sk(X)$, due to Yamamoto \cite{Ya}. Here tropical contraction means that the map $\delta$ is locally modeled on projections of the form $N_{\Sigma} \fl N_{\Sigma'}$, where $\Sigma'$ is the fan of a (closed) orbit in $Z_{\Sigma}$.
\begin{theo} {\cite[thm. 1.2]{Ya}} For any choice of branch cuts $\underline{a}= (a_{\tau})_{\tau}$, there exists a tropical contraction:
$$\delta_{\underline{a}} : \overline{\Trop}(X) \fl \Sk(X),$$
where we view $\Sk(X) \subset \overline{\Trop}(X)$ via the embedding of prop. \ref{prop sk N}. The tropical contraction preserves the integral affine structures, in the sense that:
$$\delta_* \Aff_{\overline{\Trop}(X)} = j_*(\Aff_{\Sk(X) \setminus \Gamma}),$$
where $\Aff$ denotes the sheaf of integral affine functions on $\overline{\Trop}(X)$ (see \cite[def. 2.10]{Ya}) and $\Sk(X) \setminus \Gamma$ respectively, and $j : \Sk(X) \setminus \Gamma \hookrightarrow \Sk(X)$ the inclusion.
\end{theo}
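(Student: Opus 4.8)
The plan is to construct $\delta = \delta_{\underline{a}}$ by hand, specializing Yamamoto's construction to the hypersurface case, as a piecewise-linear retraction of the polyhedral complex $\overline{\Trop}(X) \subset N_{\Sigma}$ onto the subcomplex $\Sk(X) = \partial P^*$, assembled from explicit local models dictated by the branch cuts, and then to verify that $\delta$ is a tropical contraction and that $\delta_* \Aff_{\overline{\Trop}(X)} = j_*(\Aff_{\Sk(X) \setminus \Gamma})$ for the structure of Definition~\ref{def Z aff}.

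First I would make the cell structure of $\overline{\Trop}(X)$ completely explicit. Under condition~\ref{condition}, $\Trop(X)$ is the corner locus of $L(x) = \min\{0,\ \min_{m \in V(P)}(1 - \langle m, x\rangle)\}$; a direct computation shows that $\Trop(X) \cap \overline{P^*}$ equals $\partial P^*$ \emph{with exactly its simplicial structure} by the facets $F_l^* = \Conv(v_j : j \neq l)$, that $\Trop(X) \setminus P^*$ is a union of unbounded $n$-cells (``fins''), the fin indexed by a pair $l \neq l'$ being attached to $\partial P^*$ along the ridge $F_l^* \cap F_{l'}^*$, and that $\overline{\Trop}(X) \setminus \Trop(X)$ is supported at infinity in $N_{\Sigma} \setminus N_{\R}$. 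In particular $\overline{\Trop}(X)$ collapses onto $\Sk(X)$ and the combinatorial target of the theorem is the right one; the content is to realize this collapse as a tropical morphism compatible with the integral affine structures.

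Next come the local models for $\delta$, built face by face over $\Sk(X)$. Over the interior of a maximal face $\tau$, the computation above shows that $\overline{\Trop}(X)$ coincides in a neighbourhood of $\Int(\tau)$ with $\Int(\tau)$ itself, so I would take $\delta = \operatorname{id}$ there; combined with Proposition~\ref{prop sk N} and the toric description of the Berkovich retraction (Example~\ref{ex:affinoid torus over max face}, Proposition~\ref{prop: toric retraction}), this immediately gives that $\rho_{\underline{a}} = \delta \circ \val_{\Sigma}$ is an affinoid torus fibration over $\Int(\tau)$. Near a vertex $v = v_i$ one passes to the chart $N_{\Sigma_{v_i}}$ of the fan $\Sigma_{v_i}$ of the coordinate hyperplane $D_i \simeq \CP^n$: translating $v_i$ to the origin, $L$ becomes a support function whose associated fan, together with the cells of $\overline{\Trop}(X)$ at infinity lying over $\widetilde{\Star}(v_i)$, is precisely the pullback of the cells of $\widetilde{\Star}(v_i) \subset \partial P^*$ under the linear toric projection $\pi_i : N_{\Sigma} \dashrightarrow N_{\Sigma_{v_i}}$ along the radial direction $\R v_i$; I would set $\delta = \pi_i$ (followed by the piecewise-linear identification $N_{\Sigma_{v_i}} \simeq \Star(v_i)$) over $\Int(\widetilde{\Star}(v_i))$. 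On the intermediate strata --- the cells projecting to faces of $\widetilde{\mathscr{P}}$ that straddle several faces of $\Sk(X)$, which carry the fins and the part at infinity --- $\delta$ is extended by collapsing each fin onto the faces of $\partial P^*$ adjacent to its attaching ridge, in the manner prescribed by $\underline{a}$ and the refinement $\widetilde{\mathscr{P}}$. The role of $\widetilde{\mathscr{P}}$ is exactly to make the shrunk stars $\widetilde{\Star}(v_i)$ pairwise disjoint, so that these prescriptions --- identity over maximal faces, projection near vertices, collapse on the intermediate strata --- patch into a single continuous piecewise-linear map with integer slopes, which by construction is locally modeled on projections $N_{\Sigma} \to N_{\Sigma'}$ with $\Sigma'$ the fan of a closed torus orbit, i.e.\ a tropical contraction.

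For the integral affine statement, pullback of integral affine functions along a toric projection is integral affine, so $\delta^*(\Aff_{\Sk(X)\setminus\Gamma}) \subseteq \Aff_{\overline{\Trop}(X)}$; conversely, over an open $U \subset \Sk(X)\setminus\Gamma$ the map $\delta$ is locally a toric projection whose fibers are the recession directions (fins) that get collapsed, along which integral affine functions in the sense of \cite[def.~2.10]{Ya} are constant, so any such function on $\delta^{-1}(U)$ descends to an integral affine function on $U$; this yields $\delta_* \Aff_{\overline{\Trop}(X)} = j_*(\Aff_{\Sk(X)\setminus\Gamma})$. It then remains to identify the descended structure with the one of Definition~\ref{def Z aff}: over $\Int(\widetilde{\Star}(v_i))$ it is the structure pulled back by $\phi_{v_i}$ from $\Sigma_{v_i}$, because $\pi_i$ is exactly the linearization of $\phi_{v_i}$, and over $\Int(\tau)$ it is the standard simplex structure $\phi_\tau$, because $\delta$ is the identity. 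I expect the main obstacle to be this last reconciliation on the overlaps $\Int(\widetilde{\Star}(v_i)) \cap \Int(\widetilde{\Star}(v_j))$ and $\Int(\widetilde{\Star}(v_i)) \cap \Int(\tau)$: the charts $\phi_{v}$ and $\phi_{\tau}$ do \emph{not} glue to an affine structure on all of $\Sk(X)$ --- there is nontrivial monodromy around $\Gamma$ --- so one must check that after restriction to the shrunk stars the transition maps land in $\mathrm{GL}_n(\Z) \ltimes \R^n$, and that the fins are collapsed compatibly with both normalizations at once; this bookkeeping, which is where the branch cuts do all the work, is the combinatorial heart of the argument.
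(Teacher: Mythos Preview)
This theorem is a citation of \cite[thm.~1.2]{Ya}; the paper does not prove it but rather outlines Yamamoto's construction in the hypersurface case (section~\ref{section tropical cont}). Your proposal is essentially that same outline, so the approach matches.

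The one point where the paper's description is cleaner than yours is the treatment of faces of intermediate dimension. You handle maximal faces ($\delta=\operatorname{id}$) and vertices ($\delta=\pi_i$ along $\R v_i$) explicitly, but then say that on ``intermediate strata'' $\delta$ is ``extended by collapsing each fin \dots\ in the manner prescribed by $\underline{a}$ and the refinement $\widetilde{\mathscr{P}}$'', which is vague. The paper instead gives a single uniform recipe indexed by \emph{all} faces $\tau$ of $\partial P^*$: one sets $U_\tau=\Int(\widetilde{\Star}(a_\tau))$, $W_{\tau,\tau'}=U_\tau\cap U_{\tau'}$ for $\tau'\subset\tau$, $Y_{\tau,\tau'}=W_{\tau,\tau'}+\overline{\sigma}(\tau')$, and $Y_\tau=\big(\bigcup_{\tau'\subset\tau} Y_{\tau,\tau'}\big)\cap\overline{\Trop}(X)$; the $Y_\tau$ cover $\overline{\Trop}(X)$ \cite[lem.~5.15]{Ya}, and on $Y_{\tau,\tau'}$ the local contraction is simply $\delta_\tau(w+x')=w$ for $w\in W_{\tau,\tau'}$, $x'\in\overline{\sigma}(\tau')$, i.e.\ the quotient $N_\Sigma\to N_{\Sigma'}$ by $\operatorname{Span}(\sigma(\tau'))$. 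Gluing is then \cite[lem.~5.14]{Ya}. Your vertex and maximal-face descriptions are the special cases $\tau=v_i$ (Proposition~\ref{prop cont local}) and $\dim\tau=n$ (where $Y_\tau=U_\tau=\Int(\tau)$). Writing it this way makes the ``tropical contraction'' property and the compatibility on overlaps transparent, and removes the hand-waving in your middle paragraph. The integral-affine part of your sketch is fine and in line with how the paper uses it.
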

Let us point out that the original construction is far more general, as it applies to a wide range of Calabi-Yau complete intersections inside toric varieties. However the codimension one case is substantially simpler, so that we will outline the details of the construction in this case.
\\We fix once and for all a choice $\underline{a}$ of branch cuts, and consider the associated subdivision $\widetilde{\mathscr{P}}$ of $\Sk(\X)$. Our first step will be to describe $\overline{\Trop(X)}$ more explicitly. Recall that the open subset $\Trop(X) \subset \overline{\Trop(X)}$ is given as the corner locus of the function:
$$L(x) = \min \{ 0, \min_{m \in V(P)} (1- \langle m, x\rangle ) \},$$
hence the following natural stratification. If $x \in \Trop(X)$, we let $J(x) \subset I := \{0\} \cup V(P)$ the subset of indices that realize the minimum; note that $\lvert J(x) \rvert \ge 2$ since $x \in \Trop(X)$. Given a subset $J \subset I$, the associated stratum $T_J \subset \Trop(X)$ is the set of $x \in \Trop(X)$ such that $J(x) \supseteq J$. We furthermore write $\overline{T}_J \subset N_{\Sigma}$ its closure, which is contained in $\overline{\Trop}(X)$.
\begin{defn} Let $\tau \subset \partial P^*$ be a face, and write $\tau = \Conv(e_{i_1},...,e_{i_r})$. The associated cone $\sigma(\tau) \in \Sigma$ is given by:
$$\sigma(\tau) = \Cone(e_{i_1},...,e_{i_r}),$$
and its closure $\bar{\sigma}(\tau) \subset N_{\Sigma}$.
We additionally set $\tilde{\tau} := \tau + \bar{\sigma}(\tau) \subset N_{\Sigma}.$
\end{defn}
Let $J' \subset V(P)$ be a non-empty subset of vertices of $P$, this defines a face $\tau_{J'}$ of $\partial P^*$ in the following way:
$$\tau_{J'} = \{ x \in P^* / \langle m_j, x \rangle =1 \; \forall j \in J' \},$$
this is a face of dimension $(n+1) - \lvert J'\rvert$. It is straightforward to check that for $\lvert J' \rvert \ge 2$, the region $\tilde{\tau}_{J'}$ is contained in the stratum $\overline{T}_{J'}$.
\\The following proposition asserts that the bounded strata of the tropicalization are the faces of $\partial P^*$, while the unbounded ones are obtained from non-maximal faces of $\partial P^*$ and their associated cones:
\begin{prop} Let $I = \{0\} \cup V(P)$, and $J \subset I$ with at least two elements. We let $J' = J \cap V(P)$. 
\begin{itemize}
\item if $0 \in J$, then $\overline{T}_J = \tau_{J'} \subset \partial P^*$;
\item if $0 \notin J$, then $\overline{T}_J = \tilde{\tau}_{J'}$.
\end{itemize}
\end{prop}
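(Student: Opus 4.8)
The plan is to compute the strata $T_J$ directly from the explicit formula
$$L(x)=\min\{0,\ \min_{m\in V(P)}(1-\langle m,x\rangle)\}$$
and from the description of which indices realise its minimum, and then to match them with the (extended) faces $\tau_{J'}$ and $\tilde\tau_{J'}$ using only elementary convex geometry of $P^*$. I would first record two elementary facts. Since $P^*=\{x\in N_{\R}:\langle m,x\rangle\le 1\text{ for all }m\in V(P)\}$ and $P=\Conv V(P)$, the constant term (the index $0\in I$) realises $L(x)=0$ precisely when $x\in P^*$, and it fails to realise the minimum as soon as $x\notin P^*$ (some $1-\langle m,x\rangle$ is then strictly negative); equivalently $0\in J(x)\iff x\in P^*$. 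Likewise, for $m\in V(P)$ one has $m\in J(x)\iff 1-\langle m,x\rangle=L(x)$. This turns the whole statement into bookkeeping with the inequalities cutting out $P^*$ and its faces.

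\emph{The case $0\in J$.} Write $J'=J\cap V(P)$, which is non-empty because $|J|\ge 2$. The two requirements ``$0\in J(x)$'' and ``$J'\subseteq J(x)$'' translate respectively into ``$x\in P^*$'' and ``$\langle m_j,x\rangle=1$ for all $j\in J'$'', i.e. into $x\in\tau_{J'}$; and any such $x$ automatically satisfies $|J(x)|\ge|J'|+1\ge 2$, hence $x\in\Trop(X)$. Therefore $T_J=\tau_{J'}$, which is a face of the bounded polytope $P^*$ and in particular compact, so $\overline{T}_J=\tau_{J'}$.

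\emph{The case $0\notin J$, so $J=J'\subseteq V(P)$ with $|J'|\ge 2$.} The geometric input is the identification of $\tilde\tau_{J'}\cap N_{\R}$ with the set of finite sums $\sum_i\nu_i v_{l_i}$ with $\nu_i\ge 0$ and $\sum_i\nu_i\ge 1$, where $v_{l_1},\dots,v_{l_r}$ are the vertices of the face $\tau_{J'}$; this is immediate from $\tilde\tau_{J'}=\tau_{J'}+\bar\sigma(\tau_{J'})$ together with $\sigma(\tau_{J'})=\Cone(v_{l_1},\dots,v_{l_r})$. Then I would check the two inclusions. Given $x\in T_{J'}$, put $c:=L(x)\le 0$; the defining conditions give $\langle m,x\rangle\le 1-c$ for all $m\in V(P)$ and $\langle m_j,x\rangle=1-c$ for $j\in J'$, so (using $1-c\ge 1>0$) the point $x/(1-c)$ lies in $P^*$, and in fact in $\tau_{J'}$, whence $x=(1-c)\cdot\bigl(x/(1-c)\bigr)\in\tilde\tau_{J'}$. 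Conversely, for $y=\sum_i\nu_i v_{l_i}$ with $s:=\sum_i\nu_i\ge 1$, using $v_{l_i}\in\tau_{J'}\subseteq P^*$ one gets $\langle m_j,y\rangle=s$ for every $j\in J'$ and $\langle m,y\rangle\le s$ for all $m\in V(P)$, so $L(y)=1-s\le 0$ and $J(y)\supseteq J'$; as $|J'|\ge 2$ this forces $y\in\Trop(X)$, i.e. $y\in T_{J'}$. Hence $T_{J'}=\tilde\tau_{J'}\cap N_{\R}$, and passing to closures in $N_{\Sigma}$ — using that $\tau_{J'}+\sigma(\tau_{J'})$ is dense in the compact set $\tau_{J'}+\bar\sigma(\tau_{J'})$, a standard density statement for the partial compactification — yields $\overline{T}_{J'}=\tilde\tau_{J'}$.

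The computations are routine convexity; the two points that need a little care are (i) turning ``$v_l$ is a vertex of $\tau_{J'}$'' into the linear identities $\langle m_j,v_l\rangle=1$ for $j\in J'$, which follows at once from $v_l\in\tau_{J'}\subseteq P^*$ and the definition of $\tau_{J'}$, and (ii) the density/compactness argument in $N_{\Sigma}$ needed to pass from the description of $T_{J'}$ inside $N_{\R}$ to the closed set $\tilde\tau_{J'}$. I expect (ii) to be the only genuine (though mild) obstacle, since it is the one step using the topology of the tropical toric variety $N_{\Sigma}$ rather than pure convex geometry in $N_{\R}$.
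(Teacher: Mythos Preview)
The paper states this proposition without proof; it is presented as a computational description of the strata and immediately followed by an illustrative figure. Your argument is correct and supplies exactly the kind of elementary convex-geometry verification the paper leaves to the reader. The two cases are handled cleanly: in the first, the identification $T_J=\tau_{J'}$ is immediate from the definitions and compactness of the face; in the second, your parametrisation of $\tilde\tau_{J'}\cap N_{\R}$ as $\{\sum_i\nu_i v_{l_i}:\nu_i\ge 0,\ \sum_i\nu_i\ge 1\}$ and the rescaling $x\mapsto x/(1-c)$ are the right moves, and the computation $\langle m_j,v_{l_i}\rangle=1$ for $v_{l_i}$ a vertex of $\tau_{J'}$ is precisely what makes the converse inclusion work (this uses that $P^*$ is a simplex, which holds in the paper's setting of $\CP^{n+1}$).

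The only point you flag as needing care---passing to the closure in $N_\Sigma$---is indeed mild: since $\tau_{J'}$ is compact and the translation action of $N_{\R}$ on $N_\Sigma$ is continuous, the closure of $\tau_{J'}+\sigma(\tau_{J'})$ is $\tau_{J'}+\bar\sigma(\tau_{J'})=\tilde\tau_{J'}$, which is moreover compact (hence closed) as $N_\Sigma$ is compact for a complete fan. So there is nothing to worry about there.
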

Note that in the latter case, since $\lvert J \rvert \ge 2$, the face $\tau_{J'}$ is not a maximal face of $\partial P^*$. The following picture depicts the tropicalization when $n=2$: $\Trop(X)$ is the union of the boundary of the red polytope $P^*$ with the portions of the linear planes passing through pairs of vertices of $P^*$; here we have colored $\tilde{\tau}_J$ for $J \subset \{1,2,3 \}$.
\tdplotsetmaincoords{60}{15}
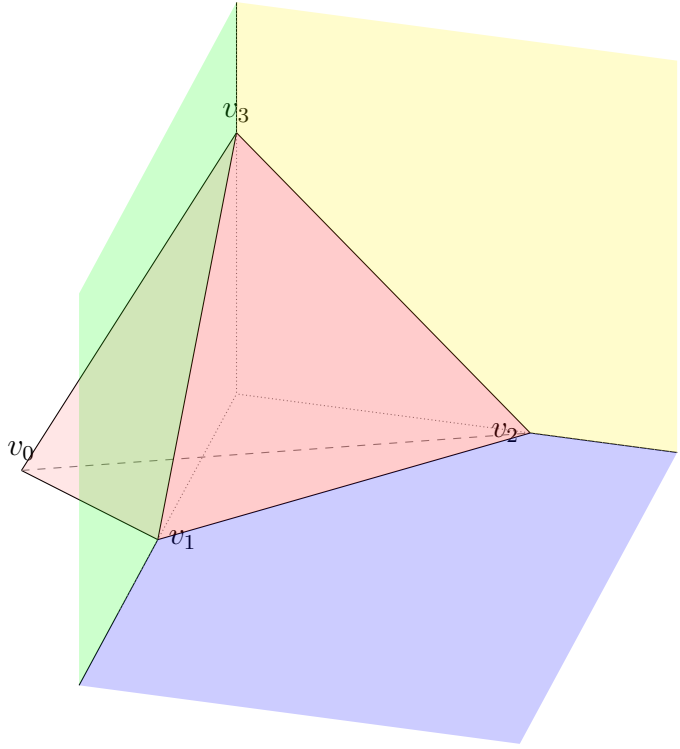
\begin{figure}[H]
    \centering
\begin{tikzpicture}[tdplot_main_coords,scale=4]
\node[left] at (1,0,0) {$v_2$};
\node[right] at (0,-1,0) {$v_1$};
\node[above] at (0,0,1) {$v_3$};
\node[above] at (-1,1,-1) {$v_0$};
\draw (1,0,0)--(1/2,0,1/2)--(0,0,1);
\draw (-1,1,-1)--(0,-1,0);
\draw (-1,1,-1)--(0,0,1);
\draw (0,-1,0)--(0,-2,0);
\draw (0,0,1)--(0,0,3/2);
\draw (1,0,0)--(3/2,0,0);
\draw[gray, dashed] (-1,1,-1)--(1,0,0);
\draw[gray,densely dotted] (0,0,3/2)--(0,0,0);
\draw[gray,densely dotted] (0,-3/2,0)--(0,0,0);
\draw[gray,densely dotted] (3/2,0,0)--(0,0,0);
\draw (1,0,0)--(1/2,-1/2,0)--(0,-1,0);
\draw (0,-1,0)--(0,-1/2,1/2)--(0,0,1);
\fill[fill=blue, fill opacity=0.2] (1,0,0)--(3/2,0,0)--(3/2,-2,0)--(0,-2,0)--(0,-1,0);
\fill[fill=yellow, fill opacity=0.2] (0,0,1)--(0,0,3/2)--(3/2,0,3/2)--(3/2,0,0)--(1,0,0);
\fill[fill=green, fill opacity=0.2] (0,0,3/2)--(0, -2,3/2)--(0,-2,0)--(0,-1,0)--(0,0,1);
\fill[fill=red, fill opacity=0.2] (0,-1,0)--(1,0,0)--(0,0,1);
\fill[fill=red, fill opacity=0.1] (0,-1,0)--(-1,1,-1)--(0,0,1);
\end{tikzpicture}
\caption{The tropicalization of a maximally degenerate K3 surface}
\end{figure}
We now move to the description of the tropical contraction. If $\tau$ is a face of $\partial P^*$, we set $U_{\tau} := \Int(\widetilde{\Star}(a_\tau))$ the open star of the point $a_\tau \in \Int(\tau)$ with respect to $\widetilde{\mathscr{P}}$. For $\tau' \subset \tau$ a smaller face of $\partial P^*$, set:
$$W_{\tau, \tau'} := U_{\tau} \cap U_{\tau'}$$
and $Y_{\tau, \tau'} := W_{\tau, \tau'} + \overline{\sigma}(\tau').$ Then
$$Y_{\tau} := \big( \bigcup_{\tau' \subset \tau} Y_{\tau, \tau'} \big) \cap \overline{\Trop}(X),$$
where the union ranges over the subfaces of $\tau$. For instance, if $\tau \subset \partial P^*$ is a maximal face, we simply have $Y_{\tau} = U_{\tau} = \Int(\tau)$.
\\Then by \cite[lem. 5.15]{Ya}, the union of $Y_{\tau}$ cover $\overline{\Trop}(X)$ when $\tau$ ranges over the faces of $\partial P^*$, so that it is enough to define local tropical contractions:
$$\delta_{\tau} : Y_{\tau} \fl U_{\tau},$$
which glue over the intersections $U_{\tau_1} \cap U_{\tau_2}$ by \cite[lem. 5.14]{Ya}. Then if $x \in Y_{\tau, \tau'}$, i.e. $x = w + x'$, with $w \in W_{\tau, \tau'}$ and $x' \in \overline{\sigma}(\tau')$, we set:
$$\delta_{\tau} (x) = w.$$
In other words, the local tropical contraction contracts the cone direction, and is locally isomorphic over $W_{\tau, \tau'}$ to the canonical projection $p: N_{\Sigma} \fl N_{\Sigma'}$, where $\Sigma'$ is the fan in $N_{\R} / \Span(\sigma(\tau'))$ induced by $\Sigma$. 
\\In the two-dimensional case, the restriction of the tropical contraction to the standard orthant is depicted as follows (we have picked the barycenters of the edges as choice of branch cuts):
\tdplotsetmaincoords{70}{25}
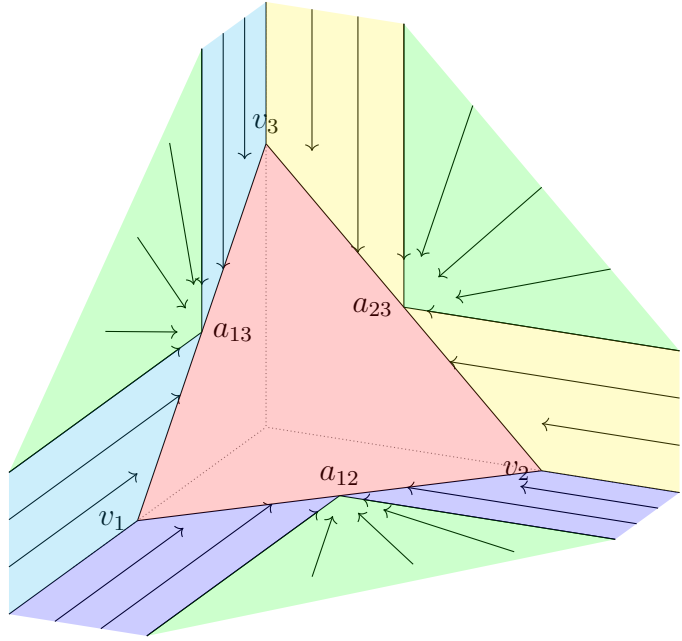
\begin{figure}[H]
    \centering
\begin{tikzpicture}[tdplot_main_coords,scale=4]
\node[left] at (1,0,0) {$v_2$};
\node[left] at (0,-1,0) {$v_1$};
\node[above] at (0,0,1) {$v_3$};
\node[above] at (1/2,-1/2,0) {$a_{12}$};
\node[left] at (1/2,0,1/2) {$a_{23}$};
\node[right] at (0, -1/2, 1/2) {$a_{13}$};
\draw (1,0,0)--(1/2,0,1/2)--(0,0,1);
\draw (0,0,3/2)--(0,0,1);
\draw (1/2, 0, 3/2)--(1/2, 0,1/2);
\draw (3/2, 0, 1/2)--(1/2, 0, 1/2);
\draw[gray,densely dotted] (0,0,3/2)--(0,0,0);
\draw[gray,densely dotted] (0,-3/2,0)--(0,0,0);
\draw[gray,densely dotted] (3/2,0,0)--(0,0,0);
\draw (1,0,0)--(1/2,-1/2,0)--(0,-1,0);
\draw (0,-1,0)--(0,-1/2,1/2)--(0,0,1);
\draw[->] (1/2,-2,0)--(1/2,-2/3,0);
\draw[->] (3/4, -5/4,0)--(9/16,-11/16,0);
\draw[->] (1, -1,0)--(5/8,-5/8,0);
\draw[->] (5/4, -3/4,0)--(11/16,-9/16,0);
\draw[->] (3/2,-1/2,0)--(7/12,-1/2, 0);
\draw[->] (1/3, -2, 0)--(1/3, -2/3, 0);
\draw[->] (1/6, -2, 0)--(1/6, -1, 0);
\draw[->] (3/2,-1/3 ,0)--(2/3, -1/3, 0);
\draw[->] (3/2,-1/6 ,0)--(1, -1/6, 0);
\draw[->] (1/2,0,3/2)--(1/2,0,2/3);
\draw[->] (3/4, 0,5/4)--(9/16,0, 11/16);
\draw[->] (1, 0,1)--(5/8,0, 5/8);
\draw[->] (5/4, 0, 3/4)--(11/16,0, 9/16);
\draw[->] (3/2,0, 1/2)--(7/12,0, 1/2);
\draw[->] (1/3, 0, 3/2)--(1/3, 0, 2/3);
\draw[->] (1/6, 0, 3/2)--(1/6, 0, 1);
\draw[->] (3/2,0, 1/3)--(2/3, 0, 1/3);
\draw[->] (3/2,0, 1/6)--(1, 0, 1/6);
\draw[->] (0, -1/6, 3/2)--(0, -1/6, 1);
\draw[->] (0, -1/3, 3/2)--(0, -1/3, 2/3);
\draw[->] (0, -1/2, 3/2)--(0, -1/2, 2/3);
\draw[->] (0, -2, 1/6)--(0, -1, 1/6);
\draw[->] (0, -2, 1/3)--(0, -2/3, 1/3);
\draw[->] (0, -2, 1/2)--(0, -2/3, 1/2);
\draw[->] (0, -3/4,5/4)--(0,-9/16, 11/16);
\draw[->] (0, -1,1)--(0, -5/8, 5/8);
\draw[->] (0,-5/4, 3/4)--(0, -11/16, 9/16);
\draw (0,-1,0)--(0,-2,0);
\draw (1/2,-1/2,0)--(1/2,-2,0);
\draw (1,0,0)--(3/2,0,0);
\draw (1/2,-1/2,0)--(3/2,-1/2,0);
\draw (0, -1/2, 3/2)--(0, -1/2, 1/2);
\draw (0, -2, 1/2)--(0, -1/2, 1/2);
\fill[fill=cyan, fill opacity=0.2] (0, -1/2, 1/2)--(0,-1,0)--(0,-2,0)--(0,-2,1/2);
\fill[fill=cyan, fill opacity=0.2] (0, -1/2, 1/2)--(0,0,1)--(0,0,3/2)--(0,-1/2,3/2);
\fill[fill=green, fill opacity=0.2] (0, -2, 1/2)--(0, -1/2, 1/2)--(0, -1/2, 3/2);
\fill[fill=green, fill opacity=0.2] (1/2,-1/2,0)--(1/2,-2,0)--(3/2,-1/2,0);
\fill[fill=blue, fill opacity=0.2] (0,-1,0)--(0,-2,0)--(1/2,-2,0)--(1/2,-1/2,0);
\fill[fill=blue, fill opacity=0.2] (1,0,0)--(3/2,0,0)--(3/2,-1/2,0)--(1/2,-1/2,0);
\fill[fill=green, fill opacity=0.2] (1/2,0, 1/2)--(1/2,0,3/2)--(3/2,0, 1/2);
\fill[fill=yellow, fill opacity=0.2] (0,0,1)--(0,0,3/2)--(1/2,0,3/2)--(1/2,0,1/2);
\fill[fill=yellow, fill opacity=0.2] (1,0,0)--(3/2,0,0)--(3/2,0,1/2)--(1/2,0, 1/2);
\fill[fill=red, fill opacity=0.2] (1,0,0)--(0,-1,0)--(0,0,1);
\end{tikzpicture}
\caption{The tropical contraction in dimension 2} \label{pic cont}
\end{figure}
If $\tau = e_{12}$ is an edge, then $W_{\tau, v_i} = \Conv(a_{12}, v_i)$ for $i=1, 2$, and the contractions over $W_{\tau, v_i}$ are the projection $N_{\R} \fl N_{\R} / \R v_i$ (the blue regions); while $W_{\tau, \tau} = a_{12}$ and $Y_{\tau, \tau}$ is the lower green region, contracted to the point $a_{12}$.
\begin{prop} Let $\tau$ be a maximal face of $\Sk(\X)$. Then the retraction $\rho_{\underline{a}} : X^{\an} \fl \Sk(\X)$ is an affinoid torus fibration over $\Int(\tau)$.
\end{prop}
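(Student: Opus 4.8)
The plan is to show that over the interior of a maximal face $\tau$ the tropical contraction $\delta_{\underline a}$ is essentially trivial, so that $\rho_{\underline a}$ agrees there with the local Berkovich retraction attached to the $0$-dimensional stratum of $\X_0$ dual to $\tau$, and then to conclude by the toric computation of Example \ref{ex:affinoid torus over max face}. Let $p$ be the $0$-dimensional stratum of $\X_0$ dual to $\tau$, and let $\tau_\sigma = \val_\Sigma(\tau) = \Conv(v_{l_1},\dots,v_{l_{n+1}})$ be the corresponding maximal simplex of $\partial P^*$ under the identification of Proposition \ref{prop sk N}.

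First I would check that $\delta_{\underline a}^{-1}(\Int(\tau)) = \Int(\tau)$. By the construction recalled above, $Y_\tau = U_\tau = \Int(\tau)$ and $\delta_{\underline a}$ restricts to the identity there; conversely, $\delta_{\underline a}$ fixes $\Sk(X) = \partial P^*$ pointwise and maps each remaining piece $Y_{\tau'}$ (for $\tau'\neq\tau$) either onto $U_{\tau'}$ or by projecting out a cone direction $\overline\sigma(\tau'')$ attached along $\tau'$. Since $\tau$ is a maximal face, $\overline{\Trop}(X)$ coincides near $\Int(\tau)$ with the affine hyperplane $\{\langle m,\cdot\rangle = 1\}$ cut out by the corresponding vertex $m$ of $P$, which carries no attached cone direction; and for $\tau'\neq\tau$ the star $U_{\tau'}$ meets $\Int(\tau)$ only through simplices of $\widetilde{\mathscr P}$ of the form $\Conv(\dots,a_\tau)$, forcing the relevant cone direction to be trivial. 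Hence no point of $\overline{\Trop}(X)\setminus\Int(\tau)$ is contracted into $\Int(\tau)$, so $\rho_{\underline a}^{-1}(\Int(\tau)) = \val_\Sigma^{-1}(\Int(\tau_\sigma))$ and $\rho_{\underline a} = \val_\Sigma$ on this open set.

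Next I would identify this open set with the generic fibre $\mathfrak X^\eta_p$ of the formal completion $\widehat{\X_{/p}}$. If $z_{l_1},\dots,z_{l_{n+1}}$ are local equations of the $n+1$ components of $\X_0$ through $p$, then $c_\X(x) = p$ if and only if $v_x(z_{l_i}) > 0$ for all $i$, and $\sum_i v_x(z_{l_i}) = v_x(t) = 1$; under the coordinates furnished by Proposition \ref{prop sk N} this says exactly $\val_\Sigma(x)\in\Int(\tau_\sigma)$, so $\rho_{\underline a}^{-1}(\Int(\tau)) = \mathfrak X^\eta_p$. By the proof of Proposition \ref{prop model dlt}, $\Sing(\X)$ avoids the $0$-dimensional strata, so $\X$ is smooth (hence snc) at $p$; moreover $F$ is a unit near $p$ by Condition \ref{condition}, so after absorbing it one gets $\widehat{\gO}_{\X,p}\simeq R[[z_0,\dots,z_n]]/(t - z_0\cdots z_n)$. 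The local content of Example \ref{ex:affinoid torus over max face} (which uses only this snc formal model, cf. Remark \ref{retract local}) then gives an open embedding $\mathfrak X^\eta_p\hookrightarrow \T^{\an}$ with $\T = \Sp K[z_0,\dots,z_n]/\{t=z_0\cdots z_n\}$, under which $\val_\Sigma$ becomes the tropicalization map of $\T$ onto the open simplex $\{x_0+\dots+x_n=1\}\subset\R_{>0}^{n+1}$. This fits $\rho_{\underline a}|_{\rho_{\underline a}^{-1}(\Int(\tau))}$ into the commutative square of Definition \ref{defn:affinoid torus fibration}, so $\rho_{\underline a}$ is an affinoid torus fibration over $\Int(\tau)$; and the integral affine functions it induces are the restrictions to $\mathrm{aff}(\tau)$ of the linear forms $\langle m,\cdot\rangle$, $m\in M$, i.e. the standard $\Z$-affine structure on the simplex, which is precisely the chart $\phi_\tau$ of Definition \ref{def Z aff} up to $\mathrm{GL}_n(\Z)\ltimes\R^n$.

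The only genuinely delicate point is the first step: ruling out that $\delta_{\underline a}$ folds some unbounded region of $\overline{\Trop}(X)$ onto the interior of a maximal face. Everything else is a local snc computation already packaged in Example \ref{ex:affinoid torus over max face}, so once one observes that a maximal face of $\partial P^*$ has no cone attached to its interior in $\overline{\Trop}(X)$, the argument closes. This establishes the "maximal face" half of Theorem \ref{theo affinoid Yuto}.
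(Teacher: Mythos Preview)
Your proof is correct and follows essentially the same route as the paper's: both show that $\delta_{\underline a}^{-1}(\Int(\tau))=\Int(\tau)$, identify $\val_\Sigma^{-1}(\Int(\tau))$ with the generic fibre $\mathfrak X_p^\eta$ of the formal completion at the corresponding $0$-dimensional stratum, and then invoke Example~\ref{ex:affinoid torus over max face}. The paper is terser in the first step (it simply reads off $Y_\tau=U_\tau=\Int(\tau)$ from the construction of $\delta$), whereas you spell out why no unbounded piece of $\overline{\Trop}(X)$ can be contracted into the interior of a maximal face; your extra remark on the induced $\Z$-affine structure is not part of this proposition in the paper but belongs to the statement of Theorem~\ref{theo affinoid Yuto}.
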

\begin{proof}
Let $\tau$ be a maximal face of $\Sk(\X) \simeq \partial P^*$, and let $p \in \X_0$ be the corresponding zero-dimensional stratum. Then from the construction of $\delta$, we have $Y_{\tau} = U_{\tau} = \Int(\tau)$, so that $\delta^{-1} (\Int(\tau)) = \Int(\tau)$, and $\rho^{-1}((\Int(\tau))= \val_{\Sigma}^{-1} ((\Int(\tau))$. Moreover, by the proof of prop. \ref{prop sk N}, if:
$$\tau = \Conv(v_{l_1},..., v_{l_n})$$
and $\val_{\Sigma}(v) = \sum_{i=1}^{n+1} \lambda_i v_{l_i}$, then $\lambda_i = v(\frac{z_i}{z_0})$. In particular, if $\val_{\Sigma}(v) \in \Int(\tau)$, then $v(\frac{z_i}{z_0})>0$ for $i=1,..., (n+1)$, or in other words $z_i =0$ at the center $c_{\X}(v)$. This implies that $\val_{\Sigma}(v) \in \Int(\tau)$, if and only $c_{\X}(v) =p$.
\\We infer that $\rho^{-1}(\Int(\tau))=\fX_p^{\eta} :=(\widehat{\X}_{/p})^{\eta}$, and that under the identification $\Sk(\X) = \partial P^*$, we have $\rho_{| \fX_p^{\eta}} = \rho_{\fX_p}$, which is indeed an affinoid torus fibration by example \ref{ex:affinoid torus over max face}.
\end{proof}
In what follows, we will be mostly interested about what happens near a vertex $v_i \in \Sk(\X)$, in which case the following holds:
\begin{prop}{\cite[lem. 5.12]{Ya}} \label{prop cont local}
Let $\tau = v_i$ be a vertex of $\Sk(\X)$. Then $Y_{\tau} = U_{\tau} + \overline{\R}_{\ge 0} v_i$, and the local tropical contraction satisfies:
$$\delta_{\tau} (w + \alpha v_i) = w,$$
for $w \in U_{\tau}$ and $\alpha \in [0, + \infty]$.
\\In other words, writing $p_i : N_{\Sigma} \fl N_{\Sigma_i}$ the canonical projection, the following diagram commutes:
\begin{center}
    \begin{tikzcd}
    Y_{e_i} \arrow[r, "\delta_i"] \arrow[rd, "p_i"]  & U_i \arrow[d, "q_i"] & \subset N_{\R} \\
    & V_i & \subset N_{\R}/ \R e_i,
   \end{tikzcd}
\end{center}
where $q_i =(p_i)_{| U_i}$ is the homeomorphism from $U_i$ onto its image $V_i$ by the projection $p_i$.
\end{prop}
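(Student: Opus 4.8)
The plan is to read off $\delta_{v_i}$ by specialising the face-by-face construction of Section~\ref{section tropical cont} to the vertex $\tau = v_i$, and then to recognise it as the restriction of a linear projection. Since $v_i$ has no proper subface, the only piece entering the definition of $Y_{v_i}$ is the one attached to $\tau' = v_i$ itself; as $v_i = \Conv(e_i)$ one has $\sigma(v_i) = \Cone(e_i) = \R_{\ge 0} v_i$, hence $W_{v_i, v_i} = U_{v_i}$, $Y_{v_i, v_i} = U_{v_i} + \overline{\R}_{\ge 0} v_i$, and thus $Y_{v_i} = \big(U_{v_i} + \overline{\R}_{\ge 0} v_i\big) \cap \overline{\Trop}(X)$, with $\delta_{v_i}$ acting by $w + \alpha v_i \mapsto w$. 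The points to settle are then: (i) the representation $x = w + \alpha v_i$ ($w \in U_{v_i}$, $\alpha \in [0,+\infty]$) of a point of $Y_{v_i}$ is unique, so that $\delta_{v_i}$ is well defined; (ii) $Y_{v_i}$ is swept out from $U_{v_i}$ by $\overline{\R}_{\ge 0}v_i$ in the way described in \cite[lem.~5.12]{Ya}.

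Both reduce to one computation with the data of Section~\ref{sec super setup}: the vertices $m_0, \ldots, m_{n+1}$ of $P$ satisfy $\langle m_j, v_i \rangle = 1$ for $j \neq i$ and $\langle m_i, v_i \rangle = -(n+1)$ — immediate from $F_j = \Conv(\{v_l : l \neq j\}) \subset \{\langle m_j, \cdot \rangle = 1\}$ together with $\sum_l v_l = 0$. Hence translating by $\alpha v_i$ shifts every affine term $1 - \langle m_j, \cdot \rangle$ of $L$ with $j \neq i$ by the same amount $-\alpha$, raises $1 - \langle m_i, \cdot \rangle$ by $\alpha(n+1)$, and fixes the term $0$. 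I would exploit this as follows: for $w \in U_{v_i} \subset \partial P^*$ the minimum $L(w) = 0$ is attained by the term $0$ together with exactly the terms indexed by the proper face of $\partial P^*$ whose relative interior contains $w$ — a face having $v_i$ as a vertex, hence none of those indices equals $i$; so along $w + [0,+\infty)v_i$ the set of minimising terms is controlled entirely by indices $j \neq i$, which is the conical local structure of $\overline{\Trop}(X)$ at $v_i$ that underlies \cite[lem.~5.12]{Ya}, and in particular $L(w + \alpha v_i) = -\alpha < 0$ for $\alpha > 0$, so $w + \alpha v_i \notin \partial P^*$. This last fact yields uniqueness in (i): if $w + \alpha v_i = w' + \alpha' v_i$ with $w, w' \in U_{v_i}$ and $\alpha \le \alpha'$, then $w = w' + (\alpha' - \alpha)v_i$ still lies in $\partial P^*$, forcing $\alpha' = \alpha$, because near its vertex $v_i$ the simplex $\partial P^*$ is transverse to the line $\R v_i$ (concretely, no two distinct points of $P^*$ close to $v_i$ differ by a nonzero multiple of $v_i$, as one checks from $v_i = -\sum_{l \neq i} v_l$).

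Granting (i) and (ii), the commutative diagram is then formal. Let $p_i : N_\Sigma \fl N_{\Sigma_i}$ be the canonical projection induced by $N_\R \fl N_\R / \R v_i$; it is continuous near the ray $\R_{\ge 0} v_i$ in $N_\Sigma$ (in particular on $Y_{e_i}$), it kills $\R v_i$, and it extends continuously over the point at infinity in the direction $v_i$, so that $p_i(w + \alpha v_i) = p_i(w)$ for all $\alpha \in [0,+\infty]$; thus $p_i$ is constant along the fibres of $\delta_{v_i}$. The restriction $q_i := (p_i)_{|U_{v_i}}$ is a homeomorphism onto an open subset $V_i \subset N_\R / \R v_i$: it is continuous and, by the transversality just used, injective, and invariance of domain on the $n$-manifold $\partial P^*$ then promotes it to a homeomorphism onto its image. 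Combining, for $x = w + \alpha v_i \in Y_{e_i}$ one gets $q_i(\delta_{v_i}(x)) = q_i(w) = p_i(w) = p_i(x)$, which is the asserted commutativity; in particular $\delta_{v_i} = q_i^{-1} \circ (p_i)_{|Y_{e_i}}$ is continuous.

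The main obstacle is the content of the first two paragraphs: pinning down $Y_{v_i}$ and the uniqueness in (i) both come down to controlling $\overline{\Trop}(X)$ near the vertex $v_i$, which is exactly where the explicit shape of $L$ and the values $\langle m_j, v_i \rangle$ enter; once that local picture is in hand, the identification of $\delta_{v_i}$ with the linear projection $p_i$ and the commutativity of the diagram follow immediately.
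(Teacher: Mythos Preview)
The paper does not prove this proposition; it is quoted from \cite[lem.~5.12]{Ya} without argument. Your direct verification --- specialising the face-by-face recipe of Section~\ref{section tropical cont} to the case where $\tau = v_i$ has no proper subface, so that $W_{v_i,v_i} = U_{v_i}$, $\sigma(v_i) = \R_{\ge 0} v_i$, and hence $\delta_{v_i}(w+\alpha v_i) = w$ by definition --- is exactly how one establishes it from the constructions recalled in the paper, and is correct.

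Two minor remarks. First, your parenthetical appeal to ``the conical local structure of $\overline{\Trop}(X)$ at $v_i$ that underlies \cite[lem.~5.12]{Ya}'' is circular as written; but it is also unnecessary, since your explicit computation via $\langle m_j,v_i\rangle = 1$ for $j\neq i$ and $\langle m_i,v_i\rangle = -(n+1)$ already shows $L(w+\alpha v_i) = -\alpha$ for $w\in U_{v_i}$ and $\alpha>0$, which gives both $w+\alpha v_i \notin \partial P^*$ (hence the uniqueness in (i)) and the description of the minimising indices. Second, the literal equality $Y_{v_i} = U_{v_i} + \overline{\R}_{\ge 0} v_i$ in the statement is a slight abuse: by the paper's own definition $Y_{v_i}$ carries an intersection with $\overline{\Trop}(X)$, which you correctly retain, and indeed over the interior of a maximal face of $\partial P^*$ only a single $m_j$-term realises the minimum, so the ray $w + \R_{>0}v_i$ leaves $\Trop(X)$ there. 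This is harmless for the content of the proposition, since the formula for $\delta_{v_i}$ and the commutative diagram are unaffected.
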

\subsection{Small resolution at a vertex} \label{local models}
We consider a point in ${\X}^{\text{sing}} \cap D_1 \cap ...\cap D_n$, the singular points in the other strata curves can be treated analogously. Etale locally around such a point, $\X$ is isomorphic to the toric variety $ \U:= V(z_1...z_n - wt) \subset \A^{n+1}_R$, where the $D_i|_{\U}= \{z_i=t=0\}$ are the components of the special fiber $\cU_0$. We still denote these by $D_i$; they form the toric boundary of $\cU$ together with the :
$$D_i'|_{\U}=\{z_i=w=0\}.$$
If $J \subset \{1,..., n\}$, we let $D_J = \cap_{j \in J} D_j$. The singular locus ${\U}^{\text{sing}}$ is of codimension $3$, and consists of the union over pairs $i \neq j$ of indices of the:
$$ V_{ij}|_{\U}=\{z_i=z_j=w=t=0\}=D_i \cap D_j \cap D_i' \cap D_j',$$
which all intersect at the torus invariant point $p := \{z_1=...=z_n=w=t=0\}$.
\begin{prop} \label{prop local res} The composition of successive toric blow-ups:
$$\cU(n-1) = \Bl_{D_{n-1}} \cU(n-2) \fl \cU(n-2) =... \fl \cU(1) = \Bl_{D_1} \cU  \fl \cU,$$
where we inductively abusively denote $D_{j+1} \subset \cU(j+1)$ the strict transform of $D_{j+1} \subset \cU(j)$, is a small log resolution of $(\cU, \cU_0)$, such that the strict transform of $D_n$ in $\cU(n-1)$ is isomorphic to $D_n$.
\end{prop}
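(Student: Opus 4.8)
The plan is to recognise the local model $\cU = \{z_1\cdots z_n - wt = 0\}$ as an affine toric variety and to realise the composition of blow-ups as an explicit sequence of cone subdivisions, ending with the \emph{staircase triangulation} of a prism. Writing $N = \Z^{n+1}$ with basis $e_1,\dots,e_{n+1}$, a direct computation identifies $\cU$ with the affine toric variety $U_\sigma$, where $\sigma$ is the cone over the prism $\Conv(e_1,\dots,e_n)\times[0,1]$: the ``bottom'' rays $e_i$ are the rays of the components $D_i$ of $\cU_0$, the ``top'' rays $e_i+e_{n+1}$ those of the $D_i'$, and $\cU_0 = \mathrm{div}(t) = \sum_i D_i$. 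We have already observed that $\Sing(\cU)$ has codimension $3$, and it lies in $\cU_0$; consequently any projective birational $\pi\colon \cU'\fl\cU$ which is an isomorphism over $\cU\smallsetminus\Sing(\cU)$, with $\cU'$ a \emph{smooth} toric variety \emph{introducing no new ray}, is automatically a small log resolution of $(\cU,\cU_0)$ — smallness because $\mathrm{Exc}(\pi)\subseteq\pi^{-1}(\Sing\cU)$ has codimension $\geq 2$, the log condition because $\pi^{-1}(\cU_0)=\mathrm{div}(t)=\sum_i\widetilde D_i$ is then a reduced subsum of the toric boundary of a smooth toric variety, hence snc. So it suffices to prove that the composition of the $\Bl_{D_j}$'s has this form, and to compute the strict transform of $D_n$.

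The induction is on $n$, the case $n=1$ being trivial. The core step is the first blow-up: the ideal of $D_1$ is $\mathcal I_{D_1}=(z_1,t)$, and $\Bl_{D_1}\cU$ is covered by the two affine charts $\cU[z_1/t]$ and $\cU[t/z_1]$, which I would compute as subrings of $\Frac(\cU)$. Using $z_1\cdots z_n=wt$ one gets $\cU[z_1/t]\cong R[z_2,\dots,z_n,z_1/t]\cong\A^n_R$ (smooth, with $\mathcal I_{D_1}\cdot\gO=(t)$, so the exceptional divisor is $\{t=0\}$ and $\cU_0$ meets it transversally), and, setting $s=t/z_1$, $\cU[t/z_1]\cong k[z_1]\otimes_k\bigl(k[z_2,\dots,z_n,w,s]/(z_2\cdots z_n - ws)\bigr)$, i.e. $\A^1\times\cW$ with $\cW$ the same type of singularity in one variable less. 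Both charts are normal and glue $\T$-equivariantly, so $\Bl_{D_1}\cU=U_{\Sigma_1}$, where $\Sigma_1$ subdivides $\sigma$ into the unimodular cone $S_1=\Cone(e_1,e_1+e_{n+1},\dots,e_n+e_{n+1})$ (the chart $\A^n_R$) and $\tau=\R_{\geq0}e_1\oplus\sigma_\cW$ (the chart $\A^1\times\cW$), glued along the wall $\Cone(e_1,e_2+e_{n+1},\dots,e_n+e_{n+1})$; no new ray appears. For $j\geq2$ the strict transform $\widetilde D_j$ is disjoint from the open chart $U_{S_1}$ (because $D_j\smallsetminus D_1$ lies in $\cU[t/z_1]$, equivalently $e_j\notin S_1$), and this persists under later blow-ups; hence $\Bl_{\widetilde D_2},\dots,\Bl_{\widetilde D_{n-1}}$ only alter the chart $\A^1\times\cW$, acting there as $\mathrm{id}_{\A^1}$ times the resolution of $\cW$ supplied by the inductive hypothesis. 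By induction this refines $\tau$ into cones $S_2,\dots,S_{n-1}$ together with the leftover cone $S_n=\Cone(e_1,\dots,e_n,e_n+e_{n+1})$, which is again unimodular. Thus $\cU(n-1)=U_{\Sigma_{n-1}}$ with $\Sigma_{n-1}$ the staircase triangulation $\{S_1,\dots,S_n\}$ (and faces) of the prism using only its $2n$ vertices — a smooth fan with no new ray — so by the first paragraph $\cU(n-1)\fl\cU$ is a small log resolution of $(\cU,\cU_0)$. Finally $S_n$ is the unique maximal cone of $\Sigma_{n-1}$ through $e_n$, so $\widetilde D_n=D_{e_n}$ has fan equal to the image of the faces of $S_n$ in $N/\Z e_n$, namely the standard orthant $\Cone(\bar e_1,\dots,\bar e_{n-1},\bar e_{n+1})$; hence $\widetilde D_n\cong\A^n\cong D_n$, compatibly with the maps to $\cU$ since a unimodular cone containing another of the same dimension equals it.

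The step I expect to be most delicate is pinning down $\Bl_{D_1}\cU$ — the blow-up of an \emph{ideal}, a priori neither normal nor toric — as the explicit toric modification $U_{\Sigma_1}$: one must compute its two affine charts correctly as subrings of $\Frac(\cU)$, where the spurious $\{t=0\}$ component of the naive presentation $\cU[x]/(tx-z_1)$ disappears, and check that they glue equivariantly (so that $\Bl_{D_1}\cU$ is genuinely the toric variety of the subdivision and no exceptional divisor has been created). Everything downstream — the prism combinatorics, the disjointness of the later centres from $U_{S_1}$, and the identification of $\widetilde D_n$ — is then routine bookkeeping.
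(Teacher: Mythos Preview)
Your proof is correct and reaches the same endpoint as the paper's: both identify $\cU$ with the affine toric variety whose cone is the cone over the prism $\Delta^{n-1}\times[0,1]$, and both show that the sequence of blow-ups produces the staircase triangulation of that prism using only its $2n$ vertices, whence smoothness, smallness, and the computation of $\widetilde D_n$ via the star of the ray $e_n$.

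The organization differs slightly. The paper works in a codimension-one sublattice $N'\subset\Z^{n+2}$ and inducts on the blow-up index $i$: at each step it observes that blowing up $(z_i,t)$ corresponds to slicing the current fan by the hyperplane $\{u_i=u_t\}$, checks directly that no two-dimensional face is cut (hence no new ray), and at the end writes down the maximal cones $\sigma_i=\Cone(v_1,\dots,v_i,v_i',\dots,v_n')$ and verifies their regularity by hand. You instead induct on the dimension $n$: you compute the first blow-up explicitly through its two affine charts, recognize the singular chart as $\A^1\times\cW$ with $\cW$ the same local model in one fewer variable, and invoke the inductive hypothesis on $\cW$. Your approach makes the self-similarity of the resolution transparent and avoids having to enumerate the final cones, at the cost of the affine-chart computation you flag as delicate (and which is indeed the only place one must be careful, though your treatment of it is fine). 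The paper's approach is more uniform fan combinatorics throughout. Both are equally valid ways to arrive at the same triangulation.
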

When $n=3$, this was done in \cite[§4.2]{MPS}, which provides a picture of the slice of the fan of $\cU$ and the subdivisions performed.
\begin{proof} Let $N = \Z^{n+2}$, with the standard basis written as $(e_1,...,e_n, e_t, e_w)$, and $N' \subset N$ the codimension 1 sublattice:
$$N' = \{ u_1+...+ u_n = u_t + u_w \} \subset N,$$
where $(u_1,...,u_n, u_t, u_w)$ is the dual basis in $M$. Then the primitive generators of the rays of the fan $\Sigma$ of $\mathscr{U}$ in $N'_{\R}$ are the $v_i = e_i + e_t$ (corresponding to the boundary component $D_i = \{ x_i = t =0 \}$) and the $v'_i= e_i +e_w$ (corresponding to the boundary component $D'_i = \{ x_i = w =0 \}$), for $i =1,...,n$. The fan $\Sigma$ has a unique maximal cone $\sigma = \sum_{i =1}^n \R_{\ge 0} v_i + \sum_{i =1}^n \R_{\ge 0} v'_i$, it is the cone over the product of an $(n-1)$-simplex and a segment.
\\We set $\mathscr{I}_i := \mathscr{I}_{\mathscr{U}(i-1)}(D_i)$ the ideal of the strict transform of $D_i$ inside $\cU(i-1)$, so that $\cU(i)$ is the blow-up of $\cU(i-1)$ along the toric ideal $\mathscr{I}_i$. 
\\We claim that the blow-up $\cU(i) \fl \cU(i-1)$ has no exceptional divisors, and that $\mathscr{I}_{i+1} = (x_i, t)$. We prove this by induction on $i \in \{ 1,..., n-1 \}$.
\\For $i=1$, we are blowing-up the toric ideal $\mathscr{I} =(x_1, t)$ inside $\cU$, this amounts to subdividing the maximal cone $\sigma$ of $\Sigma$ into the two subcones $\sigma \cap \{ u_1 \ge u_t \}$ and $\sigma \cap \{ u_1 \le u_t \}$ - the piecewise-linear function attached to $\mathscr{I}$ is $\phi_{\mathscr{I}} = \min \{ u_1, u_t \}$. It is straightforward to check that this not subdivide any 2-dimensional cone of $\Sigma$, so that the subdivision does not induce any new ray, and $\cU(1)$ has no exceptional divisors. This also implies that the zero locus of $\mathscr{I}_2 = (x_2, t)$ is the strict transform $D_2 \subset \cU(1)$.
\\Now assume that the claim holds for $i \ge 1$, we want to prove that $\cU(i+1) \fl \cU(i)$ has no exceptional divisors and that $\mathscr{I}_{i+2} = (x_{i+1}, t)$. Since we are blowing-up the ideal $\mathscr{I}_{i+1} = (x_i, t)$ inside $\cU(i)$, this means we are subdividing the fan of $\cU(i)$ along the hyperplane $\{ u_i = u_t \}$, which is once again easily seen not to subdivide the 2-dimensional cones, hence no rays are added. By the same argument as above, this implies that inside $\cU(i+1)$, the zero locus $V(x_{i+1}, t)$ is the strict transform of $D_{i+1}$. 
We now prove that $\cU(n-1)$ is regular. The maximal cones of the fan $\cU(n-1)$ are the intersection of $\sigma$ with $(n-1)$ halfspaces of the form $\{ \pm (u_i -u_t) \ge 0 \}$; by direct computation there are $n$ maximal cones $\sigma_1,..., \sigma_n$, where:
$$\sigma_i = \sigma \cap \{ u_i \ge u_t \} \cap \{ u_{i-1} \le u_t \},$$
with $\sigma_1 = \sigma \cap \{ u_i \ge u_t \}$. This yields:
$$\sigma_i = \R_{\ge 0} v_1 +...+ \R_{\ge 0} v_i + \R_{\ge 0} v'_{i}+...+ \R_{\ge 0} v'_n,$$
which is a regular cone as $(v_1,...,v_i, v'_i,...,v'_n)$ is a basis of $N$, so that the pair $(\cU(n-1), \cU_0(n-1))$ is snc.
\\Finally, we prove that the strict transform of $D_n$ in $\cU(n-1)$ is isomorphic to $D_n$. The ray $ \rho_n = \R_{\ge 0} v_n$ viewed in the fan of $\cU(n-1)$ is contained in only one maximal cone, namely $\sigma_n$. As a result, the fan of the strict transform of $D_n$ is given by the maximal cone $\sigma_n / \R v_n$ and its faces, so that the strict transform $\tilde{D}_n$ of $D_n$ is isomorphic to $\A^n \simeq D_n$. Since the restriction of the composition of blow-ups $\tilde{D}_n \fl D_n$ induces the identity on the tori, it is an isomorphism, and this concludes the proof.
\end{proof} 
Back to the global setting, we obtain:
\begin{cor} \label{cor global res} Let $\X \subset \CP^{n+1}_R$ be as above, and $i \in \{ 0,..., n+1 \}$. Labelling the elements of $\{ 0,..., n+1 \} \setminus i$ as $\{l_1,...,l_n \}$, the sequence of successive blow-ups:
$$\X(n) = \Bl_{D_{l_n}} \X(n-1) \fl \X(n-1) =... \fl \X(1) = \Bl_{D_{l_1}} \X  \fl \X,$$
yields an snc model $\X'/R$, such that $\Sk(\X') = \Sk(\X)$ and the strict transform $D'_i \simeq D_i$.
\end{cor}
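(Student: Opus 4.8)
\emph{Proof proposal.} The plan is to deduce the statement from the étale-local computation of Proposition~\ref{prop local res}, exploiting that blowing up a (possibly non-Cartier) Weil divisor is a local operation on the base which commutes with étale base change. Since each component $D_l$ of $\X_0$ is a prime divisor, it is Cartier on the smooth locus $\X\setminus\Sing(\X)$ (a smooth scheme is locally factorial), so every individual blow-up $\Bl_{D_l}$, and hence the composite $f\colon\X'\fl\X$, restricts to an isomorphism over $\X\setminus\Sing(\X)$, where $\X$ is already smooth with reduced snc special fibre; moreover all the centres lie in $\X_0$, so $\X'_K=X$ and $\X'/R$ is again a model of $X$. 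It therefore suffices to analyse $f$ étale-locally around a point $p\in\Sing(\X)$.

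Such a $p$ lies on components $D_{j_1}\cap\dots\cap D_{j_m}$ with $2\le m\le n$ and $F(p)=0$, and by the computation in the proof of Proposition~\ref{prop model dlt} there is an étale-local isomorphism $\X\simeq V(z_1\cdots z_m-wt)\times\A^{n-m}_R$ carrying each $D_{j_k}$ to $\{z_k=t=0\}\times\A^{n-m}_R$. Only the blow-ups along those $D_{j_k}$ with $j_k\ne i$ are non-trivial near $p$, the others being along divisors avoiding $p$; these are toric, given by subdividing the fan of $V(z_1\cdots z_m-wt)$ along the hyperplanes $\{u_{j_k}=u_t\}$, and as none of these hyperplanes meets the interior of a two-dimensional cone of that fan the subdivisions are mutually transverse, so the composite near $p$ is independent of the order in which they are carried out. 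After permuting indices so that $D_i$, when it occurs among $D_{j_1},\dots,D_{j_m}$, plays the role of the surviving component $D_m$, Proposition~\ref{prop local res} (applied with $m$ in place of $n$ and the trivial smooth factor $\A^{n-m}_R$) then shows that this local modification introduces no new ray, hence is small; that its source is smooth with snc special fibre; and that when $p\in D_i$ the strict transform of $D_i$ is isomorphic to $D_i$ near $p$. Gluing these pictures, $\X'$ is an snc model, $f$ is a small birational morphism, and $f|_{D'_i}\colon D'_i\fl D_i$ is a proper birational morphism which is étale-locally an isomorphism everywhere, hence an isomorphism.

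It remains to identify the skeleton. Since $f$ is small it contracts no divisor, so $\X'_0=f^*\X_0$ and, the special fibres being reduced, $K^{\log}_{\X'/R}=K_{\X'/R}=f^*K_{\X/R}=\gO_{\X'}$; together with $\X'$ being snc, and a fortiori dlt, this shows $\X'$ is again a minimal dlt model of $X$, whence $\Sk(\X')=\Sk(X)=\Sk(\X)$ inside $X^{\an}$ by the model-independence of the essential skeleton (\cite{NX}).

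The point I expect to require the most care is the order-independence used above: Proposition~\ref{prop local res} is stated for one prescribed order of blow-ups attached to a single strata curve, whereas the corollary runs a single global sequence that must simultaneously realise a reindexed --- and possibly reordered --- instance of that proposition at every singular point. Making this rigorous amounts to checking that the toric blow-ups along the $D_l$ genuinely commute, i.e.\ that subdividing the cone of $V(z_1\cdots z_m-wt)$ by any sub-collection of the hyperplanes $\{u_j=u_t\}$ never cuts a two-dimensional cone, so that no ordering ever creates an exceptional divisor and the outcome is always the smooth fan of Proposition~\ref{prop local res}.
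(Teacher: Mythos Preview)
Your proof is correct, and it is essentially the argument the paper has in mind; the paper states the corollary without proof, as an immediate consequence of Proposition~\ref{prop local res}, so you have supplied the details the paper omits. In particular you have correctly isolated the one genuinely nontrivial point --- that the single global sequence of blow-ups must, \'etale-locally at each singular point, reproduce the local resolution of Proposition~\ref{prop local res} possibly in a permuted order --- and your justification is sound: the final fan is the common refinement by the hyperplanes $\{u_j=u_t\}$, hence order-independent, and since the paper's computation shows this common refinement has no new rays, every intermediate subdivision (in any order) has no new rays either, so every partial blow-up is small.

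One small case to make explicit: when $p\notin D_i$, all $m$ local components through $p$ lie among the $D_{l_j}$ and therefore all get blown up, which is one more than in Proposition~\ref{prop local res}. This is harmless: after the first $m-1$ of them the local model is already smooth, so the strict transform of the remaining component is Cartier and the last blow-up is an isomorphism. With that remark your argument is complete. Your identification of the skeleton via the minimality of $\X'$ is also fine; an equally direct route is to note that, since $f$ is small and an isomorphism near the generic point of every stratum, the dual complexes and the associated quasi-monomial valuations coincide, giving $\Sk(\X')=\Sk(\X)$ directly inside $X^{\an}$.
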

The upshot of this is that we obtain a Berkovich retraction $\rho_{\X_i} : X^{\an} \fl \Sk(\X_i) = \Sk(\X)$, which is an affinoid torus fibration over $\Star(v_i)$ by theorem \ref{THEO TORIC STRAT}. Moreover, the integral affine structure induced on $\Star(v_i)$ matches the one induced by the chart $\phi_{v_i}$ from definition \ref{def Z aff}. 
\\Let us denote by $\fX_i = \widehat{\X}_{i, /D_i}$ the formal completion. By theorem \ref{THEO TORIC STRAT}, writing $\mathcal{N} \fl \A^1_k$ the total space of normal bundle of $D_i$ inside $\X_i$ and $\cN = \mathcal{N} \times_{\A^1} R$, we have a formal isomorphism:
$$f : \fX_i \xrightarrow{\sim} \widehat{\cN}_{/D_i},$$
where $D_i \subset \cN$ denotes the zero section.
\\Thus, passing to the generic fiber, $\fX_i^{\eta}$ embeds analytically as a open subset of a torus:
$$\fX_i^{\eta} \hookrightarrow \T^{\an}_{\mathscr{N}},$$
where $\T_{\mathscr{N}} = \cN \times_R K$ is the generic fiber of $\cN$.
\begin{lem} There is a canonical isomorphism of $K$-tori:
$$\T_{\cN} \simeq \T_{D_i} \times_k K.$$
\end{lem}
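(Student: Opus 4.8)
The strategy is to make the toric structure on $\mathcal{N}$ completely explicit and then read off the base change directly. First I would record that, since $D_i\cong\CP^n$ is toric and $\nu:=\nu_{D_i/\X_i}$ is a toric line bundle on it, the total space $\mathcal{N}$ is a toric variety of dimension $n+1$, the projection $\mathcal{N}\to D_i$ is a toric morphism, and the open torus of $\mathcal{N}$ is $\T_{\mathcal{N}}=\T_{D_i}\times\G_m$, the $\G_m$-factor being the fibre coordinate of a toric trivialisation of $\nu$ over $\T_{D_i}$. Accordingly the toric boundary divisors of $\mathcal{N}$ are the zero section $D_i$, corresponding to the ray $\R_{\ge 0}e_{\mathrm{fib}}$ with $e_{\mathrm{fib}}$ the primitive generator of the fibre direction, together with the $n+1$ vertical divisors $\pi^{-1}(E)$, one for each toric boundary divisor $E$ of $D_i$.

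The second and main step is to identify the function $t$ (pulled back from $\A^1$) on $\mathcal{N}$. Under the formal isomorphism $f:\fX_i\xrightarrow{\sim}\widehat{\cN}_{/D_i}$ of Theorem \ref{THEO TORIC STRAT}, the divisor $\cN_0=\mathrm{div}_{\cN}(t)$ corresponds to $\X_{i,0}$ near $D_i$. By Corollary \ref{cor global res} the model $\X_i$ has reduced special fibre, and near $D_i$ one has $\X_{i,0}=D_i+\sum_{l\neq i}D'_l$ with the $n+1$ components $D'_l$ meeting $D_i$ transversally along its coordinate hyperplanes $D'_l\cap D_i=\{z_l=0\}$. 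Hence $\cN_0$ has exactly $n+2$ components, all toric: the zero section (matching $D_i$, with multiplicity $1$) and the $n+1$ vertical divisors (matching the $D'_l$). Equivalently, on $\T_{\mathcal{N}}$ the function $t$ is a character $\chi^{m_0}$ with $\langle m_0,e_{\mathrm{fib}}\rangle=1$ while $t$ vanishes on every other toric boundary divisor of $\mathcal{N}$. This has two consequences I would then use: $m_0$ is primitive (it pairs to $1$ with a primitive vector), so $\chi^{m_0}:\T_{\mathcal{N}}\to\G_m$ is a surjective homomorphism of split tori and hence admits a section; and $t$ is a unit on $\T_{\mathcal{N}}$ while vanishing on the whole toric boundary of $\mathcal{N}$, so that the open subscheme $\{t\neq 0\}\subset\mathcal{N}$ equals $\T_{\mathcal{N}}$.

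Finally I would carry out the base change. Because $t$ is invertible in $K=k((t))$, base change to $K$ factors through $\G_m\subset\A^1$, so
$$\T_{\cN}=\cN\times_R K=\mathcal{N}\times_{\A^1}K=\big(\mathcal{N}\times_{\A^1}\G_m\big)\times_{\G_m}K=\T_{\mathcal{N}}\times_{\G_m}K.$$
Splitting $\chi^{m_0}$ gives a decomposition $\T_{\mathcal{N}}\simeq\ker(\chi^{m_0})\times\G_m$ in which $\chi^{m_0}$ is the second projection, whence $\T_{\cN}\simeq\ker(\chi^{m_0})\times_k K$; and since $\langle m_0,e_{\mathrm{fib}}\rangle=1$, the projection $\T_{\mathcal{N}}=\T_{D_i}\times\G_m\to\T_{D_i}$ restricts to an isomorphism $\ker(\chi^{m_0})\xrightarrow{\sim}\T_{D_i}$, yielding $\T_{\cN}\simeq\T_{D_i}\times_k K$. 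All of this is canonical once the toric data on $\X_i$ along $D_i$ is fixed. The only step with genuine geometric content—and the one I expect to demand the most care—is the identification of $t$ in the middle paragraph: one must check that under $f$ the divisor of $t$ is exactly the sum of the zero section (reduced) and all vertical divisors, which is where the local structure of $\X_i$ near $D_i$, the reducedness of its special fibre, and Condition \ref{condition} on $F$ really enter; the remainder is formal manipulation with lattices.
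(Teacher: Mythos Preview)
Your argument is correct and follows the same route as the paper: identify the open torus of the total space $\mathcal{N}$ with $\T_{D_i}\times\G_{m}$, analyse the toric morphism $t:\mathcal{N}\to\A^1$, and base change. The paper's proof is simply a terser version of yours---it cites the fan description of $\mathcal{N}$ from \cite[\S2.1]{MPS} to assert directly that $t$ restricts to the second projection $\T_{D_i}\times\G_m\to\G_m$, whereas you reconstruct this from the divisor of $t$ and conclude via a splitting of $\chi^{m_0}$. In fact the fan computation gives the sharper statement $m_0=(0,1)$ exactly (all the ``vertical'' rays of $\mathcal{N}$ have the form $(w_j,1)$ because $\nu_{D_i/\X_i}\simeq\gO(-\sum_{l\neq i}D_l\cap D_i)$, and reducedness forces $\langle m_0,(w_j,1)\rangle=1$ together with $\langle m_0,(0,1)\rangle=1$, hence $m_0=(0,1)$), so your kernel $\ker(\chi^{m_0})$ is literally $\T_{D_i}\times\{1\}$ and the splitting step is unnecessary. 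One small mis-attribution: Condition~\ref{condition} (on the Newton polytope of $F$) plays no role here; what you use is only that $\X_i$ is snc with reduced special fibre, which comes from Proposition~\ref{prop model dlt} and Corollary~\ref{cor global res}.
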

\begin{proof}
Since $\mathcal{N}$ is trivial over $\T_{D_i}$, we have a natural morphism $\T_{D_i} \times \G_{m,k} \fl \mathcal{N}$, whose image is precisely the $k$-torus of $\mathcal{N}$. Moreover, from the description of the fan of $\mathcal{N}$ (see for instance \cite[§2.1]{MPS}), the morphism $\mathcal{N} \fl \A^1_k$ restricts to the projection onto the second factor, hence the result.
\end{proof}
We write $\Sigma_i$ for the fan of $D_i$ inside $N_{\R}/ \R v_i$, and let $p_i : N_{\Sigma} \fl N_{\Sigma_i}$ be the canonical projection - on $N_{\R}$, this is simply the quotient map $N_{\R} \fl N_{\R}/ \R v_i$.
\\The above lemma provides us with a map $\val_{D_i} : \fX_i^{\eta} \fl  N_{\R}/\R v_i$. Note that up to identification, the map $\val_{D_i}$ is none other than the Berkovich retraction $\rho_{\fX_i}$ - more precisely, viewing $\Sk(\X) \subset N_{\R}$ via prop. \ref{prop sk N}, the relation $\val_{D_i} = p_i \circ \rho_{\fX_i}$ holds.
\begin{prop} Let $\val_{\CP} : \CP^{n+1, \an} \fl N_{\Sigma}$. Then we have:
$$\val_{D_i} = p_i \circ \val_{\CP}.$$
\end{prop}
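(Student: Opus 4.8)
The plan is to compare the two continuous maps $\val_{D_i}$ and $p_i\circ\val_{\CP}$, both defined on $\fX_i^{\eta}$ and valued in the Hausdorff space $N_{\Sigma_i}$, by pairing them with the lattice $M_i:=M\cap v_i^{\perp}$. This $M_i$ is exactly the character lattice of the torus $\T_{D_i}$ of $D_i$, and it is also the transpose of the quotient $p_i|_{N_{\R}}:N_{\R}\twoheadrightarrow N_{\R}/\R v_i$, so that $\langle\bar m,p_i(y)\rangle=\langle\bar m,y\rangle$ for all $y\in N_{\R}$ and $\bar m\in M_i$. Both maps are continuous ($\val_{D_i}=p_i\circ\rho_{\fX_i}$ by the preceding remark, and $p_i\circ\val_{\CP}$ by construction of $N_{\Sigma}$), so it suffices to check equality on the dense open subset $\fX_i^{\eta}\cap\T^{\an}$, on which all the tropicalizations are finite.

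On this locus I would use two inputs. First, by the preceding remark, $\val_{D_i}=p_i\circ\rho_{\fX_i}$, where $\rho_{\fX_i}:\fX_i^{\eta}\fl\Star(v_i)\subset\Sk(\X)$ is the Berkovich retraction of the snc model $\X_i$ of Corollary \ref{cor global res}. Second, by Proposition \ref{prop sk N} the restriction of $\val_{\CP}$ to $\Sk(\X)$ is the tautological identification $\Sk(\X)\xrightarrow{\sim}\partial P^{*}\subset N_{\R}$, so $\val_{\CP}(\rho_{\fX_i}(x))$ is literally the point $\rho_{\fX_i}(x)$ regarded in $N_{\R}$. Hence the asserted equality $\val_{D_i}(x)=p_i(\val_{\CP}(x))$ is equivalent to $p_i(\rho_{\fX_i}(x))=p_i(\val_{\CP}(x))$, i.e. to
$$v_{\rho_{\fX_i}(x)}(\chi^{\bar m})=v_x(\chi^{\bar m})\qquad\text{for all }\bar m\in M_i,$$
which says that the Berkovich retraction $\rho_{\fX_i}$ preserves the values of the ``horizontal'' monomials.

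To establish this, I would exploit the small resolution. By Corollary \ref{cor global res} and Theorem \ref{THEO TORIC STRAT}, $\X_i$ is toric along $D_i$: the formal completion $\fX_i=\widehat{\X}_{i,/D_i}$ is isomorphic, by an isomorphism which restricts to the identity on $D_i$ and respects the toric structures, to the completed normal bundle $\widehat{\cN}_{/D_i}$, whose torus is $\T_{\cN}\simeq\T_{D_i}\times_k K$. One then checks that, for $\bar m\in M_i$, the monomial $\chi^{\bar m}$ is identified under this isomorphism with the character $\chi^{\bar m}$ of $\T_{D_i}\subset\T_{\cN}$ — equivalently, that near $D_i$ the function $\chi^{\bar m}$ is parallel to $D_i$, independent of the conormal directions. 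This is verified by hand at the generic point of $D_i$, where the hypersurface equation $z_0\cdots z_{n+1}+tF=0$ allows one to eliminate $t$ as a unit times $z_i$ and where $\chi^{\bar m}$, being orthogonal to $v_i$, does not involve $z_i$; and along the lower-dimensional strata through $D_i$ using the local toric models of Proposition \ref{prop local res}. Consequently, near the minimal stratum $Y$ of $(\X_i)_0$ containing $c_{\X_i}(x)$, the function $\chi^{\bar m}$ is a Laurent monomial (times a unit) in local equations of the boundary components through $Y$; for such a function the quasi-monomial approximation built into the definition of $\rho_{\fX_i}$ (Proposition \ref{prop casi mono}) is exact, so $v_{\rho_{\fX_i}(x)}(\chi^{\bar m})=v_x(\chi^{\bar m})$, and the proposition follows.

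The main obstacle is the middle of the last paragraph: checking that the formal identification of $\widehat{\X}_{i,/D_i}$ with the completed normal bundle is compatible with the characters $\chi^{\bar m}$ coming from the ambient $\CP^{n+1}$ — not merely with the abstract toric structure of $\cN$ — and, relatedly, tracking the behaviour of these characters along the strata of $(\X_i)_0$ of dimension $<n$. This is precisely what the small resolution $\X_i$ was built for: it is toric along $D_i$ while leaving $D_i$ itself untouched, and the needed local computations are essentially those already carried out in Proposition \ref{prop local res}.
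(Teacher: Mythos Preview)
Your reduction is correct and lands on exactly the same crux as the paper: the compatibility of the formal isomorphism $\widehat{\X}_{i,/D_i}\simeq\widehat{\cN}_{/D_i}$ with the ambient monomials $\chi^{\bar m}$ for $\bar m\in M_i$. The paper's argument is simply more direct. Instead of passing through $\rho_{\fX_i}$ and then arguing that the quasi-monomial approximation is exact on the horizontal monomials, the paper recalls that the isomorphism of Theorem~\ref{THEO TORIC STRAT} (as built in \cite{MPS}) was constructed cone by cone with the defining property $f_\sigma^{*}(z_l/z_0)=s_l$, where $s_l$ is the trivializing section of $\gO_{\X}(D_0-D_l)$ that restricts to $z_l/z_0$ on $D_i$; in other words $s_l$ is the rational function $z_l/z_0$ itself on $\X_i$. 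This gives at once
\[
\val_{D_i}(x)=\bigl(v_x(z_1/z_0),\ldots,v_x(z_n/z_0)\bigr)=p_i\bigl(\val_{\CP}(x)\bigr),
\]
without any detour. Your route recovers the same identity, but only after the compatibility you flag as ``the main obstacle'' is verified; the paper sidesteps that verification by citing the construction in \cite{MPS} which has that compatibility built in. What your detour buys is a cleaner conceptual picture---the retraction preserves the horizontal monomial valuations---at the cost of having to unwind the local toric models by hand.
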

\begin{proof}
We assume $i=(n+1)$ for convenience and pick standard coordinates $(\frac{z_1}{z_0},...,\frac{z_{n+1}}{z_0})$ on the torus of $\CP^{n+1}$; then the projection $p_i: \R^{n+1} \fl \R^n$ is given by omitting the last coordinate.  Recall the formal isomorphism:
$$f : \widehat{\X}_{i, /D_i} \xrightarrow{\sim} \widehat{\cN}_{/D_i}$$
was constructed cone by cone, in the following way. We let $\sigma \in \Sigma_i$ be a maximal cone in the fan of $D_i$, and we assume that $\sigma = \Cone(e_1,...,e_n)$ is the standard orthant. Then the restriction $f_{\sigma}$ of $f$ to $\fX_{\sigma}$ was defined by the following property: if $s_l$ is a trivializing section of $\gO_{\X}(D_0 -D_l)$ on $\fX_{\sigma}$, restricting to the meromorphic function $z_l/z_0$ on $D_i$, then $s_l = f_{\sigma}^*(\frac{z_l}{z_0})$ (with the notation of \cite[§2.2]{MPS}, $W^{\sigma}_l = (D_0 -D_l))$. Thus, if $x \in \fX_i^{\eta}$ - which means $c_{\X_i}(v_x) \subset D_i$, then $v_x(s_l)= v_x(\frac{z_l}{z_0})$, hence:
$$\val_{\T_{\cN}}(x) = (v_x(\frac{z_1}{z_0}),...,v_x(\frac{z_n}{z_0})) \in N_{\R}/ \R e_{n+1}$$
while:
$$\val_{\T_{\CP}}(x) = \big(v_x(\frac{z_1}{z_0}),...,v_x(\frac{z_n}{z_0}), v_x(\frac{z_{n+1}}{z_0}) \big) \in N_{\R},$$
hence the result.
\end{proof}
\begin{cor} For any choice of branch cuts $\underline{a}$, the equality $\delta_{\underline{a}} \circ \val_{\CP} = \rho_{\fX_i}$ holds on $\rho_{\fX_i}^{-1} (\widetilde{\Star}(v_i)) \subset \fX_i^{\eta}$.
\end{cor}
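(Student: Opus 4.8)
The strategy is to glue together the commuting triangles that are already established: the relation $\val_{D_i}=p_i\circ\val_{\CP}$ just proved, the relation $\val_{D_i}=p_i\circ\rho_{\fX_i}$ recorded above it, and the local description of the tropical contraction near the vertex $v_i$ from Proposition \ref{prop cont local}, according to which $\delta_{\underline a}$ restricted to $Y_{v_i}$ coincides with $q_i^{-1}\circ p_i$, where $U_i=\widetilde{\Star}(v_i)$ and $q_i=p_i|_{U_i}\colon U_i\xrightarrow{\ \sim\ }V_i$ is a homeomorphism.

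Fix $x\in\rho_{\fX_i}^{-1}(\widetilde{\Star}(v_i))$ and set $w:=\rho_{\fX_i}(x)\in U_i$. Applying $p_i$ to the first two relations gives $p_i(\val_{\CP}(x))=\val_{D_i}(x)=p_i(w)=q_i(w)$, so $\val_{\CP}(x)$ lies in the fibre $p_i^{-1}(q_i(w))$, i.e.\ on the line $\{w+\alpha v_i:\alpha\in\R\}$ together with its ends at infinity. Granting the key claim that $\val_{\CP}(x)\in Y_{v_i}$, Proposition \ref{prop cont local} then yields $\delta_{\underline a}(\val_{\CP}(x))=q_i^{-1}(p_i(\val_{\CP}(x)))=q_i^{-1}(q_i(w))=w=\rho_{\fX_i}(x)$, which is exactly the asserted equality $\delta_{\underline a}\circ\val_{\CP}=\rho_{\fX_i}$ over $\widetilde{\Star}(v_i)$; here I use that $w\in U_i$ and $q_i=p_i|_{U_i}$.

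It remains to prove the claim $\val_{\CP}(x)\in Y_{v_i}$. On one hand $\val_{\CP}(x)\in\overline{\Trop}(X)=\val_\Sigma(X^{\an})$. On the other hand, unwinding the monomial bookkeeping as in the proof of Proposition \ref{prop sk N} (using local equations $z_i/z_j$ of $D_i$ on the toric charts containing the ray $v_i$), the hypothesis $x\in\fX_i^{\eta}$, i.e.\ $c_{\X}(v_x)\in D_i$, forces $\val_{\CP}(x)$ to lie in a cone of $\Sigma$ having $v_i$ as an extremal ray and with strictly positive $v_i$-coordinate — or at infinity in the $v_i$-direction — and in particular $\val_{\CP}(x)$ cannot lie in the relative interior of the unique maximal face $\tau'=\mathrm{Conv}(v_l:l\neq i)$ of $\partial P^*$ omitting $v_i$, since the center of any point tropicalising into $\mathring{\tau'}$ is the zero-stratum $p_{\tau'}=\{z_l=0:l\neq i\}\not\subset D_i$. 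A short combinatorial check — using that $P^*=\mathrm{Conv}(v_0,\dots,v_{n+1})$ is a simplex, so that $\mathrm{Int}(P^*)$ and the open cone $\mathrm{Cone}(v_l:l\neq i)$ are both disjoint from $\overline{\Trop}(X)$ near $\tau'$ — shows that, on the line $\{w+\alpha v_i\}$, the only point with $\alpha<0$ that can meet $\overline{\Trop}(X)$ is precisely this forbidden point of $\mathring{\tau'}$. Hence $\alpha\ge 0$ (or $\alpha=+\infty$), so $\val_{\CP}(x)$ lies in the region $Y_{v_i}$ on which Proposition \ref{prop cont local} applies, as claimed.

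The main obstacle is this last step: translating the model-theoretic condition "$c_{\X}(v_x)\in D_i$" into the statement that $\val_{\CP}(x)$ lands in the $v_i$-positive region $Y_{v_i}$ rather than on the opposite maximal face of $\partial P^*$. Once this is in place, the corollary is pure diagram-chasing with the commuting triangles of Propositions \ref{prop sk N}, \ref{prop cont local} and the preceding proposition.
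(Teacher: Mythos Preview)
Your proof is correct and is the same diagram chase as the paper's: from $p_i\circ\val_{\CP}=\val_{D_i}=q_i\circ\rho_{\fX_i}$ together with $\delta_{\underline a}=q_i^{-1}\circ p_i$ on $Y_{v_i}$ (Proposition~\ref{prop cont local}), one reads off $\delta_{\underline a}\circ\val_{\CP}=\rho_{\fX_i}$ immediately. You additionally verify the domain condition $\val_{\CP}(x)\in Y_{v_i}$---by showing that on the line $w+\R v_i$ the only point of $\overline{\Trop}(X)$ with $\alpha<0$ lies in the interior of the maximal face $\tau'$ opposite $v_i$, which is ruled out since $x\in\fX_i^{\eta}$ forces the center of $v_x$ to lie in $D_i$---a step the paper's three-line argument leaves implicit; your reasoning there is correct (one small remark: the hypothesis literally gives $c_{\X_i}(v_x)\in D_i$ on the resolved model, but this implies $c_{\X}(v_x)\in D_i$ since $\X_i\to\X$ is an isomorphism over $D_i$, so your use of $c_{\X}$ is justified).
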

\begin{proof} Under the identification $\Sk(\X) \subset N_{\Sigma}$, we have the equality $q_i \circ \rho_{\fX_i} = \val_{D_i}$, where $\val_{D_i}$ is the composition : $\fX_i^{\eta} \hookrightarrow \T^{\an}_{D_i} \fl N_{\R}(D_i)$. By the above proposition, we have $q_i \circ \rho_{\fX_i} = p_i \circ \val_{\CP}$, and by prop. \ref{prop cont local}, the equality $ \delta_{\underline{a}} =(q_i)^{-1} \circ p_i$ holds over $\widetilde{\Star}(v_i)$, hence the result.
\end{proof}
Since $\rho_{\fX_i}$ is an affinoid torus fibration over $\Star(v_i)$ and in particular over $\widetilde{\Star}(v_i)$, this implies that $\rho_{\underline{a}} = \delta_{\underline{a}} \circ \val_{\CP}$ is an affinoid torus fibration away from $\Gamma_{\underline{a}}$, and this concludes the proof of theorem \ref{theo affinoid Yuto}.
\section{The Fermat case} \label{sec Fermat}
\subsection{The non-archimedean Monge-Ampère equation}
Let $X \subset \CP^{n+1} \times \D^*$ be the Fermat family of Calabi-Yau hypersurfaces:
$$X = \{ z_0 ... z_{n+1} + t (z_0^{n+2} +...+ z_{n+1}^{n+2}) \} =0,$$
and $\X \subset \CP^{n+1} \times \D$ its closure. We use the same notation for the respective base change to $K$ and $R$. We are in the setting of the previous section, as the Fermat polynomial $F = z_0^{n+2}+...+ z_{n+1}^{n+2}$ is such that condition \ref{condition} holds. The family $X$ is endowed with the polarization $L = \gO_X(n+2)$, and with the family of Calabi-Yau measures:
$$\nu_t := i^{n^2} \Omega_t \wedge \bar{\Omega}_t,$$
where $(\Omega_t)_{t \in \D^*}$ is a holomorphic trivialization of the relative canonical bundle $K_{X/\D^*}$. We set:
$$\mu_t := C_t \nu_t,$$
where $C_t>0$ is the unique constant ensuring that $\int_{X_t} \mu_t = L_t^n$. By Theorem \ref{theo cv mesures}, the family $(\mu_t)_{t \in \D^*}$ converges weakly on $X^{\hyb}$ to the Lebesgue measure $\mu_0$ on $\Sk(X)$.
\\Each fiber $X_t$ is endowed with the unique Kähler Ricci-flat metric $\omega_t \in c_1(L)$, satisfying the complex Monge-Ampère equation:
$$\omega_{CY, t}^n = \mu_t.$$
Since for all $t$, the Kähler form $\omega_t \in c_1(L_t)$, there exists a family of Hermitian metrics $\phi_{CY,t}$ on $L_t$ such that $\omega_{CY, t} = dd^c \phi_{CY, t}$ - note however that this family need not vary in a subharmonic way with respect to the direction of the base.
\\Recall that $P \subset M_{\R}$ is the convex polytope associated to the toric boundary $D$ of $\CP^{n+1}$, so that $L= \gO_{\CP}(D)$ is a toric line bundle, and psh toric metrics on $(\CP^{n+1, \an}, L^{\an})$ are in one-to-one correspondence with convex $P$-admissible functions on $N_{\R}$, by theorem \ref{theo toric field}. We now have the following:
\begin{theo} \label{Fermat sol}
There exists a convex $P$-admissible function $u: N_{\R} \fl \R$, and induced continuous psh toric metric $\phi_u \in \CPSH(\CP^{n+1, \an}_K, L^{\an})$, such that the restriction $\psi_u = (\phi_u)_{| X^{\an}}$ solves the non-archimedean Monge-Ampère equation:
$$\MA(\psi_u) = \mu_0$$
on $X^{\an}$.
\end{theo}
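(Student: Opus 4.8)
The plan is to produce the convex potential $u$ from Yang Li's analysis of the Fermat degeneration and to transport it to $X^{\an}$ through the hybrid pluripotential theory developed above; the conceptual point is that solving $\MA(\psi_u)=\mu_0$ on $X^{\an}$ is equivalent to a real Monge--Ampère equation for $u$ on $\Sk(X)=\partial P^*$ equipped with the singular integral affine structure of Definition \ref{def Z aff}.

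Concretely, I would argue as follows. Li's study of the collapsing Calabi--Yau metrics $\omega_{CY,t}$ on the Fermat fibers $X_t$ produces a convex $P$-admissible function $u$ on $N_{\R}$ of ``semi-flat'' type (so that $u-\Psi_P$ is constant along the fibers of the tropical contraction $\delta_{\underline{a}}$ for the barycentric branch cuts $\underline{a}$), together with the estimate that the Calabi--Yau potential on $X_t$ is uniformly close, on a region of asymptotically full measure, to the semi-flat potential $\Phi_t\vert_{X_t}$, where $\Phi_t := (-\log\lvert t\rvert)\, u(\log\lvert z\rvert/\log\lvert t\rvert)$; consequently $(dd^c\Phi_t\vert_{X_t})^n$ has the same weak limit on $X^{\hyb}$ as the normalized Calabi--Yau measures $\mu_t$. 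By Theorem \ref{theo toric}, in the product form of Example \ref{ex toric prod} applied to $Z=\CP^{n+1}$, the $\lambda$-independent function $u$ defines a continuous fiberwise-toric psh metric $\Phi$ on $(p_1^*L)^{\hyb}$ over $(\CP^{n+1}\times\D^*)^{\hyb}$, whose restriction at $\lambda=0$ is the toric metric $\phi_u$ on $(\CP^{n+1}_K)^{\an}$ with potential $u$. Restricting $\Phi$ to $X^{\hyb}$ and using the hybrid Monge--Ampère continuity of Theorem \ref{theo cont MA}, we get $(dd^c\Phi_t\vert_{X_t})^n\to\MA(\Phi_0\vert_{X^{\an}})$ weakly on $X^{\hyb}$; by the estimate above and Theorem \ref{theo cv mesures} the left-hand side converges to the Lebesgue measure $\mu_0$ on $\Sk(X)$. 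Hence $\MA(\psi_u)=\mu_0$, with $\psi_u := \Phi_0\vert_{X^{\an}}=\phi_u\vert_{X^{\an}}$.

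I expect the real difficulty to be contained in Li's input, namely the existence of the convex $u$. One can see which equation $u$ must satisfy: since $X\in\lvert L\rvert$ is anticanonical, the non-archimedean Monge--Ampère operator restricts to $X^{\an}$ as the slice of $(dd^c\phi_u)^{\wedge n}$ along $X$, and on the torus $\T^{\an}\subset\CP^{n+1,\an}$ one has $[X]=[D]+dd^c\log\lvert 1+t\sum_m c_m\chi^m\rvert$, where, by Condition \ref{condition}, the function $\log\lvert 1+t\sum_m c_m\chi^m\rvert$ restricted to the toric skeleton $\gamma(N_{\R})$ equals $\max\{0,\Psi_P-1\}\circ\val$, a piecewise-linear convex function whose corner locus is exactly $\Sk(X)$. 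Combining this with the toric computation of mixed Monge--Ampère measures of \cite[thm. 4.4.4]{BPS}, with the fact that a toric metric factors through $\val_{\Sigma}$ (so that $\psi_u$ automatically satisfies the comparison property once $u-\Psi_P$ descends through $\delta_{\underline{a}}$), and with Theorem \ref{theo Vil} and the discussion of \S\ref{sec comp MA}, one finds that away from the discriminant $\Gamma$ the measure $\MA_{X^{\an}}(\psi_u)$ equals $n!$ times the real Monge--Ampère measure $\M(u)$ of $u$ read in the affine charts of Definition \ref{def Z aff}. So $u$ must solve $n!\,\M(u)=\mu_0$ on $\Sk(X)\setminus\Gamma$. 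The obstacle is that this integral affine structure is singular along $\Gamma$, so one cannot solve the equation chart by chart (e.g.\ on each maximal face, as in Example \ref{ex:affinoid torus over max face}) and expect the local convex solutions to glue into a globally convex $u$; it is the $\mathfrak{S}_{n+2}$-symmetry of the Fermat family together with the symmetric choice of barycentric branch cuts that makes the gluing constraints compatible and allows \cite{Li1} to exhibit an explicit symmetric solution.

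Finally, one could in principle bypass the explicit construction by using uniqueness: the solution of $\MA(\psi)=\mu_0$ on $X^{\an}$ exists and is unique up to a constant by Theorem \ref{theo sol MA}, hence it is invariant under $\mathfrak{S}_{n+2}$ and under the finite group of $(n+2)$-torsion points of $\T$ preserving $X$, and one would have to deduce from this invariance that $\psi$ factors through $\val_{\Sigma}$, i.e.\ is the restriction of a toric metric. In either presentation, the non-routine step is the verification that the real Monge--Ampère equation on the singular affine sphere $\Sk(X)\setminus\Gamma$ admits a convex solution, which is exactly where \cite{Li1} enters; granting it, the argument above yields both the stated existence and, via the comparison property noted along the way, the input to Theorem B.
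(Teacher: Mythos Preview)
Your overall architecture matches the paper's: obtain $u$ from Li's work on the Fermat family, promote it to a continuous psh hybrid toric metric via Theorem~\ref{theo toric} (in the form of Example~\ref{ex toric prod}), restrict to $X^{\hyb}$, and combine hybrid Monge--Amp\`ere continuity (Theorem~\ref{theo cont MA}) with the convergence $\mu_t\to\mu_0$ of Theorem~\ref{theo cv mesures} to conclude $\MA(\psi_u)=\mu_0$. Two points need sharpening, however.

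First, you mischaracterize Li's input. He does not ``exhibit an explicit symmetric solution'' of the real Monge--Amp\`ere equation on $\Sk(X)\setminus\Gamma$; rather, \cite{Li1} produces $P$-admissible convex approximants $u_{CY,t}$ to the Calabi--Yau potential (via double Legendre transform) with uniform Lipschitz bounds, and $u$ is any \emph{subsequential} locally uniform limit, \emph{a priori} depending on the subsequence. The paper runs the hybrid argument along such a subsequence, obtains $\MA(\psi_u)=\mu_0$, and only then invokes uniqueness in Theorem~\ref{theo sol MA} to conclude that $u$ is independent of the chosen subsequence. Your third paragraph, which frames the construction as first solving the real equation on the singular affine sphere and then lifting, inverts the actual logic.

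Second, the step ``consequently $(dd^c\Phi_t\vert_{X_t})^n$ has the same weak limit as $\mu_t$'' is not a formal consequence of $C^0$-closeness of potentials on a region of asymptotically full measure: the complex Monge--Amp\`ere operator is not continuous for that kind of convergence, and mass could concentrate on the complement. The paper isolates this as Proposition~\ref{prop cv Fermat}, whose proof appeals to Li's estimates on the \emph{measures} themselves over the toric regions (\cite[lem.~5.3, proof of thm.~5.1]{Li1}), together with a soft argument: any subsequential weak limit $\theta_\infty$ satisfies $\theta_\infty\ge\mu_0$, and equality of total masses forces $\theta_\infty=\mu_0$. You should invoke this measure-level input directly rather than presenting it as a corollary of the potential estimate.
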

A similar, more general statement was recently obtained independently in \cite{HJMM} for hypersurfaces of the form considered in section \ref{sec super setup}, building on the construction of a solution to the tropical Monge-Ampère equation on $\Sk(X)$.
\\The fact that the function $u$ solves the real Monge-Ampère equation $\M(u) = \mu_0$ on $\Sk(X) \setminus \Gamma$ was established in \cite{Li1}, however since the solution to this equation may not be unique, the convex function $u$ was only obtained along a subsequence, and could depend on the choice thereof. However the solution to the NAMA equation is unique, so that as observed in \cite[rem. 5.7]{Li1}, we get the following strenghtening of \cite[thm. 5.1]{Li1}:
\begin{cor}
The locally convex function $u_{\infty}$ obtained in \cite[thm. 5.1]{Li1}, as well as the generic SYZ fibration from \cite[thm. 5.14]{Li1}, do not depend on the choice of a subsequence.
\end{cor}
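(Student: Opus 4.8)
The plan is to play the uniqueness of the solution of the non-archimedean Monge-Ampère equation (Theorem \ref{theo sol MA}) against Li's construction. Theorem \ref{Fermat sol} exhibits a convex $P$-admissible function $u$ whose associated toric metric $\phi_u$ restricts to \emph{the} solution $\psi_u$ of $\MA(\psi_u)=\mu_0$ on $X^{\an}$; by Theorem \ref{theo sol MA} this solution, hence the function $u$ itself once normalized by $\sup_{\Sk(X)}u=0$, is canonical — it depends only on $(X,L)$ and on the (barycentric) choice of branch cuts $\underline{a}$. The corollary will follow once we show that \emph{any} subsequential limit $u_\infty$ produced by \cite[thm.~5.1]{Li1} coincides with $u$ up to an additive constant, since the generic SYZ fibration of \cite[thm.~5.14]{Li1} is a deterministic function of $u_\infty$.

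First I would recall that, along any subsequence $t_k\to 0$ along which the rescaled Calabi-Yau potentials converge, \cite[thm.~5.1]{Li1} yields a bounded, locally convex function $u_\infty$ on $\Sk(X)$ solving the real Monge-Ampère equation on $\Sk(X)\setminus\Gamma$, where $\Gamma=\Gamma_{\underline{a}}$ is the codimension-two discriminant and the integral affine structure is that of Definition \ref{def Z aff}. By Theorem \ref{theo affinoid Yuto} this affine structure is exactly the one induced by the affinoid torus fibration $\rho_{\underline{a}}:X^{\an}\fl\Sk(X)$, so working in the charts of Definition \ref{defn:affinoid torus fibration} and invoking Theorem \ref{theo toric field}, one assembles $u_\infty$ into a continuous metric $\phi_\infty=\phi_{\FS}+\psi_\infty$ on $L^{\an}$ that is semi-flat with respect to $\rho_{\underline{a}}$: it satisfies $\psi_\infty=\psi_\infty\circ\rho_{\underline{a}}$ over $\Sk(X)\setminus\Gamma$ and restricts to a toric psh metric in each chart. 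Boundedness of $u_\infty$ together with the fact that $\Gamma$ has codimension two guarantees that $\phi_\infty$ extends continuously across $\rho_{\underline{a}}^{-1}(\Gamma)$, so $\phi_\infty\in\CPSH(X^{\an},L^{\an})$; this is exactly the verification already carried out in the proof of Theorem \ref{Fermat sol}, now applied to an arbitrary subsequence.

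Next I would compute $\MA(\phi_\infty)$. By locality of the non-archimedean Monge-Ampère operator together with the toric identification of the non-archimedean and real Monge-Ampère measures (Theorem \ref{theo Vil} over the maximal faces of $\Sk(\X)$, and \cite[thm.~4.4.4]{BPS} together with the semi-flat description near each vertex more generally), one obtains $\MA(\phi_\infty)=\mu_0$ on $X^{\an}\setminus\rho_{\underline{a}}^{-1}(\Gamma)$. Since $\Gamma$ has codimension two it is $\mu_0$-null, so the restriction of $\mu_0$ to $\Sk(X)\setminus\Gamma$ still has total mass $(L^n)$, which equals the total mass of $\MA(\phi_\infty)$; hence $\MA(\phi_\infty)$ charges no mass on $\rho_{\underline{a}}^{-1}(\Gamma)$ and $\MA(\phi_\infty)=\mu_0$ on all of $X^{\an}$. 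By uniqueness (Theorem \ref{theo sol MA}) this forces $\phi_\infty=\psi_u+c$ for a constant $c$, hence $u_\infty=u+c$ on $\Sk(X)$; as $u$ is independent of the subsequence, so is $u_\infty$. Finally, the generic SYZ fibration of \cite[thm.~5.14]{Li1} is constructed, over compact subsets of the interiors of the maximal faces of $\Sk(X)$, as a special Lagrangian perturbation of $\Log_{\X}$ governed by the ansatz $u_\infty\circ\Log_{\X}$; being determined by $u_\infty$, it inherits the same independence.

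The main obstacle is the bridge in the second and third paragraphs: one must check that Li's $u_\infty$, a priori only a solution of a real Monge-Ampère equation on the \emph{open} incomplete locus $\Sk(X)\setminus\Gamma$ — where solutions need not be unique — does globalize to an honest continuous psh metric on all of $L^{\an}$, and that the resulting non-archimedean Monge-Ampère measure puts no mass on $\rho_{\underline{a}}^{-1}(\Gamma)$. Controlling the behaviour near the preimage of the discriminant, using the codimension-two estimate together with the uniform boundedness of the rescaled Calabi-Yau potentials (and the total-mass identity above), is the delicate point; once it is in place, uniqueness of the non-archimedean solution does the rest.
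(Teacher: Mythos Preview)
Your route differs from the paper's in two essential places, and the first of these is where the argument is incomplete.

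\textbf{Where the paper goes instead.} In the paper the subsequential limit $u$ is not taken as a locally convex function on $\Sk(X)\setminus\Gamma$ and then pieced together through the charts of $\rho_{\underline a}$; it is taken as a global $P$-admissible convex function on all of $N_{\R}$ (this is how Li's double-Legendre construction produces the $u_{CY,t}$ in the first place). By Theorem~\ref{theo toric field} such a $u$ defines a continuous psh \emph{toric} metric on the ambient $\CP^{n+1,\an}_K$, which one simply restricts to $X^{\an}$. No extension across $\rho_{\underline a}^{-1}(\Gamma)$ is needed, and no removable-singularity statement for non-archimedean psh metrics is invoked. Your claim that ``this is exactly the verification already carried out in the proof of Theorem~\ref{Fermat sol}'' is therefore a misreading: that proof never assembles a metric chart by chart from a solution of the real Monge--Amp\`ere equation on $\Sk(X)\setminus\Gamma$.

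\textbf{The gap.} Your step ``boundedness of $u_\infty$ together with $\codim\Gamma\ge 2$ guarantees that $\phi_\infty$ extends continuously across $\rho_{\underline a}^{-1}(\Gamma)$, so $\phi_\infty\in\CPSH(X^{\an},L^{\an})$'' is unjustified. A locally bounded toric potential over $\Sk(X)\setminus\Gamma$ gives a psh metric on the open $\rho_{\underline a}^{-1}(\Sk(X)\setminus\Gamma)$, but neither continuity nor plurisubharmonicity on all of $X^{\an}$ follows from a codimension-two condition on the base alone; you would need a non-archimedean extension theorem that the paper does not supply. Likewise your computation of $\MA(\phi_\infty)$ is a different argument: the paper does not use the toric real/NA correspondence chart by chart, but rather the hybrid continuity of Monge--Amp\`ere measures (Theorem~\ref{theo cont MA}) together with Proposition~\ref{prop cv Fermat}, which shows directly that the complex Monge--Amp\`ere measures of $s_k^{-1}(u\circ\Log_{s_k})$ converge weakly to $\mu_0$ on $X^{\hyb}$. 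Once that is in place, uniqueness of the NAMA solution (Theorem~\ref{theo sol MA}) immediately gives independence of the subsequence; the SYZ part then follows as you say.

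In short: the paper's argument is shorter and avoids exactly the ``delicate point'' you flag, by working with the ambient toric metric on $\CP^{n+1,\an}$ and appealing to hybrid continuity rather than to a chart-by-chart real/NA comparison.
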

We now move on to the proof of theorem \ref{Fermat sol}, and start by explaining how the function $u$ is obtained. 
Since in the sequel it will be more convenient to work with potentials instead of metrics, we fix once and for all the reference hybrid metric $\phi_{\FS}$ on $\CP^{n+1, \hyb}$, such that:
$$\phi_{FS, t} =\frac{(n+2)}{2} \log(\lvert z_0\rvert^2 +... + \lvert z_{n+1} \rvert^2),$$
and:
$$\phi_{FS, 0} = (n+2) \max ( \log \lvert z_0 \rvert,..., \log \lvert z_{n+1} \rvert);$$
it is a continuous plurisubharmonic metric on $\CP^{n+1, \hyb}$ by \cite[ex. 3.7]{PShyb}; we use the same notation for its restriction to $X^{\an}$. This allows us to identify the Calabi-Yau metric with its relative potential:
$$\varphi_{CY, t} = \phi_{CY, t} - \phi_{FS, t},$$
normalized by the convention $\sup_{X_t} \varphi_{CY, t}=0$, for all $t \in \bar{\D}_r$. Then Li produces, via double Legendre transform, $P$-admissible convex functions $u_{CY, t}$ on $N_{\R}$ which approximate the Calabi-Yau potential, and which satisfy suitable Lipschitz bounds. This implies that for any sequence $(t_k)_{k \in \N}$ going to zero inside $\D^*$, there exists a subsequence (which we will omit from notation) such that the admissible convex functions $u_{CY, t_k}$ converge locally uniformly to a continuous, convex admissible function $u \in \Ad_P(N_{\R})$ - which \emph{a priori} depends on a choice of subsequence. By Theorem \ref{theo toric} and in particular example \ref{ex toric prod}, this induces a continuous hybrid metric $\Phi_u \in \CPSH( \CP^{n+1, \hyb}, L^{\hyb})$, and by restriction to $X^{\hyb}$ a continuous psh hybrid metric $\Psi_u$ on $(X, L)$. We write, following the notation of \cite{Li1} (with a different sign convention), $s(t) = \frac{-1}{\log \lvert t \rvert}$, and $\Log_s(z) := \Log(sz)$ on the torus of $\CP^{n+1}$, so that the restriction of $\Psi_u$ to $X_t$ is equal to $s^{-1} (u \circ \Log_s)$. 
\begin{prop} \label{prop cv Fermat} As $k \rightarrow \infty$ (up to extracting further), the sequence of complex Monge-Ampère measures $( s_{k}^{-n}dd^c (u \circ \Log_{s_k})^n)_{k \in \N}$ on $X^{\hyb}$ converge weakly to the Lebesgue measure $\mu_0$ on the essential skeleton.
\end{prop}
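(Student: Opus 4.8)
The plan is to recognise the measures in the statement as the complex Monge--Ampère measures of a single continuous psh hybrid metric, invoke the continuity theorem for hybrid Monge--Ampère measures, and then identify the non-archimedean limit using Li's real Monge--Ampère result together with the explicit local toric structure of $\rho_{\underline a}$. First, extend $u$ trivially in the $\lambda$-direction; by Theorem~\ref{theo toric} applied to $(\CP^{n+1},L)$ this convex function corresponds to a continuous psh fiberwise-toric metric $\Phi_u$ on $L^{\hyb}$, and $\Psi_u:=\Phi_u|_{X^{\hyb}}$ is by definition a continuous psh metric on $L^{\hyb}|_{X^{\hyb}}$. By Example~\ref{ex toric prod} the restriction $\Psi_{u,t}$ to the fiber $X_t$ is, in the toric trivialisation of $L$ over the open torus, the function $s^{-1}(u\circ\Log_s)$, so that $(dd^c\Psi_{u,t})^n=s^{-n}(dd^c(u\circ\Log_s))^n$ on $X_t$; since both sides are Bedford--Taylor measures of bounded continuous psh functions they put no mass on the pluripolar set $X_t\cap D$, hence agree on all of $X_t$ once they agree on the dense torus. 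Thus the assertion is exactly that $(dd^c\Psi_{u,t_k})^n\to\mu_0$ weakly on $X^{\hyb}$.

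Next, since $\Psi_u\in\CPSH(X^{\hyb},L^{\hyb})$, Theorem~\ref{theo cont MA} gives $(dd^c\Psi_{u,t})^n\xrightarrow[t\to0]{}\MA(\Psi_{u,0})$ weakly on $X^{\hyb}$, where $\Psi_{u,0}$ is the non-archimedean metric on $L^{\an}|_{X^{\an}}$ whose potential in the toric trivialisation is the restriction of $u\circ\val_\Sigma$ to $X^{\an}$. So it remains to prove $\MA(\Psi_{u,0})=\mu_0$, which is the content of Theorem~\ref{Fermat sol}. To establish this I would use that, for $\underline a$ the barycentric branch cuts, $\rho=\rho_{\underline a}$ is an affinoid torus fibration over $\Sk(X)\setminus\Gamma$ with $\Gamma$ of codimension $2$ (Theorem~\ref{theo affinoid Yuto}), and that $\Psi_{u,0}$ is semi-flat with respect to it: over the interior of a maximal face $\tau$ one has $\rho^{-1}(\Int\tau)=\fX_p^\eta$ and $\rho=\val$, and by the coordinate computation of Proposition~\ref{prop sk N} the restriction of $\Psi_{u,0}$ there is the toric metric attached to $u|_\tau$ read in the chart $\phi_\tau$; over the star $U_i$ of a vertex $v_i$, the small resolution $\X_i$ of section~\ref{local models} identifies $\rho^{-1}(U_i)$ with an open subset of an $n$-dimensional $K$-torus over which $\rho=\rho_{\X_i}$, and the corollaries of section~\ref{local models} identify the restriction of $\Psi_{u,0}$ with the toric metric attached to $u$ read in the fan chart $\phi_{v_i}$ (here one uses that $u\circ\val_\Sigma$ is, in the normal-bundle trivialisation of $L$ along $D_i$, invariant along the $v_i$-direction up to the Fubini--Study background). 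Granting this, locality of the non-archimedean Monge--Ampère operator \cite{CLD} and the toric formula \cite[thm.~4.4.4]{BPS} give, on each chart, $\MA(\Psi_{u,0})=n!\,\M(u)$ computed in the integral affine structure of Definition~\ref{def Z aff}, which by \cite[thm.~5.1]{Li1} equals $\mu_0$ there. Hence $\MA(\Psi_{u,0})$ and $\mu_0$ agree on $\rho^{-1}(\Sk(X)\setminus\Gamma)$, a set containing $\Sk(X)\setminus\Gamma$; since $\Gamma$ has codimension $2$ we have $\mu_0\big(\rho^{-1}(\Sk(X)\setminus\Gamma)\big)=\mu_0(X^{\an})=L^n=\int_{X^{\an}}\MA(\Psi_{u,0})$, so both measures vanish on the complement and therefore coincide. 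Combined with the convergence above, this proves the proposition.

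The hard part will be the identification $\MA(\Psi_{u,0})=\mu_0$: one must carefully match the two kinds of local toric model for $\rho_{\underline a}$ — those coming from the formal neighbourhoods of the $0$-dimensional strata and those coming from the small resolutions at the vertices — with the single convex function $u$ and the affine charts of Definition~\ref{def Z aff}, and verify that in each of them $\Psi_{u,0}$ is genuinely a toric metric for a toric extension of $L^{\an}$, so that \cite[thm.~4.4.4]{BPS} applies and returns precisely the local expression of $u$; the subsequent mass estimate, ruling out concentration on $\rho^{-1}(\Gamma)$ or on the toric boundary, is then soft because all measures in play have total mass $L^n$. A more hands-on alternative, which bypasses Theorem~\ref{Fermat sol}, is to compare $u\circ\Log_{s_k}$ with the Calabi--Yau potential $\varphi_{CY,t_k}$ via the uniform estimates of \cite{Li1}, deduce by $C^0$-stability of the complex Monge--Ampère operator that $s_k^{-n}(dd^c(u\circ\Log_{s_k}))^n$ and $\mu_{t_k}$ have the same weak limit on $X^{\hyb}$, and conclude by Theorem~\ref{theo cv mesures}; there the work shifts entirely to bounding the mass of both families in the regions where Li's straightening degrades.
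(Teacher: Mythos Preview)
Your proposal contains two approaches, and the second one—your ``more hands-on alternative'' at the end—is essentially what the paper does. The paper extracts a subsequential weak limit $\theta_\infty$ of the measures $\theta_k$, uses Li's integral estimate \cite[lem.~5.3]{Li1} to show $\big|\int_{U_{s_k}} f\mu_{s_k}-\int_{U_{s_k}} f\theta_k\big|\to 0$ on the toric regions $U_{s_k}$, combines this with $\mu_{s_k}(X_{s_k}\setminus U_{s_k})\to 0$ and the known convergence $\mu_{t_k}\to\mu_0$ (Theorem~\ref{theo cv mesures}) to obtain $\theta_\infty\ge\mu_0$, and concludes by the equal-mass argument. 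This is exactly the strategy you sketch in your final paragraph; the ``$C^0$-stability'' you invoke is implemented in the paper by the direct integral comparison from \cite{Li1}, and the mass control ``where Li's straightening degrades'' is handled by the inequality $\theta_\infty\ge\mu_0$ plus total mass.

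Your primary approach, however, inverts the paper's logical order. In the paper, Proposition~\ref{prop cv Fermat} is the \emph{input} to Theorem~\ref{Fermat sol}: one first proves $\theta_k\to\mu_0$ by comparison with the Calabi--Yau measures, and only then identifies $\MA(\psi_u)=\mu_0$ via Theorem~\ref{theo cont MA}. You instead propose to prove $\MA(\Psi_{u,0})=\mu_0$ directly—by establishing semi-flatness of $\Psi_{u,0}$ with respect to $\rho_{\underline a}$ and invoking the toric formula of \cite{BPS} together with Li's real Monge--Amp\`ere equation—and then deduce the proposition from hybrid continuity. This is mathematically sound (the ingredients you list, notably Li's invariance \cite[cor.~3.28]{Li1} and the local toric models of section~\ref{local models}, are indeed available independently of Theorem~\ref{Fermat sol}), but it front-loads the entire comparison-property machinery of section~\ref{sec Fermat}.2 and Remark~\ref{rem MA hyper} into the proof of this proposition. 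The paper's route is considerably shorter here because it postpones that analysis; your route has the compensating advantage that, once completed, Theorem~\ref{Fermat sol} and the strong comparison property fall out simultaneously.
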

Before moving to the proof of the proposition, we explain how this implies Theorem \ref{Fermat sol}. Writing $\psi_u = \Psi_u^{\NA}$ the restriction of $\Psi_u$ to $X^{\an}$, it follows from Theorem \ref{theo cont MA} that the sequence of measures $(s_k^{-n} dd^c (u \circ \Log_{s_k})^n)_{k \in \N}$ on $X^{\hyb}$ converges weakly to $\MA(\psi_u)$, hence the equality. The uniqueness of $u$ now follows from uniqueness of the solution to the non-archimedean Monge-Ampère equation.
\begin{proof}[Proof of prop. \ref{prop cv Fermat}]
By compactness of $X^{\hyb}$ and after further extraction, we may assume that the sequence of measures $\theta_k := s_k^{-n} dd^c (u \circ \Log_{t_k})^n$ on $X^{\hyb}$ converge weakly to a limit measure $\theta_{\infty}$ supported on $X^{\an}$, of total mass $L^n$. Let $f \in \mathcal{C}^0(X^{\hyb})$, then by \cite[lem. 5.3, proof of thm. 5.1]{Li1}, as $k \rightarrow \infty$ (up to extracting further) we have:
$$\big\lvert \int_{U_{s_k}} f\mu_{s_k} - \int_{U_{s_k}}f\theta_k \big\rvert \xrightarrow[k \rightarrow \infty]{} 0,$$
where $U_{s_k}$ denotes the union of the 'toric regions' $U^{s_k,*}_w$ from \cite[§4.5]{Li1}. Moreover, we have $\mu_{s_k}(X_{s_k} \setminus U_{s_k}) \fl 0$ when $k  \rightarrow \infty$, by \cite[prop. 3.14]{Li1}. Thus, if $f \ge 0$, we infer that $\int_{X^{\an}} f \theta_{\infty} \ge \int_{X^{\an}} f \mu_0$, hence $\theta_{\infty} \ge \mu_0$ as measures on $X^{\an}$. However both measures have the same mass $L^n$, so that they are equal, which concludes the proof.
\end{proof}
\subsection{The comparison property}
In this section we prove the following:
\begin{theo} \label{theo comp prop} Let $X \subset \CP^{n+1} \times \D^*$ be a degeneration of Calabi-Yau hypersurfaces such that condition \ref{condition} holds, and set $L = \gO_{\CP}(n+2)$.
\\We write $(\X, \Ld)$ the dlt model of $X$ obtained by taking the closure of $X$ inside $\CP^{n+1} \times \D^*$.
\\Writing the solution $\phi$ to the NAMA equation as:
$$\phi = \phi_{\Ld} + \psi,$$
we have that $\psi = \psi \circ \rho$ over $\Sk(X) \setminus \Gamma$, where $\rho : X^{\an} \fl \Sk(X)$ is the admissible retraction constructed in Theorem \ref{theo affinoid Yuto}, and with choice of branch cuts such that $a_{\tau}$ is the barycenter of $\tau$, for each face $\tau \subset \Sk(\X)$.
\end{theo}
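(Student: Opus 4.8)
The plan is to reduce the comparison property to a combinatorial invariance statement for the convex potential of the solution, and then to feed in the semi-flatness of that potential — coming from \cite{Li1} for the Fermat family, and from the local toric models of \S\ref{local models} in general. First I would rewrite $\psi$ tropically. By Theorem \ref{Fermat sol} (resp.\ by \cite{HJMM} when $F$ only satisfies Condition \ref{condition}), the solution $\phi$ of the non-archimedean Monge-Amp\`ere equation is the restriction to $X^{\an}$ of the toric metric $\phi_u$ on $(\CP^{n+1,\an}_K,L^{\an})$ attached through Theorem \ref{theo toric field} to a convex $P$-admissible function $u:N_{\R}\fl\R$. Since $\Ld$ is the restriction to $\X$ of the (constant) toric model $\gO_{\CP^{n+1}_R}(n+2)$, the model metric $\phi_{\Ld}$ is the restriction to $X^{\an}$ of the canonical toric metric $\phi_{\can}$, whose toric potential is $\Psi_P$. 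As $u$ and $\Psi_P$ are both $P$-admissible, $u-\Psi_P$ extends continuously to a function $g$ on $N_{\Sigma}$; and because $\phi_u$ and $\phi_{\can}$ are toric their difference factors through $\val_{\Sigma}$, so that $\psi=g\circ\val_{\Sigma}$ on $X^{\an}$. Since $\rho_{\underline{a}}=\gamma\circ\delta_{\underline{a}}\circ\val_{\Sigma}$ and $\val_{\Sigma}\circ\gamma=\Id_{N_{\Sigma}}$, one gets $\psi\circ\rho_{\underline{a}}=g\circ\delta_{\underline{a}}\circ\val_{\Sigma}$, so that the assertion $\psi=\psi\circ\rho_{\underline{a}}$ over $\Sk(X)\setminus\Gamma$ becomes: $g$ is constant along the fibers of $\delta_{\underline{a}}$ lying over $\Sk(X)\setminus\Gamma$.

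Next I would unwind the latter using the description of $\delta_{\underline{a}}$ recalled in \S\ref{section tropical cont}. Over a maximal face of $\Sk(X)$ the contraction is the identity, so there is nothing to check; over $\Int\widetilde{\Star}(a_{\tau})$ it is, locally, the linear projection collapsing a cone $\overline{\sigma}(\tau')$, and near a vertex $v_i$ it is the projection along $\overline{\R}_{\ge0}v_i$ (Proposition \ref{prop cont local}). By continuity of $g$ on $N_{\Sigma}$ it suffices to work on the finite part $N_{\R}$, so the problem reduces to showing that for every proper face $\tau$ of $\Sk(X)\simeq\partial P^{*}$, with associated cone $\sigma(\tau)\in\Sigma$, the function $u-\Psi_P$ is invariant under translation by $\sigma(\tau)$ on the relevant region of $\Trop(X)$ around $\mathring{\tau}$. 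Since $\Psi_P$ is linear on $\sigma(\tau)$, with slope $1$ along each primitive generator, this is in turn equivalent to $u$ being affine, with these same slopes, along the $\sigma(\tau)$-cosets in that region — i.e.\ to the \emph{semi-flatness} of $u$ in the $\sigma(\tau)$-directions near $\mathring{\tau}$.

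Finally I would establish this semi-flatness. For the Fermat family, $u$ is a local uniform limit of Legendre transforms $u_{CY,t_k}$ of the rescaled Calabi-Yau potentials, and the analysis of \cite{Li1} on the toric regions $U^{s,*}_{w}$ attached to the strata of $\Sk(X)$ shows that there the rescaled potential is, up to vanishing error, pulled back via the local SYZ projection; by Theorem \ref{theo affinoid Yuto} and the barycentric choice of branch cuts this projection coincides with $p_{\tau}$. Passing to the limit, and using that the Legendre transform turns an approximate $p_{\tau}$-invariance together with convexity into an \emph{exact} affineness of $u$ along the $\sigma(\tau)$-cosets, one gets the required invariance over the union of the $U^{s,*}_{w}$, whose $\Log$-images exhaust $\Sk(X)\setminus\Gamma$ and whose union has asymptotically full Calabi-Yau measure by \cite[prop. 3.14]{Li1}; a last continuity/density argument then propagates it to all of $\Sk(X)\setminus\Gamma$ and to the directions at infinity of $\overline{\Trop}(X)$. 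For a general admissible $F$, where Li's estimates are unavailable, I would instead argue by rigidity: $u$ is the \emph{unique} solution of the non-archimedean equation, while near each $\mathring{\tau}$ the local toric model $\widehat{\X}_{i,/D_i}\simeq\widehat{\cN}_{/D_i}$ of \S\ref{local models} carries a manifestly $\sigma(\tau)$-semi-flat solution of the corresponding local Monge-Amp\`ere equation (from the toric case \cite[thm. 4.4.4]{BPS}); by locality of the Monge-Amp\`ere operator the two must agree over the relevant region, which forces the invariance.

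I expect this last step to be the main obstacle. Even granting Li's estimates, one must control the convergence $u_{CY,t_k}\to u$ uniformly enough that its Legendre transform behaves well, in spite of the possible non-uniqueness of solutions of the real Monge-Amp\`ere equation on $\Sk(X)\setminus\Gamma$; one must identify Li's local SYZ projections with Yamamoto's contraction $\delta_{\underline{a}}$ for the barycentric branch cuts, i.e.\ reconcile two a priori different combinatorial presentations of the same integral affine structure; and one must handle the non-compact directions of $\overline{\Trop}(X)$ by a separate argument. The combinatorial route for general $F$ sidesteps the first two difficulties, but relies instead on the local uniqueness of the semi-flat solution on the toric models, which in turn uses the precise geometry of the small resolutions of \S\ref{local models}.
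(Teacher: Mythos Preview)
Your reduction in the first two paragraphs is exactly right and matches the paper: write $\psi=(u-\Psi_P)\circ\val_{\Sigma}$, so that the comparison property becomes the statement that $u'=u-\Psi_P$ is constant along the fibres of $\delta_{\underline a}$. Since $\Sk(X)\setminus\Gamma$ is covered by the interiors of maximal faces and the open stars $U_i=\Int(\widetilde{\Star}(v_i))$, and $\delta_{\underline a}$ is the identity over the former, the only thing left is to show that for each vertex $v_i$ one has $u'(w+\alpha v_i)=u'(w)$ on $Y_{v_i}$, i.e.\ $u(w+\alpha v_i)=u(w)+\alpha$ (together with the trivial computation $\Psi_P(w+\alpha v_i)=\Psi_P(w)+\alpha$).

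The gap is in your third paragraph: you try to manufacture this ``semi-flatness'' by an analytic limit argument in the Fermat case and by a local-rigidity argument in general, and both routes are problematic. The limit argument would require you to show that approximate $p_\tau$-invariance of the $u_{CY,t_k}$ survives the double Legendre transform and becomes \emph{exact} affineness of $u$ along the cosets --- this is not automatic, and you flag it yourself as an obstacle. The rigidity argument for general $F$ appeals to local uniqueness of solutions of the real/non-archimedean Monge--Amp\`ere equation on the toric models, which simply does not hold; locality of the operator does not give you uniqueness of local solutions.

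What you are missing is that this step is already done, and cleanly, in \cite{Li1}: the identity $u(w+\alpha v_i)=u(w)+\alpha$ on $Y_{v_i}$ is precisely \cite[cor.~3.28]{Li1}. Its proof is purely tropical --- it only uses the structure of $\Trop(X)$ and the fact that $u$ is the canonical extension $v^{**}$ of its restriction $v=u_{|\Sk(X)}$ --- so once you know $u=v^{**}$ the argument goes through verbatim for any $F$ satisfying Condition~\ref{condition}. For the Fermat family $u=v^{**}$ by construction; for general admissible $F$ it is \cite[thm.~8.1]{HJMM}. So after your (correct) reduction, the proof is one citation and a two-line computation for $\Psi_P$, not a new analytic or rigidity argument.
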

Recall that the solution of the non-archimedean Monge-Ampère is explicitly given as the restriction to $X^{\an}$ of the toric metric $\phi_u \in \CPSH(\CP_K^{n+1, \an}, L^{\an})$ - in the Fermat case, this follows from Theorem \ref{Fermat sol}, while this follows from \cite[thm. 8.1]{HJMM} in general. The model metric $\phi_{\Ld}$ is also toric, and we have $\phi_{\Ld} = \phi_{u_{\FS}}$, where:
$$u_{\FS}(x) = \max_{m \in V(P)} \langle m, x \rangle$$
is the support function of the boundary of $\CP^{n+1}$. We infer that the relative toric potential $\psi = (u - u_{\FS}) \circ \val_{\Sigma}$ on $\CP^{n+1}$, hence also on $X$. We set $u' = (u - u_{\FS})$ which is a continuous function on $N_{\R}$, extending continuously to $N_{\Sigma}$.
\\We need to prove that if $\rho(x) \in \Sk(X) \setminus \Gamma$, then $\psi(x) = \psi(\rho(x))$. It is enough to treat the two following cases: $\rho(x) \in \Int(\tau)$ for a maximal face $\tau$ of $\Sk(\X)$, or $\rho(x) \in U_i = \Int(\widetilde{\Star}(v_i))$ for a vertex $v_i$ of the skeleton.
\\We start with the former. If $\rho(x) \in \Int(\tau)$, then $\rho(x) = \val_{\Sigma}(x)$ under the identification $\Sk(X) = \partial P^*$, hence $\psi (\rho(x)) = u' (\val_{\Sigma} (x)) = \psi(x)$ as $\phi$ is a toric metric on $\CP^{n+1}$.
\\The second case relies on the following observation:
\begin{lem}{\cite[cor. 3.28]{Li1}} Let $u : N_{\R} \fl \R$ be the convex admissible function from the statement of theorem \ref{Fermat sol}, and let $v_i \in \Sk(\X)$ be a vertex. Then on the region $Y_{v_i} = (U_i + \R_{\ge 0} v_i) \cap \Trop(X)$, the equality:
$$u(w+ \alpha v_i) = u(x) + \alpha$$
holds for any $w \in U_i$, $\alpha \in \R_{\ge 0}$.
\end{lem}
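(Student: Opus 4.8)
The statement is \cite[cor. 3.28]{Li1}, and its proof is archimedean in nature, going back to the construction of $u$ itself. Recall (cf. the discussion following prop. \ref{prop cv Fermat}) that $u$ is obtained as a locally uniform limit, along a subsequence $t_k\to 0$, of the $P$-admissible convex functions $u_{CY,t}$ attached to the normalised Calabi--Yau potentials $\varphi_{CY,t}$ by the double Legendre transform of \cite[§3]{Li1}; by the uniqueness of the non-archimedean Calabi--Yau metric the limit $u$ does not depend on the subsequence. Since both sides of the claimed equality are continuous in $(w,\alpha)$, the plan is to prove, for $w$ ranging over $U_i$ and $\alpha$ along the ray $\R_{\ge 0}v_i$, that $u_{CY,t}$, transported via $\Log_s$ to the $v_i$-direction, is asymptotically affine in $\alpha$ with a slope which is then forced to equal $1$.

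First I would set up the dictionary under which the behaviour of $u$ on $Y_{v_i}=(U_i+\R_{\ge 0}v_i)\cap\Trop(X)$ is governed by the behaviour of $\varphi_{CY,t}$ on the family of toric pieces $U^{s,*}_w$ of \cite[§4.5]{Li1}, as $w$ runs over $U_i$: the ray direction $\R_{\ge 0}v_i$ is exactly the direction in $N_\R$ dual to the boundary divisor $D_i\cong\CP^n$ of $\X_0$, i.e. the normal direction to $D_i$ as recorded by $\Log_s$. So the lemma amounts to showing that, after $\Log_s$-rescaling, $\varphi_{CY,t}$ on a toric neighbourhood of $D_i$ is asymptotically affine in the normal coordinate, with a $w$-independent slope.

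The geometric input, and the main obstacle, is the content of \cite[§3]{Li1}: for $|t|\ll 1$ the Calabi--Yau geometry of $X_t$ restricted to the toric neck around $D_i$, rescaled by $\Log_s$, is $C^k$-close to a product of a $v_i$-invariant model in the $D_i$-directions --- whose potential converges to the restriction of $u$ to $U_i$ --- with a flat cylinder of length $L_t\to\infty$ in the normal direction, on which any Calabi--Yau (flat) potential is \emph{exactly} affine in the cylinder coordinate. Establishing this --- which rests on the uniform Skoda / a priori estimates of \cite{Li1} together with an induction on dimension --- is the technical heart of the argument; passing to the limit then yields $u(w+\alpha v_i)=u(w)+c\,\alpha$ on $Y_{v_i}$ with $c$ independent of $w$.

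It remains to pin down $c=1$, which is a convex-geometric computation. Since $u$ is $P$-admissible, $u-\Psi_P=u-u_{\FS}$ is bounded on $N_\R$. On the other hand, for $w\in U_i$ the points $w$ and $v_i$ both lie in the cone $\sigma(\tau)=\Cone(v_{l_1},\dots,v_{l_r})$ of the fan of $\CP^{n+1}$ attached to the face $\tau=\Conv(v_{l_1},\dots,v_{l_r})$ of $\partial P^*$ that contains $w$ and has $v_i$ as a vertex, and $\Psi_P$ is linear on $\sigma(\tau)$; since $w,v_i\in\partial P^*=\{\Psi_P=1\}$ this gives $u_{\FS}(w+\alpha v_i)=\Psi_P(w)+\alpha\,\Psi_P(v_i)=1+\alpha=u_{\FS}(w)+\alpha$ for all $\alpha\ge 0$. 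Comparing with $u(w+\alpha v_i)=u(w)+c\alpha$ and letting $\alpha\to+\infty$, boundedness of $u-u_{\FS}$ forces $c=1$; equivalently $u-u_{\FS}$ is constant along $v_i$ on $Y_{v_i}$, which is the assertion (the $u(x)$ in the displayed formula being $u(w)$). Note that the real Monge--Ampère equation $\M(u)=\mu_0$ alone does not suffice here: a convex function whose Monge--Ampère mass is carried by the hypersurface $\Sk(X)$ is merely developable, not affine, in directions transverse to $\Sk(X)$, so the archimedean analysis of \cite{Li1} genuinely enters.
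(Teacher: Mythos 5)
Your route diverges from the one the paper (and \cite{Li1}) actually uses, and its central step is a genuine gap. You reduce the lemma to an archimedean statement: that over the whole region $\Log_s^{-1}(Y_{v_i})$ the Calabi--Yau metric is asymptotically a product of a $D_i$-directional model with a flat cylinder in the normal direction, with the potential exactly (or asymptotically) affine in the cylinder coordinate with a $w$-independent slope surviving in the limit. No such statement is available: the estimates of \cite{Li1} (uniform Skoda estimates, Lipschitz bounds for the convexified potentials $u_{CY,t}$, and the resulting subsequential convergence to $u$) control potentials, not metric asymptotics, and they yield the real Monge--Amp\`ere equation for $u$ only on the interiors of the maximal faces; the region $Y_{v_i}$ lies over the whole star of the vertex, including the parts of $\Trop(X)$ sitting above lower-dimensional faces, where no such product structure is known. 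Moreover, the cylinder/product behaviour you propose to \emph{establish by induction on dimension} is essentially a strong metric form of the comparison property near $D_i$ --- a stronger version of what this very section is proving at the level of potentials --- so invoking it as geometric input is circular in spirit, and you offer no argument for it beyond calling it the technical heart.

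The proof the paper relies on is purely convex-geometric, which is exactly why it can be transported beyond the Fermat case: the only input is that $u$ is the canonical extension $v^{**}$ of its restriction $v = u_{|\Sk(X)}$ (known from \cite{Li1} for the Fermat family, and from \cite[thm. 8.1]{HJMM} in general), i.e.\ $u$ is a supremum of affine functions with slopes $m \in P$ minorizing $v$ on $\Sk(X)$. Since $\langle m, v_i \rangle \le 1$ for every $m \in P$, this immediately gives $u(w + \alpha v_i) \le u(w) + \alpha$, and the reverse inequality comes from the structure of $\Trop(X)$ near the vertex: over the star $U_i$ the relevant supporting slopes can be taken in the face of $P$ dual to $v_i$, where $\langle m, v_i \rangle = 1$. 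As the paper stresses, this argument uses only $\Trop(X)$ and not the hypersurface, so no archimedean analysis enters at this point. Your normalization step (forcing the slope to be $1$ via $u_{\FS}(w + \alpha v_i) = u_{\FS}(w) + \alpha$ and boundedness of $u - u_{\FS}$) is correct and indeed mirrors the computation the paper performs right after the lemma, but it only settles the easy half; the assertion that the archimedean analysis of \cite{Li1} must genuinely enter here is precisely what the canonical-extension property is designed to avoid.
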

This is proved only in the case of the Fermat family in \cite{Li1}, but by \cite[thm. 8.1]{HJMM} in the general case, the function $u : N_{\R} \fl \R$ is the canonical extension $v^{**}$ of its restriction $v:= u_{| \Sk(X)}$. Hence the proof of \cite[cor. 3.28]{Li1} applies without changes to any degeneration of hypersurfaces such that condition \ref{condition} holds, as the proof only uses $\Trop(X)$ and not the hypersurface itself.
\\This implies that if $\rho(x) \in U_i$ - which means $\val_{\Sigma}(x) \in X_{v_i}$ - then $u'(x) = u'(\delta(x))$. Indeed, writing $x = w+ \alpha v_i$ with $x \in U_i$, we have:
$$u_{\FS} (x) = \max_{m \in V(P)} \langle m, x \rangle = u_{\FS}(w) + \max_{m \in V(P)} \langle m, \alpha v_i \rangle = u_{\FS}(w) + \alpha,$$
since $v_i$ is a vertex of $P^* =\{ \langle P, \cdot \rangle \le 1 \}$. This yields $u'(x) =u'(w) = u'(\delta(x))$ by prop. \ref{prop cont local}, and since $\psi = u' \circ \val_{\Sigma}$, we get that:
$$\psi (\rho(x)) = \psi(x)$$
whenever $\rho(x) \in U_i$. This concludes the proof of theorem \ref{theo comp prop}.
\begin{rem} \label{rem MA hyper} Let $X \subset \CP^{n+1} \times \D^*$ be a maximally degenerate hypersurface as considered in theorem \ref{theo affinoid Yuto}, and assume that for a certain choice $\underline{a}$ of branch cuts, the solution $\psi \in \mathcal{C}^0(X^{\an})$ of the non-archimedean Monge-Ampère equation:
$$\MA(\phi_{\Ld} + \psi) = \mu_0$$
satisfies the comparison property $\psi = \psi \circ \rho_{\underline{a}}$ over $\Sk(X) \setminus \Gamma$. We write $\psi_{\FS} : N_{\R} \fl \R$ the toric potential for the Fubini-Study metric on the ambient $\CP^{n+1}$ with the anticanonical polarization, and use the same notation for its restriction to $\Sk(X)$.
\\Then the collection of local functions:
$$\psi_{\tau} := \psi \; \text{on} \; \Int(\tau),$$
$$\psi_i := \psi + \psi_{\FS} -m_i \; \text{on} \; U_i,$$
where $m_i \in M$ satisfies $\langle m_i, w \rangle =1$, are convex in affine coordinates on $\Sk(X) \setminus \Gamma$, and satisfy the real Monge-Ampère equation:
$$n!\M(\psi) = \mu_0,$$
away from $\Gamma$. Note that the local potential $\psi_i$ equates the $u_m$ from \cite[def. 3.22]{Li1}.
\\This holds over the maximal faces of $\Sk(\X)$ by theorem \ref{theo Vil}, so that we focus on an open subset $U_i$. It is enough to prove that under the formal isomorphism:
$$\widehat{\X_i}_{/D_i} \simeq \widehat{\cN}_{/D_i}$$
provided by theorem \ref{THEO TORIC STRAT}, the metric $\phi = \phi_{\Ld} + \psi$ is toric and has toric potential $\psi_i$ on $U_i$. The first part follows from the assumption $\psi = \psi \circ \rho$, while the second one follows from the fact that the toric potential for $(\phi_{\Ld})_{| \T_{D_i}}$ is $(\psi_{\FS} -m_i)$ - recall that the torus $\T_{D_i, K}$ of $\cN$ is the quotient of $\T_{\CP}$ by the $m_i$-direction.
\end{rem}
\subsection{The hybrid SYZ fibration}
While the results from \cite{Li1} do not yield a global SYZ fibration on the Fermat hypersurfaces $X_t$ for $\lvert t \rvert \ll1$, there does exist a special Lagrangian fibration on a region of $X_t$ whose Calabi-Yau measure is arbitrarily close to $1$ as $t \rightarrow 0$. We will show that this fibration converges to the retraction $\rho$ from thm. \ref{Fermat sol} in the hybrid topology, this is similar to the results from \cite{GO} for finite quotients of abelian varieties. 
\\We start by recalling the precise statement on the existence of an SYZ fibration in the generic region. By the local theory for the real Monge-Ampère equation, the locally convex $u$ on $\Sk(X) \setminus \Gamma$ is smooth and strictly convex on an open subset $\mathcal{R} \subset \big( \Sk(X) \setminus \Gamma \big)$, whose complement has $(n-1)$-Hausdorff measure zero; in particular $\mathcal{R}$ is connected. The estimates from \cite{Li1} imply that the Calabi-Yau potential converges in a natural, $\mathcal{C}^{\infty}$-sense to $u$ locally over $\mathcal{R}$, so that the results from \cite{ZhSL} can be used to obtain the following:
\begin{theo}{\cite[thm. 5.13]{Li1}} Let $K \subset \mathcal{R}$ be a compact subset.
\\There exists an open neighbourhood $V_{K} \subset \mathcal{R}$ of $K$, and writing $U_{t, K} = \Log_t^{-1}(V_K)$, a special Lagrangian fibration:
$$f_t : U_{t, K} \fl \Sk(X)$$
that is a $\mathcal{C}^{\infty}$-perturbation of the map $\Log_t$ as $t \rightarrow 0$. Moreover, if $(K_n)_{n \in \N}$ is a compact exhaustion of $\mathcal{R}$, then the $U_{t, K_n}$ have Calabi-Yau measure arbitrarily close to $1$ as $t \rightarrow 0$, $n \gg1$.
\end{theo}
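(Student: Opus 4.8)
The final statement is recalled from \cite[thm. 5.13]{Li1}, so a proof amounts to reconstructing the structure of Li's argument, which I now outline. The plan is to compare, over the generic (``toric'') region, the Calabi--Yau metric on the fibre $X_t$ with the semi-flat model built from the convex function $u$ of theorem \ref{Fermat sol}, and then to upgrade the smooth Lagrangian torus fibration $\Log_t$ to a special Lagrangian one by a perturbation argument. The first ingredient is the base manifold $\mathcal{R}$: by remark \ref{rem MA hyper} (or directly by \cite[thm. 5.1]{Li1}) the function $u$ restricts, in the integral affine charts of theorem \ref{theo affinoid Yuto}, to a convex solution of the real Monge--Amp\`ere equation $n!\,\M(u)=\mu_0$ on $\Sk(X)\setminus\Gamma$, whose right-hand side has smooth positive density in those charts. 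Interior regularity theory for the real Monge--Amp\`ere equation then produces a dense open $\mathcal{R}\subset\Sk(X)\setminus\Gamma$ on which $u$ is smooth and strictly convex; the non-regular locus of a convex Alexandrov solution is a countable union of segments, so it has vanishing $(n-1)$-Hausdorff measure (cf. \cite{Moo} for the failure of strict convexity in general), and $\mathcal{R}$ is connected since its complement in the $n$-manifold $\Sk(X)\setminus\Gamma$ has codimension $\ge 1$.

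The analytic heart is the second step: the $\mathcal{C}^{\infty}$ convergence, over compact subsets of $\mathcal{R}$, of the rescaled Calabi--Yau potential to $u$. Fixing $K\subset\mathcal{R}$ compact, one works on a small neighbourhood $V_K$ of $K$ and on $U_{t,K}=\Log_t^{-1}(V_K)$, over which $X_t$ is analytically $\mathcal{C}^{\infty}$-close to an open subset of $(\C^*)^n$ carrying the toric ansatz metric $\omega_{\mathrm{sf},t}=dd^c\big(s(t)^{-1}(u\circ\Log_{s(t)})\big)$, for which $\Log_t$ is a (non-special) Lagrangian torus fibration. Combining the uniform $L^{\infty}$ estimate coming from uniform Skoda / Kołodziej-type bounds on the degenerating family \cite{Li3} with local higher-order elliptic estimates, one shows that the rescaled Calabi--Yau form $\tilde\omega_t$ on $U_{t,K}$ is a $\mathcal{C}^k_{\mathrm{loc}}$-small perturbation of $\omega_{\mathrm{sf},t}$ for every $k$, as $t\to 0$; equivalently the rescaled $\varphi_{CY,t}\circ\Log_t^{-1}$ converges to $u$ in $\mathcal{C}^k_{\mathrm{loc}}(V_K)$. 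This is precisely where proposition \ref{prop cv Fermat}, together with Li's estimates on the uniform $L^\infty$ norm and on the concentration of $\mu_t$ on the toric region \cite[prop. 3.14]{Li1}, are used.

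Granting the $\mathcal{C}^{\infty}$ comparison, the third step applies Y.\ Zhang's perturbation result \cite{ZhSL}: a Lagrangian torus fibration whose ambient K\"ahler form and holomorphic volume form are $\mathcal{C}^{\infty}$-close to a semi-flat model admits a nearby special Lagrangian fibration, built fibre by fibre by solving a linearized Hodge-theoretic equation via the implicit function theorem; applied to $\tilde\omega_t$ and $\Omega_t$ on $U_{t,K}$ this yields the desired $f_t:U_{t,K}\fl\Sk(X)$, a $\mathcal{C}^{\infty}$-perturbation of $\Log_t$. For the measure statement I would combine theorem \ref{theo cv mesures} (weak convergence $\mu_t\to\mu_0$ on $X^{\hyb}$ with $\mu_0$ the Lebesgue measure on $\Sk(X)$) with $\mu_0(\Sk(X)\setminus\mathcal{R})=0$: for a compact exhaustion $(K_n)$ of $\mathcal{R}$ one has $\mu_0(\mathcal{R}\setminus K_n)\to 0$, so $\mu_t(U_{t,K_n})=\mu_t(\Log_t^{-1}(V_{K_n}))\to\mu_0(V_{K_n})$ as $t\to 0$, which tends to $1$ as $n\to\infty$, also using that $\mu_t$ concentrates on the toric region. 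The main obstacle is the second step: the $\mathcal{C}^{\infty}$ a priori estimates on the highly degenerate Calabi--Yau manifolds, which require the full force of Li's local pluripotential-theoretic analysis and not any soft compactness argument.
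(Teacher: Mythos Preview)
The paper does not supply a proof of this theorem: it is quoted verbatim from \cite[thm.~5.13]{Li1}, preceded only by a one-sentence summary (real Monge--Amp\`ere regularity gives the open set $\mathcal{R}$; Li's estimates give $\mathcal{C}^{\infty}$ convergence of the Calabi--Yau potential to $u$ over $\mathcal{R}$; then \cite{ZhSL} produces the special Lagrangian fibration). Your outline reconstructs exactly this structure with more detail, and matches both the paper's summary here and its earlier sketch of Li's strategy in section~\ref{sec comp MA}, so there is nothing to compare against beyond noting that your account is consistent with what the paper says.
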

Here by $\mathcal{C}^{\infty}$-perturbation we mean that the $\mathcal{C}^{\infty}$-norm of $(\Log_t - f_t)$ is arbitrary small as $t \rightarrow 0$, in the scale where the torus fibers have bounded diameter. We claim that these generic SYZ fibrations converge in the hybrid topology to $\rho$:
\begin{prop} Let $U_{0, K} = \rho^{-1}(V_K)$ and:
$$U^{\hyb}_K = \val_{\hyb}^{-1}(V_K) = \bigsqcup_{ t \in \bar{\D}_r} U_{t, K} \subset X^{\hyb}$$
for $r\ll 1$ depending on $K$. Then the hybrid SYZ fibration:
$$f : U^{\hyb}_K \fl K \subset \Sk(X)$$
defined by $f_t$ on $U_{t, K}$ for $t \neq 0$ and $\rho$ on $U_{0, K}$ is continuous.
\end{prop}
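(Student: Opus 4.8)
The plan is to reduce the continuity of $f$ to two facts established earlier: that, uniformly as $t\to 0$, the complex special Lagrangian fibrations $f_t$ are $\mathcal{C}^0$-close to the tropicalization maps $\Log_t$, and that the tropical contraction $\delta_{\underline{a}}\colon\overline{\Trop}(X)\fl\Sk(X)$ restricts to the identity on $\Sk(X)$. First I would dispose of the part of $U^{\hyb}_K$ lying over $t\neq 0$: under the homeomorphism $\beta$ of Proposition~\ref{topo hyb} this is identified with an open subset of $X_{|\D^{*}_r}$ and $f$ restricts there to $(z,t)\mapsto f_t(z)$, which is continuous because the fibrations of \cite[Thm.~5.13]{Li1} are produced by a perturbation argument (following \cite{ZhSL}) depending continuously --- in fact smoothly --- on the parameter $t\neq 0$. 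It then remains to prove continuity of $f$ at a point $v\in U_{0,K}=\rho^{-1}(V_K)\subset X^{\an}$. Since $X^{\hyb}$ is metrizable it suffices to treat a sequence $x_k\to v$ in $U^{\hyb}_K$; writing $t_k=\pi(x_k)\to 0$ and passing to a subsequence, we may assume either $t_k=0$ for all $k$, in which case $f(x_k)=\rho(x_k)\to\rho(v)=f(v)$ by continuity of $\rho$ (Theorem~\ref{theo affinoid Yuto}); or $t_k\neq 0$ for all $k$, in which case $x_k\in U_{t_k,K}=\Log_{t_k}^{-1}(V_K)$ and $f(x_k)=f_{t_k}(x_k)$, and this is the case we must analyse.

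Fix an auxiliary metric $d$ on the compact space $\Sk(X)$. By \cite[Thm.~5.13]{Li1} we have $\sup_{U_{t,K}}d\big(f_t(\cdot),\Log_t(\cdot)\big)\to 0$, so it is enough to show that $\Log_{t_k}(x_k)\to\rho(v)$. I would first note that $U^{\hyb}_K$ is entirely contained in the toric part $\T^{\hyb}$ of $X^{\hyb}$: indeed $V_K\subset\mathcal{R}\subset\Sk(X)\setminus\Gamma$, and under the identification $\Sk(X)=\partial P^{*}$ of Proposition~\ref{prop sk N} the whole of $\partial P^{*}$ --- vertices included --- lies in $N_{\R}$, so $\val_{\Sigma}^{-1}(V_K)$ and the sets $\Log_t^{-1}(V_K)$ meet only the open torus. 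On the torus the log-moduli of the monomial coordinates $z_1/z_0,\dots,z_{n+1}/z_0$ are, by the definition of the hybrid topology together with the transformation rule of Proposition~\ref{topo hyb}, continuous functions on $X^{\hyb}\cap\T^{\hyb}$, and they assemble into a continuous map $X^{\hyb}\cap\T^{\hyb}\fl N_{\R}$ which --- up to a common positive constant (namely $|\log r|$), which I suppress --- restricts to $\val_{\Sigma}$ on $X^{\an}\cap\T^{\an}$ and to $\Log_t$ on $X_t\cap\T$. Applying its continuity to $x_k\to v$ gives $\Log_{t_k}(x_k)\to\val_{\Sigma}(v)$ in $N_{\R}$.

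It remains to identify $\val_{\Sigma}(v)$ with $\rho(v)$. Each $\Log_{t_k}(x_k)$ lies in $V_K$, hence in its closure $\overline{V_K}$ in $N_{\R}$, which is a compact --- thus closed --- subset of $\Sk(X)=\partial P^{*}$; therefore the limit $\val_{\Sigma}(v)$ lies in $\Sk(X)$. But $\delta_{\underline{a}}$ is a retraction of $\overline{\Trop}(X)$ onto $\Sk(X)$, i.e.\ it is the identity on $\Sk(X)$: this follows from the face-by-face description recalled in Section~\ref{section tropical cont}, where over a maximal face $\tau$ one has $Y_\tau=U_\tau=\Int(\tau)$ with $\delta_\tau=\mathrm{id}$, over a vertex $v_i$ one has $\delta_{v_i}(w+\alpha v_i)=w$, which on $U_i\subset\Sk(X)$ (the case $\alpha=0$) is again the identity, and analogously over the intermediate faces. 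Hence $\rho(v)=\delta_{\underline{a}}(\val_{\Sigma}(v))=\val_{\Sigma}(v)=\lim_k\Log_{t_k}(x_k)$, and combining this with the uniform perturbation estimate of \cite[Thm.~5.13]{Li1} yields $f(x_k)=f_{t_k}(x_k)\to\rho(v)=f(v)$, which completes the argument.

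The hard part is the reconciliation, along the central fibre, of the complex and the non-archimedean descriptions of the fibration: a priori $f_t$ is controlled only by $\Log_t$, which records the valuation $\val_{\Sigma}$ and not the contracted retraction $\rho=\delta_{\underline{a}}\circ\val_{\Sigma}$, so one might expect a genuine mismatch at $t=0$ --- which is precisely why no global SYZ fibration is available. What makes the statement work is that the domains $U_{t,K}$ are, by construction, $\Log_t$-preimages of $V_K\subset\Sk(X)$, a locus on which $\delta_{\underline{a}}$ acts trivially; dually, the hybrid topology forces the non-archimedean limit $\val_{\Sigma}(v)$ of any such sequence back into $\Sk(X)$, where $\delta_{\underline{a}}=\mathrm{id}$. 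By comparison the two remaining points --- the continuous dependence of $f_t$ on $t$ for $t\neq 0$, and the exact normalisation constant relating the transformation rule of Proposition~\ref{topo hyb} to Li's $\Log_t=\Log(s(t)\,\cdot)$ with $s(t)=-1/\log|t|$ --- are routine, and I would only record them briefly.
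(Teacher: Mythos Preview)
Your proof is correct and follows the same approach as the paper's: reduce to the convergence of $\Log_t$ to $\rho$ over $K$ via Li's $\mathcal{C}^\infty$-perturbation estimate, and then justify that convergence using continuity of the hybrid tropicalization together with the fact that $\delta_{\underline a}$ restricts to the identity on $\Sk(X)$. The paper states this in one sentence (``$\Log_t$ converges to $\rho$ over $K$ in the hybrid topology''), whereas you have unpacked precisely why the limit $\val_\Sigma(v)$ lands in $\Sk(X)$ and hence coincides with $\rho(v)$; this extra care is welcome but not a different argument.
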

\begin{proof} 
By \cite[§2.7, thm. 5.13]{Li1}, for $\lvert t \rvert \ll 1$, the SYZ fibration $f_t$ over $K$ is $\mathcal{C}^{\infty}$-close to the map $\Log_t$ in the scale where the torus fibers have diameters of size $O(1)$, which easily implies our statement, as $\Log_t$ converges to $\rho$ over $K$ in the hybrid topology. 
\end{proof}
\bibliographystyle{alpha}
\bibliography{biblio.bib}
\end{document}